\newcommand{\Gal}{\operatorname{Gal}}
\newcommand{\p}{\mathfrak{p}}
\newcommand{\Fp}{\mathbf{F}_{\mathfrak{p}}}
\newcommand{\Aut}{\operatorname{Aut}}
\newcommand{\Ql}{\mathbf{Q}_{\ell}}
\newcommand{\A}{\mathbf{A}}
\newcommand{\Zl}{\mathbf{Z}_{\ell}}
\newcommand{\im}{\operatorname{im}}
\newcommand{\Spec}{\operatorname{Spec}}
\newcommand{\tr}{\operatorname{tr}}
\newcommand{\Z}{\mathbf{Z}}
\newcommand{\F}{\mathbf{F}}
\newcommand{\R}{\operatorname{\mathbf{R}}}
\newcommand{\Q}{\mathbf{Q}}
\newcommand{\PSL}{\operatorname{PSL}}
\newcommand{\GL}{\operatorname{GL}}
\newcommand{\SL}{\operatorname{SL}}
\newcommand{\Frob}{\operatorname{Frob}}
\newcommand{\PP}{\operatorname{\mathbf{P}}}
\newcommand{\opchar}{\operatorname{char}}
\newcommand{\C}{\mathbf{C}}
\newcommand{\Qbar}{\Q^{\textup{al}}}
\newcommand{\Kbar}{K^{\textup{al}}}
\newcommand{\Fbar}{F^{\textup{al}}}
\newcommand{\Res}{\operatorname{Res}}
\newcommand{\psmod}[1]{~(\textup{\text{mod}}~{#1})}
\newcommand{\frakp}{\mathfrak{p}}
\newcommand{\scrE}{\mathscr{E}}
\DeclareMathOperator{\height}{ht}
\DeclareMathOperator{\ord}{ord}
\newcommand{\abs}[1]{\lvert {#1} \rvert}
\newcommand{\Htwo}{\boldsymbol{\mathsf{H}}^2}
\theoremstyle{plain}
\newtheorem{thm}[equation]{Theorem}
\newtheorem{lem}[equation]{Lemma}
\newtheorem{defn}[equation]{Definition}
\newtheorem{cor}[equation]{Corollary}
\newtheorem{prop}[equation]{Proposition}
\theoremstyle{remark}
\newtheorem{rmk}[equation]{Remark}
\newtheorem{remark}[equation]{Remark}
\newtheorem{exm}[equation]{Example}
\numberwithin{equation}{subsection}
\newcommand{\defi}[1]{\textsf{#1}} 		
\def\sbtors{{}_{{\textup{tor}}}}
\DeclareMathOperator{\Area}{area}
\DeclareMathOperator{\area}{area}
\DeclareMathOperator{\len}{len}
\DeclareMathOperator{\bd}{bd}
\DeclareMathOperator{\lcm}{lcm}
\newenvironment{enumalph}
{\begin{enumerate}}
{\end{enumerate}}
\newenvironment{enumroman}
{\begin{enumerate}}
{\end{enumerate}}
\begin{document}

\title[A probabilistic local-global principle for torsion]{On a probabilistic local-global principle for torsion on elliptic curves}


\author{John Cullinan}
\address{Department of Mathematics, Bard College, Annandale-On-Hudson, NY 12504, USA}
\email{cullinan@bard.edu}
\urladdr{\url{http://faculty.bard.edu/cullinan/}}

\author{Meagan Kenney}
\address{Department of Mathematics, University of Minnesota, Minneapolis, MN 55455}
\email{kenn0699@umn.edu}

\author{John Voight}
\address{Department of Mathematics, Dartmouth College, 6188 Kemeny Hall, Hanover, NH 03755, USA}
\email{jvoight@gmail.com}
\urladdr{\url{http://www.math.dartmouth.edu/~jvoight/}}

\subjclass[2000]{11G05, 14H52}

\keywords{Elliptic curves, torsion subgroups, arithmetic statistics}

\thanks{The authors would like to thank Robert Harron and Siman Wong for helpful conversations, Robert Lemke Oliver for comments, and Peter J.\ Cho, Keunyoung Jeong, Grant Molnar, Carl Pomerance, Edward Schaefer, David Zureick--Brown, and the anonymous referee for their feedback and corrections.  Voight was supported by an NSF CAREER Award (DMS-1151047) and a Simons Collaboration Grant (550029).}

\maketitle

\begin{resume}
Soit $m$ un entier positif et soit $E$ une courbe elliptique sur $\Q$ avec la propri\'et\'e que $m\mid\#E(\F_p)$ pour un ensemble de densit\'e $1$ de nombres premiers $p$. En nous appuyant sur les travaux de Katz et Harron--Snowden, nous \'etudions la probabilit\'e que $m \mid \#E(\Q)\sbtors$: nous trouvons qu'elle est non nulle pour tout $m \in \lbrace 1, 2, \dots , 10 \rbrace \cup \lbrace 12, 16 \rbrace$ et nous le calculons exactement quand $m \in \lbrace 1,2,3,4,5,7 \rbrace$. En compl\'ement, nous donnons un d\'ecompte asymptotique de courbes elliptiques avec une structure de niveau suppl\'ementaire lorsque la courbe modulaire param\'etrable r\'esulte du quotient par un groupe sans torsion de genre z\'ero.
\end{resume}

\begin{abstr}
Let $m$ be a positive integer and let $E$ be an elliptic curve over $\Q$ with the property that $m\mid\#E(\F_p)$ for a density $1$ set of primes $p$.  Building upon work of Katz and Harron--Snowden, we study the probability that $m \mid \#E(\Q)\sbtors$: we find it is nonzero for all $m \in \lbrace 1, 2, \dots, 10 \rbrace \cup \lbrace 12, 16 \rbrace$ and we compute it exactly when $m \in \lbrace 1,2,3,4,5,7 \rbrace$.  As a supplement, we give an asymptotic count of elliptic curves with extra level structure when the parametrizing modular curve arises from the quotient by a torsion-free group of genus zero.
\end{abstr}

\bigskip

\section{Introduction} \label{intro}

\subsection{Motivation}

Let $E$ be an elliptic curve over $\Q$ and let $E(\Q)\sbtors$ denote the torsion subgroup of its Mordell--Weil group.  If $p$ is a prime of good reduction for $E$ with $p \nmid \#E(\Q)\sbtors$, then we have an injection $E(\Q)\sbtors \hookrightarrow E(\F_p)$; consequently, if $m \mid \#E(\Q)\sbtors$ then $m \mid \#E(\F_p)$ for all but finitely many $p$.  The converse statement holds only \emph{up to isogeny}, by a result of Katz \cite[Theorem 2]{katz}: if $m \mid \#E(\F_p)$ for a set of primes $p$ of density $1$, then there exists an elliptic curve $E'$ over $\Q$ that is isogenous over $\Q$ to $E$ such that $m \mid \#E'(\Q)\sbtors$.  

We say $E$ \defi{locally has a subgroup of order $m$} if $m \mid \#E(\F_p)$ (equivalently, $m \mid \#E(\Q_p)\sbtors$) for a set of primes $p$ of density $1$.  With respect to the property of having a subgroup of order $m$, the result of Katz is then a local-global principle for \emph{isogeny classes} of elliptic curves.  In this paper, we consider a probabilistic refinement for the elliptic curves themselves: if $E$ locally has a subgroup of order $m$, what is the \emph{probability} that $E$ globally has a subgroup of order $m$?  

\subsection{Notation} \label{sec:notation}

Every elliptic curve $E$ over $\Q$ is defined by a unique equation of the form $y^2=f(x)=x^3+Ax+B$ with $A,B \in \Z$ such that $4A^3+27B^2 \neq 0$ and there is no prime $\ell$ such that $\ell^4 \mid A$ and $\ell^6 \mid B$.  Let $\scrE$ be the set of elliptic curves of this form, and define the \defi{height} of $E \in \scrE$ by 
\begin{equation} 
\height E \colonequals \max(\abs{4A^3},\abs{27B^2}). 
\end{equation}
For $H>0$, let $\scrE_{\leq H} \colonequals \{E \in \scrE : \height E \leq H\}$ be the finite set of elliptic curves of height at most $H$.  

For $m \in \Z_{\geq 1}$, let $\scrE_{m?}$ be the set of $E \in \scrE$ such that $E$ locally has a subgroup of order $m$.  In this notation, our goal is to study the probability
\begin{equation}  \label{eqn:limit_eqn}
P_m \colonequals \lim_{H \to \infty} \frac{\#\{E \in \scrE_{\leq H} : m \mid \#E(\Q)\sbtors\}}{\#\{E \in \scrE_{m?} \cap \scrE_{\leq H}\}}
\end{equation}
when this limit exists. 

\subsection{Results}

In view of the theorem of Mazur \cite{mazur} on rational torsion, we have $\scrE_{m?}$ nonempty if and only if $m \in \{1,2,\dots,10,12,16\}$.  Our main result is as follows.

\begin{thm} \label{mainresult}
For all $m \in \{1,2,\dots,10,12,16\}$, the probability $P_m$ defined in \textup{\eqref{eqn:limit_eqn}} exists and is nonzero.  Moreover, $P_m$ is effectively computable.  
\end{thm}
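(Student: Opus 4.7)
The plan is to express both numerator and denominator of \eqref{eqn:limit_eqn} as height-ordered counts of rational points on modular curves, apply a Harron--Snowden-type asymptotic $\#(\scrE_G \cap \scrE_{\le H}) \sim c_G H^{1/d_G}$ for each parametrizing curve $X_G$ of genus zero, and compare leading behavior.

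For the numerator, $m \mid \#E(\Q)\sbtors$ means $E(\Q)\sbtors$ contains a subgroup of order $m$; by Mazur's theorem the possibilities are either a cyclic subgroup $\Z/m\Z$ for $m \in \{1,\ldots,10,12\}$ (parametrized by $X_1(m)$) or $\Z/2\Z \times \Z/(m/2)\Z$ when $m/2 \in \{1,2,3,4\}$ (parametrized by a fiber product of $X_1$-type curves, covering the case $m=16$). Each such parametrizing curve has genus zero and a rational point, hence is $\PP^1_\Q$; pulling back to the universal Weierstrass model produces an explicit rational parametrization $(A,B)=(A(t,u),B(t,u))$ of the relevant locus in $\scrE$, to which the Harron--Snowden framework applies, yielding an asymptotic $c_m^{\textup{num}} H^{1/e_m}$ with $c_m^{\textup{num}} > 0$.

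For the denominator, Katz's theorem reduces membership in $\scrE_{m?}$ to the condition that the mod-$m$ Galois image $\rho_{E,m}(G_\Q) \le \GL_2(\Z/m\Z)$ lies in the finite collection $\mathcal{S}_m$ of subgroups (up to conjugacy) in which every element fixes some element of order exactly $m$. By inclusion--exclusion on this lattice one writes
\[
\#(\scrE_{m?} \cap \scrE_{\le H}) = \sum_{G \in \mathcal{S}_m} \mu(G)\,\#(\scrE_G \cap \scrE_{\le H}),
\]
where $\scrE_G$ is the locus of curves whose image is contained (up to conjugacy) in $G$ and is parametrized by the modular curve $X_G$. Genus-zero $X_G$ contribute power-of-$H$ terms via Harron--Snowden (as extended in the supplementary result announced in the abstract), while those of positive genus contribute only finitely many $j$-invariants, hence $O(H^{5/6})$ twists each---a strictly lower-order term.

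The ratio stabilizes provided the numerator and the dominant term in the denominator share the same growth exponent. For $m \in \{1,2,3,4,5,7\}$ the lattice $\mathcal{S}_m$ is tractable enough that careful bookkeeping of leading constants yields $P_m$ exactly; for the remaining $m \in \{6,8,9,10,12,16\}$ nonvanishing is established by directly comparing exponents and verifying that the ``exceptional'' images in $\mathcal{S}_m$---those without a common fixed vector of order $m$, where local does not imply global---contribute only lower-order terms. The main obstacle is the group-theoretic classification of $\mathcal{S}_m$ for composite $m$, since lifting from prime level to prime-power level introduces many new subgroup types (Cartan, normalizers, and the exceptional $A_4$, $S_4$, $A_5$ images) that must each be analyzed and matched to the corresponding modular curve via the Harron--Snowden framework.
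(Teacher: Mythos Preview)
Your overall architecture—parametrize both numerator and denominator by modular curves, apply Harron--Snowden asymptotics, and compare growth rates—matches the paper's strategy.  However, the argument has a genuine gap at the crucial step.

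You assert that for $m \in \{6,8,9,10,12,16\}$ the ``exceptional'' images (those in $\mathcal{S}_m$ with no common fixed vector of order $m$) contribute only lower-order terms.  This is false.  For example, when $m=5$ the curves in $\scrE_{5?}$ with no rational $5$-torsion (those $5$-isogenous to a curve with a point of order $5$) grow at exactly the same rate $H^{1/6}$ as the curves with a rational $5$-torsion point; this is precisely why $P_5=25/34<1$.  The same phenomenon occurs for $m=6,7,9,10,16$: the local-but-not-global families have the \emph{same} leading exponent as the global ones.  So your proposed mechanism for nonvanishing (exceptional contributions are negligible, hence $P_m\to 1$) is incorrect.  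The paper's mechanism is the opposite: it proves (Theorem~\ref{thm:greenbergup}) that for isogenous curves $E,E'$ the indices $[\SL_2(\Z):\Gamma_G]$ and $[\SL_2(\Z):\Gamma_{G'}]$ coincide, so all the groups $G_\ell(n;r,s)$ arising from the refined Katz analysis have the \emph{same} $d(G)$.  Since among these groups there is always one with $r+s=n$ (forcing global $\ell^n$-torsion), the numerator contributes at the same leading order as the denominator, giving $P_m>0$.

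A second issue is the structure of $\mathcal{S}_m$.  You describe it as the subgroups where ``every element fixes some element of order exactly $m$,'' but the actual local condition is $\det(1-g)\equiv 0\pmod{\ell^n}$ for all $g$, which is weaker.  The paper refines Katz's theorem to pin down these images precisely as the explicit groups $G_\ell(n;r,s)$ (see \eqref{eqn:Gellrs} and Theorem~\ref{upgrade}); this is what makes the comparison of indices tractable.  Your reference to Cartan subgroups and exceptional $A_4,S_4,A_5$ images is a red herring here: those play no role once the determinant condition is imposed.  Finally, you do not address why $m=3,4$ require separate treatment: for $\ell^n\le 4$ the groups $\Gamma_{G_\ell(n;r,s)}$ are not torsion free (Lemma~\ref{lem:quickpropr}(d) needs $\ell^n\ge 5$), so there is no universal elliptic curve and Theorem~\ref{thm:hsupgrade} does not apply; the paper handles these cases by direct calculation in Section~\ref{34div}.
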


For $m=1$ we have vacuously $P_m=1$.  For $m=2$, we again have $P_m=1$ because if $E \in \scrE_{2?}$ then its defining cubic polynomial $f(x) \in \Z[x]$ has a root modulo $p$ for a set of primes of density $1$, so by the Chebotarev density theorem it has a root in $\Q$.  The cases where $m=3,4$ require special consideration and will be treated at the end of this section.  

For $m \geq 5$ in our list, our proof of Theorem \ref{mainresult} is carried out in the following way.  We show that $P_m$ can be expressed in terms of the number of points of bounded height on a finite list of explicitly given modular curves---reducing to the case where $m=\ell^n$ is a prime power, these curves arise from a careful study of the $\ell$-adic Galois representation, refining the above theorem of Katz (see \S\ref{sec:refinement}).  We then apply the principle of Lipschitz, counting points in a homogeneously expanding region, to count elliptic curves by height on these modular curves.  Taking the ratio, we then find a positive probability.  

To count elliptic curves by height, we establish a general result of potential independent interest: we extend work of Harron--Snowden \cite{hs}, who provide asymptotics for the number of elliptic curves of bounded height in a universal family, as follows.  Let $N \in \Z_{\geq 1}$ and let $G \leq \GL_2(\Z/N)$ be a subgroup with $\det(G)=(\Z/N)^\times$.  Let 
$\pi_N \colon \SL_2(\Z) \to \SL_2(\Z/N)$ be the projection map and let
\begin{equation}  \label{eqn:GammaG}
\Gamma_G \colonequals \pi_N^{-1}(G \cap \SL_2(\Z/N)) \leq \SL_2(\Z).
\end{equation}
Let $Y_G$ be the open modular curve obtained by taking the quotient of the upper half-plane by the action of $\Gamma_G$.  Let $\Gal_\Q \colonequals \Gal(\Qbar\,|\,\Q)$ and let 
\[ \overline{\rho}_{E,N} \colon \Gal_\Q \to \Aut(E[N](\Qbar)) \simeq \GL_2(\Z/N) \] 
be the Galois representation on the $N$-torsion subgroup of $E$.  We write $\overline{\rho}_{E,N}(\Gal_\Q) \lesssim G$ to mean that the image of $\overline{\rho}_{E,N}$ is conjugate in $\GL_2(\Z/N)$ to a subgroup of $G$.

\begin{thm} \label{thm:hsupgrade}
Let $G \leq \GL_2(\Z/N)$ be such that $\det G=(\Z/N)^\times$.  Suppose that $\Gamma_G$ is torsion free (in particular, $-1 \not\in\Gamma_G$) and that $Y_G$ has genus zero and no irregular cusps.  Let
\begin{equation}
d(G) \colonequals \tfrac{1}{2}[\PSL_2(\Z):\Gamma_G] = \tfrac{1}{4}[\SL_2(\Z):\Gamma_G].
\end{equation}
Then $d(G) \in \Z_{\geq 1}$, and there exists an effectively computable $c(G) \in \R_{\geq 0}$ such that
\begin{equation} 
\#\{E \in \scrE_{\leq H} : \overline{\rho}_{E,N}(\Gal_\Q) \lesssim G\} = c(G) H^{1/d(G)} + O(H^{1/e(G)})
\end{equation}
as $H \to \infty$, where $e(G)=2d(G)$.
\end{thm}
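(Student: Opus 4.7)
The plan is to extend the approach of Harron--Snowden \cite{hs}, who treat a fixed list of classical modular curves of genus zero, to any $G$ satisfying the hypotheses. The three assumptions conspire: surjectivity of the determinant makes $Y_G$ geometrically connected with a natural $\Q$-structure; torsion-freeness of $\Gamma_G$ (and in particular $-1 \notin \Gamma_G$) makes $Y_G$ a \emph{fine} moduli space of elliptic curves with $G$-structure, so there is a universal elliptic curve $\mathcal{E}_G \to Y_G$; and the genus-zero, no-irregular-cusps hypothesis realizes $Y_G$ as an open subscheme of $\P^1_\Q$ carrying a well-behaved polynomial model of $\mathcal{E}_G$.

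First I would choose a cusp of $X_G$ defined over $\Q$ (the determinant condition forces some cusp to be $\Q$-rational) as the point at infinity, obtaining a Hauptmodul $t$ on $Y_G \subset \A^1_\Q$. After a global rescaling $(x,y) \mapsto (u^2 x, u^3 y)$ with $u \in \Q(t)^\times$, the universal elliptic curve takes the form
\begin{equation*}
\mathcal{E}_G : y^2 = x^3 + A(t)\, x + B(t), \qquad A, B \in \Z[t],
\end{equation*}
chosen so that its specialization at every $t_0 \in \Q$ not lying over a cusp is nondegenerate. Set $a \ddef \deg A$ and $b \ddef \deg B$. Since $-1 \notin \Gamma_G$, we have $|\overline{\Gamma_G}| = |\Gamma_G|$ and therefore $[\PSL_2(\Z) : \overline{\Gamma_G}] = \tfrac{1}{2}[\SL_2(\Z):\Gamma_G] = 2 d(G)$; hence the $j$-line map $Y_G \to Y(1)$ has degree $2 d(G)$, and from
\begin{equation*}
j(t) \;=\; 1728 \cdot \frac{4 A(t)^3}{4 A(t)^3 + 27 B(t)^2},
\end{equation*}
after checking (using minimality of the chosen model) that numerator and denominator have no common factor, we conclude $\max(3a, 2b) = 2 d(G)$. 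In particular $d(G) \in \Z_{\geq 1}$.

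Next I would count $Y_G(\Q)$ by height. Parameterize $t = p/q$ with $(p,q) \in \Z^2$ coprime; after substituting and scaling by appropriate powers of $q$, the specialization yields an integral Weierstrass equation
\begin{equation*}
E_{p,q} : Y^2 = X^3 + \widetilde A(p,q)\, X + \widetilde B(p,q),
\end{equation*}
where $\widetilde A, \widetilde B \in \Z[p,q]$ are binary forms of degrees $a, b$. Off a thin set (cusps, vanishing discriminant, non-minimality), we have $\height E_{p,q} \asymp \max(|p|,|q|)^{\max(3a,2b)} = \max(|p|,|q|)^{2 d(G)}$. Applying the Principle of Lipschitz to count primitive lattice points in the homogeneously expanding region $\max(|p|,|q|) \leq H^{1/(2 d(G))}$ produces a main term $c_0(G)\, H^{1/d(G)}$ with error $O(H^{1/(2d(G))}) = O(H^{1/e(G)})$.

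The principal obstacle is three linked bookkeeping corrections to $c_0(G)$. \emph{Minimality}: the integral model $E_{p,q}$ may fail to be globally minimal whenever $(\widetilde A(p,q), \widetilde B(p,q))$ acquires a common twelfth-power prime factor, and this must be handled by a local sieve contributing a convergent Euler product. \emph{Overcounting}: a single elliptic curve $E \in \scrE$ with $\overline{\rho}_{E,N}(\Gal_\Q) \lesssim G$ can correspond to multiple $G$-structures and hence to multiple points of $Y_G(\Q)$, with the multicount bounded by the index $[N_{\GL_2(\Z/N)}(G) : G]$ modulo $\Aut(E)$ off the $j \in \{0, 1728\}$ locus. \emph{Discriminant locus}: rational points with $4\widetilde A(p,q)^3 + 27 \widetilde B(p,q)^2 = 0$ must be discarded, but they lie on a proper closed subvariety whose contribution is absorbed in the error. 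Assembling these corrections into a single effectively computable constant $c(G)$, via the explicit shape of $A$ and $B$ together with the local sieve densities, is where the bulk of the work lies.
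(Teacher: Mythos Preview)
Your outline follows the paper's strategy (universal curve over $Y_G$, homogenize, Lipschitz, sieve for minimality, correct for overcounting), but there is a genuine gap in the Lipschitz step that would produce the wrong leading constant $c(G)$.

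You note that $\height E_{p,q} \asymp \max(|p|,|q|)^{2d(G)}$ and then count primitive lattice points in the \emph{box} $\max(|p|,|q|) \leq H^{1/(2d(G))}$. But the height condition $\height E_{p,q} \leq H$ does not cut out a box; it cuts out
\[
R(H) = \bigl\{(p,q) \in \R^2 : 4\,|\widetilde A(p,q)|^3 \leq H \text{ and } 27\,|\widetilde B(p,q)|^2 \leq H\bigr\},
\]
whose shape depends on the forms $\widetilde A, \widetilde B$. The box and $R(H)$ grow at the same rate, so your argument recovers the exponent $1/d(G)$---this is essentially the Harron--Snowden $\asymp$ result---but $\Area(R(1))$ is not the area of a box, and none of your three bookkeeping corrections (minimality, multiplicity of $G$-structures, discriminant locus) accounts for the discrepancy. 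The paper instead applies the Principle of Lipschitz directly to $R(H)$, after verifying that $R(H)$ is compact and homogeneously expanding; the constant $c(G)$ then carries the factor $\Area(R(1))/\zeta(2)$ explicitly.

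A related omission: from the degree of the $j$-map you extract only $\max(3a, 2b) = 2d(G)$, whereas the argument actually requires the equality $3a = 2b = 2d(G)$. This is precisely where the no-irregular-cusps hypothesis is used (via the Harron--Snowden analysis of cusp widths): an irregular cusp would lower one of the degrees, and then $R(H)$ would fail to be bounded (one constraint becomes asymptotically vacuous) and the homogeneity $R(H)=H^{1/(2d(G))}\cdot R(1)$ breaks. Your opening paragraph invokes the hypothesis only in passing, and it never enters the body of the argument.
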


In particular, this theorem applies to the groups $G$ that arise in the proof of Theorem \ref{mainresult}.  Moreover, it allows us to count elliptic curves with (marked) torsion of size at least $5$; dealing with the remaining few cases separately, we have the following corollary.  

\begin{cor} \label{thm:hsupgrade_cor}
For each $T$ in Table \textup{\ref{tab:yup}}, we have 
\begin{equation} \label{eqn:EET}
\#\{E \in \scrE_{\leq H} : E(\Q)\sbtors \simeq T\} = c(T) H^{1/d(T)} + O(H^{1/e(T)}). 
\end{equation}
\end{cor}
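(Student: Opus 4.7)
The plan is to combine Theorem~\ref{thm:hsupgrade} with a M\"obius inversion on the finite poset $\mathcal{M}$ of torsion subgroups allowed by Mazur's theorem, ordered by inclusion as abstract abelian groups. For each $T\in\mathcal{M}$ let $N_T$ denote the exponent of $T$, and let $G_T \leq \GL_2(\Z/N_T)$ be the stabilizer of an embedding of $T$ into $(\Z/N_T)^2$; then the property $T \hookrightarrow E(\Q)\sbtors$ is equivalent to $\overline{\rho}_{E,N_T}(\Gal_\Q) \lesssim G_T$, and $Y_{G_T}$ is a model for $Y_1(T)$. Since $T$ is in Mazur's list, $Y_{G_T}$ has genus zero.

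For every $T$ of sufficiently large order, a case-by-case check (finite, since $\lvert\mathcal{M}\rvert=15$) shows that $\Gamma_{G_T}$ is torsion-free and $Y_{G_T}$ has no irregular cusps, so Theorem~\ref{thm:hsupgrade} applies and yields
\begin{equation}\label{eqn:cont_asymp}
N^{\supseteq}_T(H) \colonequals \#\{E \in \scrE_{\leq H} : T \hookrightarrow E(\Q)\sbtors\} = c^{\supseteq}(T)\,H^{1/d(T)} + O(H^{1/e(T)}),
\end{equation}
with $d(T) = d(G_T)$ and $e(T) = 2d(T)$. For the remaining handful of small-torsion cases (where $-I \in \Gamma_{G_T}$ or elliptic elements appear, such as $T$ trivial, $\Z/2\Z$, $\Z/3\Z$, or $\Z/2\Z \times \Z/2\Z$), the corresponding exact-isomorphism asymptotic is furnished directly by the original work of Harron--Snowden, which predates and motivates Theorem~\ref{thm:hsupgrade} and covers precisely these exceptional $T$.

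To pass from containment to equality, apply M\"obius inversion on $\mathcal{M}$:
\begin{equation}
\#\{E \in \scrE_{\leq H} : E(\Q)\sbtors \simeq T\} = \sum_{T' \supseteq T} \mu_{\mathcal{M}}(T, T')\, N^{\supseteq}_{T'}(H).
\end{equation}
For any $T' \supsetneq T$ in $\mathcal{M}$ we have $\Gamma_{G_{T'}} \subsetneq \Gamma_{G_T}$, and a finite check on the covering relations in $\mathcal{M}$ yields the stronger inequality $d(T') \geq 2 d(T) = e(T)$, so every contribution from a proper supergroup is absorbed into $O(H^{1/e(T)})$ and only the $T' = T$ term survives to leading order, giving the claimed asymptotic with $c(T) = c^{\supseteq}(T)$.

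The main obstacle is organizational rather than conceptual: for each of the fifteen rows of Table~\ref{tab:yup} one must explicitly exhibit the group $G_T$, verify the torsion-freeness and irregular-cusp hypotheses of Theorem~\ref{thm:hsupgrade} (or else defer to Harron--Snowden), and numerically certify the inequality $d(T') \geq 2 d(T)$ along every covering relation in $\mathcal{M}$. Since the poset is small and explicit and the relevant congruence subgroups are classical, these checks are finite and amenable to direct computation with $\Gamma_1(N)$-type groups and their intersections with $\Gamma(2)$.
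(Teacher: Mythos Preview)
Your overall strategy—apply Theorem~\ref{thm:hsupgrade} whenever its hypotheses hold, defer to prior work for the remaining small $T$, and pass from containment to equality by M\"obius inversion over Mazur's finite poset—is exactly the (implicit) route the paper takes. The M\"obius step and the check that $d(T')\geq e(T)$ along every covering relation are fine for $\#T\geq 5$.

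The gap is in your handling of the cases $T=\Z/4$ and $T=\Z/2\times\Z/2$. You characterize the exceptional small groups as those with $-I\in\Gamma_{G_T}$ or elliptic elements; but $\Gamma_1(4)$ is torsion free and still fails the hypotheses of Theorem~\ref{thm:hsupgrade} because it has an \emph{irregular} cusp at $1/2$ (this is precisely the exception isolated in Lemma~\ref{lem:noirreg}). So $T=\Z/4$ is excluded from the theorem for a reason your list does not anticipate. Second, you assert that Harron--Snowden ``covers precisely these exceptional $T$'' with a power-saving asymptotic. According to the paper, Harron--Snowden prove the power-saving statement with explicit constant only for $\#T\leq 3$; for the remaining groups they establish only $\asymp H^{1/d(T)}$. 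Thus neither Theorem~\ref{thm:hsupgrade} nor Harron--Snowden supplies the claimed $c(T)H^{1/d(T)}+O(H^{1/e(T)})$ for $T=\Z/4$ or $T=\Z/2\times\Z/2$. The paper closes exactly this gap with separate direct arguments: Proposition~\ref{Z4constants} for $\Z/4$ (via the explicit family over $Y_1(4)$, working around the irregular cusp by hand) and Proposition~\ref{full_2tors_constants} for $\Z/2\times\Z/2$ (where $-I\in\Gamma(2)$ forces a bespoke count). A smaller point: your identity $e(T)=2d(T)$ is only the output of Theorem~\ref{thm:hsupgrade} and is \emph{false} for the small cases in Table~\ref{tab:yup} (e.g.\ $e(\Z/2)=3\neq 4=2d(\Z/2)$), so the M\"obius bookkeeping for those rows must use the $e(T)$ actually recorded in the table, not $2d(T)$.
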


In view of Table \ref{tab:yup}, the count of curves $E \in \scrE_{\leq H}$ such that $E(\Q)\sbtors$ merely \emph{contains} a subgroup isomorphic to $T$ has the same asymptotic as the count in \eqref{eqn:EET}.

\begin{equation} \label{tab:yup}\addtocounter{equation}{1} \notag
\begin{gathered}
{\renewcommand{\arraystretch}{1.1}
\begin{tabular}{c|c|c||c|c|c} 
$T$ & $1/d(T)$ & $1/e(T)$ & $T$ & $1/d(T)$ & $1/e(T)$ \\[0.1ex]
\hline
\hline
\{0\} & 5/6 & 1/2 & $\Z/9$, $\Z/10$ & 1/18 & 1/36 \\
$\Z/2$ & 1/2 & 1/3 & $\Z/12$ & 1/24 & 1/48 \\
$\Z/3$ & 1/3 & 1/4 & $\Z/2 \times \Z/2$ & 1/3 & 1/6 \\
$\Z/4$ & 1/4 & 1/6 & $\Z/2 \times \Z/4$ & 1/6 & 1/12 \\
$\Z/5$, $\Z/6$ & 1/6 & 1/12 & $\Z/2 \times \Z/6$ & 1/12 & 1/24 \\ 
$\Z/7$, $\Z/8$ & 1/12 & 1/24 & $\Z/2 \times \Z/8$ & 1/24 & 1/48
\end{tabular}} \\
\text{Table \ref{tab:yup}: Asymptotic count of elliptic curves with designated torsion}
\end{gathered}
\end{equation}

Harron--Snowden \cite[Theorem 1.2]{hs} proved that $\#\{E \in \scrE_{\leq H} : E(\Q)\sbtors \simeq T\} \asymp H^{1/d(T)}$ for the groups $T$ in Table \ref{tab:yup}, and gave the power-saving asymptotic with explicit constant \cite[Theorem 5.6]{hs} for $\#T \leq 3$.  Indeed, there has been a recent spate of work on the topic of counting elliptic curves with certain level structure by height \cite{boggess-sankar,bruinnaj,cj,ppj,ed_carl_arxiv}; the theorem above provides an asymptotic in cases not handled by these other works.

We follow the strategy of Harron--Snowden in the proof of Theorem \ref{thm:hsupgrade}, again applying the Principle of Lipschitz.  The constant $c(G)$ is given by a product of an area of a compact region in the plane multiplied by a sieving factor that includes certain effectively computable local correction factors.  The square-root error term accounts for the boundary of the region.  The hypotheses of Theorem \ref{thm:hsupgrade} ensure that the moduli problem defined by $G$ is fine, so there is a universal elliptic curve over the associated moduli scheme.  (In fact, there are only finitely many torsion-free, genus zero congruence subgroups $\Gamma_G \leq \SL_2(\Z)$---a list first compiled by Sebbar \cite{sebbar}.)

\begin{rmk}
Although the above result suffices for our purposes, echoing Harron--Snowden \cite[\S 1.5]{hs}, it would be desirable to establish a statement generalizing Theorem \ref{thm:hsupgrade} to an arbitrary group $G$ with $\Gamma_G$ of genus zero.  See work of Ellenberg--Satriano--Zureick-Brown \cite[\S 4]{eszb} for a conjecture of Batyrev--Manin--Malle type which predicts an estimate for the number of rational points of bounded height on stacky curves.
\end{rmk}

Returning to our main result, Theorem \ref{thm:hsupgrade} applies directly to the cases $m \geq 5$: tallying degrees $d(G)$, it is then straightforward to prove Theorem \ref{mainresult}.  In fact, we show that even before tallying the degrees $d(G)$, we know they are all equal for curves arising from ``isogenous'' moduli problems, as follows (Theorem \ref{thm:greenbergup-inart}).

\begin{thm} \label{thm:greenbergup}
Let $\varphi \colon E \to E'$ be an isogeny of elliptic curves over $\Q$.  Let $N \in \Z_{\geq 1}$, let $G \colonequals \overline{\rho}_{E,N}(\Gal_\Q) \leq \GL_2(\Z/N)$ and similarly $G'$ for $E'$.  Then the associated modular curves $Y_G$ and $Y_{G'}$ are isomorphic over $\Q$, and $d(G)=d(G')$.
\end{thm}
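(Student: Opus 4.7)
The plan is to reduce to a prime-degree isogeny and then exploit that the $\ell$-adic Galois representations of isogenous elliptic curves are conjugate over $\Q_\ell$.  Any isogeny over $\Q$ factors as a composition of prime-degree isogenies, and both conclusions---the existence of a $\Q$-isomorphism $Y_G \simeq Y_{G'}$ and the equality $d(G) = d(G')$---are transitive under composition, so it suffices to treat $\deg\varphi = \ell$ prime.  Writing $N = \ell^a M$ with $\gcd(\ell, M) = 1$ and using the Chinese remainder decomposition $\GL_2(\Z/N) \simeq \GL_2(\Z/\ell^a) \times \GL_2(\Z/M)$, the restriction $\varphi|_{E[M]} \colon E[M] \to E'[M]$ is a Galois-equivariant isomorphism; so, after matching bases via $\varphi$, the $M$-parts of $G$ and $G'$ are equal in $\GL_2(\Z/M)$ and contribute identical factors to $\Gamma_G$ and $\Gamma_{G'}$.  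This reduces the problem to the case $N = \ell^a$.

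For $N = \ell^a$, identify $V_\ell(E) \otimes \Q_\ell \simeq V_\ell(E') \otimes \Q_\ell$ via $\varphi_*$, so that $T_\ell(E)$ and $T_\ell(E')$ become two $\Z_\ell$-lattices in a common $\Q_\ell$-space with $\varphi(T_\ell(E)) \subset T_\ell(E')$ of index $\ell$.  Choose $\Z_\ell$-bases so that this inclusion has Smith normal form $\alpha \colonequals \operatorname{diag}(\ell, 1) \in \GL_2(\Q)^+$.  Galois-equivariance of $\varphi$ then gives the conjugation
\[ \rho_{E', \ell^\infty}(\sigma) = \alpha\, \rho_{E, \ell^\infty}(\sigma)\, \alpha^{-1} \quad \text{in } \GL_2(\Q_\ell), \]
so the $\ell$-adic images are $\GL_2(\Q_\ell)$-conjugate via $\alpha$.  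The next step is to lift this to a conjugation by some $\alpha' \in \GL_2(\Q)^+$ that identifies $\Gamma_G$ and $\Gamma_{G'}$ as congruence subgroups of $\SL_2(\Z)$; I expect $\alpha'$ to be $\alpha$ times an element of $\SL_2(\Z)$ realizing an Atkin--Lehner-type modification, using that both $\Gamma_G$ and $\Gamma_{G'}$ lie in the Borel congruence subgroup $\Gamma_0(\ell)$ (preserving the kernel and image of $\varphi$ at level $\ell$).  Granted such an $\alpha'$ with $\alpha' \Gamma_G (\alpha')^{-1} = \Gamma_{G'}$, the map $\tau \mapsto \alpha' \tau$ on the upper half-plane descends to a biholomorphism $Y_G(\C) \xrightarrow{\sim} Y_{G'}(\C)$, and since this map preserves hyperbolic volume, $d(G) = d(G')$.

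Finally, to promote this analytic isomorphism to a morphism of $\Q$-schemes, I would interpret the $\alpha'$-action moduli-theoretically: the correspondence sends the isomorphism class of a pair $(\tilde E, \text{$G$-level structure})$ to $(\tilde E / C, \text{$G'$-level structure})$, where $C \subset \tilde E[\ell]$ is the cyclic subgroup distinguished by the level structure (the analogue of $\ker \varphi$ in coordinates).  Since this construction is functorial over $\Q$, the induced morphism of coarse moduli schemes is $\Q$-rational.  The main obstacle will be in the second step: constructing the correct $\alpha'$ and verifying $\alpha' \Gamma_G (\alpha')^{-1} = \Gamma_{G'}$ inside $\SL_2(\Z)$, which requires a careful $\ell$-adic bookkeeping of the preimages together with the observation that the $\alpha$-conjugation lands back in $\SL_2(\Z)$ only after an appropriate $\SL_2(\Z)$-modification dictated by the Borel containment.
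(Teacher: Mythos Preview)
Your plan shares the paper's two key ingredients: a moduli-theoretic equivalence giving $Y_G \simeq Y_{G'}$ over $\Q$ (your final paragraph is essentially the paper's argument for this), and an Atkin--Lehner-type conjugation for the index equality. Where you diverge is in the reduction step, and that is where the gap lies.

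The CRT reduction to $N=\ell^a$ does not go through as stated. The image $G \leq \GL_2(\Z/\ell^a) \times \GL_2(\Z/M)$ is generally \emph{not} the direct product of its two projections, so $\Gamma_G$ is not simply $\Gamma_{G_{\ell^a}} \cap \Gamma_{G_M}$, and matching the $M$-projections of $G$ and $G'$ does not allow you to strip that factor off. Concretely, whatever $\alpha'$ you construct at level $\ell$ (say $w_\ell$, or $\operatorname{diag}(\ell,1)$ times something in $\SL_2(\Z)$) reduces to a \emph{nontrivial} element of $\GL_2(\Z/M)$, so conjugation by $\alpha'$ scrambles the $M$-level constraint in a way that need not match how $G'$ actually sits inside $\GL_2(\Z/N)$.

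The paper avoids this with a different reduction: rather than going to prime degree, it reduces to $\varphi$ a cyclic $N$-isogeny. Then $\Gamma_G, \Gamma_{G'} \leq \Gamma_0(N)$, and the single Atkin--Lehner element $\nu = \left(\begin{smallmatrix} 0 & 1 \\ N & 0 \end{smallmatrix}\right)$ satisfies $\nu \Gamma_G \nu^{-1} = \Gamma_{G'}$; since $\nu$ normalizes $\Gamma_0(N)$, one reads off $[\Gamma_0(N):\Gamma_G]=[\Gamma_0(N):\Gamma_{G'}]$ immediately. This is exactly the $\alpha'$ you anticipated, but working at level $N$ rather than level $\ell$ is what makes the bookkeeping close up. Your own outline can also be repaired without that reduction: the moduli-theoretic isomorphism you construct lifts to the upper half-plane as some $\beta \in \GL_2(\Q)^+$ (fiberwise it is an isogeny of complex tori), and conjugation by such $\beta$ preserves covolume in $\PSL_2(\R)$, which yields $d(G)=d(G')$ directly---so you need never name $\alpha'$ explicitly.
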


The invariance of the index of the adelic Galois representation under isogeny was proven by Greenberg \cite[Proposition 2.1.1]{greenberg} using a beautiful but very different argument; we view the isomorphism of modular curves as a refinement.   

Finally, carrying this out this strategy with an explicit calculation yields $P_5$ and $P_7$ in section \ref{sect57}.

\begin{thm} \label{57probthm}
We have $P_5 = 25/34 \approx 73.5\%$ and $P_7  = 4/(4+\sqrt{7}) \approx 60.2\%$.
\end{thm}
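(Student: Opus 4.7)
For each $\ell\in\{5,7\}$, the plan is to evaluate \eqref{eqn:limit_eqn} by explicitly counting the numerator and denominator via the machinery developed above. The numerator counts $E\in\scrE_{\leq H}$ with $\bar\rho_{E,\ell}(\Gal_\Q)\lesssim G_1$, where $G_1=\bigl\{\bigl(\begin{smallmatrix}1&*\\0&*\end{smallmatrix}\bigr)\bigr\}\leq\GL_2(\F_\ell)$ is the mirabolic subgroup. Since $\Gamma_{G_1}=\Gamma_1(\ell)$ is torsion free of genus zero with no irregular cusps for both $\ell=5$ and $\ell=7$, Theorem~\ref{thm:hsupgrade} applies directly and yields an asymptotic of the form $c_1 H^{1/d}+O(H^{1/(2d)})$, with $d=6$ for $\ell=5$ and $d=12$ for $\ell=7$. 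For the denominator, Katz's theorem identifies $\scrE_{\ell?}$ with the curves whose mod-$\ell$ image is contained in a Borel, and the refinement of Katz in \S\ref{sec:refinement} decomposes this set as a finite disjoint union of strata indexed by the exact $\bar\rho_{E,\ell}$-image up to conjugacy.

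Each stratum corresponds to a genus-zero modular curve. Those strata whose $\Gamma_G$ is torsion free (including the mirabolic stratum itself) are counted directly by Theorem~\ref{thm:hsupgrade}, and the remaining ``full Borel'' stratum (where $-1\in\Gamma_G$, so the moduli problem is merely coarse) is handled by running the Principle of Lipschitz on an explicit $\PP^1$-model of $X_0(\ell)$, in a manner parallel to the proof of Theorem~\ref{thm:hsupgrade}. Crucially, by Theorem~\ref{thm:greenbergup} every modular curve in this list is isomorphic over $\Q$ to an appropriate twist of $X_1(\ell)$ and carries the same exponent $1/d$, so the contributions all align and the denominator has a leading term $c_0 H^{1/d}+O(H^{1/(2d)})$; hence $P_\ell=c_1/c_0$ is well defined.

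It remains to compute $c_0$ and $c_1$ explicitly. By the proof of Theorem~\ref{thm:hsupgrade}, each constant is the product of (the area of a region in the Weierstrass $(A,B)$-plane cut out by the height condition on the given stratum) with (a sieve factor assembled from effectively computable local densities). For $\ell=5$ the areas are elementary rational numbers and the only nontrivial local correction sits at $p=5$; assembling these gives $P_5=25/34$. For $\ell=7$ the height region for the non-mirabolic strata is bounded in the $(A,B)$-plane by a curve of higher degree whose area produces a factor of $\sqrt 7$, and after multiplying by the sieve factors one obtains $P_7=4/(4+\sqrt 7)$. The main obstacle is precisely this last step: one must choose the correct Tate normal form (or Hauptmodul) parameterizing each stratum, transfer the height condition cleanly from $(A,B)$ to the uniformizer, and then evaluate the resulting area integrals, tracking in the case $\ell=7$ how the factor of $\sqrt 7$ enters and survives, rather than cancelling, in the final ratio.
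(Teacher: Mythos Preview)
Your overall strategy matches the paper's, but two points go wrong. First, you do not need a separate ``full Borel'' stratum on $X_0(\ell)$. After discarding a negligible set (Proposition~\ref{prop:countpointsonmodcrve}), every curve in $\scrE_{\ell?}$ has image exactly $G_\ell(1;1,0)$ (rational $\ell$-torsion point) or $G_\ell(1;0,0)$ (rational $\ell$-isogeny, no rational point of order $\ell$), and by Lemma~\ref{lem:samecurve} both groups satisfy $\Gamma_G=\Gamma_1(\ell)$, torsion free of genus zero. Theorem~\ref{thm:hsupgrade} therefore applies to both strata directly, and you never confront $\Gamma_0(\ell)$ or its elliptic points; the two universal curves are the Tate normal form and its V\'elu image.

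Second, and this is the real gap, neither $\Area(R(1))$ nor $\Area(R'(1))$ is an elementary rational for $\ell=5$, nor does $\sqrt 7$ arise from a ``higher-degree curve in the $(A,B)$-plane''; in fact neither area is evaluated in closed form. The decisive observation is that since both strata lie over the same base $Y_1(\ell)$, there is a linear fractional transformation $\psi$ (defined over $\Q(\zeta_\ell)^+$) with $j(\psi(t))=j'(t)$; homogenizing $\psi$ gives a linear bijection between the two $(a,b)$-regions, and the area ratio is simply the absolute value of its Jacobian determinant: $1/5$ for $\ell=5$ and $1/\sqrt 7$ for $\ell=7$. Combined with the sieve factors (for $\ell=7$ these involve both $p=3$ and $p=7$), one finds $c'/c=9/25$ and $c'/c=\sqrt 7/4$, whence the stated values of $P_\ell$. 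Without this change-of-variables trick you will not be able to evaluate the ratio.
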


For $m=5$ and $m=7$, and more generally, our investigation reveals that the curves with torsion have \emph{smaller height} relative to their counterparts with just locally a subgroup of order $m$.

We now return to the remaining values $m=3,4$ are interesting in their own right and benefit from direct arguments, so we dig deeper.  Consider first the case $m=3$.  We first recall that every elliptic curve $E \in \mathscr{E}_{3?}$ either has a rational 3-torsion point or its quadratic twist by $-3$ does.  
With careful attention to local contributions at $3$, we find a matching growth rate for the quadratic twists, yielding the following result.

\begin{thm} \label{3probthm}
We have $P_3 =1/2$.
\end{thm}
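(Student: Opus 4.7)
The plan is to decompose $\scrE_{3?}$ into two pieces via the dichotomy recalled in the text, count each piece with Corollary~\ref{thm:hsupgrade_cor} and Theorem~\ref{thm:hsupgrade}, and show the pieces have equal leading constants, so the ratio is $1/2$.

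First I would make the dichotomy precise. For $E \in \scrE_{3?}$, the Chebotarev density theorem forces every element of $\bar{\rho}_{E,3}(\Gal_\Q)$ to have $1$ as an eigenvalue, so the image fixes a line in $E[3](\Qbar)$ on which it acts by a character $\chi \colon \Gal_\Q \to \F_3^\times = \{\pm 1\}$, and on the quotient line by $\omega/\chi$, where $\omega = \det \bar{\rho}_{E,3}$ is the mod-$3$ cyclotomic character. By Katz's theorem some curve in the $\Q$-isogeny class of $E$ has a rational $3$-torsion point, and since the class consists of $E$ and a $3$-isogenous curve $E'$ (whose distinguished line carries character $\omega/\chi$), we must have $\chi \in \{1, \omega\}$. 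When $\chi = 1$, $E$ has rational $3$-torsion; when $\chi = \omega$, twisting by $-3$ (whose quadratic character is $\omega$) converts the sub-character to $\omega^2 = 1$, so $E^{(-3)}$ has rational $3$-torsion. Writing
\begin{equation*}
S \ddef \{E \in \scrE : 3 \mid \#E(\Q)\sbtors\}, \qquad S' \ddef \{E \in \scrE : 3 \mid \#E^{(-3)}(\Q)\sbtors\},
\end{equation*}
the $(-3)$-twist involution $\tau \colon E \mapsto E^{(-3)}$ restricts to a bijection $S \leftrightarrow S'$, and $\scrE_{3?} = S \cup S'$.

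For $S$, Corollary~\ref{thm:hsupgrade_cor} with $T = \Z/3$ gives $\#(S \cap \scrE_{\leq H}) = c H^{1/3} + O(H^{1/4})$ for an effective $c > 0$. For $S'$, I would re-run the Principle of Lipschitz argument underlying Theorem~\ref{thm:hsupgrade} on the $(-3)$-twist of the universal elliptic curve over $Y_1(3)$: the twist sends Weierstrass coefficients $(A, B) \mapsto (9A, -27B)$ followed by $3$-adic minimization, so globally heights scale by $3^6$, while the $3$-adic sieving factor is modified to encode which $(A,B)$ undergo further minimization after twisting. These two effects precisely cancel (this is the ``careful attention to local contributions at $3$'' advertised in the text), yielding $\#(S' \cap \scrE_{\leq H}) = c H^{1/3} + O(H^{1/4})$ with the same leading constant. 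For the intersection, any $E \in S \cap S'$ has $E[3] \cong \F_3 \oplus \F_3(\omega)$ as a Galois module, so $\bar{\rho}_{E,3}(\Gal_\Q)$ lies in the cyclic subgroup $G \ddef \langle \operatorname{diag}(1,-1) \rangle \leq \GL_2(\F_3)$, for which $\Gamma_G = \Gamma(3)$ is torsion-free of genus zero with $d(G) = 6$; Theorem~\ref{thm:hsupgrade} then gives $\#(S \cap S' \cap \scrE_{\leq H}) = O(H^{1/6})$. Combining, $\#(\scrE_{3?} \cap \scrE_{\leq H}) = 2 c H^{1/3} + O(H^{1/4})$, whence $P_3 = 1/2$.

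The hard part will be the matching-constant step for $S'$: an explicit $3$-adic computation showing that the $3^6$-scaling of heights under the naive twist $(A,B) \mapsto (9A, -27B)$ is exactly canceled by the change in the $3$-adic density of minimal twisted Weierstrass models. Since no correction is needed at primes $\ell \neq 3$ (the twist is unramified and the local factors agree), the entire matching reduces to a single local integral at $3$.
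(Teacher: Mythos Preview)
Your proposal is correct and follows essentially the same route as the paper: the dichotomy $\scrE_{3?}=S\cup S'$ via the $(-3)$-twist (the paper's Lemma~\ref{E3weierstrass}), followed by a $3$-adic matching of the leading constants for $S$ and $S'$, with the overlap $S\cap S'$ negligible. The paper carries out the ``hard part'' by working directly with the two-parameter family $y^2=x^3+u^2(6ab+27a^4)x+u^3(b^2-27a^6)$ for $u\in\{1,-3\}$: passing from $u=1$ to $u=-3$ shrinks the area of the height-$1$ region by a factor of $9$, but integrality of the twisted coefficients forces $(a,b)\in\tfrac{1}{3}\Z\times\tfrac{1}{3}\Z$, restoring a factor of $9$ in lattice density, and the residual minimality sieve at $3$ removes the same proportion $1/81$ in both cases---so your anticipated cancellation is exactly this.
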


So Theorem \ref{3probthm} says that among elliptic curves with $3 \mid \#E(\F_p)$ for almost all $p$, there are 50-50 odds that $3 \mid \#E(\Q)\sbtors$.  

When $m=4$, the situation is more complicated, due in part to the fact that $E$ can have $4 \mid \#E(\Q)\sbtors$ in two different ways.  We first show that having full $2$-torsion dominates having a point of order $4$ among elliptic curves in $\scrE_{4?}$ in the following sense.

\begin{prop} \label{prop:eweir}
We have $E \in \scrE_{4?}$ if and only if at least one of the following holds:
\begin{enumroman}
\item $E(\Q)[2] \simeq (\Z/2)^2$, or 
\item $E$ has a cyclic $4$-isogeny defined over $\Q$.
\end{enumroman}
\end{prop}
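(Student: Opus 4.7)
The plan is to rephrase both directions in terms of the mod $4$ Galois representation $\overline{\rho}_{E,4}\colon \Gal_\Q \to \GL_2(\Z/4)$, using the identity $\#E(\F_p) \equiv \det(I - \overline{\rho}_{E,4}(\Frob_p)) \pmod 4$ for primes $p$ of good reduction. By the Chebotarev density theorem, $E \in \scrE_{4?}$ is equivalent to the condition $(*)$: $\det(I - g) \equiv 0 \pmod 4$ for every $g \in G \colonequals \im(\overline{\rho}_{E,4}) \leq \GL_2(\Z/4)$.

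The backward implication is quick. Under (i), the inclusion $(\Z/2)^2 \subseteq E(\Q)\sbtors$ reduces mod $p$ to $(\Z/2)^2 \subseteq E(\F_p)$ for almost all $p$, so $4 \mid \#E(\F_p)$. Under (ii), I choose a basis of $E[4]$ adapted to the Galois-stable cyclic order-$4$ subgroup; then $G$ is upper triangular, so $\det(I - g) = (1-a)(1-d)$ with $a, d \in (\Z/4)^\times = \{\pm 1\}$, and each factor lies in $\{0, 2\}$, forcing the product to be divisible by $4$.

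For the forward direction I first reduce $(*)$ modulo $2$: each $\bar g \in \bar G \leq \GL_2(\F_2) \cong S_3$ must have $1$ as an eigenvalue. Elements of order $3$ in $\GL_2(\F_2)$ have eigenvalues in $\F_4 \setminus \F_2$ and so lack this property, so $\bar G$ is either trivial---in which case $E(\Q)[2] = E[2] \simeq (\Z/2)^2$ and (i) holds---or cyclic of order $2$, fixing a unique line $\ell \subset E[2]$. In the latter case, I pick a basis $(e_1, e_2)$ of $E[4]$ with $2 e_1$ generating $\ell$. Every $g \in G$ then has the form $\begin{pmatrix} a & b \\ c & d \end{pmatrix}$ with $a, d \in \{1, 3\}$ and $c \in \{0, 2\}$; the term $(1-a)(1-d)$ in $\det(I - g)$ is divisible by $4$ automatically, so $(*)$ simplifies to $bc \equiv 0 \pmod 4$. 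Showing $c = 0$ for every $g \in G$ will force $G$ to preserve $\langle e_1 \rangle \subset E[4]$, producing a cyclic $4$-isogeny over $\Q$ and establishing (ii).

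The crux---and the step I expect to be the main obstacle---is this last claim, a short but essential closure argument on $G$. Suppose for contradiction that some $h \in G$ has $c_h = 2$; the condition $b_h c_h \equiv 0 \pmod 4$ then forces $b_h$ even. Since $\bar G$ is nontrivial, there exists $g \in G$ with $b_g$ odd, and the same condition on $g$ forces $c_g = 0$. A direct matrix multiplication gives $(hg)_{12} = a_h b_g + b_h d_g$, which is odd (odd plus even), and $(hg)_{21} = c_h a_g \equiv 2 \pmod 4$; therefore $b_{hg} c_{hg} \equiv 2 \pmod 4$, violating $(*)$ for $hg \in G$. This contradiction forces $c = 0$ throughout $G$ and completes the proof.
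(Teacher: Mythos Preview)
Your proof is correct and follows essentially the same approach as the paper's (Proposition~\ref{4weierstrass}): both reduce to the mod~$4$ Galois image, dispatch the backward direction directly, and for the forward direction split according to whether $\bar G \leq \GL_2(\F_2)$ is trivial (giving full $2$-torsion) or of order~$2$, in the latter case using a product-of-two-elements closure argument to force the lower-left entry to vanish mod~$4$. Your write-up of the closure step is in fact slightly crisper than the paper's, but the underlying idea is the same.
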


Proposition \ref{prop:eweir} can also be rephrased geometrically: if $E \in \scrE_{4?}$, then since $\scrE_{4?} \subseteq \scrE_{2?}$ the elliptic curve $E$ arises from a $\Q$-rational point on the classical modular curve $Y_0(2)=Y_1(2)$, and this point lifts to a $\Q$-rational point under at least one of the natural projection maps $Y(2) \to Y_0(2)$ or $Y_0(4) \to Y_0(2)$, each of degree $2$.

The fact that $4 \mid \#E(\F_p)$ for all good odd $p$ in case (ii) can be explained by a governing field that is biquadratic: for half of the good primes we have $E(\F_p)[2] \simeq (\Z/2)^2$ whereas for the  complementary half $E(\F_p)$ has an element of order $4$.  See Proposition \ref{4weierstrass} for details.  

We then count the number of elliptic curves in case (i) and (ii) with a direct argument: we find they have the same asymptotic rate of growth, with explicit constants.  Next, we show that among curves satisfying (ii), those with $4 \mid \#E(\Q)\sbtors$ are asymptotically negligible.  Therefore, $P_4$ is equal to the probability that $E$ belongs to case (i) among those curves belonging to (i) and (ii), giving the following result.

\begin{thm} \label{thm:P4}
There exists a constant $c_4\in\R_{>0}$ such that as $H \to \infty$, 
\begin{align*} 
&\#\{E \in \scrE_{\leq H} : \text{$E$ has a cyclic $4$-isogeny defined over $\Q$}\} \\
&\qquad\qquad = c_4 H^{1/3} + O(H^{1/6}). 
\end{align*}
Moreover, we have $c_4 \approx 0.9574$ and $P_4 \approx 27.2\%$ effectively computable.
\end{thm}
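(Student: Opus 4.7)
Following the outline preceding the theorem, I would count the two cases of Proposition~\ref{prop:eweir} separately and combine them. For case (i), Corollary~\ref{thm:hsupgrade_cor} with $T = \Z/2\times\Z/2$ already supplies an asymptotic $c_i H^{1/3}+O(H^{1/6})$ with $c_i$ effectively computable. The bulk of the work is to establish the analogous asymptotic for case (ii)---which is the displayed formula in the theorem---and then take the ratio.

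The first step for case (ii) is parametrization. The calculation underlying Proposition~\ref{prop:eweir} shows that $E$ has a cyclic $4$-isogeny over $\Q$ exactly when, after translating a $\Q$-rational $2$-torsion point to $(0,0)$ and choosing a $\Q$-rational square root of the constant coefficient of the remaining quadratic factor, $E$ is $\Q$-isomorphic to some
\[
E_{a,c}\colon y^2=x(x^2+ax+c^2), \qquad (a,c)\in\Z\times\Z_{\geq 0},
\]
subject to the nondegeneracy conditions $c\neq 0$ and $a\neq \pm 2c$. Converting to short Weierstrass form yields $A=c^2-a^2/3$ and $B=a(2a^2-9c^2)/27$, so $4A^3$ and $27B^2$ are both homogeneous of degree $6$ in $(a,c)$. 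Consequently
\[
R \ddef \{(a,c)\in\R\times\R_{\geq 0}:\height(E_{a,c})\leq 1\}
\]
is a bounded Lipschitz domain of an effectively computable area $V$, and the Principle of Lipschitz applied to the homogeneously expanding region $H^{1/6}R$ gives $V H^{1/3}+O(H^{1/6})$ integer lattice points inside.

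Next I would sieve to pass from integer pairs $(a,c)$ to minimal Weierstrass models in $\scrE$: a direct $\ell$-adic analysis identifies which divisibility patterns on $(a,c)$ force $\ell^4\mid A$ and $\ell^6\mid B$, and M\"obius inversion introduces a convergent Euler-product correction factor $\kappa$, so that $c_4\ddef V\kappa>0$. The Lipschitz boundary error dominates the sieve truncation error, keeping the total error at $O(H^{1/6})$, and a numerical evaluation of $V$ and $\kappa$ then produces $c_4\approx 0.9574$.

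Finally, to compute $P_4$ I would verify that the overlap of cases (i) and (ii) is negligible: writing a case (i) curve as $y^2=x(x-b)(x-c)$, membership in (ii) requires one of $bc$, $b(b-c)$, or $c(b-c)$ to be a square, and parametrizing the square-part by $(\alpha m^2,\alpha n^2)$ with $\alpha$ squarefree reduces the count to $O(H^{1/6}\log H)$. Similarly, Table~\ref{tab:yup} supplies $\#\{E\in\scrE_{\leq H}:\Z/4\subseteq E(\Q)\sbtors\}=O(H^{1/4})$, which is negligible in the numerator of $P_4$. Combining, the numerator becomes $c_i H^{1/3}+o(H^{1/3})$ and the denominator $(c_i+c_4)H^{1/3}+o(H^{1/3})$, giving $P_4=c_i/(c_i+c_4)\approx 27.2\%$. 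The main obstacle will be the exact evaluation of $c_4$ (and of $c_i$): one must correctly identify the $\ell$-adic minimality patterns---especially at $\ell=2,3$, where the denominators in the change of variables from long to short Weierstrass form interact nontrivially with the sieve---and organize the resulting local densities into an Euler product with enough precision to confirm the stated numerical values.
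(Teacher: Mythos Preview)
Your plan is correct and follows the paper's approach: parametrize case~(ii), apply the Principle of Lipschitz to a homogeneous region, sieve for minimality, and form the ratio with case~(i). Two points where your execution diverges from the paper are worth flagging.

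First, the paper works with the denominator-free parametrization $A(a,b)=a^2-3b^2$, $B(a,b)=a^2b-2b^3$ derived in Proposition~\ref{4weierstrass}; this is equivalent to yours under the linear change $(a_{\textup{paper}},b_{\textup{paper}})=(c,-a/3)$. In those coordinates the minimality sieve is completely uniform---for \emph{every} prime $p$ one has $p^4\mid A$ and $p^6\mid B$ iff $p^2\mid a$ and $p^2\mid b$---so the correction factor is simply $1/\zeta(4)$, the symmetry $(a,b)\mapsto(-a,-b)$ gives $r(G)=2$, and $c_4=\area(R_4'(1))/(2\zeta(4))$ with the area computed exactly in Lemma~\ref{isogenyarealem}. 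The $\ell=3$ complications you anticipate are artifacts of your coordinate choice and disappear in the paper's.

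Second, your explicit $O(H^{1/6}\log H)$ bound for the overlap (i)$\cap$(ii) via the square-product condition is more hands-on than the paper's treatment: in Corollary~\ref{4prob} the paper simply writes $P_4=c(\Z/2\times\Z/2)/(c(\Z/2\times\Z/2)+c_4)$, leaving implicit that the overlap corresponds to Galois image in the proper subgroup $\Gamma_0(4)\cap\Gamma(2)$ and is therefore negligible by the general principle of Corollary~\ref{cor:canremove}. Your direct argument is a welcome supplement to that step.
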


The exact value of $c_4$ is given in Proposition \ref{cyclic_isogeny_constants} and for $P_4$ in Proposition \ref{4prob}.  For both $m=3,4$, these theorems match experimental data (Remarks \ref{rmk:P3approx}, \ref{rmk:P4approx}).

\begin{rmk} \label{rmk:naiveheight0}
We choose to normalize our height function including the constants in the discriminant function, following Bhargava--Shankar \cite{BS}.  Alternatively, one can order the elliptic curves by defining
\[ \height'(E) \colonequals \max(\abs{A^3},\abs{B^2}) \]
(without the scalars $4,27$).  The probability for $m=3$ is again $1/2$ in this height; see Remark \ref{rmk:naiveheight} for the probability for $m=4$ computed in this way instead.  
\end{rmk}

\subsection{Organization}

Our paper is organized as follows.  In Section \ref{S1.6}, we collect relevant facts about Galois representations attached to elliptic curves as a way to reformulate our main question in terms of Galois image, refining work of Katz \cite{katz}.  With these images in hand, it then becomes a computation with universal curves to obtain the order of growth of curves in $\scrE_{m?}$ ordered by height.  In section \ref{sect57}, we use this to prove our main result for $m \geq 5$ and carry this out explicitly for $P_5,P_7$.  In section \ref{34div}, we treat the remaining cases $m=3,4$ in detail, computing the asymptotics and the relevant constants.

\section{Galois representations and divisibility} \label{S1.6}

In this section, we characterize the image of the Galois representation under the condition of local $m$-divisibility.  The main results of this section are Corollary \ref{cor:sumitup} and Theorem \ref{upgrade}: we bound the degree of an isogeny (guaranteed by the theorem of Katz \cite[Theorem~1]{katz}) from any elliptic curve $E$ with locally a subgroup of order $m$ to an elliptic curve $E'$ with a subgroup of order $m$.

\subsection{Setup} \label{setup_section}

We reset our notation, working in more generality to start.  Let $K$ be a number field with ring of integers $\Z_K$ and algebraic closure $\Kbar$.  Let $E$ be an elliptic curve over $K$ with origin $\infty \in E(K)$.  By a \defi{prime} of $K$ we mean a nonzero prime ideal $\frakp \subset \Z_K$, and we write $\Fp \colonequals \Z_K/\frakp$ for the residue field of $\frakp$; we say a prime $\frakp$ is \defi{good} (for $E$) if $\frakp$ is prime of good reduction for $E$.

Let $\ell \in \Z$ be prime and let $T_\ell E \colonequals \varprojlim_{n} E[\ell^n](\Kbar) \simeq \Z_\ell^2$ be the $\ell$-adic Tate module, writing $P=(P_n)_n \in T_\ell E$ with each $P_n \in E[\ell^n](\Kbar)$ satisfying $\ell P_n = P_{n-1}$.  The absolute Galois group $\Gal_K \colonequals \Gal(\Kbar\,|\,K)$ acts continuously on $T_\ell E$ giving a Galois representation 
\begin{equation} \label{eqn:tateyup}
\rho_{E,\ell}\colon \Gal_K \to \Aut_{\Z_\ell}(T_\ell E) \simeq \GL_2(\Z_\ell)
\end{equation}
with $\det \rho_{E,\ell} : \Gal_K \to \Z_\ell^\times$ equal to the $\ell$-adic cyclotomic character.  In the above, we follow the convention that matrices act on the left on column vectors.

  We write 
\begin{equation} 
\overline{\rho}_{E,\ell^n} : \Gal_K \to \Aut(E[\ell^n](\Kbar)) 
\end{equation}
for just the action on $E[\ell^n](\Kbar)$, alternatively obtained as the composition of $\rho_{E,\ell}$ with reduction modulo $\ell^n$.  We also define $V_\ell E \colonequals T_\ell E \otimes \Q_\ell$.

If $E'$ is another elliptic curve over $K$, by an isogeny $\varphi\colon E \to E'$ we mean an isogeny defined over $K$.  (If we have need to consider isogenies defined over an extension, we will indicate this explicitly.)

For a good prime $\frakp$ of $K$ coprime to $\ell$, we have
\begin{equation}
\#E(\Fp) = \det(1-\rho_{E,\ell}(\Frob_\p))
\end{equation}
where $\Frob_\p$ is the conjugacy class of the Frobenius automorphism at $\frakp$ in $\Gal_K$, and recall that the point counts $\#E(\Fp)$ are well-defined on the isogeny class of $E$.  Moreover, by the Chebotarev density theorem, the condition $\ell^n \mid \#E(\Fp)$ for a set of primes $\p$ of density $1$ is equivalent to the group-theoretic condition
\begin{equation} \label{eqn:detrhoE}
\det(1-\rho_{E,\ell}(\sigma)) \equiv 0 \pmod{\ell^n} 
\end{equation}
for all $\sigma \in \Gal_K$, and further $\ell^n \mid \#E(\Fp)$ for primes $\frakp$ in a set of density $1$ if and only if $\ell^n \mid \#E(\Fp)$ for all but finitely many $\frakp$.

\subsection{Galois images}

Both to motivate what follows and because we will make use of it, we begin with the following lemma.  

\begin{defn}
A basis $P_1,P_2$ for $T_\ell E$ is \defi{clean} if there exist $r,s \in \Z_{\geq 0}$ such that (in coordinates) $P_{1,r},P_{2,s}$ generate $E[\ell^\infty](K)$.
\end{defn}

Choosing generators, we see that $T_\ell E$ always has a clean basis.  Moreover, if $P_1,P_2$ is a clean basis, then the integers $r,s$ are unique and $E[\ell^\infty](K) \simeq \Z/\ell^r \times \Z/\ell^s$.

\begin{lem} \label{lem:cleantelnrs}
The following statements hold.
\begin{enumalph}
\item In a clean basis for $T_\ell E$, we have 
\begin{equation}  \label{eqn:EgalK}
\rho_{E,\ell}(\Gal_K) \leq 
\begin{pmatrix}
1+\ell^r\Zl & \ell^{s} \Zl \\
\ell^{r} \Zl & 1+\ell^{s}\Zl
\end{pmatrix}
\end{equation}
where by convention $1+\ell^0\Z_\ell \colonequals \Z_\ell^\times$.  
\item If \eqref{eqn:EgalK} holds in a basis for $T_\ell E$, then $P_{1,r},P_{2,s}$ generate a subgroup of $E[\ell^\infty](K)$ isomorphic to $\Z/\ell^r \times \Z/\ell^s$; if moreover equality holds in \eqref{eqn:EgalK}, then this basis is clean.
\end{enumalph}
\end{lem}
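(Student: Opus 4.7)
The plan is to prove (a) by unpacking the matrix form of the Galois action on $P_{1,r}$ and $P_{2,s}$, and to prove (b) by reversing that computation and, for the ``moreover'' clause, testing equality against specific diagonal matrices inside the claimed image.

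For (a), I would fix $\sigma \in \Gal_K$ and write $\rho_{E,\ell}(\sigma) = \bigl(\begin{smallmatrix} a & b \\ c & d \end{smallmatrix}\bigr)$ in the clean basis $P_1, P_2$ of $T_\ell E$.  Since this is a $\Z_\ell$-basis, its reductions $P_{1,n}, P_{2,n}$ form a $\Z/\ell^n$-basis of $E[\ell^n](\Kbar)$ for every $n$.  Reducing the column-vector identity $\sigma(P_1) = aP_1 + cP_2$ modulo $\ell^r$ gives $\sigma(P_{1,r}) = \overline{a}\,P_{1,r} + \overline{c}\,P_{2,r}$; cleanness forces this to equal $P_{1,r}$, and the linear independence of $P_{1,r}, P_{2,r}$ in $E[\ell^r](\Kbar)$ yields $a \equiv 1$ and $c \equiv 0 \pmod{\ell^r}$.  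The symmetric calculation from $\sigma(P_{2,s}) = P_{2,s}$ produces $b \equiv 0$ and $d \equiv 1 \pmod{\ell^s}$, giving the containment in \eqref{eqn:EgalK}.  In the boundary case $r = 0$ (resp.\ $s = 0$) the corresponding congruence is vacuous, and invertibility of $\rho_{E,\ell}(\sigma)$ forces $a$ (resp.\ $d$) into $\Z_\ell^\times$, which matches the convention $1 + \ell^0\Z_\ell \colonequals \Z_\ell^\times$.

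For the first assertion of (b), the same calculation run backward does the job: the containment ensures every $\sigma \in \Gal_K$ fixes both $P_{1,r}$ and $P_{2,s}$, so both lie in $E[\ell^\infty](K)$, and since $P_{1,n}, P_{2,n}$ form a $\Z/\ell^n$-basis of $E[\ell^n](\Kbar)$ the subgroup $\langle P_{1,r}, P_{2,s}\rangle$ is visibly isomorphic to $\Z/\ell^r \times \Z/\ell^s$.  For the ``moreover'' clause, I plan to assume equality in \eqref{eqn:EgalK}, take an arbitrary $Q \in E[\ell^\infty](K)$, choose $N \geq \max(r,s)$ with $Q \in E[\ell^N]$, and write $Q = \alpha P_{1,N} + \beta P_{2,N}$ with $\alpha, \beta \in \Z/\ell^N$.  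By equality, both test matrices $\bigl(\begin{smallmatrix} 1+\ell^r & 0 \\ 0 & 1 \end{smallmatrix}\bigr)$ and $\bigl(\begin{smallmatrix} 1 & 0 \\ 0 & 1+\ell^s \end{smallmatrix}\bigr)$ lie in $\rho_{E,\ell}(\Gal_K)$ (their determinants $1+\ell^r, 1+\ell^s$ are $\Z_\ell$-units).  Imposing $\sigma(Q) = Q$ for each yields $\alpha\ell^r \equiv 0$ and $\beta\ell^s \equiv 0 \pmod{\ell^N}$, i.e., $\ell^{N-r} \mid \alpha$ and $\ell^{N-s} \mid \beta$.  Since $\ell^{N-r} P_{1,N} = P_{1,r}$ and $\ell^{N-s} P_{2,N} = P_{2,s}$, this places $Q$ in $\langle P_{1,r}, P_{2,s}\rangle$ and shows the basis is clean.

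The only real obstacle I foresee is bookkeeping the degenerate cases $r = 0$ or $s = 0$ uniformly against the convention $1 + \ell^0\Z_\ell = \Z_\ell^\times$, and making sure the two diagonal test matrices genuinely sit in the prescribed matrix set (both membership and invertibility are routine checks).  Everything else is linear algebra transported through the isomorphism $T_\ell E / \ell^n T_\ell E \simeq E[\ell^n](\Kbar)$.
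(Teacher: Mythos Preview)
Your proposal is correct and is exactly the kind of direct verification the paper has in mind; the paper's own proof consists of the single word ``Straightforward.''  One tiny caution on the ``moreover'' clause: your diagonal test matrix $\bigl(\begin{smallmatrix} 1+\ell^r & 0 \\ 0 & 1 \end{smallmatrix}\bigr)$ fails to be invertible when $\ell=2$ and $r=0$ (since $1+\ell^r=2$), so in that boundary case you should instead use an off-diagonal test such as $\bigl(\begin{smallmatrix} 1 & 0 \\ 1 & 1 \end{smallmatrix}\bigr)$, which lies in the group and forces $\alpha\equiv 0$; you already anticipated this sort of patch in your final paragraph.
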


\begin{proof}
Straightforward.
\end{proof}

Now let $n \geq 1$, and for integers $0 \leq r,s \leq n$, define the subgroup
\begin{equation} \label{eqn:Gellrs}
G_\ell(n; r,s) \colonequals \begin{pmatrix}
1+\ell^r\Zl & \ell^{s} \Zl \\
\ell^{n-s} \Zl & 1+\ell^{n-r}\Zl
\end{pmatrix} \leq \GL_2(\Z_\ell)
\end{equation}
with the same convention as in \eqref{eqn:EgalK}.  Indeed, the group on the right-hand side of \eqref{eqn:EgalK} is $G_\ell(r+s;r,s)$, i.e., corresponds to $n=r+s$.  When the prime $\ell$ is clear, we will drop the subscript and abbreviate $G(n;r,s)=G_\ell(n;r,s)$.

Our motivation for studying these groups is indicated by the following lemma.

\begin{lem} \label{lem:hasdenyup}
If $\rho_{E,\ell}(\Gal_K) \leq G(n;r,s)$ for some $0 \leq r,s \leq n$, then $\ell^n \mid \#E(\Fp)$ for all but finitely many $\frakp$.
\end{lem}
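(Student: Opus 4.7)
The plan is to reduce the claim to an elementary matrix-entry valuation computation by using the Chebotarev-type criterion already recorded in \eqref{eqn:detrhoE}. By the discussion in \S\ref{setup_section}, the divisibility $\ell^n \mid \#E(\Fp)$ for all but finitely many good primes $\frakp$ coprime to $\ell$ is equivalent to $\det(1 - \rho_{E,\ell}(\sigma)) \equiv 0 \pmod{\ell^n}$ for every $\sigma \in \Gal_K$. Under the hypothesis $\rho_{E,\ell}(\Gal_K) \leq G(n;r,s)$, it will therefore suffice to prove the following purely group-theoretic statement: for every $M \in G(n;r,s)$, we have $\det(1-M) \equiv 0 \pmod{\ell^n}$.

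To establish this, I will simply expand the characteristic polynomial and read off valuations from the definition \eqref{eqn:Gellrs}. Writing $M = \begin{pmatrix} \alpha & \beta \\ \gamma & \delta \end{pmatrix} \in G(n;r,s)$, the entries satisfy $\ell^r \mid (1-\alpha)$, $\ell^{n-r} \mid (1-\delta)$, $\ell^s \mid \beta$, and $\ell^{n-s} \mid \gamma$. The boundary cases $r=0$ or $r=n$ are automatic: the convention $1+\ell^0\Z_\ell = \Z_\ell^\times$ merely requires the relevant diagonal entry to be a unit, and the divisibility $\ell^0 \mid (1-\alpha)$ (resp.\ $\ell^0 \mid (1-\delta)$) is then vacuous. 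Now
\[ \det(1-M) = (1-\alpha)(1-\delta) - \beta\gamma, \]
and each summand is divisible by $\ell^{r} \cdot \ell^{n-r} = \ell^n$ and $\ell^{s} \cdot \ell^{n-s} = \ell^n$ respectively, hence so is their difference.

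There is essentially no obstacle here: the whole argument is a one-line $\ell$-adic valuation check once the Chebotarev reduction through \eqref{eqn:detrhoE} is invoked. The only bookkeeping needed is to verify that the boundary conventions at $r \in \{0,n\}$ and $s \in \{0,n\}$ do not disturb the valuation count, which they do not. I expect this lemma to play the role of the easy half of the equivalence developed in \S\ref{S1.6}, with the converse direction (identifying, up to isogeny, when $\rho_{E,\ell}(\Gal_K)$ can be conjugated into such a $G(n;r,s)$) being where the substantive work will lie later.
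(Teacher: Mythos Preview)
Your proof is correct and follows precisely the paper's approach: invoke the criterion \eqref{eqn:detrhoE} and then verify directly that $\det(1-g)\equiv 0\pmod{\ell^n}$ for every $g\in G(n;r,s)$ via the valuation count on $(1-\alpha)(1-\delta)-\beta\gamma$. The paper's proof says only ``we see directly that $\det(1-g)\equiv 0\pmod{\ell^n}$'' and cites \eqref{eqn:detrhoE}; you have simply spelled out that direct verification.
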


\begin{proof}
We see directly that $\det(1-g) \equiv 0 \pmod{\ell^n}$ for all $g \in G(n;r,s)$, so the result follows from \eqref{eqn:detrhoE}.
\end{proof}

\begin{exm} \label{exm:meq2doh}
If $\ell=2$, then $G_2(1;0,0)=G_2(1;1,0)$.
\end{exm}

\begin{exm} \label{exm:rn1od0s}
Suppose that $\rho_{E,\ell}(\Gal_K)=G(n;r,s)$ as above, with $n \geq 1$.

If $r=n$ and $s=0$, then $\overline{\rho}_{E,\ell^n} = \begin{pmatrix} 1 & * \\ 0 & * \end{pmatrix}$ and so if $P_1,P_2 \in E[\ell^n](\Kbar)$ are the $n$th coordinates of the chosen basis for $T_\ell E$, then $E[\ell^\infty](K)=\langle P_1 \rangle \simeq \Z/\ell^n$.  

Similarly, if $r=s=0$ and $\ell \neq 2$, then $\overline{\rho}_{E,\ell^n}=\begin{pmatrix} * & * \\ 0 & 1 \end{pmatrix}$; thus $E[\ell^\infty](K)=\{\infty\}$ and $E$ has a unique cyclic isogeny over $K$ of order $\ell^n$ whose kernel is generated by $P_1$.  

In both cases, we have $\ell^n \mid \#E(\F_\frakp)$ for all but finitely many $\frakp$.  
\end{exm}

Interchanging the basis elements made in the identification \eqref{eqn:tateyup} gives an isomorphism 
\begin{equation} \label{eqn:swapbasis}
G(n;r,s) \xrightarrow{\sim} G(n;n-r,n-s)
\end{equation}
so without loss of generality we may suppose that $r+s \leq n$ (and still that $0 \leq r,s \leq n$).  If $n=r+s$, we have $G(n;r,n-r) \simeq G(n;n-r,r)$.  

\begin{lem} \label{lem:quickpropr}
The following statements hold.
\begin{enumalph}
\item The group $G_\ell(n;r,s)$ is equal to the preimage of its reduction modulo $\ell^{\max(r,s,n-s,n-r)}$.
\item We have $\det G_\ell(n;r,s)=1+\ell^{\min(r,n-r)} \Z_\ell$.  
\item We have 
\[ [\GL_2(\Z_\ell) : G_\ell(n;r,s)] = \begin{cases}
\ell^{2n-3}(\ell^2-1)(\ell-1), & \text{ if $\min(r,n-r) \geq 1$;} \\
\ell^{2n-2}(\ell^2-1), & \text{ if $\min(r,n-r)=0$.}
\end{cases} \]
\item If $\ell^n \geq 5$, then $G_\ell(n;r,s) \cap \SL_2(\Z)$ is torsion free.  
\end{enumalph}
\end{lem}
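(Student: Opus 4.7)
My plan is to prove the four parts in sequence, using direct matrix manipulation modulo powers of $\ell$ and standard Haar-measure computations.

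Parts (a) and (b) are warm-up calculations. For (a), the group $G_\ell(n;r,s)$ is carved out of $\GL_2(\Z_\ell)$ by four entry conditions: $a \equiv 1 \pmod{\ell^r}$ (or $a \in \Z_\ell^\times$ if $r=0$), $b \equiv 0 \pmod{\ell^s}$, $c \equiv 0 \pmod{\ell^{n-s}}$, and $d \equiv 1 \pmod{\ell^{n-r}}$ (or $d \in \Z_\ell^\times$ if $r=n$). Each such condition depends only on the residue class modulo $\ell^{\max(r,s,n-s,n-r)}$, so $G_\ell(n;r,s)$ equals the full preimage of its image under reduction modulo this power. For (b), writing $g \in G$ with $a = 1 + \ell^r \alpha$, $d = 1 + \ell^{n-r} \delta$, $b = \ell^s \beta$, $c = \ell^{n-s} \gamma$ gives $\det g = 1 + \ell^r \alpha + \ell^{n-r} \delta + \ell^n(\alpha \delta - \beta \gamma)$, which lies in $1 + \ell^{\min(r,n-r)} \Z_\ell$; surjectivity onto this set is immediate from diagonal matrices, and the endpoint cases $r \in \{0,n\}$ are handled using the $\Z_\ell^\times$-convention by taking $\mathrm{diag}(u,1)$ or $\mathrm{diag}(1,u)$ with $u \in \Z_\ell^\times$.

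For (c), I will use Haar measure on $M_2(\Z_\ell)$ normalized to have total mass $1$. The measure of $G_\ell(n;r,s)$ factors over the four entry conditions, using $\mu(1 + \ell^k \Z_\ell) = \ell^{-k}$ for $k \geq 1$ and $\mu(\Z_\ell^\times) = 1 - 1/\ell$ at endpoints; in every case the two off-diagonal terms contribute $\ell^{-n}$. Dividing $\mu(\GL_2(\Z_\ell)) = (1-1/\ell)(1-1/\ell^2)$ by this product yields the index, with the two cases in the stated formula corresponding to whether or not a $\Z_\ell^\times$ factor appears among the diagonal conditions.

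Part (d) is the heart of the lemma, and is where I expect the main work. Let $g = \left(\begin{smallmatrix} a & b \\ c & d \end{smallmatrix}\right) \in G_\ell(n;r,s) \cap \SL_2(\Z)$ be torsion. Every nontrivial torsion element of $\SL_2(\Z)$ has order $2, 3, 4,$ or $6$, and correspondingly $\tr g \in \{-2,-1,0,1\}$. My key claim is the uniform congruence $\tr g \equiv 2 \pmod{\ell^n}$: once this is established, nontriviality would force $\ell^n \mid \tr g - 2 \in \{-1,-2,-3,-4\}$, hence $\ell^n \leq 4$, contradicting the hypothesis $\ell^n \geq 5$; therefore $g = I$. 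To prove the claim, I combine $bc \in \ell^n \Z_\ell$ (from the definition of $G$) with $ad - bc = 1$ to get $ad \equiv 1 \pmod{\ell^n}$. In the range $1 \leq r \leq n-1$, expanding $(1 + \ell^r \alpha)(1 + \ell^{n-r}\delta) \equiv 1 \pmod{\ell^n}$ forces $\ell^r \alpha + \ell^{n-r}\delta \equiv 0 \pmod{\ell^n}$; substituting back into $a + d = 2 + \ell^r \alpha + \ell^{n-r}\delta$ yields $\tr g \equiv 2 \pmod{\ell^n}$. The endpoint cases $r \in \{0,n\}$ are easier: for example, if $r = 0$ then $d \equiv 1 \pmod{\ell^n}$ directly, and then $a \equiv ad \equiv 1 \pmod{\ell^n}$ since $d$ is a unit. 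The main technical point is ensuring the trace congruence holds uniformly across the $\Z_\ell^\times$-convention at the endpoints.
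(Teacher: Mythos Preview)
Your proposal is correct and follows essentially the paper's approach: (a) and (b) are dismissed there as direct calculation, and (c) is done by reducing modulo $\ell^n$ and counting the image entrywise, of which your Haar-measure formulation is the continuous rephrasing. For (d) the paper uses the equivalent invariant $\det(1-g) \equiv 0 \pmod{\ell^n}$, which holds for \emph{all} $g \in G_\ell(n;r,s)$ (as in Lemma~\ref{lem:hasdenyup}), in place of your trace congruence; since $\det(g-1) = 2 - \tr g$ for $g \in \SL_2$ the two arguments coincide numerically, but the paper's packaging spares you the case split on $r$ and the separate treatment of the endpoints $r \in \{0,n\}$.
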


\begin{proof}
Parts (a) and (b) follow from a direct calculation.  For part (c), we reduce modulo $n$ (using (a)) and count the size of the reduction in each coordinate: we find
\[ \frac{\phi(\ell^n)}{\phi(\ell^r)} \ell^{n-s} \ell^s \frac{\phi(\ell^n)}{\phi(\ell^{n-r})} = \ell^{3n-2}(\ell-1)^2 \cdot \begin{cases} 
(\ell^{n-2}(\ell-1)^2)^{-1}, & \text{ if $r,n-r \geq 1$;} \\
(\ell^{n-1}(\ell-1))^{-1}, & \text{ if $r=0,n$.} \end{cases} \] 
Simplifying and noting $\#\GL_2(\Z/\ell^n)=\ell^{4(n-1)}\#\GL_2(\Z/\ell)=\ell^{4n-3}(\ell-1)(\ell^2-1)$, the result follows.

For part (d), as in Lemma \ref{lem:hasdenyup} we have $\det(g-1) \equiv 0 \pmod{\ell^n}$ for all $g \in G(n;r,s)$.  If $g \in \SL_2(\Z)$ is torsion, then its characteristic polynomial matches that of a root of unity of order dividing $6$, and if $g \neq 1$ then $\det(g-1)=1, 2, 3, 4$, a contradiction.  
\end{proof}

The groups $G(n;r,s)$ arise from curves isogenous to the ones studied in Lemma \ref{lem:cleantelnrs}, as follows.

\begin{prop} \label{prop:cleanCvrall}
Suppose in a (clean) basis for $T_\ell E$ that we have 
\[ \rho_{E,\ell}(\Gal_K)=G(n;r,n-r). \]  
Then the following statements hold.
\begin{enumalph}
\item Any cyclic subgroup $C \leq E[\ell^\infty](\Kbar)$ stable under $\Gal_K$ in fact has $C \leq E[\ell^\infty](K)$.
\item If $\varphi \colon E \to E'$ is a cyclic isogeny with $\deg \varphi=\ell^k$, then $k \leq \max(r,n-r)$.  Moreover, there exists a clean basis for $E'$ such that 
\begin{equation} \label{eqn:whatdowemean}
\rho_{E',\ell}(\Gal_K)=
\begin{cases}
G(n;r,n-r-k), & \text{only if $k \leq n-r$;}  \\
G(n;n-r,r-k), & \text{only if $k \leq r$.}
\end{cases} 
\end{equation}
\end{enumalph}
\end{prop}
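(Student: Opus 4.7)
For part (a), the plan is to exploit the detailed structure of $G(n;r,n-r)$ via its unipotent and diagonal elements. Galois-stability of $C=\langle Q\rangle$ of order $\ell^k$ provides a character $\chi\colon\Gal_K\to(\Z/\ell^k)^\times$ with $\sigma(Q)=\chi(\sigma)Q$ for all $\sigma$. Writing $Q=xP_{1,k}+yP_{2,k}$, since $Q$ has order $\ell^k$ at least one of $x,y$ must be a unit in $\Z/\ell^k$; after possibly applying the swap isomorphism \eqref{eqn:swapbasis}, we may assume $r\leq n-r$. When $y$ is a unit, evaluating $\sigma(Q)=\chi(\sigma)Q$ against the element with $\rho_{E,\ell}(\sigma)=\begin{pmatrix}1&\ell^{n-r}\\0&1\end{pmatrix}\in G$ forces $\chi(\sigma)=1$ and $\ell^{n-r}y\equiv0\pmod{\ell^k}$, giving $k\leq n-r$; similarly, the diagonal element $\begin{pmatrix}1+\ell^r&0\\0&1\end{pmatrix}\in G$ forces $\ell^r x\equiv0\pmod{\ell^k}$, giving $v_\ell(x)\geq k-r$. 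Together these place $xP_{1,k}\in\langle P_{1,r}\rangle$ and $yP_{2,k}\in\langle P_{2,n-r}\rangle$, both contained in $E(K)$, so $Q\in E[\ell^\infty](K)$. The case where $x$ is a unit is symmetric.

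For part (b), the bound $k\leq\max(r,n-r)$ follows immediately from (a), since the largest cyclic subgroup of $E[\ell^\infty](K)\simeq\Z/\ell^r\times\Z/\ell^{n-r}$ has order $\ell^{\max(r,n-r)}$. To compute $\rho_{E',\ell}$, I identify $V_\ell E=V_\ell E'$ via $\varphi_*$ and note that $T_\ell E'=T_\ell E+\Z_\ell\cdot\tfrac{1}{\ell^k}\tilde Q$ for any lift $\tilde Q\in T_\ell E$ of $Q$. In the aligned case $\tilde Q=bP_2$ with $b$ a unit and $k\leq n-r$, the set $\{P_1,\tfrac{1}{\ell^k}P_2\}$ is a $\Z_\ell$-basis of $T_\ell E'$, and conjugation by the diagonal matrix $M=\begin{pmatrix}1&0\\0&\ell^k\end{pmatrix}$ sends $G(n;r,n-r)$ bijectively onto $G(n;r,n-r-k)$; the alternative alignment $\tilde Q=aP_1$ with $a$ a unit and $k\leq r$ yields $G(n;r,n-r+k)$, which by \eqref{eqn:swapbasis} is isomorphic to $G(n;n-r,r-k)$.

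The main technical obstacle is that a generator of an arbitrary cyclic $C\leq E[\ell^\infty](K)$ need not be aligned with $P_{1,r}$ or $P_{2,n-r}$. This is resolved by replacing the clean basis of $T_\ell E$: if $k\leq r$, multiplication by $\ell^{r-k}$ on $E[\ell^r](K)\simeq(\Z/\ell^r)^2$ is surjective onto $E[\ell^k](K)\simeq(\Z/\ell^k)^2$, so any order-$\ell^k$ point of $E(K)$ lifts to an order-$\ell^r$ point that serves as the new $P_{1,r}$ in a modified clean basis aligning $Q$ with the first coordinate; a symmetric construction using $E[\ell^{n-r}](K)$ handles the case $k\leq n-r$. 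Once $Q$ is aligned, the direct conjugation computation above yields the claimed Galois image, with equality (rather than containment) following from the bijectivity of conjugation by $M\in\GL_2(\Q_\ell)$.
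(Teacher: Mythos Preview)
Your argument for part (a) is correct and gives a clean alternative to the paper's case analysis by testing against specific unipotent and diagonal elements of $G(n;r,n-r)$. (A tiny caveat: the matrix $\begin{pmatrix}1+\ell^r&0\\0&1\end{pmatrix}$ fails to lie in $\GL_2(\Z_\ell)$ when $\ell=2$ and $r=0$, but any unit $u\neq1$ in the top-left fixes this.)

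Part (b) has a genuine gap in the alignment step. You claim that whenever $k\leq r$, one can choose a modified clean basis in which the kernel generator $Q$ becomes $P_{1,k}'$. This is false. Take $r=1$, $n-r=2$, and $Q=P_{2,1}$ (so $k=1\leq r$). Any basis $P_1',P_2'$ of $T_\ell E$ with Galois image equal to $G(3;1,2)$ must, by Lemma~\ref{lem:charimage}(a), have $(P_2')_2\in E(K)$, which forces $\ell$ to divide the first $P$-coordinate of $P_2'$; but $(P_1')_1=Q=P_{2,1}$ forces $\ell$ to divide the first $P$-coordinate of $P_1'$ as well, and then the change-of-basis matrix has non-unit determinant. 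So no such basis exists, and your construction cannot proceed. The correct dichotomy is not ``$k\leq r$ versus $k\leq n-r$'' but depends on the \emph{kernel itself}: writing $Q=x_1P_{1,k}+x_2P_{2,k}$, one aligns with the first coordinate when $x_1$ is a unit and with the second when $x_1=0$, and one must invoke the detailed conclusion of (a) to see these cases are exhaustive.

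The paper makes exactly this split. Your diagonal-conjugation computation matches the paper's $x_1=0$ case; in the $x_1$-unit case the paper does \emph{not} first pass to a new clean basis of $T_\ell E$, but conjugates directly by an explicit upper-triangular matrix $A=\begin{pmatrix}1&x\\0&\ell^k\end{pmatrix}$ built from $x=x_2/x_1$ and identifies the resulting group by hand. Your strategy of reducing to an aligned kernel can be made to work, but you must base the alignment on the shape of $Q$ rather than the size of $k$, and you must supply the missing verification that the modified basis still realizes the Galois image as exactly $G(n;r,n-r)$ (e.g.\ via Lemma~\ref{lem:cleantelnrs} plus an index argument).
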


In \eqref{eqn:whatdowemean}, we mean that if $r < k \leq n-r$ then the first case must occur, and symmetrically if $n-r < k \leq r$ then the second case must occur; if $k \leq \min(r,n-r)$, then either case can arise.  The proof will show that all possibilities do arise.

\begin{proof}
We first prove (a).  Interchanging basis elements as in \eqref{eqn:swapbasis}, we may suppose without loss of generality that $r \leq n-r$.  Let $\#C=\ell^k$.  A generator for $C$ is of the form $P = x_1P_{1,k} + x_2P_{2,k}$ with $(x_1:x_2) \in \PP^1(\Z/\ell^k)$. 
\begin{itemize}
\item If $k \leq r$, then we have full $\ell^k$-torsion $E[\ell^k](\Kbar)=E[\ell^k](K)$ and so certainly (a) holds.  
\item Suppose $r< k \leq n-r$.  Then by hypothesis, 
\[ \overline{\rho}_{E,\ell^k}(\Gal_K) = \left\{ \begin{pmatrix} 1 + \ell^r a & 0 \\ \ell^r c & 1 \end{pmatrix} : a,c \in \Z/\ell^{k-r} \right\}. \]
Taking $a \in \Z/\ell^{k-r}$, we see that the only stable lines (eigenvectors) in the action on column vectors are generated by elements $(x:1) \in \PP^1(\Z/\ell^k)$ with $\ell^{k-r} \mid x$, so $P=xP_{1,k} + P_{2,k} = (x/\ell^{k-r})P_{1,r} + P_{2,k} \in E[\ell^k](K)$ as desired.
\item Finally, if $k > s \colonequals n-r$, then $\ell^{k-s} C$ is Galois stable, so $(x_1:x_2) \equiv (0:1) \pmod{\ell^s}$ by the previous case.  Applying upper-triangular unipotent matrices in $\overline{\rho}_{E,\ell^k}(\Gal_K)$ then gives a contradiction.
\end{itemize}

Next, part (b).  Lemma \ref{lem:cleantelnrs}(b) and the preceding part (a) imply $k \leq \max(r,n-r)$.  We now change convention for convenience, supposing that $n-r \leq r$.  Let $P_1,P_2$ be the given clean basis for $T_\ell E$.  As in (a), let $C=\ker \varphi$ be generated by $P=x_1 P_{1,k} + x_2 P_{2,k}$ with $(x_1:x_2) \in \PP^1(\Z/\ell^k)$.  We consider two cases.  

First, suppose $x_1=0$.  Then $(x_1:x_2)=(0:1)$ and $C$ is generated by $P_{2,k}$.  A basis for $T_\ell E'$ (in $V_\ell E$) is then given by $P_1, \ell^{-k} P_2$.  In this basis and pulling the scalar $\ell^k$ through, we have
\begin{equation} \label{eqn:rhoEell}
\rho_{E',\ell}(\Gal_K) =\begin{pmatrix}  \ell^{k} & 0 \\ 0 & 1 \end{pmatrix} G(n;r,n-r) \begin{pmatrix}  \ell^{-k} & 0 \\ 0 & 1 \end{pmatrix}  = G(n;r,n-r+k)
\end{equation}
and $k \leq n-r$ (with still $n-r \leq r$).  Applying \eqref{eqn:swapbasis} converts $G(n;r,n-r+k)$ to $G(n;n-r,r-k)$, as claimed.  

Otherwise, we have $x_1 \neq 0$; then $(x_1:x_2)=(1:x)$ for $x \in \Z/\ell^k$; we lift to $x \in \Z_\ell^\times$.  Let
\[ U \colonequals \begin{pmatrix} 0 & 1 \\ 1 & x \end{pmatrix} \in \GL_2(\Z_\ell). \]
Let $Q_i=UP_i$ for $i=1,2$, so $Q_1,Q_2$ is a (no longer necessarily clean) basis for $T_\ell E$ in which $\ker \varphi = \langle Q_{2,k} \rangle$.  Nevertheless, we calculate:
\begin{equation}
\begin{pmatrix} \ell^k & 0 \\ 0 & 1 \end{pmatrix} U = \begin{pmatrix} \ell^k & 0 \\ 0 & 1 \end{pmatrix} \begin{pmatrix} 0 & 1 \\ 1 & x \end{pmatrix} = \begin{pmatrix} 0 & \ell^k \\ 1 & x \end{pmatrix} = \begin{pmatrix} 0 & 1 \\ 1 & 0 \end{pmatrix} \begin{pmatrix} 1 & x \\ 0 & \ell^k \end{pmatrix} = U'A.
\end{equation}
A basis for $T_\ell E'$ (in $V_\ell E)$ is given by $Q_1,\ell^{-k}Q_2$, so the same is true after applying $(U')^{-1}=U'$ which just swaps basis vectors to give $\ell^{-k}Q_2,Q_1$.  In this basis, the image of $\rho_{E',\ell}(\Gal_K)$ is
\[ U'\begin{pmatrix} \ell^k & 0 \\ 0 & 1 \end{pmatrix} U G(n;r,n-r) U^{-1} 
\begin{pmatrix} \ell^{-k} & 0 \\ 0 & 1 \end{pmatrix} U' = A G(n;r,n-r) A^{-1} \]
and we then compute:
\begin{equation} 
\begin{aligned}
&\begin{pmatrix} 1 & x \\ 0 & \ell^k \end{pmatrix} \begin{pmatrix} 1 + \ell^r a & \ell^{n-r} b \\ \ell^r c & 1 + \ell^{n-r} d \end{pmatrix} \begin{pmatrix} 1 & -x\ell^{-k} \\ 0 & \ell^{-k} \end{pmatrix} \\
&= \begin{pmatrix} 1 + \ell^r(a+cx) & -(a+cx)(x\ell^{r-k}) + (b+dx)\ell^{n-r-k}   \\ 
c\ell^{k+r} & 1+d\ell^{n-r}-cx\ell^r \end{pmatrix}.
\end{aligned}
\end{equation}
We are free to reparametrize, replacing $a,b \leftarrow a+cx,b+dx$ to get
\begin{equation} 
= \begin{pmatrix} 1 + \ell^r a & b\ell^{n-r-k}-ax\ell^{r-k}    \\ 
c\ell^{k+r} & 1+d\ell^{n-r}-cx\ell^r \end{pmatrix}.
\end{equation}
Since $n-r \leq r$, then we recognize the group $G(n;r,n-r-k)$.  Putting these together gives the result.
\end{proof}

Recall that the $\ell$-\defi{isogeny graph} of $E$ has as vertices the set of curves $\ell$-power isogenous to $E$ up to isomorphism and (undirected) edges are $\ell$-isogenies.  Proposition \ref{prop:cleanCvrall} provides a description of the $\ell$-isogeny graph of $E$ when $\rho_{E,\ell}(\Gal_K)=G(n;r,n-r)$ (depending essentially only on $n$)---a nontrivial path in the graph is a cyclic $\ell$-power isogeny.  Here are a few illustrative examples.

\begin{exm} \label{exm:isoggraph}
Suppose $\rho_{E,\ell}(\Gal_K)=G(n;r,n-r)$ with $n \geq 1$, and without loss of generality suppose $r \leq n-r$.  

If $r=0$, then $E$ has Galois image $G(n;0,n)$, and the isogeny graph consists of a chain of $n+1$ vertices with Galois images $G(n;0,n),G(n;0,n-1),\dots,G(n;0,0)$; the kernels of these isogenies are cyclic subgroups of $E[\ell^\infty](K) \simeq \Z/\ell^n$.  

For Galois image $G(2;1,1)$ ($n=2$ and $r=1$), there are $\ell+1$ vertices adjacent to $E$ with Galois image $G(2;1,0)$.  
\end{exm}

We conclude this section by a study of curves with Galois image (contained in) $G(n;r,s)$, building on Lemma \ref{lem:cleantelnrs}.

\begin{lem} \label{lem:charimage}
Suppose $\rho_{E,\ell}(\Gal_K) \leq G(n;r,s)$ with $0 \leq r,s,r+s \leq n$, and if $\ell=2$ suppose that $(r,s) \neq (0,0)$.  Then the following statements hold.
\begin{enumalph}
\item \label{charimagepart1} 
If $\rho_{E,\ell}(\Gal_K)=G(n;r,s)$, then $P_{1,r},P_{2,s}$ generate $E[\ell^\infty](K)$; in particular, we have $E[\ell^\infty](K) \simeq \Z/\ell^r \times \Z/\ell^s$.  
\item Suppose $\rho_{E,\ell}(\Gal_K) \leq G(n;r,s)$.  Then, for all $t$ such that $s \leq t \leq n-r$, there exists a cyclic $\ell^{t-s}$ isogeny $E \to E'$ over $K$ such that if $P_1,P_2$ is a basis for $T_\ell E$, then
$$
\rho_{E',\ell}(\Gal_K) \leq G(n;r,t)
$$
in the basis $\ell^{s-t}P_1, P_2$ for $T_\ell E'$ (in $V_\ell E$).
\item The elliptic curve $E$ admits a cyclic $\ell^{n-(r+s)}$-isogeny $E \to E'$ over $K$ with $\ell^n \mid \#E'(K)\sbtors$.
\end{enumalph}
\end{lem}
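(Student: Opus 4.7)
The plan is to handle parts (a), (b), (c) in order; part (b) contains the main technical work, part (c) is a direct corollary, and part (a) reduces to a fixed-point computation.

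For (a), I would prove both inclusions of $E[\ell^\infty](K)$ separately. For the inclusion $\langle P_{1,r}, P_{2,s}\rangle \subseteq E[\ell^\infty](K)$: since $r+s \leq n$ we have the matrix containment $G_\ell(n;r,s) \subseteq G_\ell(r+s;r,s)$, and so Lemma~\ref{lem:cleantelnrs}(b) applies and gives the claim directly. For the reverse inclusion, identify $E[\ell^\infty](\Kbar) \cong (\Q_\ell/\Z_\ell)^2$ in the basis $(P_1,P_2)$, so that $E[\ell^\infty](K)$ is the fixed set of the action of $G(n;r,s)$. Testing fixed-point conditions against the unipotents $\begin{pmatrix} 1 & \ell^s \\ 0 & 1 \end{pmatrix}$ and $\begin{pmatrix} 1 & 0 \\ \ell^{n-s} & 1 \end{pmatrix}$ forces $\ell^s x_2 = \ell^{n-s} x_1 = 0$, and testing against a diagonal $\operatorname{diag}(u,u')$ with $u-1$ of $\ell$-adic valuation exactly $r$ (and $u'-1$ of valuation exactly $n-r$) forces $\ell^r x_1 = 0$; combining yields $(x_1,x_2) \in \Z/\ell^r \times \Z/\ell^s$. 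The hypothesis $(\ell,r,s)\neq(2,0,0)$ is exactly what guarantees a suitable diagonal test element exists, avoiding the degeneracy of Example~\ref{exm:meq2doh}.

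For (b), the key construction is a Galois-stable cyclic subgroup $C \ddef \langle P_{1,t-s}\rangle \leq E[\ell^{t-s}](\Kbar)$ of order $\ell^{t-s}$. Stability: the bottom-left entry of any element of $G(n;r,s)$ lies in $\ell^{n-s}\Z_\ell$, and the hypothesis $t \leq n-r$ gives $n-s \geq r + (t-s) \geq t-s$, so modulo $\ell^{t-s}$ the Galois action is upper-triangular in the basis $(P_{1,t-s}, P_{2,t-s})$ and preserves $C$. Let $\varphi \colon E \to E' \ddef E/C$, a cyclic $K$-rational $\ell^{t-s}$-isogeny. Under $\varphi_*\colon V_\ell E \xrightarrow{\sim} V_\ell E'$, a basis of $T_\ell E'$ is $(\ell^{s-t}P_1, P_2)$ (since $\varphi_*(T_\ell E)$ sits with index $\ell^{t-s}$ in $T_\ell E'$ along the first coordinate). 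Passing to this basis conjugates $\rho_{E,\ell}$ by $\operatorname{diag}(\ell^{t-s},1)$, and the direct calculation
\[
\operatorname{diag}(\ell^{t-s},1) \begin{pmatrix} 1+\ell^r a & \ell^s b \\ \ell^{n-s}c & 1+\ell^{n-r}d \end{pmatrix} \operatorname{diag}(\ell^{s-t},1) = \begin{pmatrix} 1+\ell^r a & \ell^t b \\ \ell^{n-t}c & 1+\ell^{n-r}d \end{pmatrix}
\]
shows the conjugate of $G(n;r,s)$ lands in $G(n;r,t)$, as required.

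For (c), specialize (b) to $t \ddef n-r$; this is admissible because $s \leq n-r$ by hypothesis. The resulting isogeny is cyclic of degree $\ell^{(n-r)-s} = \ell^{n-(r+s)}$, and its image satisfies $\rho_{E',\ell}(\Gal_K) \leq G(n;r,n-r) = G_\ell(r+(n-r);\,r,\,n-r)$. Applying Lemma~\ref{lem:cleantelnrs}(b) to $E'$ in the new basis then produces a $K$-rational subgroup of $E'[\ell^\infty]$ isomorphic to $\Z/\ell^r \times \Z/\ell^{n-r}$, which has order $\ell^n$, so $\ell^n \mid \#E'(K)\sbtors$. The principal obstacle throughout is the bookkeeping of bases and matrix conjugations in (b); once $C$ and the conjugating diagonal matrix are correctly identified, (c) is immediate, and (a) requires only minor care for the $\ell=2$ edge case handled by the explicit exclusion.
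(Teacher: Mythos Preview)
Your approach matches the paper's essentially line for line: for (a), both compute the fixed points by testing against explicit diagonal and unipotent elements of $G(n;r,s)$ (the paper works modulo $\ell^n$, you in $(\Q_\ell/\Z_\ell)^2$, which amounts to the same thing); for (b), both construct the isogeny as the quotient by the cyclic subgroup $\langle P_{1,t-s}\rangle$ (the paper parametrizes by $u=n-t+s$ instead of $t$) and recover the new Galois image by conjugating with $\operatorname{diag}(\ell^{t-s},1)$; and (c) is in both cases the specialization $t=n-r$ of (b) combined with Lemma~\ref{lem:cleantelnrs}(b).
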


\begin{proof}
We prove (a), and let $P_{1,n},P_{2,n} \in E[\ell^n](\Kbar)$ be the $n$th coordinates of the chosen basis for $T_\ell E$, and consider a point $P \colonequals x_1 P_{1,n} + x_2 P_{2,n} \in E[\ell^n](\Kbar)$ with $x_1,x_2 \in \Z/\ell^n$.  Of course $P \in E[\ell^n](K)$ if and only if $(g-1)(P) \equiv 0 \pmod{\ell^n}$ for all $g \in G(n;r,s)$.  If $P \in E[\ell^n](K)$, then taking diagonal matrices shows that $\ell^{n-r} \mid x_1$ and $\ell^{r} \mid x_2$; since $r+s \leq n$, we have $s \leq n-r$ so $\ell^{s} \mid x_1$ and similarly $\ell^{n-s} \mid x_2$.  Conversely, 
\begin{equation}
\begin{pmatrix}
\ell^r a & \ell^s b \\
\ell^{n-s} c & \ell^{n-r} d 
\end{pmatrix} 
\begin{pmatrix} \ell^{n-r} \\ \ell^{n-s} \end{pmatrix} 
\equiv 0 \pmod{\ell^n}
\end{equation}
so $E[\ell^n](K)=\langle \ell^{n-r} P_1, \ell^{n-s} P_2 \rangle \simeq \Z/\ell^r \times \Z/\ell^s$, proving (a).

Next, part (b).  Let $u \in \Z$ satisfy $s \leq u \leq n$.  A similar argument in coordinates as in the previous paragraph shows $\ell^u P_{1,n}$ generates a Galois-stable subgroup of $E(K)$.  Let $E' \colonequals E/\langle \ell^u P_{1,n} \rangle$, so that the quotient map $E \to E'$ defines a cyclic $\ell^{n-u}$-isogeny.  Conjugating as in \eqref{eqn:rhoEell} shows that $\rho_{E',\ell}(\Gal_K) = G(n;r,s+n-u)$.  Restricting $u$ to range over $r+s \leq u \leq n$, the image $\rho_{E',\ell}(\Gal_K)$ ranges over $G(n;r,t)$ with for $s \leq t \leq n-r$, with $n-u=t-s$.  

Finally, for (c), take $t=n-r$ in part (b).
\end{proof}

\subsection{Refining the theorem of Katz} \label{sec:refinement}

In this section, we refine the result of Katz (mentioned in the introduction), which we now recall.  

\begin{thm}[{Katz \cite{katz}}] \label{thm:katz}
Let $n \geq 1$.  Suppose that $\ell^n \mid \#E(\Fp)$ for a set of good primes of $K$ of density $1$.  Then there exists an elliptic curve $E'$ over $K$ that is $K$-isogenous to $E$ and a $\Z_\ell$-basis of $T_\ell E' \simeq \Z_\ell^2$ such that
\begin{equation} \label{eqn:rhoEpell}
\rho_{E',\ell}(\Gal_K) \leq G(n;r,n-r) 
\end{equation}
for some integer $0 \leq r \leq n$.  In particular, $\ell^n \mid \#E'(K)\sbtors$.
\end{thm}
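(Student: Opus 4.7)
The plan is to bootstrap Katz's original theorem---which produces an isogenous curve with rational $\ell^n$-torsion but says nothing about the shape of the Galois image---by interpreting the resulting torsion through Lemma \ref{lem:cleantelnrs}. The hypothesis $\ell^n\mid\#E(\Fp)$ on a density one set of good primes translates, via Chebotarev and \eqref{eqn:detrhoE}, into $\det(1-\rho_{E,\ell}(\sigma))\equiv 0\pmod{\ell^n}$ for every $\sigma\in\Gal_K$, which is the group-theoretic hypothesis with which Katz works. So the first step is to invoke Katz's theorem (in its natural extension from $\Q$ to the number field $K$): this produces an elliptic curve $E'/K$ that is $K$-isogenous to $E$ and satisfies $\ell^n\mid\#E'(K)\sbtors$.

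The remaining content is a short structural manipulation. Write $E'[\ell^\infty](K)\simeq \Z/\ell^{r_0}\times\Z/\ell^{s_0}$ with $0\le r_0\le s_0$, reindexing via \eqref{eqn:swapbasis} if necessary. Lemma \ref{lem:cleantelnrs}(a) furnishes a clean basis $P_1,P_2$ for $T_\ell E'$ in which
\[ \rho_{E',\ell}(\Gal_K)\le G(r_0+s_0;\,r_0,s_0), \]
and the torsion divisibility $\ell^n\mid\ell^{r_0+s_0}$ forces $r_0+s_0\ge n$. Setting $r\ddef\min(r_0,n)$, so that $0\le r\le n$, I would then verify entry by entry from the definition \eqref{eqn:Gellrs} that
\[ G(r_0+s_0;\,r_0,s_0)\le G(n;\,r,n-r); \]
the only nonobvious comparison is $\ell^{s_0}\Z_\ell\subseteq\ell^{n-r_0}\Z_\ell$ in the upper-right entry, which holds precisely because $r_0+s_0\ge n$ (the other three entries either match or are immediate from $r_0\le n$).

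The only piece of bookkeeping is the degenerate case $r_0>n$: here $E'[\ell^n](\Kbar)=E'[\ell^n](K)$, so $\rho_{E',\ell}(\Gal_K)$ lands inside the kernel $I+\ell^n M_2(\Z_\ell)$, which sits inside $G(n;\,n,0)$; the conclusion then holds with $r=n$. The ``in particular'' clause is automatic because the construction started from Katz's guarantee that $\ell^n\mid\#E'(K)\sbtors$. There is no serious obstacle here---the real input is Katz's theorem, and the refinement is a matter of repackaging the information supplied by the clean basis into the specific matrix shape $G(n;r,n-r)$.
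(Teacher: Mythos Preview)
Your argument is correct. You invoke Katz's theorem in its weakest form---producing an isogenous $E'$ with $\ell^n \mid \#E'(K)\sbtors$---and then recover the matrix shape $G(n;r,n-r)$ by choosing a clean basis for $T_\ell E'$ and checking the entrywise containment $G(r_0+s_0;r_0,s_0) \le G(n;r,n-r)$ with $r=\min(r_0,n)$. That containment check is sound (and your separate treatment of $r_0>n$ is redundant, since the general verification already covers it).

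The paper takes a slightly different route: rather than treating Katz's theorem as a black box yielding only torsion, it unpacks Katz's actual argument. Katz's group-theoretic Theorem~1 produces $G$-stable lattices $\mathcal{L}' \subseteq \mathcal{L} \subseteq V_\ell E$ with $\mathcal{L}/\mathcal{L}'$ of order $\ell^n$ and trivial $G$-action; the paper observes that this is \emph{equivalent} to the existence of a basis of $\mathcal{L}'$ in which the image lies in $G(n;r,n-r)$. One then takes $E'$ to be the curve with $T_\ell E' = \mathcal{L}'$. So in the paper's presentation the matrix shape is already implicit in Katz's lattice construction, and the torsion conclusion is the consequence; in yours the torsion is the input and the matrix shape is deduced afterward via Lemma~\ref{lem:cleantelnrs}. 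Your approach has the virtue of using Katz only as a black box, at the cost of a small extra lemma; the paper's approach makes clear that no new content beyond Katz is needed.
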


\begin{proof}
We briefly review the method of proof for the reader's convenience.  (Some details of the argument are explained in the next section.)  Let $V$ be a 2-dimensional $\Ql$-vector space and let $G \leq \Aut(V)$ be a compact open subgroup.  By an inductive group-theoretic argument, Katz \cite[Theorem 1]{katz} shows that if 
$$
\det(1-g) \equiv 0 \pmod{\ell^n}
$$
holds for all $g \in G$, then there exist $G$-stable lattices $\mathcal{L}' \subseteq\mathcal{L} \subseteq V$ such that the quotient $\mathcal{L}/\mathcal{L}'$ has order $\ell^n$ and trivial $G$-action; equivalently, there exists a $\Ql$-basis of $V$ such that $G \leq G(n;r,n-r)$ for some integer $0 \leq r \leq n$.  

We then apply the preceding paragraph to elliptic curves \cite[Theorem 2]{katz}.  We take $V = T_\ell E \otimes {\Q_\ell}$ and $G=\Gal_K$; then $\mathcal{L}'=T_{\ell}(E')$ for some elliptic curve $E'$ over $K$ that is $K$-isogenous to $E$, and 
\begin{equation} 
\mathcal{L}/\mathcal{L}' \subseteq \ell^{-n}\mathcal{L'}/\mathcal{L'} \simeq \mathcal{L}'/\ell^n \mathcal{L}' \simeq E'[\ell^n] 
\end{equation} is a subgroup of $K$-rational torsion points of $E'$ (see \cite[Introduction]{katz} for a review of Galois-stable lattices of the Tate module).
\end{proof}

\begin{lem} \label{lem:phicycl}
Under the hypotheses of Theorem \textup{\ref{thm:katz}}, the isogeny $\varphi\colon E \to E'$ may be taken to be a cyclic $\ell$-power isogeny.
\end{lem}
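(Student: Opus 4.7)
The plan is to apply Theorem \ref{thm:katz} to produce some isogeny $\varphi\colon E \to E'$, observe that $\varphi$ automatically has $\ell$-power degree (since it arises from a sublattice of the $\ell$-adic Tate module of index $\ell^n$), and then factor out multiplication-by-$\ell$ maps from the source to extract the cyclic part.

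In more detail, the proof of Theorem \ref{thm:katz} constructs a Galois-stable sublattice $\mathcal{L}' = T_\ell E' \subseteq \mathcal{L} = T_\ell E$ of index $\ell^n$. Standard theory identifies such a sublattice with an $\ell$-power isogeny between $E$ and $E'$ in one direction; taking the dual if necessary, we obtain $\varphi\colon E \to E'$ with $\deg \varphi$ an $\ell$-power. Write $\ker \varphi \cong \Z/\ell^a \times \Z/\ell^b$ as an abelian group with $a \geq b \geq 0$. If $b = 0$ then $\varphi$ is already cyclic and there is nothing to do; otherwise the $\ell^b$-torsion subgroup of $\ker \varphi$ has order $\ell^{2b} = \#E[\ell^b]$, and combined with the inclusion $(\ker \varphi)[\ell^b] \subseteq E[\ell^b]$ we conclude $E[\ell^b] \subseteq \ker \varphi$.

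By the universal property of the quotient $[\ell^b]\colon E \to E$, the isogeny $\varphi$ then factors as $\varphi = \psi \circ [\ell^b]$, where $\psi\colon E \to E'$ is an isogeny with $\ker \psi \cong (\ker \varphi)/E[\ell^b] \cong \Z/\ell^{a-b}$, and hence is cyclic. Replacing $\varphi$ by $\psi$ yields the desired cyclic $\ell$-power isogeny $E \to E'$; since the target $E'$ is unchanged, all conclusions of Theorem \ref{thm:katz} still apply, and Galois-equivariance of the factorization is automatic because $E[\ell^b]$ is itself Galois-stable as the full $\ell^b$-torsion of $E$. The main obstacle, such as it is, lies in the bookkeeping of the first step: tracking the precise relationship between the sublattice inclusion $\mathcal{L}' \subseteq \mathcal{L}$ produced by Katz and the direction of the corresponding isogeny between $E$ and $E'$ (which may require invoking the dual isogeny to align conventions). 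Beyond this, the factorization is entirely formal.
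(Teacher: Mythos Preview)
Your proof is correct and follows the same approach as the paper: factor out multiplication-by-$\ell$ maps to extract a cyclic isogeny with the same target $E'$. One small imprecision: in Katz's construction the lattice $\mathcal{L}$ need not equal $T_\ell E$ itself (only $\mathcal{L}' = T_\ell E'$ is identified), so the index of $T_\ell E'$ in $T_\ell E$ need not be exactly $\ell^n$---but your conclusion that $\deg\varphi$ is an $\ell$-power still holds, since both are lattices in the same $V_\ell E$. The paper sidesteps this bookkeeping by starting from an \emph{arbitrary} isogeny $E \to E'$ and first stripping off the prime-to-$\ell$ part (which preserves $\rho_{E',\ell}$); it then factors $[\ell^k]$ on the target side rather than the source, but either side works.
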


\begin{proof}
Given any isogeny $\varphi\colon E \to E'$, we may factor $\varphi$ into first an isogeny of $\ell$-power degree then an isogeny of degree coprime to $\ell$.  The latter isogeny preserves the image of $\rho_{E',\ell}$, so we may assume $\varphi$ has $\ell$-power degree.  The resulting isogeny factors as a cyclic $\ell$-power isogeny followed by multiplication by a power of $\ell$, and again the latter preserves the image of $\rho_{E',\ell}$, so the conclusion follows.
\end{proof}

To refine Theorem \ref{thm:katz}, we identify the image of $\rho_{E,\ell}$ by following the isogeny guaranteed by Lemma \ref{lem:phicycl}.  In general, one can say little more than $E$ is isogenous to $E'$!  The following lemma is the starting point for Katz, as it is for us.

\begin{lem} \label{serre_exercise}
Let $k$ be a field, let $V$ be a $k$-vector space with $\dim_k V=2$, and let $G \leq \GL(V)$ be a subgroup.  Suppose that $\det(1-g)=0$ for all $g \in G$.  Then there exists a basis of $V \simeq k^2$ such that $G \leq \GL_2(k)$ is contained one of the subgroups
\[ \begin{pmatrix} 1 & k \\ 0 & k^\times \end{pmatrix} \quad \text{or} \quad \begin{pmatrix} k^\times & k \\ 0 & 1 \end{pmatrix}. \]
\end{lem}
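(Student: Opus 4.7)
The plan is a two-step argument: first show that $G$ preserves some line $L \subseteq V$, then case-split based on whether $G$ acts trivially on $L$ or on $V/L$.

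For step one, the hypothesis $\det(1-g) = 0$ implies, when $g \neq I$, that $g - I$ has rank exactly one, so we may write $g - I = v_g \otimes \phi_g$ with $v_g \in V \setminus \{0\}$ and $\phi_g \in V^* \setminus \{0\}$; in particular $\ker \phi_g$ is the $1$-eigenspace of $g$. Computing $\tr((g-I)(h-I))$ in two ways---once by expanding and using $\tr g = 1 + \det g$ (forced by the characteristic polynomial factoring as $(x-1)(x-\det g)$), and once via the rank-one decomposition---gives the key identity
\[ \phi_g(v_h)\, \phi_h(v_g) = (1-\det g)(1-\det h) \]
for all $g, h \in G \setminus \{I\}$. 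Either all image lines $\langle v_g \rangle$ coincide with a single line $L$---in which case $L$ is automatically $G$-invariant via $h \cdot \im(g-I) = \im(hgh^{-1}-I) = L$---or there exist $g_1, g_2 \in G \setminus \{I\}$ with $v_{g_1}, v_{g_2}$ linearly independent, which we take as a basis. Writing the matrices of $g_1, g_2$ explicitly in this basis and applying the identity to the pair $(g_1, g_2)$ gives a relation among their parameters; applying it then to $(g_i, h)$ for each $h \in G \setminus \{I\}$ forces $\phi_h$ to share its kernel with $\phi_{g_1}$ (after separately treating the degenerate case when both $g_1, g_2$ are unipotent, which yields a direct contradiction via the $1$-eigenvalue condition on $g_1 g_2 \in G$, and the mixed unipotent/non-unipotent case, which forces a common coordinate line to be fixed pointwise by every element of $G$). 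In either outcome, a $G$-invariant line $L \subseteq V$ emerges.

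For step two, $G$ acts on $L$ via a character $\chi \colon G \to k^\times$ and on $V/L$ via a character $\psi \colon G \to k^\times$. The eigenvalues of each $g \in G$ are $\chi(g)$ and $\psi(g)$, and the hypothesis forces one of them to equal $1$, so $G = \ker \chi \cup \ker \psi$. Since a group is a union of two subgroups only when one contains (and so equals) the other, we conclude that either $\chi \equiv 1$ (yielding the first displayed form in a basis with $e_1$ spanning $L$) or $\psi \equiv 1$ (yielding the second). The main obstacle is step one, the reducibility of the representation, where the trace identity above is essential and some bookkeeping is needed for the unipotent edge cases; step two is then a clean group-theoretic dichotomy.
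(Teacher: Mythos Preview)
Your proof is correct. The paper itself does not give a self-contained argument but simply cites Serre's exercise and, for perfect $k$, Katz's proof via the Brauer--Nesbitt theorem. The Brauer--Nesbitt route is quick: from $\det(1-g)=0$ one reads off $\tr g = 1 + \det g$ for all $g\in G$, so $V$ and $\mathbf{1}\oplus\det$ have the same traces, hence the same semisimplification; thus $V$ has a composition series with factors $\mathbf{1}$ and $\det$, and which one is the subobject gives the two cases. Your approach trades this representation-theoretic input for a direct rank-one calculation, the key point being the identity $\phi_g(v_h)\phi_h(v_g)=(1-\det g)(1-\det h)$ obtained by computing $\tr\bigl((g-I)(h-I)\bigr)$ two ways. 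This is more hands-on---the unipotent edge cases do require the bookkeeping you flag---but it has the genuine advantage of working over an arbitrary field, not just a perfect one, which is exactly the generality the lemma claims. Your Step~2, reducing to the fact that a group cannot be the union of two proper subgroups, is the same dichotomy that falls out of the composition-series argument, so the two proofs reconverge once an invariant line is in hand.
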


\begin{proof}
See Serre \cite[p.~I-2, Exercise 1]{serre}; when $k$ is perfect, see the proof by Katz \cite[Lemma 1, p.~484]{katz} using the Brauer--Nesbitt theorem.  
\end{proof}

\begin{cor} \label{cor:EKell}
If $\ell \mid \#E(\Fp)$ for a set of primes of $K$ of density $1$, then at least one of the following holds:
\begin{enumroman}
\item $E(K)[\ell] \neq \{\infty\}$; or
\item there is a cyclic $\ell$-isogeny $E \to E'$ over $K$ where $E'(K)[\ell] \neq \{\infty\}$.
\end{enumroman}
\end{cor}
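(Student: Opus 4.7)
The plan is to feed the mod-$\ell$ representation $\overline{\rho}_{E,\ell}$ through Lemma \ref{serre_exercise} and read off each alternative directly from the resulting matrix shape. The first step is to invoke the Chebotarev density theorem together with the criterion \eqref{eqn:detrhoE} (specialized to $n=1$) to convert the density-one hypothesis into the group-theoretic statement that $\det(1-g) \equiv 0 \pmod{\ell}$ for every $g$ in the image of $\overline{\rho}_{E,\ell}\colon \Gal_K \to \GL_2(\F_\ell)$. Applying Lemma \ref{serre_exercise} with $k=\F_\ell$ and $V=E[\ell](\Kbar)$ then produces a basis $P_1,P_2$ of $E[\ell](\Kbar)$ in which the image of $\overline{\rho}_{E,\ell}$ lies in one of the two prescribed upper-triangular subgroups.

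In the case where the image lies in $\begin{pmatrix} 1 & \F_\ell \\ 0 & \F_\ell^\times \end{pmatrix}$, every $\sigma \in \Gal_K$ fixes $P_1$, so $P_1 \in E(K)[\ell] \setminus \{\infty\}$ and alternative (i) holds. In the case where it lies in $\begin{pmatrix} \F_\ell^\times & \F_\ell \\ 0 & 1 \end{pmatrix}$, the line $\langle P_1 \rangle \subseteq E[\ell](\Kbar)$ is $\Gal_K$-stable, so the quotient map $\varphi \colon E \to E' \colonequals E/\langle P_1 \rangle$ is a cyclic $\ell$-isogeny defined over $K$. Since $\sigma(P_2) - P_2 \in \langle P_1 \rangle = \ker \varphi$ for every $\sigma$, the image $\varphi(P_2) \in E'[\ell]$ is Galois-fixed, and it is nontrivial because $P_2 \notin \ker \varphi$; this gives alternative (ii).

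The whole argument is essentially bookkeeping once Lemma \ref{serre_exercise} is in hand: the main step is the reduction to the lemma, and the only small thing to verify is the nontriviality of $\varphi(P_2)$ in case (ii), which is immediate from $P_2 \notin \langle P_1 \rangle$. There is no real obstacle here; this corollary serves as the $n=1$ base case that motivates the finer refinement for general $n$ carried out in the remainder of \S\ref{sec:refinement}.
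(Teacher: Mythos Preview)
Your proof is correct and follows essentially the same approach as the paper: apply Lemma~\ref{serre_exercise} with $k=\F_\ell$, $V=E[\ell](\Kbar)$, and $G=\overline{\rho}_{E,\ell}(\Gal_K)$, then read off alternatives (i) and (ii) from the two upper-triangular shapes. Your version is slightly more detailed in justifying that $\varphi(P_2)$ is Galois-fixed and nontrivial in case (ii), but the argument is the same.
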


\begin{proof}
Apply Lemma \ref{serre_exercise} with $k=\F_\ell$ and $V=E[\ell]$, and $G=\overline{\rho}_{E,\ell}(\Gal_K)$.  
For the first subgroup we are in case (i); for the second, the basis $P_1,P_2$ provided by the lemma gives an $\ell$-isogenous curve $E' \colonequals E/\langle P_1 \rangle$ over $K$ with the image of $\langle P_2 \rangle$ invariant under $G$, so we are in case (ii).
\end{proof}

In other words, Corollary \ref{cor:EKell} says that when $n=1$, we may take the isogeny $\varphi\colon E \to E'$ provided by Lemma \ref{lem:phicycl} to have degree dividing $\ell$; in particular, this proves a refinement of Theorem \ref{thm:katz} for $n=1$.

We now seek to generalize Corollary \ref{cor:EKell} to the prime power case $m=\ell^n$.  We start by considering the case where the degree of the isogeny $\varphi\colon E \to E'$ provided by Lemma \ref{lem:phicycl} is large.

\begin{lem} \label{mubound}
Let $\varphi \colon E \to E'$ be a cyclic $\ell^k$-isogeny over $K$ such that $\ell^n \mid \#E'(K)\sbtors$.  Suppose that $k \geq n$.  Then there is a $\Z_\ell$-basis for $T_\ell E \simeq \Z_\ell^2$ such that
\begin{equation} \label{eqn:rhoEellnr0}
\rho_{E,\ell}(\Gal_K) \leq G(n;r,0) = \begin{pmatrix} 1 + \ell^r \Zl & \Zl \\ \ell^{n} \Zl & 1+ \ell^{n-r} \Zl\end{pmatrix}
\end{equation}
for some integer $0 \leq r \leq n$.  In particular, there exists a cyclic $\ell^{n-r}$-isogeny $\psi\colon E \to E''$ such that $\ell^n \mid \#E''(K)\sbtors$ and $\rho_{E'',\ell}(\Gal_K) \leq G(n;r,n-r)$. 
\end{lem}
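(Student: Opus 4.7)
The plan is to exploit the $\Gal_K$-stable cyclic subgroup $\ker \varphi$ by working in a basis of $T_\ell E$ adapted to it, then convert the rational $\ell^n$-torsion condition on $E'$ into congruences on the diagonal characters of $\rho_{E,\ell}$ via a fixed-point count on $E'[\ell^n]$.

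First, pick a $\Z_\ell$-basis $(Q_1, Q_2)$ of $T_\ell E$ such that $Q_{1,k}$ generates $\ker \varphi$; this is possible because $\ker \varphi$ is a cyclic subgroup of $E[\ell^k]$ of order $\ell^k$.  The $\Gal_K$-stability of $\ker \varphi$ forces
\[ \rho_{E,\ell}(g) = \begin{pmatrix} \alpha_g & \beta_g \\ \ell^k \gamma_g & \delta_g \end{pmatrix} \]
in this basis, with $\alpha_g, \delta_g \in \Z_\ell^\times$.  The conjugation computation from the proof of Proposition \ref{prop:cleanCvrall} shows that in the corresponding basis $(\ell^{-k} Q_1, Q_2)$ of $T_\ell E'$ (inside $V_\ell E$) we have $\rho_{E',\ell}(g) = \begin{pmatrix} \alpha_g & \ell^k \beta_g \\ \gamma_g & \delta_g \end{pmatrix}$; since $k \geq n$, the off-diagonal term $\ell^k \beta_g$ vanishes modulo $\ell^n$, so $\overline{\rho}_{E',\ell^n}(g)$ is \emph{lower} triangular.

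Next, extract $r$ by a fixed-point count on $E'[\ell^n]$.  Let $r \in \{0,1,\dots,n\}$ be the largest integer with $\alpha(g) \equiv 1 \pmod{\ell^r}$ for all $g$, and let $s$ be defined analogously for $\delta$.  A point $(x,y) \in E'[\ell^n]$ is $K$-rational iff $(\bar\alpha_g - 1) x \equiv 0$ and $(\bar\delta_g - 1) y \equiv -\bar\gamma_g x \pmod{\ell^n}$ for every $g$.  The admissible $x$ form a subgroup $X \subseteq \Z/\ell^n$ of order $\ell^r$ (the common annihilator of the $\bar\alpha_g - 1$), and for each such $x$ the compatible $y$ form either the empty set or a coset of $Y_0 \colonequals \bigcap_g \ker(\bar\delta_g - 1)$, which has order $\ell^s$.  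Hence
\[ \ell^n \leq \#E'(K)[\ell^n] \leq |X| \cdot |Y_0| = \ell^{r+s}, \]
so $r + s \geq n$.  Therefore $\alpha(g) \equiv 1 \pmod{\ell^r}$ and $\delta(g) \equiv 1 \pmod{\ell^{n-r}}$ for all $g \in \Gal_K$.

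Combining, since $\ell^k \gamma_g \in \ell^n \Z_\ell$ (as $k \geq n$), we conclude $\rho_{E,\ell}(\Gal_K) \leq G(n; r, 0)$, proving the main containment.  For the ``in particular'' statement, the upper-triangular shape of $\rho_{E,\ell}$ modulo $\ell^{n-r}$ (valid because $k \geq n \geq n - r$) makes $\langle Q_{1, n-r}\rangle$ a $\Gal_K$-stable cyclic subgroup of $E$ of order $\ell^{n-r}$, so the quotient $\psi \colon E \to E'' \colonequals E/\langle Q_{1, n-r}\rangle$ is a cyclic $\ell^{n-r}$-isogeny over $K$.  Conjugating $\rho_{E,\ell}$ by $\begin{pmatrix} \ell^{n-r} & 0 \\ 0 & 1 \end{pmatrix}$ (as in Proposition \ref{prop:cleanCvrall}) in the basis $(\ell^{-(n-r)} Q_1, Q_2)$ of $T_\ell E''$ yields $\rho_{E'',\ell}(\Gal_K) \leq G(n; r, n-r)$, and Lemma \ref{lem:cleantelnrs}(b) then delivers $E''(K)[\ell^\infty] \supseteq \Z/\ell^r \times \Z/\ell^{n-r}$, so $\ell^n \mid \#E''(K)\sbtors$.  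The technical heart of the argument is the fixed-point count, which plays the role of an abelian analog of Katz's inductive group-theoretic argument; it is made tractable (uniformly in $\ell$, including $\ell=2$) by the fact that the lower-triangular reduction on $E'$ comes for free from the hypothesis $k \geq n$.
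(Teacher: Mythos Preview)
Your proof is correct and takes a genuinely different route from the paper's.

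Both arguments begin the same way: choose a basis of $T_\ell E$ adapted to the Galois-stable cyclic kernel, so that $\overline{\rho}_{E,\ell^n}$ is triangular. From there the two diverge. The paper never passes to $E'$: it uses only the isogeny-invariant consequence $\det(1-g)\equiv 0\pmod{\ell^n}$, factors this as $(1-a)(1-d)\equiv 0$ in the triangular form, picks $g$ minimizing $r=\ord_\ell(1-a)$, and then handles elements $g'$ with larger $\ord_\ell(1-a')$ by a multiplicative trick (apply the determinant condition to $gg'$). Your approach instead realizes $\overline{\rho}_{E',\ell^n}$ explicitly by conjugation, observes it is lower-triangular modulo $\ell^n$, and bounds $\#E'(K)[\ell^n]\leq \ell^{r+s}$ by a fixed-point count over the two diagonal characters. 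Combined with $\#E'(K)[\ell^n]\geq \ell^n$, this forces $s\geq n-r$ in one stroke, with no need for the $gg'$ step.

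What each buys: the paper's argument uses only the determinant condition, so it would apply verbatim under the a priori weaker hypothesis ``$\ell^n\mid\#E'(\Fp)$ for density-one $\frakp$''. Your argument uses the actual rational torsion on $E'$ and is consequently cleaner---the inequality $\ell^n\leq\ell^{r+s}$ replaces the case analysis---and is uniform in $\ell$. For the ``in particular'' clause the paper simply invokes Lemma~\ref{lem:charimage}(b) with $s=0$; your direct conjugation and appeal to Lemma~\ref{lem:cleantelnrs}(b) amount to the same thing. One small citation caveat: in Proposition~\ref{prop:cleanCvrall} the kernel is generated by the \emph{second} basis vector, whereas in your setup it is the first, so the conjugating diagonal matrix differs by a swap; your displayed formula for $\rho_{E',\ell}(g)$ is correct as written, but strictly speaking you are doing the symmetric computation rather than quoting that proposition verbatim.
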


\begin{proof}
By hypothesis, there is a cyclic subgroup $C_k \leq E(\Kbar)$ stable under $\Gal_K$ of order $\ell^k$.  Since $k \geq n$, the subgroup $\ell^{k-n}C_k \leq E(\Kbar)$ is also $\Gal_K$-stable and order $\ell^n$.  Extending to a basis for $E[\ell^n](\Kbar)$, we have
\begin{equation} \label{eqn:GrhoellnK}
G \colonequals \overline{\rho}_{E,\ell^n}(\Gal_K) \leq \begin{pmatrix} * & * \\ 0 & * \end{pmatrix}.
\end{equation}
The containment \eqref{eqn:rhoEellnr0} is determined by reduction modulo $\ell^n$, so equivalently we show 
\begin{equation} \label{eqn:1rzldns}
G \leq \begin{pmatrix} 1+\ell^r\Z/\ell^n & * \\ 0 & 1+\ell^{n-r}\Z/\ell^n \end{pmatrix} 
\end{equation}
for some $r$.

Since $\ell^n \mid \#E'(K)\sbtors$, as in \eqref{eqn:detrhoE} we conclude that $\det(1-g) \equiv 0 \psmod{\ell^n}$ for all $g \in G$.  Let $g=\begin{pmatrix} a & b \\ 0 & d \end{pmatrix} \in G$ be such that $r \colonequals \ord_\ell(1-a)$ minimal, so that $0 \leq r \leq n$.
Then
\begin{equation} \label{eqn:detcond}
\det(1-g)=(1-a)(1-d) \equiv 0 \pmod{\ell^n} 
\end{equation}
gives $d \equiv 1 \psmod{\ell^{n-r}}$, which is a start.  To finish, let $g'=\begin{pmatrix} a' & b' \\ 0 & d' \end{pmatrix} \in G$ be any element, and let $r' \colonequals \ord_\ell(1-a) \geq r$.  Then $\ord_\ell(1-d') \geq n-r'$ as in \eqref{eqn:detcond}, so if $r'=r$ we are done.  So suppose $r'>r$.  Consider the determinant condition on $gg'$, which reads
\begin{equation}
\det(1-gg') = (1-aa')(1-dd') \equiv 0 \pmod{\ell^n}. 
\end{equation}
Then $aa' \equiv a \not\equiv 1 \pmod{\ell^{r+1}}$, so $\ord_\ell(1-aa')=r$, and thus $\ord_\ell(1-dd') \geq n-r$, i.e., $dd' \equiv 1 \psmod{\ell^{n-r}}$.  But we already have $d \equiv 1 \psmod{\ell^{n-r}}$, so $d' \equiv 1 \psmod{\ell^{n-r}}$, proving \eqref{eqn:1rzldns}.  

The final statement then follows from Lemma \ref{lem:charimage}(b), with $s=0$.
\end{proof}

\begin{cor} \label{cor:sumitup}
For $m \geq 1$, suppose that $m \mid \#E(\F_\frakp)$ for a set of primes of $K$ of density $1$.  Then there exists a cyclic isogeny $\varphi \colon E \to E'$ of degree $d \mid m$ such that $m \mid \#E'(K)\sbtors$.  Moreover, for every $\ell^n \parallel m$, there exists $0 \leq r \leq n$ (depending on $\ell$) such that 
\[ \rho_{E',\ell}(\Gal_K) \leq G_\ell(n;r,n-r). \]
\end{cor}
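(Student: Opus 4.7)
The plan is to reduce to the prime-power case via the Chinese remainder theorem and then to assemble the resulting isogenies from $E$ using coprimality of their degrees.

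For each prime power $\ell^n \parallel m$, the hypothesis forces $\ell^n \mid \#E(\F_\frakp)$ on a density-one set of primes, so Theorem \ref{thm:katz} together with Lemma \ref{lem:phicycl} produces a cyclic $\ell$-power isogeny $\varphi_\ell \colon E \to E_\ell^\circ$ over $K$ whose target satisfies $\ell^n \mid \#E_\ell^\circ(K)\sbtors$ and, in a suitable basis for $T_\ell E_\ell^\circ$, has $\rho_{E_\ell^\circ,\ell}(\Gal_K) \leq G_\ell(n;r,n-r)$ for some $0 \leq r \leq n$. The degree of $\varphi_\ell$ may exceed $\ell^n$; in that case Lemma \ref{mubound} applies and replaces $\varphi_\ell$ by a cyclic $\ell^{n-r}$-isogeny $E \to E_\ell'$, still emanating from $E$, of degree dividing $\ell^n$ and still retaining both the Galois containment and the required $K$-rational torsion on the target. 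When the degree of $\varphi_\ell$ already divides $\ell^n$, we simply set $E_\ell' \colonequals E_\ell^\circ$.

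To stitch the primes back together, set $C_\ell \colonequals \ker(E \to E_\ell') \leq E(\Kbar)$ and $C \colonequals \sum_{\ell \mid m} C_\ell$. The subgroups $C_\ell$ are $\Gal_K$-stable and cyclic of pairwise coprime $\ell$-power orders dividing $\ell^n$, so the sum is direct, $C$ is cyclic and $\Gal_K$-stable of order $d$ dividing $m$, and the quotient $\varphi\colon E \to E' \colonequals E/C$ is a cyclic $K$-isogeny of degree $d$. For each $\ell \mid m$, the induced isogeny $E_\ell' \to E'$ has degree coprime to $\ell$, so it induces a $\Gal_K$-equivariant isomorphism of $\ell$-adic Tate modules; hence after transporting bases we have $\rho_{E',\ell}(\Gal_K) \leq G_\ell(n;r,n-r)$, and Lemma \ref{lem:charimage}(a) yields $\ell^n \mid \#E'(K)\sbtors$. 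The Chinese remainder theorem across the primes dividing $m$ then upgrades these divisibilities to $m \mid \#E'(K)\sbtors$.

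The principal subtlety is ensuring that the maneuver at step one is coherent across primes: one needs the replacement produced by Lemma \ref{mubound} to emanate from the original $E$ (not from some intermediate curve) so that the kernels $C_\ell$ can all be added inside $E(\Kbar)$. This is automatic because the proof of Lemma \ref{mubound} builds the new cyclic subgroup from the Galois-stable line $\ell^{k-n}\ker \varphi_\ell \leq E(\Kbar)$ rather than from a subgroup of $E_\ell^\circ(\Kbar)$. Everything else is formal bookkeeping.
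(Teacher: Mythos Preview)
Your argument is correct and follows the same route as the paper's proof: apply Katz together with Lemma~\ref{lem:phicycl}, invoke Lemma~\ref{mubound} at primes where the resulting isogeny has degree at least $\ell^n$, and then combine the $\ell$-primary kernels inside $E(\Kbar)$. The paper's proof is a one-line sketch of precisely this, so you have merely filled in the bookkeeping (in particular the observation that $E/C_\ell \to E/C$ has degree prime to $\ell$ and hence preserves the $\ell$-adic image).

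One small citation to tighten: you invoke Lemma~\ref{lem:charimage}(a) to deduce $\ell^n \mid \#E'(K)\sbtors$, but that part is stated under the hypothesis of \emph{equality} $\rho_{E',\ell}(\Gal_K) = G_\ell(n;r,n-r)$, whereas you only have containment. The conclusion you want still holds---containment in a smaller group only yields \emph{more} rational torsion---and is exactly Lemma~\ref{lem:cleantelnrs}(b) applied with $s=n-r$. Swapping the reference fixes this.
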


\begin{proof}
For each prime power $\ell^n \parallel m$, apply the theorem of Katz (Theorem \ref{thm:katz}), the refinements of Lemmas \ref{lem:phicycl} and \ref{mubound}; and then combine these isogenies (taking the sum of the kernels). 
\end{proof}

By Corollary \ref{cor:sumitup}, the possible elliptic curves $E$ that locally have a subgroup of order $m=\ell^n$ arise (dually) from cyclic isogenies from curves with $\ell$-adic Galois images contained in $G_\ell(n;r,n-r)$ for some $r$.  To conclude, we add the hypothesis that this latter containment is an \emph{equality}; when we calculate probabilities, we will see this fullness condition holds outside of a negligible set.

\begin{thm} \label{upgrade}
For $m \geq 1$, suppose that $m \mid \#E(\F_\frakp)$ for a set of primes of $K$ of density $1$, and let $\varphi\colon E \to E'$ be a cyclic $\ell$-power isogeny over $K$ such that $\rho_{E',\ell}(\Gal_K) = G(n;r,n-r)$ (in a choice of basis for $T_\ell E'$) for some $0 \leq r \leq n$.  Then there exists $s$ with $0 \leq s \leq n$ such that $\rho_{E,\ell}(\Gal_K) = G(n;r,s)$ (in a basis for $T_\ell E$).
\end{thm}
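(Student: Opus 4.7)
The plan is to reduce this to Proposition \ref{prop:cleanCvrall}(b), applied to the dual isogeny $\hat\varphi \colon E' \to E$. First I would observe that the given basis for $T_\ell E'$ in which $\rho_{E',\ell}(\Gal_K) = G(n;r,n-r)$ is automatically clean: by Lemma \ref{lem:cleantelnrs}(b), equality in the containment \eqref{eqn:EgalK} (here with $s = n-r$) forces cleanness. This is precisely the hypothesis needed to invoke Proposition \ref{prop:cleanCvrall}(b) with source $E'$.

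Next I would check that $\hat\varphi$ is again a cyclic $\ell^k$-isogeny, where $\ell^k = \deg\varphi$. Indeed, from $\hat\varphi\circ\varphi = [\ell^k]_E$ one computes $\ker\hat\varphi = \varphi(E[\ell^k])$, and since $\ker\varphi$ is a cyclic subgroup of order $\ell^k$ inside $E[\ell^k] \simeq (\Z/\ell^k)^2$, the quotient $E[\ell^k]/\ker\varphi$ is cyclic of order $\ell^k$, so its image under $\varphi$ is cyclic of order $\ell^k$. Now applying Proposition \ref{prop:cleanCvrall}(b) with the roles of ``$E$'' and ``$E'$'' interchanged yields a clean basis for $T_\ell E$ in which either
\[ \rho_{E,\ell}(\Gal_K) = G(n;r,n-r-k) \quad \text{(requiring $k \leq n-r$),} \]
or
\[ \rho_{E,\ell}(\Gal_K) = G(n;n-r,r-k) \quad \text{(requiring $k \leq r$);} \]
at least one case occurs, which in passing re-derives the bound $k \leq \max(r,n-r)$.

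In the first case I would take $s \colonequals n-r-k$ and be done, since $0 \leq s \leq n-r \leq n$. In the second case the first parameter is $n-r$ rather than $r$, so I would apply the basis interchange \eqref{eqn:swapbasis} to $T_\ell E$, which converts $G(n;n-r,r-k)$ into $G(n;n-(n-r),n-(r-k)) = G(n;r,n-r+k)$, whereupon $s \colonequals n-r+k$ satisfies $n-r \leq s \leq n$ by $k \leq r$ and gives the required form $\rho_{E,\ell}(\Gal_K) = G(n;r,s)$. The only real bookkeeping step is this final renormalization forcing the first parameter to be $r$ as demanded by the statement; the substance of the argument is entirely packaged into Proposition \ref{prop:cleanCvrall}.
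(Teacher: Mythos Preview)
Your proposal is correct and follows essentially the same approach as the paper: apply Proposition \ref{prop:cleanCvrall}(b) to the dual isogeny $\varphi\spcheck\colon E'\to E$, and in the second branch use the basis swap \eqref{eqn:swapbasis} to force the first parameter back to $r$. Your version is slightly more detailed (explicitly verifying cleanness of the basis and cyclicity of the dual), but the substance and bookkeeping match the paper's argument exactly.
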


\begin{proof}
We have $\deg \varphi=\ell^k$ for some $k \geq 0$.  We apply Proposition \ref{prop:cleanCvrall}(b) to the dual isogeny $\varphi\spcheck \colon E' \to E$ (with, alas, the roles of $E$ and $E'$ interchanged): we conclude that $\rho_{E,\ell}(\Gal_K) = G(n;r,s)$ with $0 \leq s \leq n-r \leq n$ or $\rho_{E,\ell}(\Gal_K) = G(n;n-r,s')$ with $s' \leq r \leq n$.  In the latter case, recalling \eqref{eqn:swapbasis}, we have equivalently $\rho_{E,\ell}(\Gal_K) = G(n;r,s)$ with $0 \leq s=n-s' \leq n$.
\end{proof}

\section{Counting elliptic curves} \label{sec:moregen}

In this section, we count by height elliptic curves parametrized by a modular curve of genus zero uniformized by a torsion free congruence subgroup.

\subsection{Moduli of elliptic curves} \label{sec:modellcur}

We quickly set up the necessary theory concerning moduli of elliptic curves.  

Let $G \leq \GL_2(\Z/N)$ be a subgroup.  If $G$ arises as the image of the mod $N$ Galois representation of an elliptic curve over $\Q$, then its determinant is the cyclotomic character and thus surjective, so we suppose that $\det G=(\Z/N)^\times$.  Let $\pi_N \colon \SL_2(\Z) \to \SL_2(\Z/N)$ be the projection and as in \eqref{eqn:GammaG} let 
\[ \Gamma_G \colonequals \pi_N^{-1}(G \cap \SL_2(\Z/N)) \leq \SL_2(\Z). \]
The group $\Gamma_G$ is a discrete group acting properly on the upper half-plane $\Htwo$, and the quotient $\Gamma_G \backslash \Htwo$ can be given the structure of a Riemann surface (compact minus finitely many points).  Attached to $G$ is the moduli problem of elliptic curves with $G$-level structure, as in the following proposition.  

\begin{prop} \label{prop:theresacurve}
Suppose that $\det G=(\Z/N)^\times$.  Then there exists an affine, smooth, geometrically integral curve $Y_G$ defined over $\Q$, unique up to isomorphism, with the following properties.
\begin{enumroman}
\item There is an isomorphism of Riemann surfaces $\Gamma_G \backslash \Htwo \xrightarrow{\sim} Y_G(\C)$.  
\item For every number field $K$, there is a (functorial) bijection between the set $Y_G(K)$ and the set of $\Kbar$-isomorphism classes of $\Gal_K$-stable $G$-equivalence classes of pairs $(E,\iota)$, where $E$ is an elliptic curve over $K$ and $\iota \colon E[N](\Kbar) \to (\Z/N)^2$ is an isomorphism of groups.
\item For every elliptic curve $E$ over $K$, there exists $\iota$ such that the isomorphism class of $(E,\iota)$ lies in $Y_G(K)$ if $\overline{\rho}_{E,N}(\Gal_K) \lesssim G$ is contained in a subgroup conjugate to $G$; the converse holds if $j(E) \neq 0,1728$.
\item If $\Gamma_G$ is torsion free (in particular $-1 \not\in G$), then \textup{(ii)} holds but for $K$-isomorphism classes, and there is a universal elliptic curve $E_{G,\textup{univ}} \to Y_G$, unique up to isomorphism.
\end{enumroman}
\end{prop}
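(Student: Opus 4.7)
The plan is to construct $Y_G$ as a quotient of the full level-$N$ moduli scheme $Y(N)$ and then verify the four claims against the standard theory of modular curves (Deligne--Rapoport, Katz--Mazur, Shimura).

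First I would recall that for $N \geq 3$ there is a smooth affine $\Q$-scheme $Y(N)$ that finely represents the moduli problem of pairs $(E, \iota)$ where $\iota \colon E[N] \xrightarrow{\sim} (\Z/N)^2$ is a full level-$N$ structure, normalized via the Weil pairing against a fixed primitive $N$th root of unity. The group $\GL_2(\Z/N)$ acts on $Y(N)$ via $(E, \iota) \mapsto (E, g \circ \iota)$, and geometrically $Y(N)_{\Qbar}$ decomposes into $\varphi(N)$ connected components permuted transitively by $\det \colon \GL_2(\Z/N) \to (\Z/N)^\times$, each component isomorphic as a Riemann surface to $\Gamma(N) \backslash \Htwo$. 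I then define
\[
Y_G \colonequals G \backslash Y(N),
\]
a quotient $\Q$-scheme; the hypothesis $\det G = (\Z/N)^\times$ ensures $G$ acts transitively on the geometric components, so $Y_G$ is geometrically integral, while affineness and smoothness are inherited from $Y(N)$.

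For (i), I would compute $Y_G(\C) = G \backslash Y(N)(\C)$; restricting to a single geometric component gives
\[
(G \cap \SL_2(\Z/N)) \backslash (\Gamma(N) \backslash \Htwo) = \Gamma_G \backslash \Htwo,
\]
since $\Gamma_G / \Gamma(N) = G \cap \SL_2(\Z/N)$ by definition. For (ii), the moduli description of $Y(N)(K)$ as the set of $\Kbar$-isomorphism classes of pairs $(E, \iota)$ with $\iota$ Galois-equivariantly defined descends to the $G$-quotient: a $K$-point of $Y_G$ is precisely a $\Kbar$-isomorphism class of $(E, \iota)$ modulo the $G$-action, required to be $\Gal_K$-stable as a $G$-orbit. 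For (iii), if $\overline{\rho}_{E,N}(\Gal_K) \lesssim G$ in some basis then that basis supplies an $\iota$ with Galois-stable $G$-orbit, giving a $K$-point; conversely, given such a point and a choice of representative $\iota$, for each $\sigma \in \Gal_K$ we have $\sigma(\iota) = g_\sigma \circ \iota \circ \alpha_\sigma$ for some $g_\sigma \in G$ and $\alpha_\sigma \in \Aut(E)$, so when $\Aut(E) = \{\pm 1\}$ (i.e.\ $j(E) \neq 0, 1728$) the element $\alpha_\sigma$ is absorbed and one concludes $\overline{\rho}_{E,N}(\sigma) \in G$ up to conjugation.

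For (iv), the assumption that $\Gamma_G$ is torsion free forces the moduli problem to be rigid: a nontrivial automorphism $\alpha$ of a pair $(E, \iota)$ preserving the $G$-orbit of $\iota$ would induce an element of $G \cap \SL_2(\Z/N)$ of finite order lifting to a torsion element of $\Gamma_G$, contradicting the hypothesis; the condition $-1 \notin \Gamma_G$ rules out the universal involution $[-1]$, while the absence of other torsion handles the CM $j$-invariants. Hence the moduli functor is representable, $Y_G$ finely represents it, and the universal elliptic curve $E_{G,\textup{univ}} \to Y_G$ is obtained by descending the universal elliptic curve on $Y(N)$ along the étale $G$-quotient. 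The main obstacle I anticipate is carrying out the converse direction of (iii) cleanly: the distinction between a $G$-orbit of level structures being $\Gal_K$-stable and the image $\overline{\rho}_{E,N}(\Gal_K)$ actually sitting inside $G$ is precisely mediated by $\Aut(E)$, which is why the restriction $j(E) \neq 0, 1728$ appears; everything else reduces to standard bookkeeping from the references.
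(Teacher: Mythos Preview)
Your proposal is correct and follows the same approach as the paper: construct $Y_G$ as the quotient $G \backslash Y(N)$ and invoke the standard moduli theory of Deligne--Rapoport and Katz--Mazur. In fact the paper's proof is just a pointer to these references (together with Baran and Zywina for part (iii)), so your sketch actually supplies more detail than the paper does; the only caveat is the usual one that $Y(N)$ is a fine moduli space only for $N \geq 3$, so the small cases would need to be handled by the standard workaround of adding auxiliary level structure, but this is implicit in the cited references.
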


In (iv), in particular, $E_{G,\textup{univ}}$ is an elliptic curve over (the affine coordinate ring of) $Y_G$, and the bijection in (iv) is defined by the map that sends $P \in Y_G(K)$ to the fiber of $E_{G,\textup{univ}} \to Y_G$ over $P$.

\begin{proof}
The curve $Y_G$ can be constructed as the quotient of the (connected but geometrically disconnected) modular curve $Y(N)$ defined over $\Q$ by $G$.  For more details, see Deligne--Rapoport \cite[Chapters IV, VI]{DR} or the tome of Katz--Mazur \cite[Chapter 4]{KM}; for property (iii), see Baran \cite[\S 4]{Baran} and Zywina \cite[Proposition 3.2]{zywina}.
\end{proof}

We recall also here the notion of an \emph{irregular cusp} (see e.g., Diamond--Shurman \cite[(3.3), p.~75]{DS}, Shimura \cite[\S 2.1, p.~29]{Shimura}), primarily to show it is only a minor nuisance.  Let $\Gamma \leq \SL_2(\Z)$ be a subgroup of finite index.  If $-1 \in \Gamma$, then every cusp of $\Gamma$ is regular; so suppose $-1 \not\in \Gamma$.  Then the stabilizer of the cusp $\infty$ under $\Gamma$ is an infinite cyclic group generated by $\pm \begin{pmatrix} 1 & h \\ 0 & 1 \end{pmatrix}$ for some $h \in \Z_{>0}$, and we accordingly say that $\infty$ is \defi{regular} or \defi{irregular} as the sign of this generator is $+$ or $-$.  For any cusp $s$, we choose a matrix $\alpha \in \SL_2(\Z)$ such that $\alpha(\infty)=s$ and conjugate the preceding definition.  

The groups $G_\ell(n;r,s) \leq \GL_2(\Zl)$ of Section \ref{S1.6} naturally define subgroups $\overline{G}_{\ell^n}(n;r,s) \leq \GL_2(\Z/\ell^n)$ by reduction modulo $\ell^n$.

\begin{lem} \label{lem:noirreg}
Let $\ell$ be prime, let $n \geq 1$, and for integers $0 \leq r,s \leq n$ with $r+s \leq n$, let $G=\overline{G}_{\ell^n}(n;r,s) \leq \GL_2(\Z/\ell^n)$ be the reduction modulo $\ell^n$ of $G_\ell(n;r,s)$.  Then the group $\Gamma_G$ has no irregular cusps except when $\ell^n=2^2=4$ and $rs=0$.
\end{lem}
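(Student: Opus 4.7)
The plan is to recast the existence of irregular cusps as a matrix-theoretic condition and then rule it out via an $\ell$-adic valuation argument, with a short case analysis for $\ell = 2$.

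I would first use the standard characterization: $\Gamma_G$ has an irregular cusp if and only if $-I \notin \Gamma_G$ and there exists $M \in \Gamma_G$ with $\tr(M) = -2$ and $M \neq -I$. Indeed, any such $M \in \SL_2(\Z)$ is $\SL_2(\Z)$-conjugate to $-T^h$ for some $h \neq 0$ (where $T = \begin{pmatrix} 1 & 1 \\ 0 & 1 \end{pmatrix}$), and this exhibits $\alpha(\infty)$ as an irregular cusp of $\Gamma_G$ when $-I \notin \Gamma_G$; the converse is immediate.

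Next, I would parametrize such $M$: write $M = -I + N$ with
\[ N = \begin{pmatrix} t & b \\ c & -t \end{pmatrix} \]
a nonzero nilpotent integer matrix, equivalently $(t, b, c) \neq (0, 0, 0)$ and $t^2 + bc = 0$. Then $M \bmod \ell^n \in G = \overline{G}_{\ell^n}(n;r,s)$ unpacks to
\[ t \equiv 2 \pmod{\ell^r}, \quad t \equiv -2 \pmod{\ell^{n-r}}, \quad \ell^s \mid b, \quad \ell^{n-s} \mid c. \]
The divisibility conditions give $\ell^n \mid bc = -t^2$, so $2\,\ord_\ell(t) \geq n$; squaring the congruences on $t$ yields $t^2 \equiv 4 \pmod{\ell^{\max(r,\,n-r)}}$. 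Combining, the key constraint is
\begin{equation*}
4 \equiv 0 \pmod{\ell^{\max(r,\,n-r)}}.
\end{equation*}

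If $\ell$ is odd, this forces $\max(r, n-r) = 0$, hence $n = 0$, contradicting $n \geq 1$; so no such $M$ exists and $\Gamma_G$ has no irregular cusps. If $\ell = 2$, I would split on $n$. For $n \geq 3$: if $r \geq 2$ then $t \equiv 2 \pmod 4$ forces $\ord_2(t) = 1$, contradicting $2\,\ord_2(t) \geq n \geq 3$; the symmetric argument rules out $n - r \geq 2$; and otherwise $r \leq 1$ and $n - r \leq 1$ force $n \leq 2$, again a contradiction. For $n = 1$: a direct check of the three allowed pairs $(r,s)$ shows $-I \in G$, so every cusp is regular. For $n = 2$ (so $\ell^n = 4$): $-I \in G$ precisely when $r = 1$, while the remaining cases $r \in \{0, 2\}$ have $rs = 0$ (since $r + s \leq 2$ forces $s = 0$ when $r = 2$), matching exactly the stated exception.

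The technical heart is the divisibility constraint $4 \equiv 0 \pmod{\ell^{\max(r,\,n-r)}}$, which immediately eliminates the odd $\ell$ case and forces $n \leq 2$ when $\ell = 2$; the remaining enumeration is routine. The only subtle point is the trace characterization of irregular cusps, which rests on the transitivity of the $\SL_2(\Z)$-action on cusps.
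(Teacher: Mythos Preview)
Your proof is correct, and the overall strategy---reduce the existence of an irregular cusp to a trace/divisibility constraint on elements of $\Gamma_G$, then pin down the small residual cases---is the same as the paper's. The execution differs in one notable way. The paper observes directly that for \emph{every} $\gamma \in \Gamma_G$, the condition $\det(\gamma)=1$ forces $\ell^r a_0 + \ell^{n-r} d_0 \equiv 0 \pmod{\ell^n}$ and hence $\tr(\gamma) \equiv 2 \pmod{\ell^n}$; an irregular cusp then gives $-2 \equiv 2 \pmod{\ell^n}$, i.e.\ $\ell^n \mid 4$, immediately reducing to $\ell^n \in \{2,4\}$. Your nilpotent parametrization $M = -I + N$ instead combines the off-diagonal divisibilities with $bc = -t^2$ to get the weaker bound $\ell^{\max(r,n-r)} \mid 4$, which then requires the extra case split for $\ell=2$, $n \geq 3$. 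Both routes work; the paper's determinant trick buys a cleaner modulus and avoids that additional case analysis, while your argument is a bit more concrete about the structure of trace-$(-2)$ elements. (Incidentally, both proofs establish slightly more than the lemma states: the case $(\ell^n,r,s)=(4,1,0)$ also has $-I \in \Gamma_G$ and hence no irregular cusp, even though $rs=0$.)
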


\begin{proof}
If $\gamma \in \Gamma_G$, then $\gamma=\begin{pmatrix} 1+\ell^r a_0 & \ell^s b_0 \\ \ell^{n-s} c_0 & 1+\ell^{n-r}d_0 \end{pmatrix}$ with $a_0,b_0,c_0,d_0 \in \Z$ and 
\begin{equation} \label{eqn:r10d8sa}
\begin{aligned}
\det(\gamma)&=(1+\ell^r a_0)(1+\ell^{n-r} d_0)-\ell^n b_0c_0 \\
&= 1 + \ell^r a_0 + \ell^{n-r} d_0 + \ell^n(a_0d_0-b_0c_0) = 1, 
\end{aligned}
\end{equation}
so expanding we find 
\begin{equation} \label{tryup}
\tr(\gamma)= 2+ \ell^r a_0+\ell^{n-r}d_0 \equiv  2 \pmod{\ell^n}.
\end{equation}

Let $s$ be a cusp of $\Gamma_G$ and $\alpha \in \SL_2(\Z)$ be such that $\alpha(\infty)=s$, and consider the group $\alpha^{-1} \Gamma_G \alpha$.  Let $\alpha^{-1}\gamma\alpha = \pm \begin{pmatrix} 1 & h \\ 0 & 1 \end{pmatrix} \in \alpha^{-1} \Gamma_G \alpha$ generate the stabilizer of $\infty$.  Then $\tr(\alpha^{-1}\gamma\alpha)=\tr(\gamma) = \pm 2 \equiv 2 \pmod{\ell^n}$.  Suppose $s$ is irregular.  Then $-2 \equiv 2 \pmod{\ell^n}$ so $\ell^n=2^1,2^2$.  If $\ell^n=2^1$ then $-1 \in \Gamma_G$ and $s$ is regular by definition.  So suppose $\ell^n=2^2$.  We have the cases $(r,s)=(0,0),(1,0),(1,1),(2,0)$.  If $r=1$ then again $-1 \in \Gamma_G$.  Otherwise, $(r,s)=(2,0),(0,0)$ then by Example  \ref{exm:rn1od0s} we see $\Gamma_G=\Gamma_1(4)$, and $1/2$ is indeed an irregular cusp \cite[Exercise 3.8.7]{DS}.  
\end{proof}

\begin{lem} \label{lem:samecurve}
We have 
\[ \Gamma_{\overline{G}_{\ell^n}(n;r,s)} = \Gamma_{\overline{G}_{\ell^n}(n;r',s)} \]
where $r' \colonequals \max(r,n-r)$.
\end{lem}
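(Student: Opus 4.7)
The plan is to unwind the definition $\Gamma_G = \pi_N^{-1}(G \cap \SL_2(\Z/N))$ and prove the stronger statement
\[
\overline{G}_{\ell^n}(n;r,s) \cap \SL_2(\Z/\ell^n) = \overline{G}_{\ell^n}(n;r',s) \cap \SL_2(\Z/\ell^n),
\]
after which the lemma follows by applying $\pi_{\ell^n}^{-1}$. Without loss of generality we may assume $r \leq n-r$ so that $r' = n-r$; the case $r' = r$ is trivial. Note the off-diagonal entries in the definition \eqref{eqn:Gellrs} of $G_\ell(n;r,s)$ depend only on $s$, so they pose no issue, and the only thing that changes between $G_\ell(n;r,s)$ and $G_\ell(n;r',s)$ is that the diagonal congruence conditions $(1,1) \equiv 1 \pmod{\ell^r}$ and $(2,2) \equiv 1 \pmod{\ell^{n-r}}$ get swapped.

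The mechanism that forces the equality is the determinant constraint. The key step is to observe that, as computed in \eqref{eqn:r10d8sa}, for any $\gamma = \begin{pmatrix} 1+\ell^r a_0 & \ell^s b_0 \\ \ell^{n-s} c_0 & 1+\ell^{n-r} d_0 \end{pmatrix}$ in $\overline{G}_{\ell^n}(n;r,s) \cap \SL_2(\Z/\ell^n)$, the condition $\det(\gamma) = 1$ yields the linkage
\[
\ell^r a_0 + \ell^{n-r} d_0 \equiv 0 \pmod{\ell^n}.
\]
Rewriting this gives $1 + \ell^r a_0 \equiv 1 - \ell^{n-r} d_0 \pmod{\ell^n}$, so the $(1,1)$ entry lies in $1 + \ell^{n-r}(\Z/\ell^n) = 1 + \ell^{r'}(\Z/\ell^n)$; symmetrically, $1 + \ell^{n-r} d_0 \equiv 1 - \ell^r a_0 \pmod{\ell^n}$, so the $(2,2)$ entry lies in $1 + \ell^r(\Z/\ell^n) = 1 + \ell^{n-r'}(\Z/\ell^n)$. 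Thus $\gamma \in \overline{G}_{\ell^n}(n;r',s)$, giving one inclusion.

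For the reverse inclusion, start instead with $\gamma \in \overline{G}_{\ell^n}(n;r',s) \cap \SL_2(\Z/\ell^n)$. Since $r' \geq r$, the $(1,1)$ congruence $(1,1) \in 1 + \ell^{r'}(\Z/\ell^n)$ already implies $(1,1) \in 1 + \ell^r(\Z/\ell^n)$. Running the same determinant argument in the group $\overline{G}_{\ell^n}(n;r',s)$ shows that $(2,2)$ satisfies $(2,2) \in 1 + \ell^{r'}(\Z/\ell^n) = 1 + \ell^{n-r}(\Z/\ell^n)$, as required to lie in $\overline{G}_{\ell^n}(n;r,s)$. Hence both intersections coincide, and the lemma follows. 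There is no serious obstacle here: the entire argument is a one-line consequence of the determinant identity \eqref{eqn:r10d8sa} already recorded in the proof of Lemma \ref{lem:noirreg}, together with the symmetric roles the two diagonal entries play once determinant $1$ is imposed.
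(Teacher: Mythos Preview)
Your proof is correct and follows essentially the same approach as the paper's: both use the determinant identity \eqref{eqn:r10d8sa} to see that the constraint $\det(\gamma)=1$ forces each diagonal entry to be $\equiv 1 \pmod{\ell^{\max(r,n-r)}}$. The paper's proof is a one-line sketch (``if $r \leq n-r$ then $1+\ell^r a_0 \equiv 1 \pmod{\ell^{n-r}}$''), while you have carefully spelled out both inclusions; the content is the same.
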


\begin{proof}
Looking back at \eqref{eqn:r10d8sa}, we see that e.g.\ if $r \leq n-r$ then $1+\ell^ra_0 \equiv 1 \pmod{\ell^{n-r}}$, so $\ell^{n-r} \mid \ell^{r}a_0$.  
\end{proof}

Lemma \ref{lem:samecurve} indicates one of the ways in which different moduli problems can have the same underlying uniformizing congruence subgroup.

To complete our setup for our main result (Theorem \ref{thm:hsupgrade0}), we must decide how to count our elliptic curves.  Specifically, we need to distinguish between counting elliptic curves $E$ for which \emph{there exists} a rational $G$-structure, versus counting equivalence classes of pairs $(E,\iota)$ of elliptic curves $E$ equipped with rational $G$-structures $\iota$.  Ultimately, we will see that these two counts differ by a simple multiple (on the main term, and  with square root error term).  

To this end, for an elliptic curve $E$ over $\Q$, let $r_G(E)$ be the number of $\Kbar$-isomorphism classes of $\Gal_K$-stable $G$-equivalence classes of pairs $(E,\iota)$ as in Proposition \ref{prop:theresacurve}(b), equivalently the number of isomorphism classes $[(E,\iota)] \in Y_G(\Q)$.  Let 
\begin{equation} \label{eqn:rG}
r(G) \colonequals [N_{\GL_2(\Z/N)}(G) : G] 
\end{equation}
be the index of $G$ in its normalizer in $\GL_2(\Z/N)$.  Write 
\[ \pm G \colonequals G\langle -1 \rangle = G \cup -G, \] 
so $\pm G = G$ if and only if $-1 \in G$.  Then $N_{\GL_2(\Z/N)}(\pm G)=N_{\GL_2(\Z/N)}(G)$, so $r(G)=2r(\pm G)$ if $-1 \not \in G$.


\begin{exm} \label{exm:GZN3}
If $G=\begin{pmatrix} 1 & * \\ 0 & * \end{pmatrix}$, then $N_{\GL_2(\Z/N)}(G) = \begin{pmatrix} * & * \\ 0 & * \end{pmatrix}$ and so $r(G) = \phi(N)=[\Gamma_0(N):\Gamma_1(N)]$ and $r(\pm G)=\phi(N)/2$ when $N \geq 3$.  
\end{exm}

\begin{lem} \label{g_structure_lem}
Let $E$ be an elliptic curve over $\Q$ with $j(E) \neq \{0,1728\}$.  Then the following statements hold.
\begin{enumalph}
\item If $r_G(E) \geq 1$, then $r_G(E) \geq r(\pm G)$.
\item If $r_G(E) > r(\pm G)$, then there exists a proper subgroup $G'<G$ such that $r_{G'}(E) \geq 1$.  
\end{enumalph}
\end{lem}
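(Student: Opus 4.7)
My strategy is to express $r_G(E)$ as a coset count in $\GL_2(\Z/N)$ and extract both statements from the containment $N \subseteq T$, where $N \colonequals N_{\GL_2(\Z/N)}(\pm G)$ and $T$ is a transporter set. After fixing a basis of $E[N](\Qbar)$, I write $\rho = \overline{\rho}_{E,N}$ and $H \colonequals \rho(\Gal_\Q)$, and use $\Aut_{\Qbar}(E) = \{\pm 1\}$ (which holds because $j(E) \neq 0, 1728$) together with Proposition \ref{prop:theresacurve}(ii) to identify
\[ r_G(E) = \lvert \pm G \backslash T \rvert, \qquad T \colonequals \{g \in \GL_2(\Z/N) : gHg^{-1} \subseteq \pm G\}. \]
The set $T$ is a union of left $\pm G$-cosets, and one checks directly that $N \subseteq T$ whenever $H \subseteq \pm G$.

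For part (a), the hypothesis $r_G(E) \geq 1$ gives some $\iota_0 \in T$. I would replace the chosen basis by $\iota_0$---this conjugates $\rho$ but leaves $r_G(E)$ unchanged---so as to assume $H \subseteq \pm G$. Then $T \supseteq N$, and therefore
\[ r_G(E) = \lvert \pm G \backslash T \rvert \;\geq\; \lvert \pm G \backslash N \rvert \;=\; [N : \pm G] \;=\; r(\pm G). \]

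For part (b), I would again reduce to $H \subseteq \pm G$. The strict inequality $r_G(E) > r(\pm G) = \lvert \pm G \backslash N \rvert$ forces $T \supsetneq N$, so I can pick $\iota \in T \setminus N$. Setting $G^* \colonequals \iota^{-1}(\pm G)\iota$, the fact that $\iota$ does not normalize $\pm G$ gives $G^* \neq \pm G$, while the defining condition $\iota H \iota^{-1} \subseteq \pm G$ translates to $H \subseteq G^*$. Thus $H$ is contained in the proper subgroup $K \colonequals \pm G \cap G^*$ of $\pm G$ (proper because $G^*$ has the same order as $\pm G$ but is distinct). I would take $G' \colonequals K \cap G$: this is a subgroup of $G$, and it is proper because $G \subseteq K$ would force $\pm G \subseteq K$ (using $-1 \in K$), contradicting $K \subsetneq \pm G$. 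Finally, I would verify $H \subseteq \pm G'$: any $h \in H \subseteq K$ lies in $G$ or in $-G$, and since $K = \pm K$ (as $-1 \in K$) we have $h \in K \cap G = G'$ in the first case and $-h \in K \cap G = G'$ in the second, giving $h \in \pm G'$ either way. Hence $r_{G'}(E) \geq 1$ via the trivial level structure.

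The main obstacle I expect is the bookkeeping that separates $G$ from $\pm G$ and the corresponding $\{\pm 1\}$-action on cosets. A secondary check is that $Y_{G'}$ is defined, i.e., $\det G' = (\Z/N)^\times$; this follows automatically from $H \subseteq \pm G'$, since the cyclotomic character $\det \circ \rho$ is surjective and $\det(-I) = 1$.
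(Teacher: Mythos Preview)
Your overall strategy---expressing $r_G(E)$ as a coset count in $\GL_2(\Z/N)$ and comparing with the normalizer---is exactly the paper's argument, recast in transporter language. The paper phrases (a) as an orbit--stabilizer count for the $N_{\GL_2(\Z/N)}(G)$-action on level structures, and proves (b) by the contrapositive (if the image equals $G$ then the normalizer acts transitively); your direct construction of $G'$ in (b) is a minor variation.

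There is, however, a genuine error in your key identification. The $\Gal_\Q$-stability of the $G$-equivalence class $Gg\iota_0$ is the condition $Gg\rho(\sigma)^{-1}=Gg$ for all $\sigma$, i.e.\ $gHg^{-1}\subseteq G$, \emph{not} $gHg^{-1}\subseteq \pm G$. The $\pm 1$ enters only afterward, when one passes to $\Qbar$-isomorphism classes via $\Aut(E_{\Qbar})=\{\pm 1\}$. So the correct formula is $r_G(E)=\lvert \pm G\backslash T_G\rvert$ with $T_G=\{g:gHg^{-1}\subseteq G\}$. A concrete counterexample to your version: take $N=5$, $G=\left(\begin{smallmatrix}1&*\\0&*\end{smallmatrix}\right)$ (so $Y_G=Y_1(5)$), and let $E$ be a quadratic twist, by a character independent of the mod-$5$ representation, of a curve with a rational point of order~$5$. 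Then $H=\pm G$, so your $T$ contains the identity and your formula gives $r_G(E)\geq 1$; but $E$ has no rational $5$-torsion point, so in fact $r_G(E)=0$ (consistent with Proposition~\ref{prop:theresacurve}(iii), which requires $H\lesssim G$).

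This error is precisely what forces you into the $\pm G'$ bookkeeping in (b), and it leaves the final step ``$H\subseteq \pm G'\Rightarrow r_{G'}(E)\geq 1$'' unjustified: by the corrected formula one needs $H\lesssim G'$, which is strictly stronger. With $T_G$ in place of $T$, the basis change in (a) already gives $H\subseteq G$; then in (b), choosing $\iota\in T_G\smallsetminus N(G)$ and setting $G'\colonequals G\cap \iota^{-1}G\iota$ yields a proper subgroup of $G$ containing $H$ directly, and the argument concludes without any $\pm$ gymnastics---matching the paper's proof.
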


\begin{proof}
First, part (a).  By the description in Proposition \ref{prop:theresacurve}(ii), the group $N_{\GL_2(\Z/N)}(G)$ acts functorially on moduli points (postcomposing after $\iota$), so it acts by automorphisms of $Y_G$ defined over $\Q$.  In particular, this group acts on the set of isomorphism classes $[(E,\iota)] \in Y_G(\Q)$ that counted by $r_G(E)$.  We claim that the stabilizer of this action is $\pm G$.  Indeed, let $u \in N_{\GL_2(\Z/N)}(G)$ and suppose that $[(E,\iota)]=[(E,u\iota)]$.  Then there exists an automorphism $\alpha \in \Aut(E)$ such that $G\iota = G u\iota\alpha$.  Since $j(E) \neq \{0,1728\}$ we have $\Aut(E)=\{\pm 1\}$, hence $G\iota = G u \iota \alpha=Gu\alpha\iota$ so $\pm G u=\pm G$, i.e., $u \in \pm G$.  This proves (a).

We now prove (b).  In view of Proposition \ref{prop:theresacurve}(iii), we may prove the contrapositive: if the image $\rho_{E,N}(\Gal_\Q)\lesssim G \leq \GL_2(\Z/N)$ is \emph{onto} $G$ (up to conjugacy), then in fact $r_G(E)=r(\pm G)$.  Indeed, let $[(E,\iota)],[(E,\iota')] \in Y_G(\Q)$.  
Then the isomorphisms $\iota,\iota' \colon E(\Kbar)[N] \to (\Z/N)^2$ may be chosen such that the two representations $\rho_{E,N},\rho_{E,N}' \colon \Gal_\Q \to \GL_2(\Z/N)$ are subgroups of $G$.  Let $u \colonequals \iota' \iota^{-1} \in \GL_2(\Z/N)$; then the matrix $u$ conjugates the image of $\rho_{E,N}$ into $\rho_{E,N}'$.  But since $\rho_{E,N}(\Gal_\Q)=G$ by hypothesis and $\rho_{E,N}'(\Gal_\Q) \leq G$, we must have $u \in N_{\GL_2(\Z/N)}(G)$.  Thus $r_G(E) \leq r(\pm G)$, so by (a) equality holds.
\end{proof}

\subsection{Isogeny invariance}

In this section, having in section \ref{S1.6} understood our probability as a condition relating isogenous elliptic curves, we are led to the following theorem which relates the image of Galois for isogenous curves.  

\begin{thm} \label{thm:greenbergup-inart}
Let $\varphi \colon E \to E'$ be an isogeny of elliptic curves over a number field $K$.  Let $N \in \Z_{\geq 1}$, let $G \colonequals \overline{\rho}_{E,N}(\Gal_K) \leq \GL_2(\Z/N)$ and similarly $G'$ for $E'$.  Then the groups $\Gamma_G, \Gamma_{G'} \leq \GL_2(\Q)$ are conjugate in $\GL_2(\Q)$, the associated modular curves $Y_G$ and $Y_{G'}$ are isomorphic over $\Q$, and 
\[ [\SL_2(\Z):\Gamma_G]=[\SL_2(\Z):\Gamma_{G'}]. \]
\end{thm}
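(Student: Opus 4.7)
The plan is to produce an explicit matrix $M \in \GL_2(\Q)$ that conjugates $\Gamma_G$ onto $\Gamma_{G'}$; the isomorphism of modular curves over $\Q$ and the equality of indices will then follow essentially formally. First I would reduce to the case where $\varphi \colon E \to E'$ is a cyclic $\ell^k$-isogeny for a single prime $\ell$, since any isogeny factors (over $K$) as a composition of such and the desired conclusions compose. Write $N = \ell^n M_0$ with $\gcd(\ell, M_0) = 1$.

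At primes $p \neq \ell$, the map $\varphi$ restricts to a $\Gal_K$-equivariant isomorphism $E[p^\infty] \xrightarrow{\sim} E'[p^\infty]$, so after choosing compatible bases the corresponding components of $G$ and $G'$ inside $\GL_2(\Z/M_0)$ coincide. At $\ell$, the key idea is to identify $V_\ell E = V_\ell E'$ via $\varphi \otimes \Q_\ell$; under this identification, $T_\ell E$ and $T_\ell E'$ are two $\Gal_K$-stable $\Z_\ell$-lattices in the same $\Q_\ell$-vector space. Choosing a basis $(P_1, P_2)$ of $T_\ell E$ adapted to $\varphi$ (so that $\ker \varphi = \langle P_{2,k} \rangle$), the change-of-basis computation appearing in the proof of Proposition \ref{prop:cleanCvrall}(b)---the conjugation identity \eqref{eqn:rhoEell}, whose validity as a formal change of basis does not require the restrictive shape $G(n;r,n-r)$---shows that in the basis $(P_1, \ell^{-k} P_2)$ of $T_\ell E'$ the Galois action is exactly $D \rho_{E,\ell}(\Gal_K) D^{-1}$ with $D \colonequals \begin{pmatrix} \ell^k & 0 \\ 0 & 1 \end{pmatrix}$. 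Crucially, $D$ has integer entries and so lies in $\GL_2(\Q)$ globally, not only in $\GL_2(\Q_\ell)$.

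Taking $M \colonequals D \in \GL_2(\Q)$, conjugation by $M$ is a genuine change of lattice at $\ell$, whereas at primes $p \mid M_0$ we have $\ell^k \in \Z_p^\times$, so $M \in \GL_2(\Z_p)$ and conjugation is merely a change of basis that can be absorbed into the $\varphi$-identification of $E[p^\infty]$ with $E'[p^\infty]$. Assembling these contributions via the Chinese remainder theorem yields $M \Gamma_G M^{-1} = \Gamma_{G'}$ as subgroups of $\GL_2(\Q)$. Since $M \in \GL_2(\Q)^+$ acts on $\Htwo$ by fractional linear transformations and descends to a biholomorphism $\Gamma_G \backslash \Htwo \xrightarrow{\sim} \Gamma_{G'} \backslash \Htwo$, we obtain $Y_G(\C) \xrightarrow{\sim} Y_{G'}(\C)$; the $\Q$-rationality of this isomorphism follows from the $\Q$-rationality of $M$ (equivalently, from realizing it as the moduli-theoretic map induced by $\varphi$ and its dual). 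The equality of indices follows because conjugation by an element of $\GL_2(\R)^+$ preserves the covolume of discrete subgroups of $\SL_2(\R)$.

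The main obstacle is the assembly step: verifying that the single rational matrix $D$ can simultaneously serve as a genuine change of lattice at $\ell$ and an absorbable change of basis at primes $p \mid M_0$ in such a way that $M \Gamma_G M^{-1}$ equals $\Gamma_{G'}$ globally in $\GL_2(\Q)$, and not merely prime-by-prime. This requires carefully pairing the CRT decomposition of $G$ and $G'$ with the choice of bases at each prime so that everything glues into a single $\GL_2(\Q)$-conjugation; once this is done, descending the isomorphism of Riemann surfaces to a $\Q$-rational isomorphism of modular curves is straightforward via Proposition \ref{prop:theresacurve}.
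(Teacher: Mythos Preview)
Your approach and the paper's are essentially the same: reduce to a cyclic isogeny and conjugate by an explicit matrix in $\GL_2(\Q)$ (you use $D=\operatorname{diag}(\ell^k,1)$; the paper uses $\nu=\left(\begin{smallmatrix}0&1\\N&0\end{smallmatrix}\right)$). But the ``assembly step'' you flag as the main obstacle is a genuine obstruction, not a technicality: the desired equality $M\Gamma_G M^{-1}=\Gamma_{G'}$ fails in general. The reason is that for $\gamma\in\Gamma_G$ the entry $\ell^{-k}c$ of $M^{-1}\gamma M$ reduces modulo $\ell^n$ in a way that depends on $c\bmod\ell^{n+k}$, not just on $c\bmod\ell^n$; so $M^{-1}\Gamma_G M$ is a congruence subgroup of level $N\ell^k$, whereas $\Gamma_{G'}$ has level dividing $N$. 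Equivalently, the mod-$N$ image $G'$ of $E'$ is determined by the mod-$N\ell^k$ image of $E$, not by $G$ alone.

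In fact the conclusion itself fails. Take $N=5$ and $E/\Q$ with a rational $5$-torsion point $P$ together with an independent rational $5$-isogeny (such $E$ exist in abundance: the relevant modular curve has $\Gamma_G=\Gamma(5)$, genus zero, and contains the middle curve of the isogeny class \textsf{11.a}); then $G=\{\operatorname{diag}(1,d):d\in\F_5^\times\}$ and $[\SL_2(\Z):\Gamma_G]=120$. For generic such $E$ the $5$-adic image is the full preimage of $G$, and a direct change-of-lattice computation gives $E'=E/\langle P\rangle$ mod-$5$ image $G'=\{\left(\begin{smallmatrix}1&0\\ \gamma&\delta\end{smallmatrix}\right):\gamma\in\F_5,\ \delta\in\F_5^\times\}$, so $[\SL_2(\Z):\Gamma_{G'}]=24$. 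Since conjugation in $\GL_2(\R)^+$ preserves covolume, $\Gamma_G$ and $\Gamma_{G'}$ cannot be $\GL_2(\Q)$-conjugate; and $Y_G$, $Y_{G'}$ (with $12$ and $4$ cusps respectively) are not isomorphic. The paper's own argument has the same gap at the step ``$\nu\Gamma_G\nu^{-1}=\Gamma_{G'}$''. What is true---and what is actually used in Proposition~\ref{prop:countpointsonmodcrve}, where $G$ is taken to be the full $m$-adic image---is Greenberg's theorem that the index of the $\ell$-adic (or adelic) image is isogeny-invariant; \emph{that} statement your conjugation argument does prove.
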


As mentioned in the introduction, the invariance of the index of the $p$-adic Galois representation under isogeny was already proven by Greenberg \cite[Proposition 2.1.1]{greenberg}, by a different argument.  

\begin{proof}
Without loss of generality, we may assume that $\varphi \colon E \to E'$ is given by a cyclic $N$-isogeny, so that the Galois image $G$ has 
\begin{equation} \label{eqn:GinupperN}
G \leq \begin{pmatrix} * & * \\ 0 & * \end{pmatrix}
\end{equation}
It follows from this group-theoretic statement that for every elliptic curve $A$ over $K$ whose mod $N$-Galois image is (conjugate to a) subgroup of $G$, there is an isogeny $\varphi \colon A \to A'$ (over $K$, with cyclic kernel of order $N$ generated by the point corresponding basis vector) such that the mod $N$-Galois image of $A'$ is a subgroup of $G'$.  Moreover, $\det G = \det G'$, since the determinant is the cyclotomic character and so its image only depends on (the roots of unity in) $K$.  Finally, the dual isogeny maps $\varphi\spcheck \colon E' \to E$, and similarly maps $G'$ to $G$.  In other words, the moduli problems attached to $G$ and to $G'$ are naturally equivalent, which gives an isomorphism $Y_G \xrightarrow{\sim} Y_{G'}$ of curves over their common field of definition $\Q(\zeta_N)^{\det G}=\Q(\zeta_N)^{\det G'}$.

From \eqref{eqn:GinupperN} we have
\begin{equation} \label{eqn:SL2zgamman} 
[\SL_2(\Z) : \Gamma_G] = [\SL_2(\Z) : \Gamma_0(N)][\Gamma_0(N) : \Gamma_G]. 
\end{equation}
Applying the isogeny $\varphi$ and swapping basis vectors acts by conjugation by the element $\nu=\begin{pmatrix} 0 & 1 \\ N & 0 \end{pmatrix}$ so that $\nu \Gamma_G \nu^{-1} = \Gamma_{G'}$.  Since $\nu$ normalizes the group $\Gamma_0(N)$, we have 
\begin{equation}
[ \Gamma_0(N): \Gamma_G] = [\nu \Gamma_0(N) \nu^{-1} : \nu \Gamma_G \nu^{-1}]
= [\Gamma_0(N) : \Gamma_{G'} ].
\end{equation}
Plugging this into \eqref{eqn:SL2zgamman} gives the result on indices.
\end{proof}

\begin{rmk}
We believe that Theorem \ref{thm:greenbergup-inart} should also follow more generally from the natural compatibilities satisfied by Shimura's theory of canonical models \cite[\S 6.7]{Shimura}.  The argument above gives a bit more information, namely that $\Gamma_{G'}$ is obtained from $\Gamma_G$ under conjugation by the Atkin--Lehner involution of $\Gamma_0(N)$.
\end{rmk}

In the next section, we will prove that for modular curves $Y_G$ such that $\Gamma_G$ is torsion free of genus zero, the asymptotic point count depends only on the index $[\SL_2(\Z):\Gamma_G]$; together with Theorem \ref{thm:greenbergup-inart} and the theorem of Katz (as refined in the previous section), this provides a concise explanation and ultimately a proof that the probability $P_m$ is positive for $m \geq 5$.

\subsection{Asymptotics} 

In this section, we prove Theorem \ref{thm:hsupgrade}.  We recall notation from section \ref{sec:notation}, and we prove the following weaker version first.  

\begin{thm} \label{thm:hsupgrade0}
Let $G \leq \GL_2(\Z/N)$ have $\det G=(\Z/N)^\times$, and suppose that $\Gamma_G$ is torsion free of genus zero and has no irregular cusps.  Let
\[ d(G) \colonequals \tfrac{1}{2}[\PSL_2(\Z):\Gamma_G]=\tfrac{1}{4}[\SL_2(\Z):\Gamma_G]. \]
Then $d(G) \in 6\Z_{\geq 1}$, and there exists $c(G) \in \R_{\geq 0}$ such that
\[ N_G(H) \colonequals \#\{E \in \scrE_{\leq H} : \overline{\rho}_{E,N}(\Gal_\Q) \lesssim G\} = c(G) H^{1/d(G)} + O(H^{1/e(G)}) \] 
as $H \to \infty$, where $e(G)=2d(G)$.
\end{thm}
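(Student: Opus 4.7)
The plan is to follow Harron--Snowden \cite{hs}, using the fine moduli structure to produce a universal Weierstrass model and then applying the principle of Lipschitz. Since $\Gamma_G$ is torsion free and $Y_G$ has no irregular cusps, Proposition \ref{prop:theresacurve}(iv) provides a universal elliptic curve $E_{G,\textup{univ}} \to Y_G$ over $\Q$. If the count is nonzero then there is some $E$ over $\Q$ with $\overline{\rho}_{E,N}(\Gal_\Q) \lesssim G$, producing a rational point of $Y_G$; combined with the genus-zero hypothesis this forces $Y_G \cong \PP^1_\Q \setminus S$ for a finite cuspidal $S$. Fix a rational uniformizer $t$ on $Y_G$ and write the universal curve in short Weierstrass form $y^2 = x^3 + A(t)x + B(t)$ with $A,B \in \Q(t)$.

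To confirm $d(G) \in 6\Z_{\geq 1}$: torsion freeness gives $-I \notin \Gamma_G$ and hence $[\PSL_2(\Z):\Gamma_G] = 2d(G)$, while fineness (torsion freeness plus no irregular cusps) makes the Hodge bundle $\omega$ a genuine line bundle on $Y_G$ of degree $[\PSL_2(\Z):\Gamma_G]/12 = d(G)/6$; hence $6 \mid d(G)$, and the genus formula $0 = 1 + d(G)/6 - \nu_\infty/2$ for torsion-free, genus-zero congruence subgroups gives $d(G) = 3(\nu_\infty - 2) \geq 6$. Since $A, B$ are sections of $\omega^{\otimes 4}, \omega^{\otimes 6}$, setting $t = u/v$ with $\gcd(u,v) = 1$ and clearing denominators produces polynomials $\tilde A(u,v), \tilde B(u,v) \in \Z[u,v]$, homogeneous of degrees $2d(G)/3$ and $d(G)$ respectively---matched to the Weierstrass weights $(4,6)$ so that $\tilde A^3$ and $\tilde B^2$ are both homogeneous of degree $2d(G)$. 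Each coprime $(u,v)$ specializes to an elliptic curve $E_{(u,v)}$ with integral model $(\tilde A(u,v), \tilde B(u,v))$, and its minimal model has height
\[ \height E_{(u,v)} = \mu(u,v)^{-12}\max\bigl(|4\tilde A(u,v)|^3,\,|27\tilde B(u,v)|^2\bigr), \]
where $\mu(u,v) = \prod_p p^{\epsilon_p(u,v)}$ records the $p$-adic minimalization exponents.

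Next apply the principle of Lipschitz to count coprime $(u,v) \in \Z^2$ subject to the height bound, together with a $p$-adic sieve enforcing minimality and extracting local densities. The compact region $\{(x,y) \in \R^2 : \max(|4\tilde A(x,y)|^3, |27\tilde B(x,y)|^2) \leq 1\}$ has piecewise smooth boundary, so lattice counting in its dilation by $H^{1/(2d(G))}$ yields a main term of order $H^{1/d(G)}$ with boundary error $O(H^{1/(2d(G))}) = O(H^{1/e(G)})$; M\"obius inversion for coprimality and the convergent Euler product for the minimalization correction preserve this shape. Finally, Lemma \ref{g_structure_lem} identifies $r(\pm G)$ as the generic fiber size of the forgetful map from $Y_G(\Q)$ to isomorphism classes of elliptic curves $E/\Q$ with $\overline{\rho}_{E,N}(\Gal_\Q) \lesssim G$, so one divides by $r(\pm G)$; the residual contribution from $j \in \{0,1728\}$ is $O(1)$ and absorbed into the error term.

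The main obstacle is the uniform $p$-adic sieve: one must show the Euler product of local minimalization densities converges to an effectively computable $c(G) \in \R_{\geq 0}$ and that the tail of the sieve does not degrade the square-root error term. This is a case analysis at each prime using the explicit universal Weierstrass model, proceeding exactly as in Harron--Snowden \cite[\S 5]{hs} in our more general setting.
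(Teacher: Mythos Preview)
Your outline follows the same four-step approach as the paper (universal curve, Lipschitz, sieve, automorphisms), but two points need sharpening.

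First, and most importantly, the minimality sieve is \emph{not} an infinite Euler product whose convergence needs to be controlled. The paper's key observation is that since $A(t,1)=f(t)$ and $B(t,1)=g(t)$ are coprime polynomials, the minimality defect $e$ of any groomed pair $(u,v)$ must divide a fixed integer $m$ (the lcm of the resultants $\Res_t(f,g)$ and $\Res_t(\breve f,\breve g)$). So the sieve is a \emph{finite} sum $\sum_{e \mid m} \delta_e\, e^{12/d(G)}$ over congruence classes, and no Euler-product tail analysis is needed. This is also what makes the constant effectively computable. By framing the ``main obstacle'' as Euler-product convergence, you are anticipating a difficulty that the resultant bound dissolves.

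Second, two bookkeeping issues in your final step. The correct division factor is $r(G)=2r(\pm G)$, not $r(\pm G)$: the extra factor of $2$ comes from $(u,v)$ and $(-u,-v)$ giving the same $t=u/v$ in lowest terms. And you must also argue that the elliptic curves whose image lands in a \emph{proper} subgroup $G'<G$ are negligible (the paper does this by noting that $[\Gamma_G:\Gamma_{G'}]\geq 2$, hence $d(G')\geq 2d(G)$, so these contribute $O(H^{1/2d(G)})$); your proposal only mentions $j\in\{0,1728\}$. Finally, note that the paper invokes Huxley's refinement to get error $O(H^{1/2d(G)-\delta})$ before M\"obius inversion, which is what keeps the coprimality sieve from degrading the error term---your claim that ``M\"obius inversion\ldots\ preserve[s] this shape'' needs exactly this input.
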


As mentioned in the introduction, we follow an approach outlined in Harron--Snowden \cite[\S 5]{hs}. 

\begin{proof}
Our proof proceeds in four steps.

\emph{Step 1: universal curve}.  Let $Y_G$ be the curve over $\Q$ given by Proposition \ref{prop:theresacurve}.  We are given that $\Gamma_G$ (equivalently $Y_G$) has genus zero.  If $Y_G(\Q)=\emptyset$, then the theorem is trivially true taking $c(G)=0$.  So we may suppose $\#Y_G(\Q)=\infty$, in which case by choosing a coordinate $t$ we have $Y_G = \Spec \Q[t] \smallsetminus S \subseteq \A_\Q^1=\Spec \Q[t]$ where $S \subseteq Y_G$ is a finite set of closed points (stable under $\Gal_\Q$).  Since $\Gamma_G$ is torsion free, by Proposition \ref{prop:theresacurve}(iv), there is a universal curve of the form
\begin{equation}  \label{eqn:yftgt}
E_{G,\textup{univ}} \colon y^2 = x^3 + f(t)x + g(t) 
\end{equation}
where $f(t),g(t) \in \Q(t)$ (and regular away from $S$).  In particular, for every elliptic curve $E$ over $\Q$ such that $\overline{\rho}_{E,N}(\Gal_\Q) \lesssim G$, there exists $t_0 \in \Q \smallsetminus S$ such that $E$ is isomorphic to the curve $y^2 = x^3 + f(t_0)x+g(t_0)$.

Repeating carefully the argument of Harron--Snowden \cite[Proposition 3.2, second proof of Lemma 3.3]{hs} (given under more restrictive hypothesis, but using the fact that $\Gamma_G$ has no irregular cusps by hypothesis), after minimally clearing denominators we have $f(t),g(t) \in \Q[t]$ with $\gcd(f(t),g(t))=1$, and 
\begin{equation} 
3\deg f(t) = 2\deg g(t) = \deg(j) = [\PSL_2(\Z):\Gamma_G] = 2d(G)
\end{equation}
and moreover $12 \mid [\PSL_2(\Z):\Gamma_G]$.  In particular, $d(G) = \frac{1}{2}[\PSL_2(\Z):\Gamma_G] \in 6\Z_{\geq 1}$.  We now homogenize, letting $t=a/b$ and clearing denominators, giving
\begin{equation}  \label{eqn:EABab}
E_{A,B} \colon y^2 = x^3 + A(a,b)x + B(a,b) 
\end{equation}
with $A(a,b),B(a,b) \in \Z[a,b]$ satisfying 
\begin{equation} \label{eqn:degfgd}
\begin{aligned}
\deg A(a,b) &= \deg f(t) = \tfrac{2}{3}d(G) \\
\deg B(a,b) &= \deg g(t)=d(G)
\end{aligned}
\end{equation}

\emph{Step 2: principle of Lipschitz}.  In view of \eqref{eqn:EABab}, as a first step we count the number of integer points in the region
\begin{align} \label{region}
R(H) \colonequals \lbrace (a,b) \in \R^2 : \abs{A(a,b)} \leq (H/4)^{1/3} \text{ and } \abs{B(a,b)} \leq (H/27)^{1/2} \rbrace
\end{align}
as $H \to \infty$.

We claim that the region $R(H)$ is bounded.  By the above, the polynomials $f(t),g(t)$ are coprime, so 
\begin{equation}
\max(\abs{f(x)}^3,\abs{g(x)}^2) \geq \mu > 0
\end{equation}
is bounded below for all $x \in \R$.  From \eqref{eqn:degfgd}, we have
\begin{equation}
\begin{aligned}
\abs{A(a,b)} &= \abs{b^{2d(G)/3}f(a/b)} \\
\abs{B(a,b)} &= \abs{b^{d(G)}g(a/b)}
\end{aligned}
\end{equation}
we conclude that 
\[ H \geq \max_{a,b \in \R} (\abs{4A(a,b)^3},\abs{27B(a,b)^2}) \geq \mu\abs{b^{d(G)}} \]
so $b$ is bounded; a symmetric argument shows that $a$ is bounded.  

Being closed and bounded, the region $R(H)$ is compact.  Moreover, $R(H)$ has rectifiable boundary (defined by polynomials).  By the Principle of Lipschitz \cite{davenport}, the number of integral points in the region (\ref{region}) is given by its area up to an error proportional to the length of its boundary.  

Conveniently, the region $R(H)$ is \emph{homogeneous} in $H$: dropping parentheses to write $H^{1/2d(G)}=H^{1/(2d(G))}$, again from \eqref{eqn:degfgd} we have
\begin{equation} \label{eqn:H2dr1}
H^{1/d(G)} R(1) = R(H).
\end{equation}
Indeed, if $(a',b')=(H^{1/2d(G)} a,H^{1/2d(G)} b)$, then from
\[ \abs{A(a', b')} = (H^{1/2d(G)})^{2d(G)/3}\abs{A(a,b)} = H^{1/3}\abs{A(a,b)} \]
and similarly with $B$, we have $(a',b') \in R(H)$ if and only if $(a,b) \in R(1)$.  

Therefore,  
\begin{equation} \label{eqn:areaRHrg}
\begin{aligned}
\#(R(H) \cap \Z^2) &= \area(R(H)) + O(\len(\bd(R(H)))) \\
&= \area(R(1)) H^{1/d(G)} + O(H^{1/2d(G)})
\end{aligned}
\end{equation}
where the exponent on the error term follows from being the arclength of a 2-dimensional compact region with polynomial boundary.

We will use a slight refinement of this estimate which improves the error term, due to Huxley \cite{huxley} (and applied in our setting by Pomerance--Schaefer \cite[\S 4]{ed_carl_arxiv})---our boundary is defined by nonlinear polynomials, so the error term in \eqref{eqn:areaRHrg} arising from lattice points on the boundary can be improved to $O(H^{1/2d(G)-\delta})$ for some $\delta>0$.  

\emph{Step 3: sieving}.  We now apply a sieve to take care of local conditions: among the lattice points counted in the previous step, we want exactly those with $E \in \scrE$.  We first restrict the count of lattice points, then adjust the constant by finitely many local factors.  

First, the points $(a,b) \in \Z^2$ such that $4A(a,b)^3+27B(a,b)^2=0$ lie on a curve, which by standard estimates is $O(H^{1/2d(G)})$ so applying this condition does not change \eqref{eqn:areaRHrg}.  Second, for the points $(a,b) \in \Z^2$ such that $p \mid a$ and $p \mid b$, we have overcounted and we need to apply the correction factor $1-1/p^2$ for all primes $p$.  We say $(a,b) \in \Z^2$ is \defi{groomed} if $4A(a,b)^3+27B(a,b)^2 \neq 0$ and $\gcd(a,b)=1$.  A standard M\"obius sieve argument (see e.g.\ Harron-Snowden \cite[Proof of Theorem 5.5]{hs}) with the improved error term $O(H^{1/2d(G)-\delta})$, together with \eqref{eqn:areaRHrg}, gives
\begin{equation} \label{eqn:areaRHrgfixed2}
\begin{aligned}
&\#\{(a,b) \in R(H) \cap \Z^2 : \text{$(a,b)$ groomed} \} \\
&\qquad\qquad = \frac{\area(R(1))}{\zeta(2)}H^{1/d(G)} + O(H^{1/2d(G)}).
\end{aligned}
\end{equation}

We now consider local conditions imposed by minimal models.  Suppose that $q$ is a power of a prime $p$ such that $q^4 \mid A(a,b)$ and $q^6 \mid B(a,b)$ with $\gcd(a,b)=1$.  Recall $A(t,1)=f(t)$ and $B(t,1)=g(t)$ are coprime.  If $p \nmid b$, then $f(t)$ and $g(t)$ have a common root $a/b \in \Z/q$, so $q$ divides the nonzero resultant $\Res_t(f(t),g(t)) \in \Z$ of $f(t)$ and $g(t)$ with respect to $t$ \cite[Chapter 3]{CLO}; similarly, if $p \nmid a$ then $q$ divides the resultant of $A(1,u)=u^{\deg f} f(1/u)$ and $B(1,u)$.  Let $m$ be the least common multiple of these two resultants.  Applying the Sun Zu Theorem (CRT), we have shown that if $e \in \Z_{\geq 1}$ satisfies $e^4 \mid A(a,b)$ and $e^6 \mid B(a,b)$, then in fact $e \mid m$.

So let $E$ be an elliptic curve over $\Q$, and suppose $E$ has a $G$-level structure defined over $\Q$ in the sense of Proposition \ref{prop:theresacurve}(iv).  By \eqref{eqn:EABab}, we have $E \colon y^2 = x^3 + Ax + B$ where $A=A(a,b)$ and $B=B(a,b)$ for some $a,b \in \Z$ with $\gcd(a,b)=1$ (coming from $t=a/b \in \Q$ in lowest terms).  Then there exists a unique integer $e \in \Z_{\geq 1}$ such that the unique representative of $E$ in $\scrE$ is given by $y^2=x^3+A'x+B'$ where $A'=e^{-4}A(a,b) \in \Z$ and $B'=e^{-6}B(a,b) \in \Z$: namely, the largest positive integer $e$ such that $e^{12} \mid \gcd(A(a,b)^3,B(a,b)^2)$.  We call $e$ the \defi{minimality defect} of $(a,b)$.  By the previous paragraph, we have $e \mid m$.  Moreover, 
\[ e^{12} \height E = \max(\abs{4A^3},\abs{27B^2}) \]
so $(A,B) \in R(e^{12} H)$.  Recalling the discussion at the end of section \S \ref{sec:modellcur}, let
\[ N_G^{\square}(H) \colonequals \#\{(E,G\iota) : E \in \scrE_{\leq H} \text{ and } \overline{\rho}_{E,N}(\Gal_\Q) \lesssim_{\iota} G\} \]
count the number of pairs $(E,G\iota)$ where $E \in \scrE_{\leq H}$ and $G\iota$ is a $\Gal_\Q$-stable $G$-equivalence class of isomorphism $E[N](\Kbar) \to (\Z/N)^2$.  Running this argument in the other direction, we conclude that the count
\begin{equation} \label{eqn:emabreh}
\begin{aligned}
N_G^{\square}(H) &= \sum_{e \mid m} \#\{(a,b) \in R(e^{12}H) \cap \Z^2 \\
&\qquad\qquad\qquad : \text{$(a,b)$ groomed, minimality defect $e$}\}.
\end{aligned}
\end{equation}

Of course, the condition that $(a,b)$ has minimality defect $e$ is determined by congruence conditions on $a$ and $b$.  Let $\delta_e$ be the proportion of integers (congruence classes) satisfying this condition, so $0 \leq \delta_e \leq 1$ and $\sum_{e \mid m} \delta_e = 1$.  For each $e$ and each such congruence class, the principle of Lipschitz applies; summing of congruences classes then multiplies the asymptotic by the factor $\delta_e$.  Applying \eqref{eqn:areaRHrgfixed2}, from \eqref{eqn:emabreh} we conclude
\begin{equation} \label{eqn:emabreh000}
N_G^{\square}(H) = \frac{\area(R(1))}{\zeta(2)} \biggl(\sum_{e \mid m} \delta_e e^{12/d(G)}\biggr) H^{1/d(G)} + O(H^{1/2d(G)})
\end{equation}
so in particular
\begin{equation} 
c^{\square}(G) = \frac{\area(R(1))}{\zeta(2)} \biggl(\sum_{e \mid m} \delta_e e^{12/d(G)}\biggr).
\end{equation}

\emph{Step 4: automorphisms}.  Finally, to count the number of curves (rather than curves equipped with level structure), we apply Lemma \ref{g_structure_lem}.  For the curves with Galois image exactly $G$ (up to conjugacy) and $j(E) \neq 0,1728$, we have overcounted by the factor $2r(\pm G)=r(G)$, the additional factor $2$ coming from $t=a/b=a'/b' \in \Q$ is in lowest terms if and only if $(a',b')=\pm(a,b)$.  The curves with $j(E)=0,1728$ have $A=0$ or $B=0$, so are negligible (comparing to the length of the boundary).  For the remaining curves, suppose that $E$ has $\overline{\rho}_{E,N}(\Gal_\Q) = G' < G$ a proper subgroup (up to conjugation).  If $\Gamma_{G'}$ has genus $\geq 1$, then $N_{G'}(H)$ is either finite or grows slower than any power of $H$ (see Serre \cite[p.\ 133]{serre:lmw}), so in particular is $O(H^{1/d(G)})$.  Otherwise, $\Gamma_{G'}$ has genus zero and is still torsion free without irregular cusps.  Since $\det G' = \det G = (\Z/N)^\times$, we have
\[ [G:G']=[\Gamma_G:\Gamma_{G'}] \in \Z_{\geq 2} \]
so $d(G') \geq 2d(G)$.  Applying Step 3 then shows that the count of these curves is negligible.  

Thus from \eqref{eqn:emabreh000} we get that   
\begin{equation} 
\#\{E \in \scrE_{\leq H} : \overline{\rho}_{E,N}(\Gal_\Q) \lesssim G\} 
 = c(G) H^{1/d(G)} + O(H^{1/2d(G)})
 \end{equation}
where 
\begin{equation} \label{eqn:full_expression} 
c(G) = \frac{c^{\square}(G)}{r(G)} = \frac{\area(R(1))}{r(G)\zeta(2)} \biggl(\sum_{e \mid m} \delta_e e^{12/d(G)}\biggr)
\end{equation}
as claimed.
\end{proof}

\begin{cor} \label{cor:canremove}
With notation as in Theorem \textup{\ref{thm:hsupgrade0}}, we have
\[ N_G(H) = \#\{E \in \scrE_{\leq H} : \overline{\rho}_{E,N}(\Gal_\Q) \sim G\} + O(H^{1/2d(G)}). \]
\end{cor}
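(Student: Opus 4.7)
The plan is to control the difference
\[ \Delta(H) \colonequals N_G(H) - \#\{E \in \scrE_{\leq H} : \overline{\rho}_{E,N}(\Gal_\Q) \sim G\}. \]
Every $E$ contributing to $\Delta(H)$ has $\overline{\rho}_{E,N}(\Gal_\Q)$ conjugate in $\GL_2(\Z/N)$ to a \emph{proper} subgroup of $G$. Since $\GL_2(\Z/N)$ is finite, there are only finitely many conjugacy classes of such subgroups, and consequently
\[ \Delta(H) \leq \sum_{[G']} \#\{E \in \scrE_{\leq H} : \overline{\rho}_{E,N}(\Gal_\Q) \sim G'\}, \]
where the sum runs over conjugacy classes of proper subgroups $G' < G$. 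It suffices to show each summand is $O(H^{1/2d(G)})$.

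Fix such a $G'$. Since the mod-$N$ cyclotomic character is surjective on $\Gal_\Q$, the summand vanishes unless $\det G' = (\Z/N)^\times$, which I henceforth assume. Comparing the short exact sequences $1 \to H \cap \SL_2(\Z/N) \to H \to \det H \to 1$ for $H \in \{G, G'\}$ gives
\[ [\Gamma_G : \Gamma_{G'}] = [G \cap \SL_2(\Z/N) : G' \cap \SL_2(\Z/N)] = [G:G'] \geq 2, \]
so whenever $d(G')$ is defined, $d(G') \geq 2d(G)$. Moreover, $\Gamma_{G'} \leq \Gamma_G$ inherits torsion-freeness for free.

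If $Y_{G'}$ has genus $\geq 1$, then by Serre \cite[p.~133]{serre:lmw} the count $\#\{E \in \scrE_{\leq H} : \overline{\rho}_{E,N}(\Gal_\Q) \sim G'\}$ grows more slowly than any positive power of $H$, so it is $O(H^{1/2d(G)})$. Otherwise $Y_{G'}$ has genus zero, and my next step is to verify that $\Gamma_{G'}$ has no irregular cusps: were $s$ an irregular cusp of $\Gamma_{G'}$, then after conjugating $s$ to $\infty$ by some $\alpha \in \SL_2(\Z)$, the stabilizer of $\infty$ in $\alpha^{-1}\Gamma_{G'}\alpha$ would contain an element of the form $-\begin{pmatrix} 1 & h \\ 0 & 1 \end{pmatrix}$ with $h \in \Z_{>0}$; this same element lies in $\alpha^{-1}\Gamma_G\alpha$ and, since $\Gamma_G$ is torsion-free (so $-1\notin\Gamma_G$), forces the corresponding cusp of $\Gamma_G$ to be irregular, contradicting hypothesis. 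With this in hand, Theorem \ref{thm:hsupgrade0} applied to $G'$ delivers $N_{G'}(H) = O(H^{1/d(G')}) = O(H^{1/2d(G)})$.

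Summing the bounds over the finitely many conjugacy classes of proper subgroups then completes the argument. The only step not entirely routine is the descent of the no-irregular-cusps hypothesis to $\Gamma_{G'}$; this is precisely the stabilizer calculation above, and it crucially relies on the torsion-freeness of $\Gamma_G$.
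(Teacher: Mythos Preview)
Your argument is correct and follows essentially the same approach as the paper, which simply points back to Step~4 of the proof of Theorem~\ref{thm:hsupgrade0}: split off proper subgroups $G'<G$, dispose of the genus $\geq 1$ case via Serre, and in the genus zero case use $[\Gamma_G:\Gamma_{G'}]\geq 2$ to get $d(G')\geq 2d(G)$ and reapply the theorem. You add explicit justification for why $\Gamma_{G'}$ inherits the no-irregular-cusps property (the paper merely asserts this), and your stabilizer argument for that point is correct.
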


In other words, counting curves with image contained in $G$ is asymptotic to the count of curves with image equal to $G$.

\begin{proof}
Proven in Step 4 of the proof of Theorem \ref{thm:hsupgrade0}. 
\end{proof}

\begin{prop} \label{prop:effcomput}
The constant $c(G)$ in Theorem \textup{\ref{thm:hsupgrade0}} is effectively computable.
\end{prop}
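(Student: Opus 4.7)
The plan is to verify that each factor in the closed-form expression
\[ c(G) = \frac{\area(R(1))}{r(G)\zeta(2)} \biggl(\sum_{e \mid m} \delta_e e^{12/d(G)}\biggr) \]
established in the proof of Theorem \ref{thm:hsupgrade0} is individually effectively computable, starting from the matrix group $G \leq \GL_2(\Z/N)$. The input data is finite (a finite subgroup of a finite group), so the only question is whether passing through the geometry of $Y_G$ and the analysis of $R(1)$ produces effective outputs. I would treat the algebraic and analytic ingredients separately.

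The algebraic ingredients go first. The index $r(G) = [N_{\GL_2(\Z/N)}(G):G]$ is a finite computation in $\GL_2(\Z/N)$ and $d(G)$ is the effectively computable index $\tfrac{1}{4}[\SL_2(\Z):\Gamma_G]$. The crux is constructing the universal curve \eqref{eqn:yftgt}: one chooses a Hauptmodul $t$ for $\Gamma_G$ (the uniformization of $Y_G$ can be done effectively via $q$-expansions of modular forms on $\Gamma_G$, or by exploiting Sebbar's classification together with the explicit tables in the literature for the finitely many torsion-free genus zero congruence subgroups), computes the $j$-map $j(t) \in \Q(t)$, and then reads off $f(t), g(t) \in \Q[t]$ via $j(t) = 1728 \cdot 4f(t)^3/(4f(t)^3 + 27g(t)^2)$, clearing denominators to obtain $A(a,b), B(a,b) \in \Z[a,b]$. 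Once $A, B$ are explicit, the integer $m$ (the LCM of $\Res_t(f,g)$ and the analogous resultant at the other cusp) is computable by a finite symbolic calculation.

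For the analytic ingredients, the density $\delta_e$ is the proportion of residue classes $(a,b) \pmod{e^{12}}$ with $\gcd(a,b)$ coprime to the primes dividing $e$ and minimality defect equal to $e$; by definition this is determined by the valuations of $A(a,b)$ and $B(a,b)$ modulo a fixed power of each prime dividing $m$, and can be evaluated by direct enumeration over a finite set of residue classes. The area $\area(R(1))$ is the area of a compact semi-algebraic planar region cut out by two explicit polynomial inequalities; since the boundary is rectifiable and given by explicit algebraic curves, one can compute $\area(R(1))$ to arbitrary precision either by effective cylindrical algebraic decomposition followed by elementary integration, or by a straightforward numerical quadrature with rigorous error bounds (the defining polynomials are of known degree and have integer coefficients, so Lipschitz constants for the boundary are effective). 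Finally, $\zeta(2) = \pi^2/6$ is known in closed form and $e^{12/d(G)}$ is algebraic with explicit minimal polynomial.

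The only step with real substance is the explicit construction of the universal curve $E_{G,\textup{univ}}$ from the abstract datum $G$; everything else reduces to bookkeeping in finite groups, symbolic resultants, enumeration over finite congruence tables, and effective numerical integration over a known semi-algebraic set. Thus assembling these effectively-computable pieces via \eqref{eqn:full_expression} yields $c(G)$ effectively, completing the proof.
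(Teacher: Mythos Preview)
Your overall structure matches the paper's proof exactly: both arguments walk through the formula \eqref{eqn:full_expression} factor by factor, noting that $r(G)$ and $d(G)$ are finite group computations, that $m$ is a resultant, that each $\delta_e$ is a finite enumeration of congruence classes, and that $\area(R(1))$ is a numerical integral over an explicit semi-algebraic region. On all of these points your proposal and the paper are in agreement.

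There is, however, a genuine gap in your construction of the universal curve. You propose to compute the $j$-map $j(t)\in\Q(t)$ and then ``read off'' $f(t),g(t)$ from the relation $j=1728\cdot 4f^3/(4f^3+27g^2)$. But the $j$-invariant determines an elliptic curve over $\Q(t)$ only up to quadratic twist, and the universal curve $E_{G,\textup{univ}}$ is a \emph{specific} twist---the one whose fibers carry the $G$-level structure over $\Q$. Choosing the wrong twist gives a different family whose fibers need not have mod $N$ image in $G$ at all, and the region $R(1)$ (hence $c(G)$) would change. Appealing to Sebbar's list does not help here, since Sebbar classifies the subgroups $\Gamma_G\leq\PSL_2(\Z)$, and distinct $G\leq\GL_2(\Z/N)$ with the same $\Gamma_G$ yield different universal curves. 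The paper resolves this by computing the graded ring of modular forms for $\Gamma_G$ via $q$-expansions (using the log canonical ring description of Voight--Zureick-Brown and the modular-symbols algorithms of Assaf) and then recognizing the Eisenstein series $E_4,E_6$ themselves---not merely their ratio $j$---as explicit elements of $\Q(t)$. This pins down the correct twist. Your parenthetical mention of $q$-expansions is on the right track, but you then discard that information by passing through $j$; you should instead express $E_4$ and $E_6$ directly in terms of the Hauptmodul. A secondary omission: you do not say how to effectively decide whether $Y_G(\Q)=\emptyset$ (in which case $c(G)=0$); the paper handles this by embedding $X_G$ as a conic and checking local solubility.
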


\begin{proof}
We first claim that the universal curve is effectively computable, in the sense that there is a Turing machine (effective procedure) that, given input $G$, outputs $f(t),g(t) \in \Q(t)$ such that \eqref{eqn:yftgt} is universal.  We compactify $Y_G$ by adding cusps $X_G \colonequals Y_G \cup \Delta$; the set $\Delta$ (naturally identified with the set of $G$-orbits of $\PP^1(\Z/N)$) is effectively computable.  By Voight--Zureick-Brown \cite[Chapter 4]{vzb}, the canonical ring of $Y_G$ is the log canonical ring of $X_G$; this graded ring has a simple, explicit description \cite[\S 4.2]{vzb} in terms of $\#\Delta$.  Moreover \cite[\S 6.2]{vzb}, the log canonical ring is isomorphic to the graded ring of modular forms of even weight for $\Gamma_G$; by linear algebra with $q$-expansions computed via modular symbols as explained by Assaf \cite{assaf}, we obtain explicit equations for this canonical ring, realizing $X_G$ as a subvariety of weighted projective space.  Next, we can effectively determine if $X_G(\Q)=\emptyset$ and, if $X_G(\Q) \neq \emptyset$, compute $P_0 \in X_G(\Q)$: briefly, we compute a canonical divisor, embed $X_G \to \PP^2$ as a conic, and either find that $X_G(\Q_p)=\emptyset$ for some prime $p$ or we find a point in $X_G(\Q)$, after which we may parametrize the entire set $X_G(\Q)$ in terms of a parameter $t$, giving a computable isomorphism between the field of fractions of the log canonical ring and $\Q(t)$.  Finally, using linear algebra we recognize the Eisenstein series $E_4,E_6$ first as elements of the graded ring and then as rational functions in $t$.  

The remaining quantities are also effectively computable.  We compute the degree as 
\[ 4d(G)=[\SL_2(\Z) : \Gamma_G]=[\SL_2(\Z/N) : G \cap \SL_2(\Z/N)]\phi(N). \]  
For the constant $c(G)$, we note that the area $\area(R(1))$ can be computed to any desired precision by numerical integration, and $r(G)$ can be determined by finite exhaustion.  The integer $m$ is effectively computable as the least common multiple of resultants, and we can find the set of divisors of $m$ and then for each $e \mid m$, compute the proportion $\delta_e$ by exhaustive enumeration.  
\end{proof}

\begin{rmk}
Actually, by work of Sebbar \cite{sebbar} there are exactly $33$ torsion-free, genus zero subgroups of $\PSL_2(\Z)$, all of which lift to torsion-free subgroups of $\SL_2(\Z)$ by Kra \cite[Theorem, p.~181]{kra}.  Up to twist, there are only finitely many $G$ that can give each $\Gamma_G$, so the set of groups $G$ that satisfy the hypotheses of Theorem \ref{thm:hsupgrade} is finite (again, up to twist).  So it would be desirable to carry out the proof of Proposition \ref{prop:effcomput} in every case, and to just compute these constants (keeping track of the effect of the twist)---but such a task lies outside of the motivation and scope of this paper.  

Nevertheless, many of the curves in Sebbar's list arise in our analysis, as follows.  By (\ref{eqn:GammaG}) and the natural projection $\SL_2(\Z) \to \PSL_2(\Z)$, we can associate to every $G_\ell(n;r,s)$ a subgroup $\Gamma_G$ of $\PSL_2(\Z)$ via
$$
G_\ell(n;r,s) \leftrightarrow \Gamma_{\overline{G}_{\ell^n}(n;r,s)},
$$
(though of course a given group may not have genus 0).  Of the 33 genus zero subgroups of $\PSL_2(\Z)$, some can be written as $\Gamma_{G}$ with $G=\overline{G}_{\ell^n}(n;r,s)$.  In particular, we have 
\begin{equation}
\begin{aligned}
\Gamma(\ell^n) &= \Gamma_{\overline{G}_{\ell^n}(2n;n,n)} \\
\Gamma_1(\ell^n) &= \Gamma_{\overline{G}_{\ell^n}(n;n,0)} = \Gamma_{\overline{G}_{\ell^n}(n;0,0)} \\
\Gamma_0(4) &=\Gamma_{\overline{G}_4(2;1,0)},
\end{aligned}
\end{equation}
with functorial intersections.  In this way, the 16 groups
\begin{center}
\begin{tabular}{llllll}
$\Gamma(2)$ &$\Gamma(3)$ & $\Gamma(4)$ & $\Gamma(5)$ & $\Gamma_1(5)$ & $\Gamma_1(7)$\\
 $\Gamma_1(8)$ & $\Gamma_1(9)$ & $\Gamma_1(10)$ & $\Gamma_1(12)$ &$\Gamma_0(4)$ & $\Gamma_0(6)$ \\
$\Gamma_0(4) \cap \Gamma(2)$ & $\Gamma_1(8) \cap \Gamma(2)$ &  $\Gamma_0(2) \cap \Gamma(3)$ & $\Gamma_0(3) \cap \Gamma(2)$ 
\end{tabular}
\end{center}
in \cite{sebbar} can be each realized as (intersections of) the $\Gamma_{\overline{G}_\ell(n;r,s)}$.  Of the remaining 17 torsion-free genus zero groups, 9 can be realized as $\Gamma_H$, where $H$ is a \emph{proper} subgroup of some $\overline{G}_\ell(n;r,s)$.
The remaining 8 torsion-free genus zero groups
\begin{align*}
&\Gamma_0(8),\ \Gamma_0(9),\ \Gamma_0(8) \cap \Gamma(2),\ \Gamma_0(12),\\
&\Gamma_0(16),\ \Gamma_0(18),\ \Gamma_0(16) \cap \Gamma_1(8),\ \Gamma_0(25)\cap\Gamma_1(5)
\end{align*}
do not correspond to a $G_\ell(n;r,s)$ (or to an intersection).
\end{rmk}

We now officially conclude the proof.

\begin{proof}[Proof of Theorem \textup{\ref{thm:hsupgrade}}]
Combine Theorem \ref{thm:hsupgrade0} with Proposition \ref{prop:effcomput}.
\end{proof}

\section{The probabilities \texorpdfstring{$P_m$}{Pm} for \texorpdfstring{$m \geq 5$}{m at least 5}} \label{sect57}

In this section, we prove Theorem \ref{mainresult}, and we obtain an explicit result for the cases $m=5$ and $m=7$.  Although we do not do so, one can apply the arguments of this section to similarly compute $P_m$ for the remaining values of $m \geq 5$.    

\subsection{Proof of main result} \label{sec:generalpm_jv} 

Let $m \in \lbrace 1,2,\dots,10,12,16 \rbrace$.  As in section \ref{sec:notation}, we seek to refine our understanding of the subset
\begin{equation} 
\scrE_{m?} \colonequals \{E \in \scrE : \text{$m \mid \#E(\F_p)$ for a set of primes $p$ of density $1$}\}
\end{equation}
by considering the probability 
\begin{equation}  \label{eqn:limit_eqn3}
P_{m} \colonequals \lim_{H \to \infty} \frac{\#\{E \in \scrE_{\leq H} : m \mid \#E(\Q)\sbtors\}}{\#\{E \in \scrE_{m?} \cap \scrE_{\leq H}\}};
\end{equation}
in particular, we want to show $P_m$ is defined.  (Until we do, we may take $P_m$ to be the $\limsup$.)

We now proceed to prove Theorem \ref{mainresult} for $m \geq 5$.  Our strategy is as follows.  First, building on section \ref{S1.6}, we show that 100\% of curves in the numerator and denominator of $P_m$ are obtained from curves whose $\ell$-adic Galois image in a clean basis is equal to $G_\ell(n;r,s)$ for every $\ell \mid m$ (in particular, the mod $m$ image is the full preimage of the reductions modulo $\ell^n \parallel m$).  Second, using Theorem \ref{thm:hsupgrade0}, we give an asymptotic count for these curves; we find a positive proportion, as predicted by Theorem \ref{thm:greenbergup-inart}.

\begin{defn} \label{defn:z0}
We say that an elliptic curve $E$ over $\Q$ is \defi{$m$-full} if for all $\ell^n \parallel m$, there exist $r,s \in \Z_{\geq 0}$ with $r,s \leq n$ such that $\rho_{E,\ell}(\Gal_\Q)=G_\ell(n;r,s)$ (in a basis for $T_\ell(E)$).
\end{defn}

As in \eqref{eqn:swapbasis}, in Definition \ref{defn:z0} we may without loss of generality further suppose that $r+s \leq n$.  By Lemma \ref{lem:hasdenyup}, if $E$ is $m$-full, then $E \in \scrE_{m?}$.  The following proposition provides a converse sufficient for our purposes.

\begin{prop} \label{prop:countpointsonmodcrve}
We have 
\[ \#\{E \in \scrE_{\leq H} : \textup{$E$ is $m$-full}\} \sim \#(\scrE_{m?} \cap \scrE_{\leq H}) \]
as $H \to \infty$.
\end{prop}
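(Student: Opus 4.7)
The containment $\{E \in \scrE_{\leq H} : E \text{ is } m\text{-full}\} \subseteq \scrE_{m?} \cap \scrE_{\leq H}$ is immediate from Lemma \ref{lem:hasdenyup}, so the task reduces to proving the reverse asymptotic, i.e., that the set of $E \in \scrE_{m?} \cap \scrE_{\leq H}$ that \emph{fail} to be $m$-full has size of smaller order.

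My plan is to decouple across primes via the Chinese Remainder Theorem: the condition $E \in \scrE_{m?}$ is equivalent to $E \in \scrE_{\ell^n?}$ for every prime power $\ell^n \parallel m$, and $m$-fullness likewise reduces to $\ell^n$-fullness at each such $\ell^n$. Fix such a prime power. By Corollary \ref{cor:sumitup} together with Theorem \ref{upgrade}, every $E \in \scrE_{\ell^n?}$ satisfies $\rho_{E,\ell}(\Gal_\Q) \lesssim G_\ell(n;r,s)$ for some integers $0 \leq r,s \leq n$ (in a suitable basis for $T_\ell E$), and there are only finitely many relevant pairs $(r,s)$. Consequently,
\[
\scrE_{\ell^n?} \cap \scrE_{\leq H} = \bigcup_{(r,s)} \{E \in \scrE_{\leq H} : \rho_{E,\ell}(\Gal_\Q) \lesssim G_\ell(n;r,s)\},
\]
and the $\ell^n$-full curves are precisely those attaining equality (up to conjugacy) for at least one such $(r,s)$.

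Next, I would apply Theorem \ref{thm:hsupgrade0} to each reduction $\overline{G}_{\ell^n}(n;r,s)$---the hypotheses of torsion-freeness, absence of irregular cusps, and genus zero are supplied (for $\ell^n \geq 5$) by Lemma \ref{lem:quickpropr}(d), Lemma \ref{lem:noirreg}, and Sebbar's list cited in the remark after the proof of Theorem \ref{thm:hsupgrade}. This yields a count $c\,H^{1/d} + O(H^{1/(2d)})$ for each pair, and Corollary \ref{cor:canremove} absorbs the curves with $\rho_{E,\ell}(\Gal_\Q)$ \emph{strictly} contained in $G_\ell(n;r,s)$ into the $O(H^{1/(2d)})$ error. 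By Theorem \ref{thm:greenbergup}, the main-term exponents $1/d$ agree across the different pairs $(r,s)$ (all arise from $\ell$-power-isogenous moduli problems), so summing shows that the subset of $\scrE_{\ell^n?} \cap \scrE_{\leq H}$ failing $\ell^n$-fullness has size $O(H^{1/(2d)})$. Intersecting the conditions across the prime powers $\ell^n \parallel m$, either by sieving the joint mod-$m$ image via CRT or by directly pulling back the local conditions in the Lipschitz count from Step 2 of the proof of Theorem \ref{thm:hsupgrade0}, preserves the order comparison and yields the asymptotic.

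The chief obstacle will be verifying the hypotheses of Theorem \ref{thm:hsupgrade0} at small-prime factors of $m$---specifically the prime powers $\ell^n \in \{2,3,4\}$ arising when $m \in \{6,10,12\}$, where Lemma \ref{lem:quickpropr}(d) does not apply and $\Gamma_{\overline{G}_{\ell^n}(n;r,s)}$ contains $-1$. At these primes one should either replace the mod-$\ell^n$ group by the joint mod-$m$ group, which typically becomes torsion-free once combined with the larger prime factor (as in the isogeny-class analysis via Theorem \ref{thm:greenbergup}), or handle the small-prime contribution by the easy direct arguments behind $P_1, P_2$ and the discussion preceding $P_3$. In either treatment the small-prime contribution produces only a constant multiplicative correction, and the lower-order comparison driving the proposition persists.
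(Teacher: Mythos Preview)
Your outline has the right ingredients, but there is a genuine gap. The definition of $m$-full requires the \emph{$\ell$-adic} equality $\rho_{E,\ell}(\Gal_\Q)=G_\ell(n;r,s)$, whereas Corollary~\ref{cor:canremove} (and Step~4 of Theorem~\ref{thm:hsupgrade0}) only controls curves whose \emph{mod-$N$} image is strictly contained in the reduction of $G_\ell(n;r,s)$ at a fixed finite level $N$. An elliptic curve can have full mod-$N$ image for every $N$ at which $G_\ell(n;r,s)$ is determined (cf.\ Lemma~\ref{lem:quickpropr}(a)) and still fail the $\ell$-adic equality by dropping at some higher level. To bridge this, the paper invokes a Frattini-subgroup argument (citing Serre) for the compact $m$-adic group $G'_{\textup{full}}=\prod_{\ell\mid m}G_\ell(n;r_\ell,n-r_\ell)$: the Frattini subgroup is open, so there are only finitely many maximal closed proper subgroups, each determined at a finite level. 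One then applies the Corollary~\ref{cor:canremove}-style argument to each of these finitely many subgroups. You do not mention this step, and without it the passage from ``mod-$N$ image equals the reduction'' to ``$\ell$-adic image equals $G_\ell(n;r,s)$'' is not justified.

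There is also a structural issue with the prime-by-prime decoupling. Bounding the curves that fail $\ell^n$-fullness inside $\scrE_{\ell^n?}$ gives $O(H^{1/(2d_\ell)})$, but for composite $m$ this can exceed the main term of $\scrE_{m?}$ (for instance $m=10$: the error at $\ell=5$ is $O(H^{1/12})$, while $\scrE_{10?}$ only grows like $H^{1/18}$). The paper avoids this entirely by working from the start with the product group $G'_{\textup{full}}$ and its modular curve $Y_{G'_{\textup{full}}}$: proper open subgroups then automatically double the index at the \emph{product} level, and torsion-freeness holds for all $m\geq 5$ (since $\det(g-1)\equiv 0\pmod{m}$ rules out $\det(g-1)\in\{1,2,3,4\}$), disposing of your acknowledged small-prime obstacle in one stroke. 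The paper also routes the count through the isogenous curve $E'$ with $\rho_{E',\ell}\leq G_\ell(n;r,n-r)$ and transfers back via Theorem~\ref{thm:greenbergup-inart}, rather than counting $E$ directly. Your ``intersect via CRT'' remark at the end is effectively a pointer back to the paper's product-group approach; once you adopt it, the prime-by-prime framing no longer does any work.
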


\begin{proof}
Let $E \in \scrE_{m?}$.  By Corollary \ref{cor:sumitup}, there exists a cyclic isogeny $\varphi \colon E \to E'$ of degree $d \mid m$ such that for all $\ell^n \mid m$, we have $\rho_{E',\ell}(\Gal_\Q) \leq G_\ell(n;r,n-r)$ for some $0 \leq r \leq n$ (with these quantities depending on $\ell$).  Moreover, by Theorem \ref{upgrade}, if $\rho_{E',\ell}(\Gal_\Q)=G_\ell(n;r,n-r)$ for all $\ell \mid m$ (so equality holds), then $E$ is $m$-full.  

Let $T_m E \colonequals \varprojlim_n E[m^n](\Qbar) \simeq \prod_{\ell \mid m} \Z_\ell^2$ be the $m$-adic Tate module; let 
\begin{equation} 
\rho_{E,m} \colon \Gal_\Q \to \Aut_{\Z_m}(T_m E) \simeq \GL_2(\Z_m) \simeq \prod_{\ell \mid m} \GL_2(\Z_\ell) 
\end{equation}
be the associated Galois representation, and let $G \colonequals \rho_{E,m}(\Gal_\Q)$ be the image.  Repeat this with $E'$ and $G'$.  Let 
$G'_{\textup{full}} \colonequals \prod_{\ell \mid m} G_\ell(n;r,n-r)$, 
so by the first paragraph we have $G' \leq G'_{\textup{full}}$.  Suppose that $G' < G'_{\textup{full}}$ is a strict inequality; we will show the count of the curves $E$ obtained in this way is asymptotically negligible.  

We first consider the counts of the target curves $E'$.  We begin by reducing to a finite problem, lifting an argument from Sutherland--Zywina \cite[Proof of Proposition 3.6(i)]{SZ}.  The profinite group $G'_{\textup{full}}$, as a product of compact $\ell$-adic Lie groups, satisfies condition (iv) of a proposition of Serre \cite[Proposition, \S 10.6, p.~148; Example 1, p.~149]{serre:lmw}, so it satisfies condition (ii): its Frattini subgroup $\Phi(G'_{\textup{full}})$, the intersection of the maximal closed proper subgroups, is open.  Therefore there are only finitely many maximal proper open (so finite index) subgroups of $G'_{\textup{full}}$.  In particular, $G'$ is contained in (at least) one of these subgroups.  

Since $m \geq 5$, the group $\Gamma_{G'_{\textup{full}}}$ is torsion free by Lemma \ref{lem:quickpropr}(d), and so by Proposition \ref{prop:theresacurve}, there exists a curve $Y_{G'_{\textup{full}}}$ that is a fine moduli space for $G'_{\textup{full}}$.  Since $G' < G'_{\textup{full}}$, the same holds for $G'$ and moreover we have a map $Y_{G'} \to Y_{G'_{\textup{full}}}$.  Since $[G'_{\textup{full}}:G']>1$ and $\det(G')=\det(G'_{\textup{full}})=\Z_m^\times$, we have 
\[ [G'_{\textup{full}} \cap \SL_2(\Z_m) : G' \cap \SL_2(\Z_m)]>1, \] 
and hence $[\Gamma_{G'_{\textup{full}}}:\Gamma_{G'}]>1$.  Repeating the argument in Corollary \ref{cor:canremove}, the asymptotic count of elliptic curves parametrized by $Y_{G'}$ are negligible in comparison to those parametrized by $Y_G$ (the image of $Y_{G'}(\Q)$ in $Y_G(\Q)$ is thin).  Therefore
\begin{equation}  \label{eqn:Eph}
\begin{aligned}
&\#\{E' \in \scrE_{\leq H} : [E'] \in Y_{G'}(\Q)\} \\
&\qquad\qquad= o(\#\{E' \in \scrE_{\leq H} : [E'] \in Y_{G'_{\textup{full}}}(\Q)\}).
\end{aligned}
\end{equation}
Repeating this argument with $G'$ one of the finitely many maximal proper subgroups and summing, we have
\[ \#\{E' \in \scrE_{\leq H} : G' < G'_{\textup{full}}\} = o(\#\{E \in \scrE_{\leq H} : [(E,\iota)] \in Y_{G'_{\textup{full}}}(\Q)\}). \]

To finish, we count the curves $E$.  By Theorem \ref{thm:greenbergup-inart}, the groups $\Gamma_G$ is conjugate in $\GL_2(\Q)$ to $\Gamma_{G'}$ and have the same underlying modular curve.  By Theorem \ref{thm:hsupgrade0}, the asymptotics for the count of such $E$ is the same as that for counting $E'$; therefore, the result follows from \eqref{eqn:Eph}.
\end{proof}

With Proposition \ref{prop:countpointsonmodcrve} in hand, we just need to count by height the number of $m$-full elliptic curves by the choices for the groups $G_\ell(n;r,s)$ for $\ell^n \parallel m$ subject to \eqref{eqn:swapbasis}, and then to decide the proportion of which have $m$-torsion, as follows.  We recall the special case $\ell^n=2^1$ in Example \ref{exm:meq2doh}.

\begin{cor} \label{cor:pm5}
For $m \geq 5$, the probability $P_m$ is nonzero for all $m$.
\end{cor}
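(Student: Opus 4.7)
The plan is to decompose both the numerator and denominator of $P_m$ as finite sums of counts of elliptic curves parametrized by modular curves indexed by local tuples $(r_\ell,s_\ell)_{\ell\mid m}$, and then to exhibit a positive contribution to the numerator with the same asymptotic rate as the denominator.

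By Proposition \ref{prop:countpointsonmodcrve}, the denominator $\#(\scrE_{m?}\cap\scrE_{\leq H})$ is asymptotic to the number of $m$-full curves of height at most $H$. The $m$-full curves partition into finitely many subsets indexed by tuples $(r_\ell,s_\ell)_{\ell\mid m}$ with $0\leq r_\ell,s_\ell$ and $r_\ell+s_\ell\leq n_\ell$ (where $\ell^{n_\ell}\parallel m$), each consisting of the $E$ with $\rho_{E,\ell}(\Gal_\Q)=G_\ell(n_\ell;r_\ell,s_\ell)$ in a clean basis. For each such tuple I would apply Theorem \ref{thm:hsupgrade0} to the product subgroup
\[
G=\prod_{\ell\mid m}\overline{G}_{\ell^{n_\ell}}(n_\ell;r_\ell,s_\ell)\leq\GL_2(\Z/m),
\]
obtaining an asymptotic $c_{(r,s)}H^{1/d(G)}+O(H^{1/e(G)})$ with $c_{(r,s)}\geq 0$ effectively computable. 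Crucially, by Theorem \ref{thm:greenbergup-inart} applied iteratively along the cyclic $\ell$-power isogenies of Proposition \ref{prop:cleanCvrall}, the index $d(G)=d(m)$ is one and the same for every tuple, so the denominator is $\bigl(\sum_{(r,s)}c_{(r,s)}\bigr)H^{1/d(m)}+o(H^{1/d(m)})$.

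For the numerator, I observe via Lemma \ref{lem:charimage}(a) that any tuple with $r_\ell+s_\ell=n_\ell$ for every $\ell\mid m$ parametrizes curves satisfying $E[\ell^\infty](\Q)\simeq\Z/\ell^{r_\ell}\times\Z/\ell^{s_\ell}$, of order $\ell^{n_\ell}$, and hence $m\mid\#E(\Q)\sbtors$. Consequently the numerator is at least $\sum_{r+s=n}c_{(r,s)}H^{1/d(m)}$ up to lower-order terms. For each $m\in\{5,6,\dots,10,12,16\}$, at least one such full-torsion tuple---for example, the one realizing the largest torsion subgroup with $m$-divisible order permitted by Mazur's theorem---corresponds to a modular curve $Y_G$ appearing in Sebbar's list of torsion-free genus zero congruence subgroups, so $Y_G\cong\PP^1_\Q$ has infinitely many rational points, the bounded region $R(1)$ from \eqref{eqn:areaRHrg} has strictly positive area, and the sieving factor in \eqref{eqn:full_expression} is positive. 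Thus $c_{(r,s)}>0$ for that tuple, and dividing yields $P_m>0$.

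The main obstacle is that Theorem \ref{thm:hsupgrade0} does not literally apply to \emph{every} tuple in the sum for the denominator: for small prime powers (e.g.\ $G_2(1;1,0)$ contains $-I$), the uniformizing group $\Gamma_G$ is not torsion-free. Fortunately, for the purposes of showing $P_m>0$ one only needs an upper bound $O(H^{1/d(m)})$ on the denominator of the same order as the lower bound on the numerator; this can be obtained by combining the Harron--Snowden framework for $\Gamma_0(N)$-type moduli with the isogeny-invariance of the degree (Theorem \ref{thm:greenbergup-inart}), and absence of irregular cusps is handled by Lemma \ref{lem:noirreg} together with a direct check at the few exceptional composite levels. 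With these bookkeeping verifications in hand, the ratio of the two asymptotics is bounded below by a positive constant, completing the proof.
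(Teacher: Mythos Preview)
Your overall strategy---restrict to $m$-full curves via Proposition~\ref{prop:countpointsonmodcrve}, decompose into finitely many tuples $(r_\ell,s_\ell)_\ell$, apply Theorem~\ref{thm:hsupgrade0}, and exhibit a full-torsion tuple matching the dominant asymptotic---is the same as the paper's.  But there is a genuine gap in the key step.

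You claim that ``the index $d(G)=d(m)$ is one and the same for every tuple,'' justified by applying Theorem~\ref{thm:greenbergup-inart} along the isogenies of Proposition~\ref{prop:cleanCvrall}.  This overreaches: the isogenies of Proposition~\ref{prop:cleanCvrall}(b) only connect curves whose images are $G_\ell(n;r,s)$ and $G_\ell(n;r,s')$ for the \emph{same} unordered pair $\{r,n-r\}$ (varying $s$).  They do not connect, say, $G_2(3;3,0)$ (which gives $\Gamma_1(8)$, with $[\SL_2(\Z):\Gamma_G]=48$ and $d(G)=12$) to $G_2(3;1,2)$ (with index $24$ and $d(G)=6$), and indeed these values differ.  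So isogeny-invariance only shows that $d(G_\ell(n;r,s))$ is independent of $s$, not of $r$; for $m=8$ and $m=12$ there genuinely are two different exponents present in the denominator.

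Fortunately, that weaker $s$-independence is exactly what is needed, and your isogeny argument already proves it.  The paper obtains the same fact instead from Lemma~\ref{lem:quickpropr}(c): the index $[\GL_2(\Z_\ell):G_\ell(n;r,s)]$ depends only on whether $\min(r,n-r)\geq 1$, hence on $r$ and not on $s$.  The conclusion then runs: whichever $r$-tuple gives the dominant denominator term $H^{1/d_0}$, the choice $s_\ell=n_\ell-r_\ell$ produces a tuple with the \emph{same} $d(G)=d_0$ and (by Lemma~\ref{lem:charimage}(a)) with $m\mid\#E(\Q)\sbtors$; hence $P_m>0$.  Your proof is easily repaired by replacing the false global claim with this per-$r$ matching.

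Finally, your ``main obstacle'' is not one: since $m\geq 5$, the product group $\Gamma_G\leq\SL_2(\Z)$ is torsion free for every tuple.  The proof of Lemma~\ref{lem:quickpropr}(d) goes through verbatim with $m$ in place of $\ell^n$: any nontrivial torsion element $\gamma\in\SL_2(\Z)$ has $\det(\gamma-1)\in\{1,2,3,4\}$, while membership in $\Gamma_G$ forces $\det(\gamma-1)\equiv 0\pmod m$.  So Theorem~\ref{thm:hsupgrade0} applies directly (to the product group in $\GL_2(\Z/m)$, not to the individual $\ell$-factors), and no separate upper-bound workaround is required.
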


\begin{proof}
By Proposition \ref{prop:countpointsonmodcrve}, in the denominator of $P_m$ we need to count curves parametrized by groups $G \leq \GL_2(\Z/m)$ isomorphic (via the CRT) to the product $\overline{G}_{\ell^n}(n_\ell;r_\ell,s_\ell)$ (with $0 \leq r_\ell,s_\ell,r_\ell+s_\ell \leq n_\ell$, where $m = \prod \ell^{n_\ell}$, by \eqref{eqn:swapbasis}), and the numerator consists of the subset of counts with $r_\ell+s_\ell=n_\ell$.  By 
Lemma \ref{lem:quickpropr}(d), the groups $\Gamma_{G}$ are torsion free.  Only groups $G$ with $\det G=(\Z/m)^\times$ and $\Gamma_G$ of genus zero contribute nonnegligibly.  By Theorem \ref{thm:hsupgrade0}, the asymptotic for such a group is determined by $d(G)=\tfrac{1}{4}[\SL_2(\Z):\Gamma_{G}]$.  

We compute 
\begin{equation}
[\SL_2(\Z):\Gamma_{G}]=[\GL_2(\Z/m) : G] = \prod_{\ell \mid m} [\GL_2(\Z_\ell) : G_\ell(n_\ell;r_\ell,s_\ell)] 
\end{equation}
since the group is a direct product.  But we computed these indices in Lemma \ref{lem:quickpropr}: they only depend on whether $\min(r_\ell,n_\ell-r_\ell)\geq 1$ or not; the smallest degree $d(G)$ (from the smallest index, giving the largest asymptotic $H^{1/d(G)}$) occurs when $\min(r_\ell,n_\ell-r_\ell)\geq 1$ for each $\ell$.  Whatever the largest asmyptotic, we may always choose $s_\ell=n_\ell-r_\ell$ and by Lemma \ref{lem:charimage}(a) such curves have $m$-torsion, hence arise with positive probability.
\end{proof}

For the sake of explicitness, we indicate the rate of growth for each group in Table \ref{tab:allthedata}.  By a straightforward calculation in \textsf{Magma} \cite{magma}, we find Table \textup{\ref{tab:allthedata}}: the universal elliptic curve for $G_\ell(n;r,n-r)$ is isogenous to $G_\ell(n;r,n-r-k)$ for $k \leq n-r$ and $G_\ell(n;n-r,r-k)$ for $k \leq r$, so we can use universal equations for one to get to all others.  A list of all universal polynomials for the $m$-full groups that occur can be found online \cite{polysonline}.

\begin{equation} \label{tab:allthedata}\addtocounter{equation}{1} \notag
\begin{gathered}
{\renewcommand{\arraystretch}{1.1}
\begin{tabular}{c|c||c|c} 
$m$ & $G$ & $d(G)$ & \textup{torsion} \\[0.1ex]
\hline
\hline
$5$ & \textup{all} & $6$ & $\{0\}$, $\Z/5$ \\
$6$ & \textup{all} & $6$ & $\Z/2$, $\Z/6$ \\
$7$ & \textup{all} & $12$ & $\{0\}$, $\Z/7$ \\
$8$ & $G_2(3;r,0)$, $r=1,2,3$ & $12$ & $\Z/2^r$ \\
$8$ & $G_2(3;r,1)$, $r=1,2$ & $6$ & $\Z/2^r \times \Z/2$ \\
$9$ & \textup{all} & $18$ & $\{0\}$, $\Z/3$, $\Z/9$ \\
$10$ & \textup{all} & $18$ & $\Z/2$, $\Z/10$ \\
$12$ & $G_2(4;r,0) \times G_3(1;0,0)$, $r=1,2$ & $24$ & $\Z/2^r$ \\
$12$ & $G_2(4;1,1) \times G_3(1;1,0)$ & $12$ & $\Z/6 \times \Z/2$ \\
$16$ & \textup{all} & $24$ & $\Z/2^r \times \Z/2$, $r=0,1,2,3$
\end{tabular}} \\
\text{Table \textup{\ref{tab:allthedata}}: Data for modular curves parametrizing $m$-full elliptic curves}
\end{gathered}
\end{equation}

We find that for $m \in \{5,6,7,9,10,16\}$, all $m$-full groups $G$ have the same index $d(G)$; for $m=8,12$, we distinguish between two cases.  

\subsection{Setup to compute \texorpdfstring{$P_\ell$}{Pl} for \texorpdfstring{$\ell=5,7$}{l=5,7}} 

In the remainder of this section, we follow the proof of Corollary \ref{cor:pm5} and compute $P_\ell$ for $\ell=5,7$.  The main simplification in these cases is that, aside from a negligible subset when $\ell=5$ (see Lemma \ref{isog_lem}), elliptic curves in $\scrE_{\ell?}$ either have a global point of order $\ell$, or are $\ell$-isogenous to one that does.  

Let $\ell \in \lbrace 5,7 \rbrace$.  The Tate normal form of an elliptic curve, which gives a universal curve with a rational $\ell$-torsion point, has Weierstrass model
\begin{align} \label{tate_normal}
E(t): y^2 + (1-c)xy - by = x^3 - bx^2
\end{align}
with $b,c \in \Z[t]$ explicitly given; the rational point $(0,0)$ generates a rational subgroup of order $\ell$, and accordingly the image of the $\ell$-adic Galois representation lies in the group $G_\ell(1;1,0)$ as in Example \ref{exm:rn1od0s}.  Applying V\'elu's formulas \cite[(11)]{velu} to the isogeny with kernel generated by $(0,0)$, one obtains a model:
\begin{align} \label{tate_normal_isog}
E'(t): y^2 + (1-c)xy-by=x^3-bx^2 + dx + e,
\end{align}
for $d,e \in \Z[t]$.  The curve $E'(t)$ is the universal elliptic curve for the moduli problem of elliptic curves with $\ell$-adic Galois representation contained in $G_\ell(1;0,0)$, just as in Lemma \ref{lem:charimage}(b) the property that for any nonsingular specialization $t \in \Q$ it locally has a subgroup of order $\ell$.

Passing to short Weierstrass form, we write
\begin{align*}
E(t): y^2 &= x^3 + f(t)x + g(t) \\
E'(t): y^2 &= x^3 + f'(t)x+g'(t), 
\end{align*}
for explicit polynomials $f(t),f'(t),g(t),g'(t) \in \Q[t]$ given in \eqref{5polys} below.  Let $j(t)$ (resp.~$j'(t)$) be the $j$-function of $E(t)$ (resp.~$E'(t)$).  

Recall the integer $r(G)$ defined in \eqref{eqn:rG}.  As in Example \eqref{exm:GZN3}, we find that \begin{equation} \label{eqn:noRG}
r(G)=4,6
\end{equation} 
for $m=5,7$, so the ratio of the two cancels in each case.   

Writing $t = a/b$ and homogenizing, we finally arrive at two-parameter integral models 
\begin{equation} \label{int_models}
\begin{aligned} 
E(a,b): y^2 &= x^3+A(a,b)x + B(a,b) \\
E'(a,b): y^2 &= x^3+A'(a,b)x+B'(a,b),
\end{aligned}
\end{equation}
where $A,B \in \Z[a,b]$ and $A',B' \in \Z[a,b]$ are coprime pairs.  

We can now count integral curves by height and apply the methods of the previous sections.  Before preceding, as a guide to the reader we give an overview of the calculations in both cases here.

The probability $P_\ell$ will follow from the explicit computation of two growth constants: $c(G_\ell(1;1,0))$ and $c(G_\ell(1;0,0))$, associated to elliptic curves with a rational point of order $\ell$, and those that admit a rational $\ell$-isogeny (but not a rational point of order $\ell$), respectively.  Since the main growth terms have the same degree (see Table \ref{tab:allthedata}), we find
\begin{equation} \label{prob_definition}
P_\ell = \frac{c(G_\ell(1;1,0))}{c(G_\ell(1;1,0))+c(G_\ell(1;0,0))} = \left(1 + \frac{c(G_\ell(1;0,0))}{c(G_\ell(1;1,0))}\right)^{-1},
\end{equation}
where the constants $c(G_\ell(1;1,0))$ and $c(G_\ell(1;0,0))$ are defined in (\ref{eqn:emabreh000}).  

To ease notation, we abbreviate 
\begin{align*}
c& \colonequals c(G_\ell(1;1,0)) \\
c'& \colonequals c(G_\ell(1;0,0)).
\end{align*}
We also define the quantities arising in Step 3 of Theorem \ref{thm:hsupgrade0}, namely
\begin{equation} \label{eqn:mmp}
\begin{aligned}
m &\colonequals \lcm\bigl(\Res_t(f(t),g(t)), \Res_t(\breve{f}(t),\breve{g}(t)) \bigr) \\
m' &\colonequals \lcm\bigl(\Res_t(f'(t),g'(t)), \Res_t(\breve{f'}(t),\breve{g'}(t)) \bigr),
\end{aligned}
\end{equation}
where $\breve{h}(t) \colonequals  t^{\deg h}h(1/t)$ is the reciprocal polynomial of $h(t) \in \Q[t]$,
along with the corresponding correction ratios $\delta_e$ and $\delta_e'$ measuring the proportion of curves with minimality defect $e$ for $e \mid m,m'$, respectively, all as appearing in \eqref{eqn:emabreh000}.

Therefore, to compute $P_\ell$ we are reduced to computing the ratio 
\begin{equation} \label{eqn:cpc}
\frac{c'}{c} = \frac{\Area(R'(1)) \sum_{e\mid m'} \delta_e'e^{12/d(G)}}{\Area(R(1))\sum_{e\mid m} \delta_ee^{12/d(G)}}, 
\end{equation}
where
\begin{equation} \label{eqn:explicitAB}
\begin{aligned} 
R(1) &\colonequals \lbrace (a,b) \in \R^2 : \abs{A(a,b)} \leq 4^{-1/3} \text{ and } \abs{B(a,b)} \leq 27^{-1/2} \rbrace \\
R'(1) &\colonequals \lbrace (a,b) \in \R^2 : \abs{A'(a,b)} \leq 4^{-1/3} \text{ and } \abs{B'(a,b)} \leq 27^{-1/2} \rbrace.
\end{aligned}
\end{equation}

We compute the local corrections by finite search.  The ratio of areas has a remarkably simple expression, as follows.  By Lemma \ref{lem:samecurve}, we have $\Gamma_{\overline{G}_\ell(1;0,0)}=\Gamma_{\overline{G}_\ell(1;1,0)}$, i.e., the curves $E(t)$ and $E'(t)$ are universal curves over the same base modular curve.  Over $\Q(\zeta_\ell)$, the determinant of the mod $\ell$ Galois representation (the cyclotomic character) becomes trivial, so both of these curves solve the same moduli problem over $\Q(\zeta_\ell)$, and hence over $\C$. Since these two schemes represent the same functor, there is an isomorphism between them.  Both have base scheme a Zariski open in $\PP^1$ (say with variables $t$ and $t'$, respectively), so there exists a linear fractional transformation $\psi$ such that $t' = \psi(t)$.  Postcomposing with the $j$-function $X \to X(1) = \PP^1$, we conclude that there exists a linear fractional transformation $\psi$ such that
\begin{equation} \label{eqn:jvarphit}
j(\psi(t)) = j'(t).
\end{equation}
In concrete terms, given the $j$-invariants $j(t)$ and $j'(t)$ we compare zeroes and poles to explicitly compute the linear fractional transformation $\psi$.  By homogenizing $t$ to $(a,b)$ and  computing the effect on each variable, we get a change of variables mapping $R(1)$ bijectively onto $R'(1)$.  Therefore, the ratio
$$
\frac{\Area(R'(1))}{\Area(R(1))} 
$$
is the determinant of the change of variables matrix!  (In particular, this can be given exactly without needing it for the two areas themselves.) 

\subsection{The case \texorpdfstring{$\ell=5$}{l=5}} 

We now carry out the above strategy for $\ell=5$.  In the Tate normal form \eqref{tate_normal}, we compute $b=c=t$ (see also Garc\'ia-Selfa--Tornero \cite[Thm.~3.1]{gst}); applying V\'elu's formulas \cite[(11)]{velu} gives 
\begin{equation}
\begin{aligned}
d &=-5t^3-10t^2+5t,\text{ and}\\
e &=-t^5-10t^4+5t^3-15t^2+t
\end{aligned}
\end{equation}
in \eqref{tate_normal_isog}.  The Weierstrass coefficients and $j$-invariants of $E(t)$ and $E'(t)$ are given by 
\begin{equation} \label{5polys}
\begin{aligned}
f(t) &=-27(t^4-12t^3+14t^2+12t+1)\\
g(t) &= 54(t^6-18t^5+75t^4+75t^2+18t+1) \\
j(t) &= \frac{f(t)^3}{t^5(t^2-11t-1)}\\[10pt]
f'(t)&=-27(t^4+228t^3+494t^2-228t+1) \\
g'(t)&=54(t^6-522t^5-10005t^4-10005t^2+522t+1)\\
j'(t) &= \frac{f'(t)^3}{t(t^2-11t-1)^5}.
\end{aligned}
\end{equation}

\begin{lem} \label{isog_lem}
The curve $E'(t_0)$ defined by \textup{\eqref{tate_normal_isog}} has a rational $5$-torsion point if and only $t_0 \in \Q^{\times 5}$. 
\end{lem}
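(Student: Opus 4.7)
The plan is to identify the class $\alpha(t)$ of the natural short exact sequence
\[ 0 \to \mu_5 \to E'(t)[5] \to \Z/5 \to 0 \]
of $\Gal_{\Q(t)}$-modules with the class of $t$ in $\Q(t)^\times/\Q(t)^{\times 5} \simeq H^1(\Gal_{\Q(t)},\mu_5)$. This sequence exists because the $5$-isogeny $\varphi\colon E(t)\to E'(t)$ of \eqref{tate_normal_isog} has trivial Galois action on $\ker\varphi=\langle(0,0)\rangle\simeq\Z/5$, and the exact sequence $0\to\ker\varphi\to E(t)[5]\xrightarrow{\varphi}\ker\varphi\spcheck\to 0$ together with the Weil pairing identifies $\ker\varphi\spcheck\simeq\mu_5$ as a Galois-stable submodule of $E'(t)[5]$ with quotient $\simeq\Z/5$.

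First, I would reduce the lemma to triviality of the specialized class. For nonsingular $t_0\in\Q^\times$, the long exact $\Gal_\Q$-cohomology sequence of the specialized sequence (using $\mu_5(\Q)=1$) injects $E'(t_0)(\Q)[5]$ into $(\Z/5)^{\Gal_\Q}=\Z/5$ with image equal to $\ker\delta$ for the connecting map $\delta\colon\Z/5\to H^1(\Gal_\Q,\mu_5)\simeq\Q^\times/\Q^{\times 5}$. Hence $E'(t_0)$ has rational $5$-torsion if and only if the specialized class $\alpha(t_0)$ is trivial in $\Q^\times/\Q^{\times 5}$.

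Next I would determine $\alpha(t)$ via ramification. From \eqref{5polys} a direct computation gives $\Delta_{E'}(t) = -110592\,t\,(t^2-11t-1)^5$, so $\overline{\rho}_{E',5}$ is unramified outside the cusps $t=0,\infty$ and the degree-$2$ closed point $\{t^2-11t-1=0\}$; their widths on $X_1(5)$ are $1,1,5$ respectively, read from the pole orders of $j'$. Near a cusp of width $w$, the Tate curve parametrization makes the local mod-$5$ monodromy conjugate to $\begin{pmatrix}1 & w\\ 0 & 1\end{pmatrix}$, which is nontrivial iff $5\nmid w$; hence $\overline{\rho}_{E',5}$ ramifies only at $t=0,\infty$. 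Thus the divisor of $\alpha(t)$ is supported on $\{0,\infty\}$ modulo $5$, and since principal divisors on $\PP^1$ have degree $0$, $\alpha(t)\equiv t^a\cdot c\pmod{\Q(t)^{\times 5}}$ for some $a\in\F_5^\times$ (nonzero by nontrivial inertia at $0$) and some $c\in\Q^\times/\Q^{\times 5}$.

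To pin down $c$, I specialize at $t_0=1$. V\'elu's formulas applied to $E(1)\colon y^2+y=x^3-x^2$ (Cremona curve 11.a3, with $E(1)(\Q)\sbtors\simeq\Z/5\Z$) and kernel $\langle(0,0)\rangle$ yield $E'(1)\colon y^2+y=x^3-x^2-10x-20$, which is Cremona 11.a1 and also has $\Z/5\Z$ rational torsion. So $\alpha(1)=c$ is trivial, $c\equiv 1\pmod{\Q^{\times 5}}$, and therefore $\alpha(t)\equiv t^a\pmod{\Q(t)^{\times 5}}$ with $\gcd(a,5)=1$. For any nonsingular $t_0\in\Q^\times$, the class $\alpha(t_0)=t_0^a$ is trivial in $\Q^\times/\Q^{\times 5}$ iff $t_0\in\Q^{\times 5}$, which gives the lemma. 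The main obstacle is the ramification analysis: identifying the cusp widths correctly from the $E'$-parametrization of $Y_1(5)$, establishing triviality of mod-$5$ inertia at the quadratic cusps, and then combining with the specialization at $t_0=1$ to fix the multiplicative constant $c$.
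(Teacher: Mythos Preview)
Your argument is correct, and it takes a genuinely different route from the paper.

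The paper proceeds by direct computation: it calculates the $5$-division polynomial of $E'(t)$ and identifies the $5$-torsion field of $E'(t)$ over $\Q(t)$ as the splitting field of $x^5-t$, with Galois group $F_{20}$. From this, if $E'(t_0)$ acquires a rational $5$-torsion point then the mod $5$ image becomes split diagonal, so the $5$-torsion field collapses to $\Q(\zeta_5)$ and $x^5-t_0$ has a rational root; conversely, for $t_0=s^5$ the paper writes down an explicit $5$-torsion point on $E'(s^5)$.

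Your approach is cohomological: you recognise $E'(t)[5]$ as an extension of $\Z/5$ by $\mu_5$ and locate its class in $H^1(\Gal_{\Q(t)},\mu_5)\simeq \Q(t)^\times/\Q(t)^{\times 5}$, then use the cusp widths of $X_1(5)$ (read from the pole orders of $j'$) to see that the Kummer class is unramified away from $t=0,\infty$, hence $\alpha(t)\equiv c\cdot t^a$; a single specialization at $t_0=1$ (landing on the curve \textsf{11.a1}, which does have $\Z/5\Z$ torsion) kills the constant $c$. This recovers the paper's identification of the torsion field with $\Q(\zeta_5,t^{1/5})$ without ever touching the division polynomial.

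What each buys: the paper's method is entirely elementary and self-contained, and as a bonus produces the explicit $5$-torsion point when $t_0=s^5$. Your method avoids any polynomial computation beyond the discriminant and $j$-invariant, and its structure (Kummer class plus monodromy at cusps plus one specialization) would transport cleanly to other primes $\ell$ or other base fields. Both arguments are ultimately computing the same invariant---the class of $t$ in $\Q(t)^\times/\Q(t)^{\times 5}$---but yours explains \emph{why} it must be a power of $t$ times a constant, whereas the paper simply exhibits it.
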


\begin{proof}
The discriminant of $E'(t)$ is $t(t^2 - 11t - 1)^5$, so $t=0$ is the only rational singular specialization.  By explicitly computing the 5-division polynomial of $E'(t)$ using the expressions in (\ref{5polys}), one can show that the 5-torsion field of $E'(t)$ has Galois group $F_{20}$ over $\Q(t)$ and is the splitting field of $x^5-t$ over $\Q(t)$.  For any non-zero specialization $t = t_0$, the mod 5 representation of $E'(t_0)$ is a subgroup of 
$$
\begin{pmatrix} * & * \\ 0 & 1 \end{pmatrix}.
$$
If, in addition, $E'(t_0)$ has a rational 5-torsion point, then the above Galois representation is diagonal, yet must have surjective determinant.  Thus, the 5-torsion field of $E'(t_0)$ is $\Q(\zeta_5)$ and so the polynomial $x^5 - t_0$ has a rational root, but does not split; \emph{i.e.}~$t_0$ is a rational 5th power.

Conversely, if $t_0=s^5$ then the point
\begin{align*} 
&(s^8 + s^7 + 2s^6 - 2s^5 + 5s^4 - 3s^3 + 2s^2 - s, \\
 &\quad s^{12} - s^{11} - s^{10} + s^8 - 10s^7 + 13s^6 - 11s^5 + 5s^4 - 3s^3 + s^2)
 \end{align*}
is a point of order $5$.  
\end{proof}

\begin{rmk} \label{isog_rem}
If $t_0 \in \Q$, then as in Example \ref{exm:isoggraph}, the isogeny class to which $E(t_0)$ belongs typically contains only two curves, $E(t_0)$ and $E'(t_0)$, linked by a 5-isogeny, with the two representations in Lemma \ref{serre_exercise} (contained in a Borel subgroup); see for example the isogeny class with LMFDB \cite{lmfdb} label \href{http://www.lmfdb.org/EllipticCurve/Q/38/b/}{\textsf{38.b}}.  However, if $t_0$ is a 5th power, then $E'(t_0)$ has a rational 5-torsion point, the mod 5 representation of $E'(t_0)$ is contained in the split Cartan subgroup 
\[ \begin{pmatrix} 1 & 0 \\ 0 & * \end{pmatrix} \leq \GL_2(\F_5), \] 
and $E'(t_0)$ admits two different rational 5-isogenies: for example, \href{http://www.lmfdb.org/EllipticCurve/Q/1342/b/}{\textsf{1342.b}}.

By the classification of possible images of mod 5 and mod 7 representations of Zywina \cite[Theorems 1.4, 1.5]{zywina}, this is a ``worst case scenario'' for the isogeny graph of a curve in $\scrE_{5?}$.  By comparison, for curves in $\scrE_{7?}$, all isogeny classes contain two curves linked by a $7$-isogeny.  See the recent preprint Chiloyan--Lozano-Robledo \cite{alvaro} on the classification of isogeny graphs of elliptic curves over $\Q$. 
\end{rmk}

We now compute the all-important change of coordinates $\phi(t)$ in \eqref{eqn:jvarphit}.  We write 
\[ u \colonequals (11+5\sqrt{5})/2 \approx 11.09 \in \R_{>0} \] 
so that $u$ and $-1/u$ are the roots of the quadratic polynomial $t^2-11t-1$.  We define the linear fractional transformation 
\begin{equation}
\psi(t) \colonequals \frac{ut+1}{t-u},
\end{equation}
mapping $u \to \infty$, $0 \to -1/u$, and $\infty \to u$.  It is routine to verify that $j(\psi(t))=j'(t)$.  

\begin{lem} \label{5jacobian}
With $R(1),R'(1)$ as defined in \eqref{eqn:explicitAB}, we have 
\begin{equation}
\frac{\Area(R'(1))}{\Area(R(1))} = \frac{1}{5}.
\end{equation}
\end{lem}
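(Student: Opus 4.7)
The plan is to lift the M\"obius transformation $\psi$ to a linear map on $\R^2$ and track how it transforms $R'(1)$ to a dilate of $R(1)$. Let $L \colon \R^2 \to \R^2$ be the linear map with matrix $\begin{pmatrix} u & 1 \\ 1 & -u \end{pmatrix}$, so that $L(a,b) = (ua+b,\,a-ub)$ induces $\psi(t) = (ut+1)/(t-u)$ on the ratio $t = a/b$. Its determinant is $-(u^2+1)$; using $u^2 = 11u+1$ one computes $(u^2+1)^2 = (11u+2)^2 = 125u^2$, whence $u^2+1 = 5\sqrt{5}\,u$ and $\abs{\det L} = 5\sqrt{5}\,u$.

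The heart of the argument---and the main obstacle---is to establish the polynomial identities
\[
A(L(a,b)) = 5u^2\, A'(a,b), \qquad B(L(a,b)) = \epsilon \cdot 5\sqrt{5}\,u^3\, B'(a,b)
\]
in $\Q(\sqrt{5})[a,b]$, where $\epsilon \in \{\pm 1\}$ is a sign. These arise by unwrapping $j(\psi(t)) = j'(t)$: over $\Qbar(t)$, the curves $E(\psi(t))$ and $E'(t)$ share a $j$-invariant and so are twists, yielding $f(\psi(t)) = \mu(t)^4 f'(t)$ and $g(\psi(t)) = \mu(t)^6 g'(t)$ for some $\mu(t) \in \Qbar(t)^{\times}$. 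To pin down $\mu$: $f \circ \psi$ has a pole of order $4$ at $t = u$ while $f'(u) \ne 0$, so $(t-u)^4 f(\psi(t)) = \kappa\, f'(t)$ for a constant $\kappa$ (both sides being polynomials of degree $4$ in $t$), and comparing leading coefficients---using $\psi(\infty) = u$ and the calculation $f(u) = -135u^2$---gives $\kappa = 5u^2$. Homogenizing $t = a/b$ and multiplying by $b^4(t-u)^4 = (a-ub)^4$ delivers the identity for $A$; an analogous sixth-power argument delivers the identity for $B$, the magnitude of the scalar being $(5u^2)^{3/2} = 5\sqrt{5}\,u^3$. Both identities can alternatively be verified directly by polynomial expansion or by evaluation at a handful of specializations of $(a,b)$.

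Given the identities, the rest is mechanical. For $(a,b) \in R'(1)$, the bounds $\abs{A'(a,b)} \leq 4^{-1/3}$ and $\abs{B'(a,b)} \leq 27^{-1/2}$ transport under $L$ to $\abs{A(L(a,b))} \leq 5u^2 \cdot 4^{-1/3}$ and $\abs{B(L(a,b))} \leq 5\sqrt{5}\,u^3 \cdot 27^{-1/2}$; both are equivalent to $L(a,b) \in R(H)$ with the same $H = (5u^2)^3 = 125u^6$ (agreement of the two scalings being a sanity check), so $L(R'(1)) = R(125u^6)$. On the one hand $\Area(L(R'(1))) = \abs{\det L}\,\Area(R'(1)) = (u^2+1)\,\Area(R'(1))$; on the other hand, the homogeneity $\Area(R(H)) = H^{1/d(G)}\,\Area(R(1))$ from Step 2 of the proof of Theorem~\ref{thm:hsupgrade0} (with $d(G) = 6$) yields $\Area(R(125u^6)) = (125u^6)^{1/6}\Area(R(1)) = \sqrt{5}\,u\,\Area(R(1))$. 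Equating the two expressions and using $u^2+1 = 5\sqrt{5}\,u$ gives $\Area(R'(1))/\Area(R(1)) = \sqrt{5}\,u/(5\sqrt{5}\,u) = 1/5$.
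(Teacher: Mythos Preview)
Your proof is correct and follows essentially the same approach as the paper: exhibit a linear change of variables carrying $R'(1)$ onto (a dilate of) $R(1)$ and read off the ratio of areas as a Jacobian. The paper writes its map explicitly as a rotation by $\theta=\tfrac{1}{2}\arctan(2/11)$ composed with a reflection and scaling by $\sqrt{5}$, and simply verifies the identities $A'\circ R_\theta = A\circ S$, $B'\circ R_\theta = B\circ S$ directly; you instead derive the map from the matrix of $\psi$, establish the scalar relations $A\circ L = 5u^2\,A'$ and $B\circ L = \pm 5\sqrt{5}\,u^3\,B'$ by matching leading terms, and then absorb the extra scalar via the homogeneity $R(H)=H^{1/12}R(1)$. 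The two linear maps are in fact scalar multiples of one another (your $L$ rescaled by $(5u^2)^{-1/4}$ is the paper's orthogonal-times-$\sqrt{5}$ map), so the arguments are the same up to normalization.
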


\begin{proof}
By the observation following  (\ref{eqn:jvarphit}), the ratio of areas is the determinant of the change of variables matrix mapping $R(1)$ bijectively onto $R'(1)$.  There is a pleasant,  visible symmetry in this case---one which gave this entire project momentum---so we are even more explicit in this case.   

Define the angle $\theta \colonequals \arctan(2/11)/2$, so that 
$$
\cos \theta = \frac{1}{5} \sqrt{\frac{25+11\sqrt{5}}{2}} \quad \text{and} \quad \sin \theta = \frac{1}{5} \sqrt{\frac{25-11\sqrt{5}}{2}}.
$$
Direct calculation reveals that 
\begin{align*}
A'(a\cos \theta - b\sin \theta, a\sin \theta + b\cos \theta) &= A(\sqrt{5}a,-\sqrt{5}b), \\
B'(a\cos \theta - b\sin \theta, a\sin \theta + b\cos \theta) &= B(\sqrt{5}a,-\sqrt{5}b).
\end{align*}
In other words, a rotation by $\theta$, followed by a reflection and a scaling of $a$ and $b$ by $\sqrt{5}$ maps $R'(1)$ bijectively onto $R(1)$, as in Figure \ref{fig:agraphbgraph}.  

\begin{equation} \label{fig:agraphbgraph}\addtocounter{equation}{1} \notag
\begin{gathered}
\includegraphics[scale=0.28]{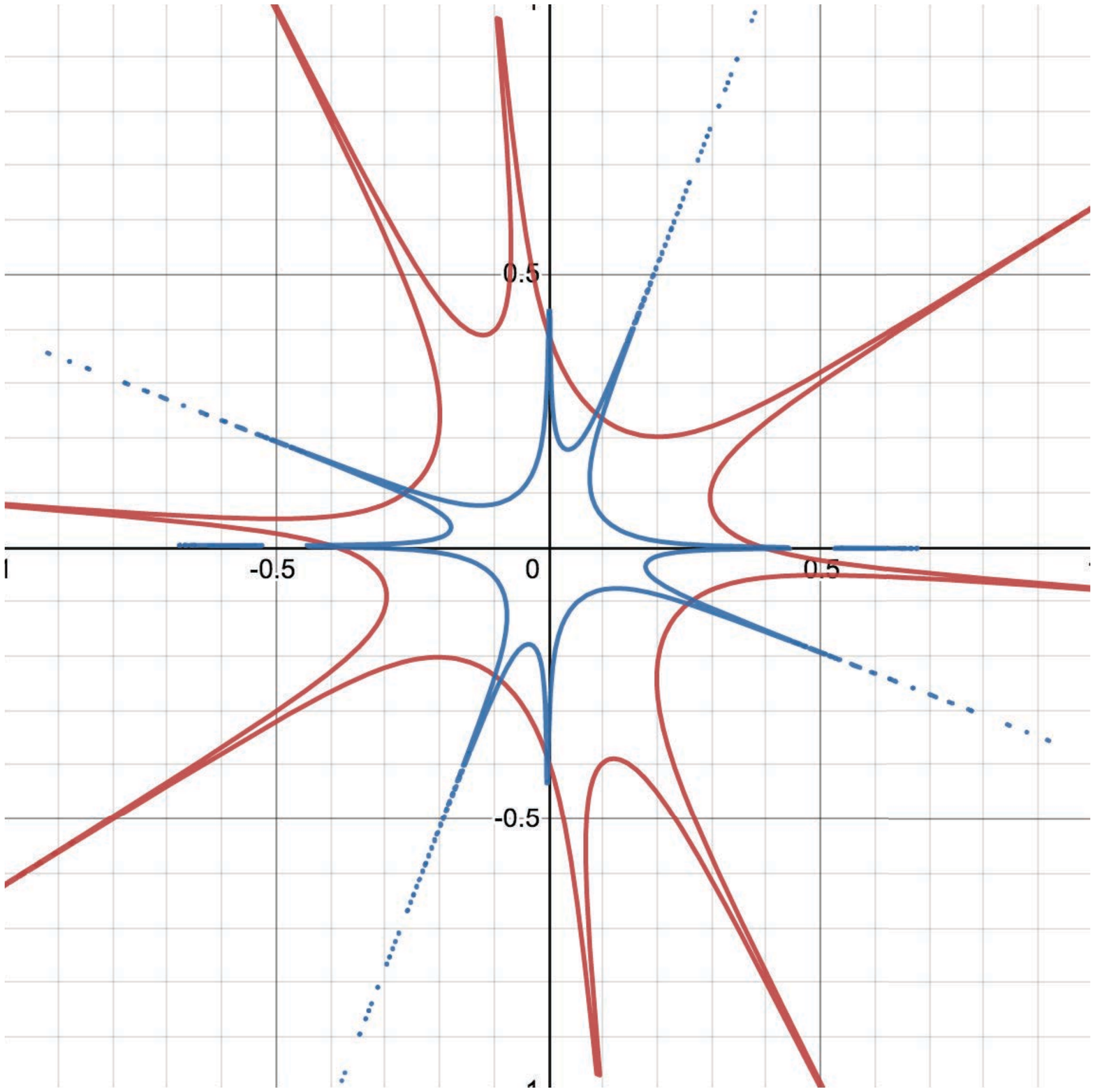} \qquad  \includegraphics[scale=0.28]{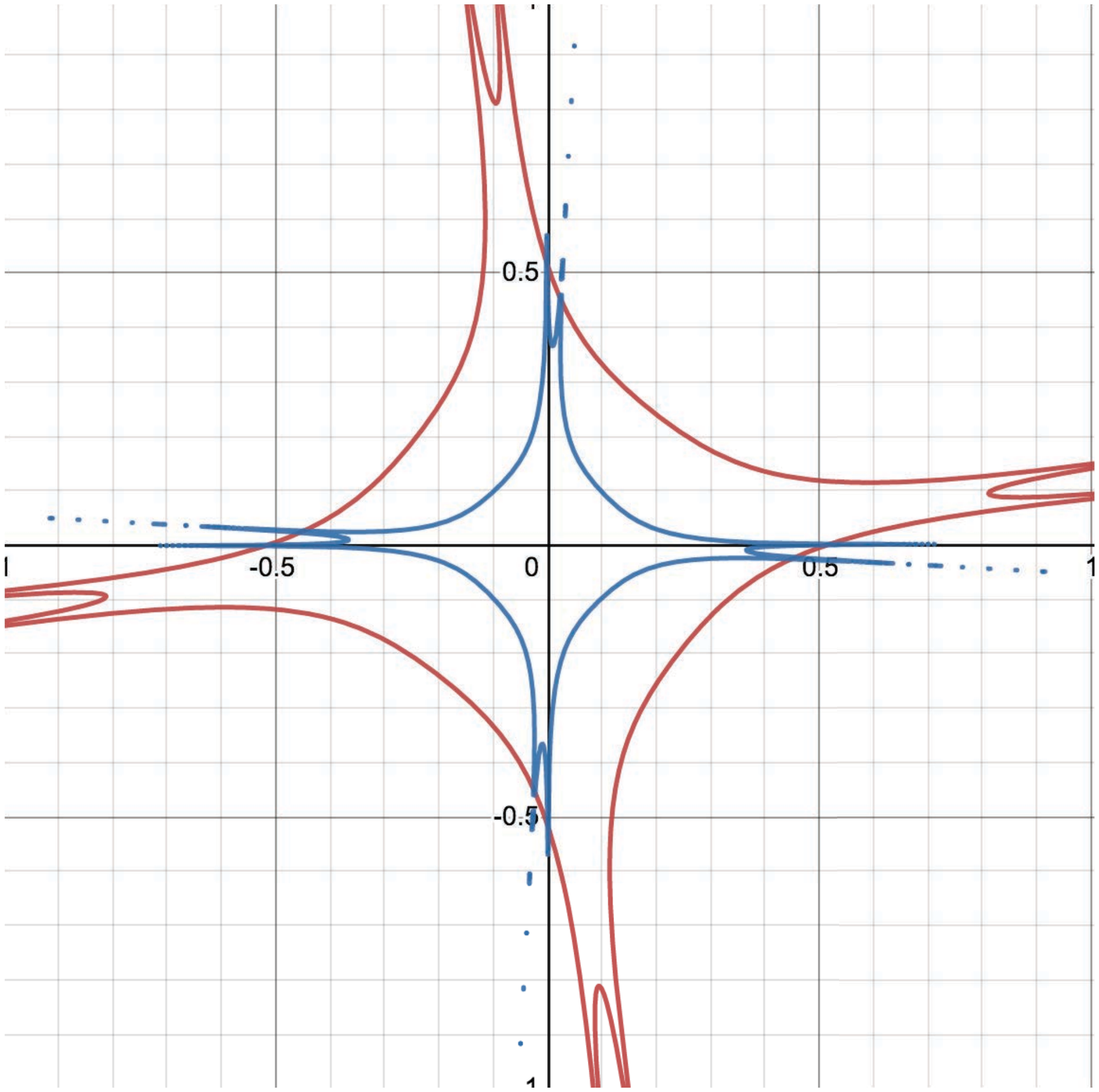} \\
\parbox[c]{4.5in}{\centering Figure \ref*{fig:agraphbgraph}: Symmetry of $R(1)$ and $R'(1)$, $m=5$}
\end{gathered}
\end{equation}

The ratio $\Area(R'(1))/\Area(R(1)) = 1/5$ follows from the fact that a reflection/rotation is area-preserving and the scaling is by $\sqrt{5}$ in both directions $a$ and $b$. 
\end{proof}

Now we calculate the sieve factor in (\ref{eqn:cpc}), which is the last ingredient needed for an exact expression for $P_5$.  Because we will perform a similar sieve for the case $P_7$, we go into some detail here and then proceed more quickly through this step when $\ell=7$.

We start by recording the integers $m$ and $m'$ defined in \eqref{eqn:mmp}, computed by resultants:
\begin{align*}
m &= 2^{16}3^{36}5, \\
m' &= 2^{16}3^{36}5^{25}.
\end{align*}
We recall that $d(G_5(1;1,0)) = d(G_5(1;0,0)) = 6$.  Thus, the local correction factors for $c$ is
\[ \sum_{e\mid m} \delta_ee^{12/d(G)} = \sum_{e\mid 2^{16}3^{36}5} \delta_ee^{2} \]
and similar for $c'$.  We now compute the $\delta_e$ and $\delta_e'$.

\begin{lem} \label{delta_e}
With all notation as above, we have $\delta_1=1$ and $\delta_e=0$ for all other divisors $e$ of $m$.
\end{lem}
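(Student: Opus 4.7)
The plan is to reduce the lemma to showing that $v_p(B(a,b)) < 6$ for every groomed pair $(a,b)$ and every prime $p \in \{2,3,5\}$ dividing $m = 2^{16} 3^{36} 5$; this would rule out the condition $e^6 \mid B(a,b)$ for any $e > 1$ and force the minimality defect to be identically $1$, yielding $\delta_1 = 1$ with $\delta_e = 0$ otherwise. I would factor $A(a,b) = -27\, F(a,b)$ and $B(a,b) = 54\, G(a,b)$ with
\begin{align*}
F(a,b) &= a^4 - 12 a^3 b + 14 a^2 b^2 + 12 a b^3 + b^4, \\
G(a,b) &= a^6 - 18 a^5 b + 75 a^4 b^2 + 75 a^2 b^4 + 18 a b^5 + b^6,
\end{align*}
and use $v_2(54) = 1$, $v_3(54) = 3$, $v_5(54) = 0$ to convert divisibility of $B$ into divisibility of $G$.

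For $p=3$, I would reduce $G$ modulo $3$ (the coefficients $18$ and $75$ vanish) and apply $a^3 \equiv a \pmod 3$ to obtain $G \equiv a^2 + b^2 \pmod 3$; since squares modulo $3$ lie in $\{0,1\}$, this vanishes only at $(a,b) \equiv (0,0) \pmod 3$, which is forbidden by $\gcd(a,b)=1$, so $v_3(G) = 0$ and $v_3(B) = 3 < 6$. For $p=5$, a similar reduction (the $75$-terms vanish, and Fermat gives $a^5 \equiv a$ and $b^5 \equiv b$) yields $G \equiv a^2 + ab + b^2 \pmod 5$; the discriminant $-3 \equiv 2$ is a non-residue modulo $5$, so this binary form is anisotropic, $v_5(G) = 0$ on groomed pairs, and $v_5(B) = 0$.

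The prime $p = 2$ is the main obstacle, since the relevant $2$-adic cancellations are invisible modulo $2$ alone. First, if exactly one of $a,b$ is even then the middle coefficients of $G$ (all even) drop out and $G \equiv a^6 + b^6 \pmod 2$ is odd, giving $v_2(B) = 1$. If instead both $a,b$ are odd, I would regroup
\[ G(a,b) = (a^6 + b^6) - 18\,ab\,(a-b)(a+b)(a^2+b^2) + 75\, a^2 b^2 (a^2 + b^2) \]
and observe that $(a-b)(a+b)(a^2+b^2)$ is divisible by $8$ (each factor is even), so the middle term is a multiple of $144$ and vanishes modulo $16$. Since $a^2, b^2 \in \{1,9\} \pmod{16}$ for odd $a,b$, a case analysis on the four subcases (using $a^6 = (a^2)^3$) shows $(a^6 + b^6) + 75\, a^2 b^2(a^2 + b^2) \equiv 8 \pmod{16}$ in every case; hence $v_2(G) = 3$ exactly and $v_2(B) = 4 < 6$, finishing the last prime and the lemma.
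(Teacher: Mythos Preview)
Your treatment of $p=2$ and $p=3$ is clean and correct, but the case $p=5$ breaks down in two respects.

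First, the reduction is miscomputed: modulo $5$ one has $-18\equiv 2$ and $18\equiv 3$, so with Fermat's little theorem
\[
G(a,b)\;\equiv\; a^{2}+2ab+3ab+b^{2}\;\equiv\; a^{2}+b^{2}\pmod 5,
\]
not $a^{2}+ab+b^{2}$. The form $a^{2}+b^{2}$ is isotropic over $\F_{5}$ (take $(a,b)=(1,2)$), so it is simply false that $v_{5}(G)=0$ on groomed pairs; for instance $G(1,2)=2105=5\cdot 421$.

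Second, and more seriously, the strategy of bounding $v_{5}(B)$ alone cannot succeed. One computes $G(7,1)=-1000$ and $G'(7,1)=-11280$, so that $v_{5}(G(7,1))=3>2=2\,v_{5}(G'(7,1))$; by Hensel's lemma $G(t,1)$ has a simple root $\alpha\in\Z_{5}$ with $\alpha\equiv 7\pmod{25}$. Approximating $\alpha$ by integers then produces groomed pairs $(a,1)$ with $v_{5}(G(a,1))$, and hence $v_{5}(B(a,1))$, as large as one likes---in particular $\geq 6$. So the implication ``groomed $\Rightarrow 5^{6}\nmid B$'' that your plan rests on is false.

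What rescues the lemma is that such pairs still have small $v_{5}(A)$: for instance $F(7,1)=-944\equiv 1\pmod 5$, so $5\nmid A(a,1)$ near $\alpha$. This is exactly why the paper checks the \emph{joint} condition $p^{4}\mid A(a,b)$ and $p^{6}\mid B(a,b)$ for each $p\in\{2,3,5\}$ and shows it forces $p\mid\gcd(a,b)$. Your arguments for $p=2$ and $p=3$ can be kept as written (they actually establish more than is needed there), but for $p=5$ you must bring $A$ back into play.
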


\begin{proof}
Let $(a,b) \in \Z^2$.  One can easily verify by hand or computer that if $p=2, 3$, or $5$, then 
\begin{align*}
p^4 \mid A(a,b) \text{ and } p^6 \mid B(a,b)
\end{align*}
if and only if $a\equiv b \equiv 0 \pmod{p}$.  If $e>1$ is a possible minimality defect (recall this means that $e$ is the largest positive integer such that $e^{12} \mid \gcd(A(a,b)^3,B(a,b)^2)$), then $e$ is divisible by at least one of 2, 3, or 5 and so by the previous observation $(a,b)$ is not groomed.  Thus $\delta_e = 0$.  Since $\sum_{e|m} \delta_e = 1$, we have $\delta_e=1$.
\end{proof}

\begin{cor} \label{cor:cg110_5}
We have
\[
c = c(G_\ell(1;1,0)) = \frac{\Area(R(1))}{\zeta(2)}.
\]
\end{cor}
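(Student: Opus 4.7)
The plan is to read the constant $c = c(G_5(1;1,0))$ directly off the explicit formula for the leading term of $N_G^{\square}(H)$ established in Step 3 of the proof of Theorem \ref{thm:hsupgrade0} and then simplify it using Lemma \ref{delta_e}. Specifically, equation \eqref{eqn:emabreh000}, which is the formula that defines the constants $c(G_\ell(1;1,0))$ and $c(G_\ell(1;0,0))$ in this section, gives
\begin{equation*}
c = \frac{\Area(R(1))}{\zeta(2)} \sum_{e \mid m} \delta_e\, e^{12/d(G)},
\end{equation*}
where $d(G) = 6$ according to Table \ref{tab:allthedata} and $m = 2^{16}3^{36}5$ is the integer computed in \eqref{eqn:mmp}. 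So the exponent on each divisor is $e^{12/6}=e^{2}$, and the problem reduces to evaluating the finite sum $\sum_{e \mid 2^{16}3^{36}5}\delta_e e^{2}$.

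The second and final step is to apply Lemma \ref{delta_e}: it asserts that $\delta_1 = 1$ and $\delta_e = 0$ for every other divisor $e$ of $m$. Hence the sum collapses to the single term $\delta_1 \cdot 1 = 1$, giving $c = \Area(R(1))/\zeta(2)$ as claimed. There is essentially no obstacle---the proof is a one-line substitution, and all of the substantive content lies in Lemma \ref{delta_e}, whose proof reduces to the elementary observation that at each prime $p \in \{2,3,5\}$ dividing $m$, the universal coefficients satisfy $p^4 \mid A(a,b)$ and $p^6 \mid B(a,b)$ only when $p \mid \gcd(a,b)$, which is incompatible with the groomedness condition $\gcd(a,b)=1$.
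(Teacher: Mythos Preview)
Your proof is correct and follows essentially the same approach as the paper's own proof, which simply says to plug Lemma \ref{delta_e} into the explicit formula for the constant (the paper cites \eqref{eqn:full_expression} rather than \eqref{eqn:emabreh000}, but since the $r(G)$ factor cancels in the ratio $c'/c$ as noted in \eqref{eqn:noRG}, this distinction is immaterial here). All the substantive work is indeed in Lemma \ref{delta_e}, and your summary of that lemma's content is accurate.
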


\begin{proof}
Plug Lemma \ref{delta_e} and \eqref{eqn:full_expression}.
\end{proof}

\begin{lem} \label{delta_e'}
We have $\delta'_e = 0$ for all divisors $e$ of $m'/5^{25}$.
\end{lem}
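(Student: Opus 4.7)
The plan is to show that for every groomed $(a,b) \in \Z^2$, the minimality defect $e$ of the resulting elliptic curve in $\scrE$ is coprime to $6$, i.e., $v_2(e) = v_3(e) = 0$. Since every divisor $e > 1$ of $m'/5^{25} = 2^{16} 3^{36}$ has a prime factor in $\{2,3\}$, this immediately gives $\delta'_e = 0$ for all such $e$, establishing the lemma (the $e=1$ contribution is, as in Lemma \ref{delta_e}, handled separately).

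I would handle $p = 3$ first. Writing $A'(a,b) = -27 \cdot P(a,b)$ with
\[ P(a,b) = a^4 + 228 a^3 b + 494 a^2 b^2 - 228 a b^3 + b^4, \]
and reducing modulo $3$ using $228 \equiv 0$ and $494 \equiv 2 \psmod{3}$, one finds $P(a,b) \equiv (a^2+b^2)^2 \psmod{3}$. Since $(a,b)$ is groomed, not both $a$ and $b$ are divisible by $3$, so $a^2 + b^2 \not\equiv 0 \psmod{3}$. Hence $3 \nmid P(a,b)$, so $v_3(A'(a,b)) = 3$ exactly and $3^4 \nmid A'(a,b)$, precluding $3 \mid e$.

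For $p = 2$, the reduction $P(a,b) \equiv a+b \psmod{2}$ shows $2 \nmid A'(a,b)$ unless $a,b$ have the same parity; groomedness then forces both odd. Assuming $a,b$ odd, I would write $B'(a,b) = 54 \cdot Q(a,b)$ and reduce $Q(a,b)$ modulo $16$. Using $a^4 \equiv b^4 \equiv 1 \psmod{16}$ (so $a^6 \equiv a^2$, $a^5 b \equiv ab$, $a^4 b^2 \equiv b^2$, and symmetrically for $b$) together with $-522 \equiv 6$, $-10005 \equiv 11$, $522 \equiv 10 \psmod{16}$, the six monomials of $Q$ collapse to $Q(a,b) \equiv (1{+}11)a^2 + (1{+}11)b^2 + (6{+}10)ab \equiv 12(a^2 + b^2) \psmod{16}$. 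For $a,b$ odd one has $a^2 + b^2 \equiv 2 \psmod{8}$, so $Q(a,b) \equiv 8 \psmod{16}$, giving $v_2(Q) = 3$ exactly. Consequently $v_2(B'(a,b)) = v_2(54) + 3 = 4 < 6$, so $2^6 \nmid B'(a,b)$ and $2 \nmid e$.

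The main obstacle is bookkeeping in the mod-$16$ reduction of $Q(a,b)$: one must carefully track each of the six monomials using the odd-integer identities $a^{2k} \equiv 1 \psmod{16}$ and $a^{2k+1}b \equiv ab \psmod{16}$, verify the coefficient reductions, and check that the cross terms do indeed combine to a multiple of $16 \cdot ab$. Once the congruence $Q(a,b) \equiv 8 \psmod{16}$ is in hand, the conclusion is immediate; the $p=3$ case, by contrast, is straightforward from the perfect-square reduction of $P$.
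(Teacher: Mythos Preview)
Your proof is correct and follows essentially the same approach as the paper: both show that for groomed $(a,b)$ and $p \in \{2,3\}$, one cannot have $p^4 \mid A'(a,b)$ and $p^6 \mid B'(a,b)$ simultaneously, so the minimality defect is coprime to $6$. The paper merely asserts this as ``a similar calculation as in Lemma~\ref{delta_e}'' without details, whereas you supply the explicit congruences---your mod-$3$ perfect-square reduction of $P$ and the mod-$16$ collapse of $Q$ to $12(a^2+b^2)\equiv 8$ are exactly the computations the paper leaves implicit.
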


\begin{proof}
A similar calculation as in Lemma \ref{delta_e} shows that if $p=2,3$, then 
\begin{align*}
p^4 \mid A'(a,b) \text{ and } p^6 \mid B'(a,b)
\end{align*}
if and only if $a\equiv b \equiv 0 \pmod{p}$, so the same conclusion holds.
\end{proof}

By Lemma \ref{delta_e'}, it only remains to compute $\delta_e'$ for $e\mid 5^{25}$.

\begin{lem} \label{delta_5'}
We have $\delta_e' = 0$ for all $e >5$, $\delta_1' = 29/30$,  and $\delta_5' = 1/30$, so that 
\[
\sum_{e \mid 5^{25}} \delta_e' e^{2} =  \frac{29}{30} + \frac{1}{30}\cdot 25 = \frac{9}{5}.
\]
\end{lem}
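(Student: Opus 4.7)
The plan is to analyze, locally at $5$, exactly when a groomed pair $(a,b) \in \Z^2$ has minimality defect divisible by $5^k$, then read off the densities $\delta'_{5^k}$. The starting observation is the congruence $t^2 - 11t - 1 \equiv (t-3)^2 \pmod 5$, which via the explicit formulas in \eqref{5polys} gives
\[
A'(a,b)/(-27) \;\equiv\; (a-3b)^4 \pmod 5,
\]
so $v_5(A'(a,b)) \geq 1$ for a groomed pair forces $5\nmid b$ and $a \equiv 3b \pmod 5$. Substituting $a = 3b + 5c$ and expanding by the binomial theorem, the scalars conspire so that every resulting monomial carries a factor $5^4$; explicitly $A'(3b+5c,\,b) = -27 \cdot 5^4 \cdot q(b,c)$ with $q(b,c) \in \Z[b,c]$. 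A parallel expansion gives $B'(3b+5c,\,b) = 54 \cdot 5^5 \cdot h(b,c)$ with $h(b,c) \in \Z[b,c]$. Hence the congruence $5\nmid b$ and $a \equiv 3b \pmod 5$ already forces $v_5(A') \geq 4$ and $v_5(B') \geq 5$; only the extra condition $v_5(B') \geq 6$ remains to pin down defect divisible by $5$.

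Next I would reduce $h(b,c) \pmod 5$ and use Fermat's little theorem to simplify $2b^6 + bc^5$ to $b(2b+c) \pmod 5$. When $5\nmid b$, this vanishes precisely when $c \equiv -2b \equiv 3b \pmod 5$, which lifts to the congruence $a \equiv 18b \pmod{25}$. The conditional density in $\Z_5^2$ of groomed pairs satisfying both $5\nmid b$ and $a\equiv 18b\pmod{25}$ is
\[
\frac{(4/5)\cdot(1/25)}{1 - 1/25} \;=\; \frac{1}{30},
\]
accounting for the value $\delta_5' = 1/30$.

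To establish $\delta_e' = 0$ for all $e > 5$, I would iterate the substitution once more: writing $c = 3b+5d$ and expanding $q(b,\,3b+5d)$, the $b^4$ coefficient is $2545 = 5\cdot 509$ while every other coefficient is divisible by $5^2$. Dividing out one factor of $5$ leaves $509 b^4$ plus terms divisible by $5$, and $509 \not\equiv 0 \pmod 5$, so $v_5(q) = 1$ exactly whenever $5 \nmid b$. Consequently $v_5(A'(a,b)) = 5$ (not $\geq 8$) under the defect-$5$ congruences, so the minimality defect cannot reach $5^2 = 25$ for any groomed pair. The remaining mass thus goes to $\delta_1' = 1 - 1/30 = 29/30$, and the stated sum evaluates as $\frac{29}{30} + \frac{1}{30} \cdot 25 = \frac{9}{5}$.

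The main obstacle is purely bookkeeping: the three successive binomial expansions (of $A'(3b+5c,\,b)$, $B'(3b+5c,\,b)$, and $q(b,\,3b+5d)$) must be carried out exactly and the $5$-adic valuation of each monomial tracked, since the assertion $\delta'_e = 0$ for $e > 5$ hinges on $v_5(A')$ being exactly $5$ (not merely $\geq 4$) once we are on the defect-$5$ locus. None of this is conceptually difficult, and each expansion can be confirmed in a few lines by hand or by a brief computer algebra check.
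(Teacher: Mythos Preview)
Your proof is correct and arrives at the same conclusions as the paper, but the computational organization differs in a way worth noting.

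The paper's argument is a direct residue check: it dehomogenizes via $t=a/b$ (using the palindromic symmetry of $A',B'$ to handle both affine charts at once), then simply verifies by exhaustion that $f'(t)\equiv 0\pmod{5^6}$ has no solutions (ruling out defect $\geq 25$), and that the simultaneous congruences $f'(t)\equiv 0\pmod{5^4}$, $g'(t)\equiv 0\pmod{5^6}$ hold only for $t\equiv 18\pmod{25}$, i.e., exactly one of the $30$ points of $\PP^1(\Z/25)$.

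Your approach instead exploits the algebraic structure: the factorization $t^2-11t-1\equiv (t-3)^2\pmod 5$ explains \emph{why} $v_5(A')$ jumps from $0$ to $\geq 4$ as soon as $a\equiv 3b\pmod 5$, and the iterated substitutions $a=3b+5c$, $c=3b+5d$ amount to a Newton-polygon/Hensel analysis that pins down $v_5(A')=5$ exactly on the defect-$5$ locus. This is more illuminating (it shows the mechanism rather than just reporting a search) but requires carrying three binomial expansions correctly; I checked your key claims $q(b,c)=16b^4+72b^3c+104b^2c^2+48bc^3+c^4$ and $q(b,3b+5d)=5\cdot 509\,b^4+(\text{terms divisible by }25)$, and they are right. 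Your density computation $(4/5)(1/25)/(1-1/25)=1/30$ agrees with the paper's $\PP^1(\Z/25)$ count, and your condition $c\equiv 3b\pmod 5$ is exactly $a\equiv 18b\pmod{25}$, matching the paper's $t\equiv 18\pmod{25}$.
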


\begin{proof}
We first show that $\delta_e' = 0$ for $e > 5$.  Suppose $e >5$, so that $e = 5^k$ for $1 \leq k \leq 25$.  Let $(a,b) \in \Z^2$ have minimality defect $e$, so $e^{12} \mid \gcd(A'(a,b)^3,B'(a,b)^2)$; then $e^{12} \mid A'(a,b)^3$, whence $e^{4}\mid A'(a,b)$.  Note that $e$ is divisible by $5^2$.  

Suppose further that $\gcd(a,b) = 1$. We will show that there are no solutions to the congruence 
\begin{align} \label{mod5^6}
A'(a,b) \equiv 0 \pmod{5^6},
\end{align}
implying that it is impossible for $e^4 \mid A'(a,b)$ unless $\gcd(a,b) \ne 1$, i.e., unless $(a,b)$ is not groomed.  

Since $\gcd(a,b) = 1$, either $a$ or $b$ is coprime to 5.  By the symmetry of the coefficients of $A'(a,b)$, it suffices to assume $b$ is coprime to 5, and hence invertible modulo $5^6$.  Multiplying (\ref{mod5^6}) by $(1/b)^6$, we are left with the congruence 
\[
f'(t)  \equiv 0 \pmod{5^6},
\]
to which there are no solutions.  As we sketched above, this is enough to conclude that $\delta_e' = 0$ for $e > 5$.  It remains to calculate $\delta_5'$, which we can do working modulo $25$.

Suppose 
\begin{align*}
A'(a,b) \equiv 0 \pmod{5^4} \quad\text{and}\quad
B'(a,b) \equiv 0 \pmod{5^6}.
\end{align*}
If $b$ is invertible modulo 25, then we are left to consider the congruences 
\begin{align*}
f'(t) \equiv 0 \pmod{5^4} \quad\text{and}\quad
g'(t) \equiv 0 \pmod{5^6}.
\end{align*}
which happens if and only if $t \equiv 18 \pmod{25}$.  Similarly if $a$ is invertible, then we find $t \equiv 7 \pmod{25}$.  We also note that 18 and 7 are inverses modulo 25, reflecting the fact that $A'$ and $B'$ are each reciprocal polynomials.  This accounts for 1/30 of the possible ratios $(a:b)$ among groomed $(a,b)$ modulo 25.  (Alternatively, working over $\PP^1(\Z/25)$, we find that of the 30 rational points, only $[18:1] = [1:7]$ solve the above congruences.)

Thus, $\delta_5' = 1/30$, and $\delta_1' = 1- 1/30 = 29/30$, and the correction factor of $9/5$ follows.
\end{proof}

\begin{cor} \label{cor:cg110_5'}
The constant $c'$ is given by
\[
c' = \frac{9\Area(R(1))}{5\zeta(2)}.
\]
\end{cor}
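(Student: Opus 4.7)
The plan is to mirror the brief proof of Corollary \ref{cor:cg110_5}, substituting the data for $G = G_5(1;0,0)$ into the general expression \eqref{eqn:full_expression} for $c(G)$, and then simplifying.

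From Table \ref{tab:allthedata} we have $d(G_5(1;0,0)) = 6$, matching $d(G_5(1;1,0))$, so the exponent $12/d(G) = 2$ in the sieve sum is the same as for $c$. Similarly, by \eqref{eqn:noRG}, the normalizer index $r(G_5(1;0,0))$ equals $r(G_5(1;1,0))$, so those factors enter identically into the two constants. The quantities that change in going from $c$ to $c'$ are therefore just the area $\Area(R'(1))$ (replacing $\Area(R(1))$) and the sieve factor $\sum_{e \mid m'} \delta'_e e^2$ (replacing the trivial sum $1$ produced by Lemma \ref{delta_e}).

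To complete the proof, I would cite three lemmas already established in this section. Lemma \ref{delta_e'} eliminates all divisors of $m'/5^{25}$ from the sieve sum by the same mod-$2$ and mod-$3$ argument used in Lemma \ref{delta_e}; Lemma \ref{delta_5'} then evaluates the remaining sum $\sum_{e \mid 5^{25}} \delta'_e e^2 = 9/5$; and Lemma \ref{5jacobian} converts $\Area(R'(1))$ to a multiple of $\Area(R(1))$. Substituting all three into \eqref{eqn:full_expression}, under the same implicit conventions as in Corollary \ref{cor:cg110_5}, and simplifying yields the claimed formula.

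There is no substantive obstacle here; the argument is a mechanical assembly step. All the content lives in the three supporting lemmas, and in particular the non-trivial factor $9/5$ in the sieve sum traces back to Lemma \ref{delta_5'}, where a single rational ratio $t \equiv 18 \pmod{25}$ on $\PP^1(\Z/25)$ forces minimality defect exactly $5$ and contributes the weighting $\delta'_5 = 1/30$. This single point is precisely what distinguishes $c'$ from what one would naively get by scaling $c$ by the area ratio alone.
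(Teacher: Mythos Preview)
Your approach is essentially the paper's: the paper's proof reads in full ``This follows immediately from Lemmas \ref{delta_e'} and \ref{delta_5'}, as in Corollary \ref{cor:cg110_5},'' i.e.\ plug the sieve factor $9/5$ into \eqref{eqn:full_expression} exactly as was done for $c$.

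The one discrepancy is your invocation of Lemma \ref{5jacobian}. The paper does \emph{not} use it here; the area ratio $1/5$ is applied only afterward, in the computation of $c'/c = (1/5)(9/5)=9/25$. You are bringing it in because the displayed formula in the corollary has $\Area(R(1))$, but that is evidently a typo for $\Area(R'(1))$: with $R'(1)$ the formula $c' = (9/5)\,\Area(R'(1))/\zeta(2)$ follows directly from the sieve factor, matching both the paper's short proof and the later ratio $c'/c=9/25$. If instead you take the printed $R(1)$ at face value and actually apply Lemma \ref{5jacobian}, you pick up an extra factor $1/5$ and obtain $9\,\Area(R(1))/(25\,\zeta(2))$, not the stated $9/5$. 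So your assertion that ``simplifying yields the claimed formula'' does not survive the arithmetic as written; drop the appeal to Lemma \ref{5jacobian} (or note the typo) and your proof coincides with the paper's.
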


\begin{proof}
This follows immediately from Lemmas \ref{delta_e'} and \ref{delta_5'}, as in Corollary \ref{cor:cg110_5}.
\end{proof}

We finally arrive at the exact value of $P_5$.

\begin{cor}
We have $P_5 =  25/34 \approx 73.5\%$.
\end{cor}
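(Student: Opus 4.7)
The plan is to assemble the ingredients of the preceding lemmas and corollaries into formula \eqref{prob_definition}, namely $P_5 = (1+c'/c)^{-1}$. By \eqref{eqn:cpc}, the ratio $c'/c$ factors as a product of three pieces: a normalizer-index ratio, an area ratio, and a sieve-sum ratio. First I would observe that, per \eqref{eqn:noRG}, both $r(G_5(1;1,0))$ and $r(G_5(1;0,0))$ equal $4$---consistent with Lemma \ref{lem:samecurve} identifying their common uniformizing congruence subgroup---so that factor cancels, as does $\zeta(2)$, reducing the ratio to area and sieve contributions alone.

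Next I would plug in the three main computations already assembled. Lemma \ref{5jacobian} yields $\Area(R'(1))/\Area(R(1))=1/5$, via the linear fractional transformation $\psi$ equating $j\circ\psi = j'$ and the explicit rotation-reflection-scaling it induces. Lemma \ref{delta_e} gives sieve sum $1$ for $c$, since no nontrivial minimality defect arises among groomed pairs. Lemmas \ref{delta_e'} and \ref{delta_5'} together give the sieve sum for $c'$ as $9/5$; the only nontrivial local contribution $\delta'_5 = 1/30$ traces back to the residue classes $t\equiv 18\pmod{25}$ and its reciprocal $t\equiv 7\pmod{25}$, as expected from Lemma \ref{isog_lem}.

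Combining these,
\[
\frac{c'}{c}=\frac{1}{5}\cdot\frac{9/5}{1}=\frac{9}{25},
\]
and therefore
\[
P_5 = \left(1+\frac{9}{25}\right)^{-1}=\frac{25}{34}.
\]
All the analytic, geometric, and sieve-theoretic content is already in place in the preceding material; the only potential pitfall in this final step is bookkeeping---making sure the $r(G)$ and $\zeta(2)$ factors genuinely cancel in the ratio, and that $\Area(R'(1))$ (rather than $\Area(R(1))$) enters on the $c'$ side before the ratio of Lemma \ref{5jacobian} is applied. Beyond this organizational point, no further obstacle remains.
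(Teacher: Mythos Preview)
Your proposal is correct and follows essentially the same route as the paper: assemble the area ratio $1/5$ from Lemma \ref{5jacobian} and the sieve sums $1$ and $9/5$ from Lemmas \ref{delta_e}, \ref{delta_e'}, \ref{delta_5'} into \eqref{eqn:cpc} to get $c'/c=9/25$, then invoke \eqref{prob_definition}. Your extra remark that the $r(G)$ and $\zeta(2)$ factors cancel is already handled in the text surrounding \eqref{eqn:noRG}, so no further justification is needed.
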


\begin{proof}
Combining Corollaries \ref{cor:cg110_5} and \ref{cor:cg110_5'}, together with Lemmas \ref{5jacobian} and \ref{delta_5'}, we have
$$ 
\frac{c'}{c} = \frac{\Area(R'(1)) \sum_{e\mid m'} \delta_e'e^{2}}{\Area(R(1))} = \frac{1}{5} \cdot \frac{9}{5} = \frac{9}{25}. 
$$
By (\ref{prob_definition}), we have 
\[
P_5 = \frac{c}{c+c'} = \frac{1}{1 + c'/c} = \frac{1}{1+9/25}=\frac{25}{34}.  \qedhere
\]
\end{proof}

\begin{rmk} \label{5corroborate}
We perform a count of $5$-full elliptic curves $E \in \scrE_{5?} \cap \scrE_{\leq H}$ in \textsf{Magma} of height $H \leq 10^{36}$, giving 196772 with a global subgroup of order 5 and 70784 with only a local, but not global, subgroup of order 5.  These 70784 further decompose as 37944 with $e=1$ and 32840 with $e=5$.  These proportions are in line with the ones predicted above and altogether give a ratio of 
$$
196772/(196772+70784) \approx 73.5\%,
$$
which agrees nicely with our calculations above. 
\end{rmk}

\subsection{The Case \texorpdfstring{$\ell=7$}{l=7}} 

Repeating the steps in the previous section, we are more brief.  
The universal models for those curves have Weierstrass data:
\begin{align*}
f(t)&= -27t^8 + 324t^7 - 1134t^6 + 1512t^5 - 945t^4 + 378t^2 - 108t - 27 \\
g(t) &= 54t^{12} - 972t^{11} + 6318t^{10} - 19116t^9 + 30780t^8 - 26244t^7 + 14742t^6 \\
&\quad\ - 11988t^5 + 9396t^4 -  2484t^3 - 810t^2 + 324t + 54 \\
f'(t) &= -27t^8 - 6156t^7 - 1134t^6 + 46872t^5    - 91665t^4 + 90720t^3 - 44982t^2 \\
&\quad\ + 6372t - 27 \\
g'(t) &= 54t^{12} - 28188t^{11} - 483570t^{10} + 2049300t^9 - 3833892t^8 + 7104348t^7 \\
&\quad\ - 13674906t^6 + 17079660t^5 - 11775132t^4 + 4324860t^3 - 790074t^2 \\
&\quad\ + 27540t + 54.
\end{align*}
As above, we let $A,B,A',B'$ denote the homogenizations of $f,g,f',g'$.

The $j$-invariant $j(t)$ of $E(t)$ is given explicitly by 
$$
j(t) = \frac{(t^2 - t + 1)^3(t^6 - 11t^5 + 30t^4 - 15t^3 - 10t^2 + 5t + 1)^3}{t^7(t-1)^7(t^3 - 8t^2 + 5t + 1)}
$$
and so has simple poles at the roots of the polynomial  $h(t) \colonequals t^3-8t^2+5t+1$, and poles of order 7 at $0,1,\infty$.  Similarly, the $j$-invariant $j'(t)$ of $E'(t)$ has simple poles at 0, 1, $\infty$ and poles of order 7 at the roots of $h(t)$.  The roots of $h(t)$ are real, generate the field $\Q(\zeta_7 + \zeta_7^{-1})$, and we label them according to the ordering $\rho_1 < \rho_2 < \rho_3$. 
Under the linear fractional transformation 
$$
\psi(t) = \frac{(\rho_2-\rho_1)t+(\rho_1-\rho_2)\rho_3}{(\rho_2 -\rho_3)t+(\rho_1\rho_3-\rho_1\rho_2)},
$$
we have $j(\psi(t)) = j'(t)$.  We now proceed exactly as above.

\begin{lem} \label{conjecture}
With $R(1),R'(1)$ as defined in \eqref{eqn:explicitAB}, we have 
\begin{equation}
\frac{\Area(R'(1))}{\Area(R(1))} = \frac{1}{\sqrt{7}}.
\end{equation}
\end{lem}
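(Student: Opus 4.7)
I follow the strategy of Lemma \ref{5jacobian}, exhibiting a linear transformation $T \colon \R^2 \to \R^2$ that maps $R'(1)$ bijectively onto $R(1)$, then computing the ratio as $\Area(R'(1))/\Area(R(1)) = 1/|\det T|$.  Homogenizing $\psi$ via $t = a/b$ gives the linear map
\[
M_\psi = \begin{pmatrix} \rho_2-\rho_1 & -(\rho_2-\rho_1)\rho_3 \\ \rho_2-\rho_3 & -(\rho_2-\rho_3)\rho_1 \end{pmatrix},
\]
with $\det M_\psi = (\rho_2-\rho_1)(\rho_2-\rho_3)(\rho_3-\rho_1)$.  Since $(\det M_\psi)^2 = \disc h = 7^4$ (computed directly from $h(t) = t^3 - 8t^2 + 5t + 1$), we obtain $|\det M_\psi| = 49$.

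Since $j(\psi(t)) = j'(t)$, the curves $E(\psi(t))$ and $E'(t)$ are quadratic twists over $\overline\Q(t)$.  I look for $T$ of the form $T = \lambda M_\psi$ for a scalar $\lambda$.  The key intermediate claim is that $A(M_\psi(a,b)) = K \cdot A'(a,b)$ for some constant $K \in \overline\Q^\times$, and correspondingly $B(M_\psi(a,b)) = K^{3/2} B'(a,b)$; equivalently, the twist factor $D(t)$ relating the two universal curves has the shape $D(t) = \sqrt{K}\cdot[(\rho_2-\rho_3)(t-\rho_1)]^{-4}$, in parallel with the structure verified for $\ell=5$ in Lemma \ref{5jacobian}.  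Granting this and using that $A$ and $B$ are homogeneous of degrees $8$ and $12$ respectively, taking $\lambda \colonequals K^{-1/8}$ gives $A(T(a,b)) = A'(a,b)$ and $B(T(a,b)) = B'(a,b)$, so $T$ is the desired bijection and
\[
\frac{\Area(R'(1))}{\Area(R(1))} = \frac{1}{|\lambda|^2 \, |\det M_\psi|} = \frac{|K|^{1/4}}{49}.
\]

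The proof then reduces to showing $|K| = 7^6$, which yields $|K|^{1/4}/49 = 7^{3/2}/49 = 1/\sqrt 7$.  I would compute $K$ by evaluating $A(M_\psi(1,0)) = (\rho_2-\rho_3)^8 f\bigl((\rho_2-\rho_1)/(\rho_2-\rho_3)\bigr)$ and dividing by $A'(1,0) = -27$, then simplifying via the Vieta relations $\rho_1+\rho_2+\rho_3 = 8$, $\rho_1\rho_2+\rho_1\rho_3+\rho_2\rho_3 = 5$, $\rho_1\rho_2\rho_3 = -1$.  This explicit verification---a lengthy symbolic manipulation involving the degree-$8$ polynomial $f$ evaluated at an element of $\Q(\zeta_7)^+ = \Q(\rho_1,\rho_2,\rho_3)$---is the main obstacle.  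The appearance of the clean value $7^6$ (with no other prime factors) reflects the geometric origin of $\psi$ as an involution of $Y_1(7)$ exchanging the two moduli problems, and ultimately traces back to $\disc h = 7^4$ together with the rigidity afforded by $\Gamma_1(7)$ being a fine moduli space.
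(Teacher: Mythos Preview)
Your approach is essentially the same as the paper's: the paper also takes $T = \lambda M_\psi$ with your matrix $M_\psi$ (written there as $J = uM_\psi$ with $u = 7^{-3/4}$, i.e.\ your $\lambda = K^{-1/8}$ with $K = 7^6$) and verifies that $A(J(a,b)) = A'(a,b)$ and $B(J(a,b)) = B'(a,b)$, then computes $|\det J| = 7^{-3/2}\cdot 49 = \sqrt{7}$.

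The only gap in your write-up is that you stop short of actually verifying $|K| = 7^6$, calling it ``the main obstacle.''  The paper does not do anything clever here either: it simply declares the scalar $u = 7^{-3/4}$ and asserts the two polynomial identities $A(J(a,b)) = A'(a,b)$, $B(J(a,b)) = B'(a,b)$ as a direct check.  Since $A,A',B,B'$ are explicit integer-coefficient forms and $J$ has entries in $7^{-3/4}\cdot\Q(\zeta_7)^+$, this is a finite symbolic computation (e.g.\ expand both sides over the basis $1,\rho_1,\rho_1^2$ of $\Q(\zeta_7)^+$ and compare coefficients, or just hand it to a computer algebra system).  Your proposed route---evaluating $A(M_\psi(1,0))/A'(1,0)$ to extract $K$---is perfectly valid and amounts to the same calculation, but there is no shortcut that avoids touching the degree-$8$ polynomial; the heuristic about $\disc h$ and the geometry of $Y_1(7)$ explains \emph{why} the answer is a power of $7$, but does not by itself pin down the exponent.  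To close the proof, just carry out the evaluation (or equivalently verify the full polynomial identity with the scalar $7^{-3/4}$ in place).
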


\begin{proof}
The change of variables $(a,b) \mapsto J(a,b)$ defined by matrix multiplication (on columns)
$$
J \colonequals \begin{pmatrix} u & 0 \\ 0 & u \end{pmatrix} \begin{pmatrix}\rho_2-\rho_1 & \rho_1\rho_3-\rho_2\rho_3 \\ \rho_2 -\rho_3 & \rho_1\rho_3-\rho_1\rho_2\end{pmatrix},
$$
and $u = 7^{-3/4}$ maps $R(1)$ bijectively onto $R'(1)$ by checking that 
\begin{align*}
A(J(a,b)) = A'(a,b) \text{ and } B(J(a,b)) = B'(a,b).
\end{align*}
Then
$$
\abs{\det (J)} = \abs{u^2(\rho_2 - \rho_3)(\rho_1^2 -\rho_1\rho_2 - \rho_1\rho_3 + \rho_3\rho_2)} = \sqrt{7}, 
$$
which proves the lemma.
\end{proof}

Just like in the case $\ell=5$ we must compute the local correction factors.  We compute $m=-2^{32}3^{72}7$ and $m'=-2^{32}3^{72}7^{49}$ and check that $d(G) = 12$ (so that we sum $\delta_e e^1$ and $\delta_e' e^1$ over the divisors of $m$ and $m'$, respectively). 

\begin{lem} \label{bad7primes1}
We have $\delta_e = \delta_e' = 0$ when $e$ is divisible by a power of $2$, and $\delta_7 = 0$. 
\end{lem}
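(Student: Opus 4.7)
I would handle the two claims separately, mimicking the structure of Lemmas \ref{delta_e} and \ref{delta_e'}.

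For $\delta_e = \delta_e' = 0$ whenever $2 \mid e$, reduce modulo $2$: a direct coefficient check shows
\[ f(t) \equiv f'(t) \equiv t^8 + t^4 + 1 = (t^2 + t + 1)^4 \pmod 2, \]
so $A(a,b) \equiv A'(a,b) \equiv (a^2 + ab + b^2)^4 \pmod 2$ after homogenizing. A groomed pair cannot have $a \equiv b \equiv 0 \pmod 2$, and a quick case check in the remaining three parity patterns shows $a^2 + ab + b^2$ is always odd; hence both $A(a,b)$ and $A'(a,b)$ are odd for every groomed $(a,b)$, which rules out the condition $2^4 \mid A$ (or $A'$) required for any even minimality defect.

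For $\delta_7 = 0$ the main observation is that $v_7(m) = 1$, and since $m$ is the $\lcm$ of the two resultants in \eqref{eqn:mmp}, each of $\Res_t(f,g)$ and $\Res_t(\breve f, \breve g)$ has $7$-adic valuation at most $1$. Applying Cramer's rule to the Sylvester matrix produces a Bezout identity
\[ u(t) f(t) + v(t) g(t) = \Res_t(f, g) \in \Z \]
with $u, v \in \Z[t]$. Specializing at $t_0 = a/b \in \Q$ when $7 \nmid b$, so that $u(t_0)$ and $v(t_0)$ have non-negative $7$-adic valuation, forces
\[ \min\bigl(v_7(f(t_0)), v_7(g(t_0))\bigr) \leq v_7(\Res_t(f, g)) \leq 1, \]
while $v_7(f(a/b)) = v_7(A(a,b))$ and $v_7(g(a/b)) = v_7(B(a,b))$ because $b$ is a $7$-adic unit.

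Now suppose a groomed pair $(a,b)$ has minimality defect $e = 7$, so $7^4 \mid A(a,b)$ and $7^6 \mid B(a,b)$. If $7 \nmid b$, the bound above gives an immediate contradiction; if $7 \mid b$ then $7 \nmid a$ by grooming, and the analogous argument applied to the reciprocal polynomials $\breve f, \breve g$ at $s = b/a$ rules out this case too. Hence $\delta_7 = 0$. The only mildly technical step is invoking the Sylvester/Cramer construction that produces the Bezout identity with coefficients in $\Z[t]$; everything else is routine bookkeeping.
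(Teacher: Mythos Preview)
Your argument is correct. For the prime $2$, your approach is essentially the paper's---a direct coefficient check modulo $2$---only you make the factorization $(a^2+ab+b^2)^4$ explicit rather than simply asserting that $2^4 \mid A(a,b)$ and $2^6 \mid B(a,b)$ force $a \equiv b \equiv 0 \pmod 2$.

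For $\delta_7=0$ you take a genuinely different route. The paper does the same thing it did at $2$: a direct finite check that $7^4 \mid A(a,b)$ and $7^6 \mid B(a,b)$ imply $7 \mid \gcd(a,b)$. You instead leverage the already-computed fact that $v_7(m)=1$, pulling a B\'ezout identity $u f + v g = \Res_t(f,g)$ out of the Sylvester matrix to bound $\min(v_7(f(t_0)),v_7(g(t_0))) \leq 1$ for $7$-integral $t_0$, and then handle the case $7 \mid b$ via the reciprocal polynomials. This is more conceptual and avoids any explicit mod-$7$ arithmetic; it also explains structurally \emph{why} the exponent $1$ in $v_7(m)=1$ forces $\delta_7=0$, and the same argument would immediately give $\delta_e=0$ for any prime power $e=p^k$ with $p^{4k}$ exceeding the $p$-part of $m$. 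The paper's brute-force check is shorter to state but hides this mechanism. Both approaches are fine here.
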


\begin{proof}
Similar to our work in the $\ell=5$ case, one checks that 
\begin{align*}
2^4 \mid A(a,b) \quad\text{and}\quad 2^6 \mid B(a,b)
\end{align*}
if and only if $a\equiv b \equiv 0 \pmod{2}$ and similarly for $A'(a,b)$ and $B'(a,b)$.  An analogous  calculation gives us the same result for the polynomials $A(a,b)$ and $B(a,b)$ when $p=7$.  

Thus, none of the groomed pairs $(a,b)$ contribute to the correction factor in these cases and by the identical argument to the one in Lemma \ref{delta_e} we conclude that $\delta_e = \delta_e'=0$ when $e$ is a power of 2 and that $\delta_7=0$, as claimed.
\end{proof}

\begin{lem} \label{bad7primes2}
With all notation as above, we have 
\begin{align*}
\delta_e'&=0 \text{ if } 7^2 \mid e. \\
\delta_e = \delta_e' &= 0 \text{ if } 3^2 \mid e \\
\delta_3 &= 1/4 \\
\delta_1 &= 3/4 \\
\delta_3' &= 7/32 \\
\delta_7' &= 3/32 \\
\delta_{21}' &= 1/32 \\
\delta_1' &= 21/32.
\end{align*}
\end{lem}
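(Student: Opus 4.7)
The argument parallels Lemma~\ref{delta_5'}, decomposing across two primes $p=3$ and $p=7$. By Lemma~\ref{bad7primes1}, only $p=3$ (in both the primed and unprimed cases) and $p=7$ (in the primed case only) can contribute to the minimality defect. The Sun Zu Theorem lets us factor $\delta_e = \prod_p \delta^{(p)}_{p^{\ord_p(e)}}$ and analogously for $\delta_e'$, where $\delta^{(p)}_{p^k}$ denotes the $p$-local proportion of groomed pairs with defect $p^k$ at $p$; it therefore suffices to determine these local factors.

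I would carry this out in three steps. \emph{First} (vanishing), I would check by finite enumeration that the congruences $f(t) \equiv 0 \pmod{3^8}$ and $g(t) \equiv 0 \pmod{3^{12}}$ (and analogously for $f',g'$ at $p=3$, and for $f',g'$ at $p=7$) have no solution in $\Z/3^{12}$ (resp.\ $\Z/7^{12}$), using the reciprocal symmetry of the polynomials to cover the case $p \nmid a$. This mirrors the end of Lemma~\ref{delta_5'} and yields $\delta^{(p)}_{p^k}=0$ for $k \geq 2$, establishing the stated vanishing. \emph{Second} (counting the defect-$p$ contribution), I would verify for each of the three pairs in play that the system $p \mid A$ and $p \mid B$ (resp.\ $p \mid A'$ and $p \mid B'$) has a unique solution $(a_0 : b_0) \in \PP^1(\F_p)$, and moreover that $v_p(A) \geq 4$ and $v_p(B) \geq 6$ hold for \emph{every} lift of $(a_0 : b_0)$ to $\PP^1(\Z/p^N)$. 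Combined with Step~1 this gives defect exactly $p$ uniformly across this residue class, so
\[ \delta^{(p)}_p = 1/\abs{\PP^1(\F_p)} = 1/(p+1), \]
yielding $\delta^{(3)}_3 = 1/4$, $\delta^{(3)}_1 = 3/4$ and $\delta^{(7)}_7 = 1/8$, $\delta^{(7)}_1 = 7/8$. \emph{Third} (assembly), for the unprimed case only $p=3$ contributes and we read off $\delta_3 = 1/4$, $\delta_1 = 3/4$; for the primed case, multiplicativity gives $\delta_1' = (3/4)(7/8) = 21/32$, $\delta_3' = (1/4)(7/8) = 7/32$, $\delta_7' = (3/4)(1/8) = 3/32$, and $\delta_{21}' = (1/4)(1/8) = 1/32$.

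The principal obstacle is the uniformity assertion in Step~2: in the $\ell=5$ setting $f'$ had a \emph{double} root mod~$5$, forcing the authors to pass to modulus $5^2$ (so only one of five lifts of the mod-$5$ root gave defect~$5$). Here we claim the stronger assertion that for $\ell=7$ at both $p=3$ and $p=7$, every lift of the relevant mod-$p$ solution achieves the target valuations; this in turn follows from the specific shape of the universal polynomials (at $p=3$, for example, the overall factor of $-27$ in $f$ already contributes $v_3=3$, so only a single further factor of $3$ in $f(t)/(-27)$ at $t \equiv t_0 \pmod 3$ is needed). Verifying this prime-by-prime is a finite though tedious symbolic computation.
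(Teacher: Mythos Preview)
Your approach is essentially the same as the paper's: both arguments (i) rule out defects divisible by $3^2$ or $7^2$ by checking that a suitable congruence on $f$ (or $f'$) alone has no solutions, (ii) identify a single point of $\PP^1(\Z/p)$ at which the full divisibility $p^4 \mid A$, $p^6 \mid B$ already holds, and (iii) assemble via CRT. Your explicit factorization $\delta_e = \prod_p \delta^{(p)}_{p^{\ord_p e}}$ is a clean way to organize this; the paper does the same thing implicitly, invoking CRT only at the end for $\delta'_{21}$.

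One wording issue in your Step~2: the condition ``$p \mid A$ and $p \mid B$'' is vacuous at $p=3$, since $27 \mid f$ and $54 \mid g$ force $3^3 \mid A$ and $3 \mid B$ identically. You clearly see this (your last paragraph says exactly the right thing: reduce $f/(-27)$ modulo~$3$), but the statement of Step~2 as written does not literally hold. The paper sidesteps this by checking the target divisibilities $3^4 \mid A$ and $3^6 \mid B$ directly and observing they are determined by a single residue class mod~$3$; that is the cleanest formulation and is what your final paragraph is groping toward. With that adjustment your argument goes through and matches the paper's.
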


\begin{proof}
The reasoning is identical to that in Lemma \ref{delta_5'}.  Suppose $3^2\mid e$.  Let $(a,b) \in \Z^2$ have minimality defect $e$, so $e^{12} \mid  \gcd(A(a,b)^3,B(a,b)^2)$; then $e^{12} \mid A(a,b)^3$, whence $e^{4} \mid A(a,b)$.  Since $e$ is divisible by $3^2$, we have that $A(a,b) \equiv 0 \pmod{3^8}$.

If, in addition, $\gcd(a,b) = 1$ we can show that there are no solutions to the congruence 
\begin{align} \label{mod3^5}
A(a,b) \equiv 0 \pmod{3^5},
\end{align}
implying that it is impossible for $e^4 \mid A(a,b)$ unless $\gcd(a,b) \ne 1$, \emph{i.e.}~unless $(a,b)$ is not groomed.  

Since $\gcd(a,b) = 1$, either $a$ or $b$ is coprime to 3.  Moreover, since the coefficients of $A(a,b)$ are not symmetric, we must consider both cases.  If $b$ is coprime to 3, then it is invertible modulo $3^5$ and so we led to the congruence 
\[
f(t) \equiv 0 \pmod{3^5},
\]
which has no solutions. Similarly, we can invert $a$ to arrive at the congruence 
\[
t^8f(1/t) \equiv 0 \pmod{3^5},
\]
which also has no solutions. We can repeat this same argument for the polynomial $A'(a,b)$ and again for the polynomial $A'(a,b)$ when $e$ is divisible by $7^2$.  In all cases we conclude that there are no groomed pairs $(a,b)$ giving rise to divisibility by $e^{12}$ in these cases.  This leaves only a handful of cases left to work out: $\delta_3$, $\delta_3'$, $\delta_7'$, and $\delta_{21}'$.  These will, in turn, give us $\delta_1$ and $\delta_1'$.  

For $\delta_3$ and $\delta_3'$ we work in $\PP^1(\Z/3)$ and find 
\begin{align*}
A(a,b) \equiv 0 \pmod{3^4} \text{ and } B(a,b) \equiv 0 \pmod{3^6}
\end{align*}
if and only if 
\begin{align} \label{delta3congruences}
[a:b] \equiv [2:1] \equiv [1:2] \pmod{3}
\end{align}
This accounts for $1$ of the 4 points of $\PP^1(\Z/3)$, hence $\delta_3 = 1/4$.  For $\delta_3'$, we find the exact conditions as (\ref{delta3congruences}).

Turning to $\delta_7'$ we find 
\begin{align*}
A'(a,b) \equiv 0 \pmod{7^4} \text{ and } B'(a,b) \equiv 0 \pmod{7^6}
\end{align*}
if and only if 
\begin{align} \label{delta7congruences}
[a:b] \equiv [5:1] \equiv [1:3] \pmod{7}.
\end{align}
This accounts for 1 of the 8 points of $\PP^1(\Z/7)$.

For $\delta_{21}'$ we use the CRT combined with the proportions at 3 and 7 above.  We finally arrive at
\begin{align*}
\delta_3' &= \frac{1}{4} \cdot \frac{7}{8} = \frac{7}{32} \\
\delta_7' &= \frac{3}{4} \cdot \frac{1}{8} = \frac{3}{32} \\
\delta_{21}' &= \frac{1}{4} \cdot \frac{1}{8} = \frac{1}{32}.\\
\delta_1' &= 1 - \delta_3' - \delta_7' - \delta_{21}' = \frac{21}{32}. \qedhere
\end{align*}
\end{proof}

\begin{cor} \label{P7cor}
We have 
\[ P_7= 4/(4+\sqrt{7}) \approx 60.2\% \]
\end{cor}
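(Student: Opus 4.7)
The plan is to read off $P_7$ directly from the framework assembled in Section \ref{sect57}, by plugging the inputs from Lemmas \ref{conjecture}, \ref{bad7primes1}, and \ref{bad7primes2} into the master formula (\ref{prob_definition}). Specifically, I would first note that for $\ell=7$ we have $d(G_7(1;1,0))=d(G_7(1;0,0))=12$ by Table \ref{tab:allthedata}, so the exponent $12/d(G)$ appearing in the sieving factor equals $1$. Hence the ratio $c'/c$ in (\ref{eqn:cpc}) reduces to
\[
\frac{c'}{c} = \frac{\Area(R'(1))}{\Area(R(1))} \cdot \frac{\sum_{e\mid m'} \delta_e' e}{\sum_{e\mid m} \delta_e e}.
\]

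Second, I would read off the area ratio from Lemma \ref{conjecture}, which gives $\Area(R'(1))/\Area(R(1))=1/\sqrt{7}$. For the two sieve sums, Lemma \ref{bad7primes1} eliminates any $e$ divisible by $2$ or any $e=7$, and Lemma \ref{bad7primes2} eliminates any $e$ with $3^2\mid e$, eliminates $\delta_e'$ for $7^2 \mid e$, and provides the remaining nonzero values. The only surviving divisors are therefore $e=1,3$ on the left and $e=1,3,7,21$ on the right. A direct computation gives
\[
\sum_{e\mid m}\delta_e e = \tfrac{3}{4}\cdot 1 + \tfrac{1}{4}\cdot 3 = \tfrac{3}{2},
\]
\[
\sum_{e\mid m'}\delta_e' e = \tfrac{21}{32}\cdot 1 + \tfrac{7}{32}\cdot 3 + \tfrac{3}{32}\cdot 7 + \tfrac{1}{32}\cdot 21 = \tfrac{84}{32}=\tfrac{21}{8}.
\]

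Third, I would combine these:
\[
\frac{c'}{c} = \frac{1}{\sqrt{7}} \cdot \frac{21/8}{3/2} = \frac{1}{\sqrt{7}}\cdot \frac{7}{4} = \frac{\sqrt{7}}{4},
\]
and plug into (\ref{prob_definition}) to obtain
\[
P_7 = \left(1+\frac{\sqrt{7}}{4}\right)^{-1} = \frac{4}{4+\sqrt{7}}.
\]

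There is really no obstacle at this stage: every nontrivial ingredient (the isogeny-refinement of Katz via Theorem \ref{upgrade}, the asymptotic count in Theorem \ref{thm:hsupgrade0}, the area identification via the linear-fractional transformation $\psi$, and the local minimality-defect analysis at the primes $2,3,7$) has already been packaged into the cited lemmas. The only thing to verify is the clean cancellation of $\sqrt{7}$ between the area ratio and the ratio of sieve sums, which is what makes the final answer a pleasantly simple algebraic number rather than a transcendental-looking expression involving $\Area(R(1))$ itself.
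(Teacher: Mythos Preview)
Your proof is correct and follows essentially the same approach as the paper: both compute $c'/c$ by combining the area ratio $1/\sqrt{7}$ from Lemma \ref{conjecture} with the sieve sums from Lemmas \ref{bad7primes1} and \ref{bad7primes2}, obtaining $c'/c=\sqrt{7}/4$ and hence $P_7=4/(4+\sqrt{7})$ via (\ref{prob_definition}). Your write-up is slightly more explicit about why the exponent $12/d(G)$ equals $1$ and which divisors survive, but the computation is identical.
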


\begin{proof}
By (\ref{prob_definition}) and Lemmas \ref{conjecture}, \ref{bad7primes1}, and \ref{bad7primes2}, we have 
$$
P_7 = \frac{c}{c+c'} = \frac{1}{1 + c'/c},
$$
with
$$ 
\frac{c'}{c}= \frac{1}{\sqrt{7}} \cdot \frac{ (21/32) + (7/32) \cdot 3 + (3/32) \cdot 7 + (1/32) \cdot 21}{ (3/4) + (1/4) \cdot 3} = \frac{\sqrt{7}}{4},
$$
from which the exact value of $P_7$ follows.
\end{proof}

\begin{rmk} \label{7corroborate}
Similar to \ref{5corroborate}, we perform a count of elliptic curves in \textsf{Magma} of height $H \leq 10^{72}$.  We get 1291676 with a global subgroup of order 7 (where 645918 correspond to $e=1$ and  645758 to $e=3$).  We also get 854432 that  locally have a subgroup of order 7, but not globally.   These 
854432 break down as: 213522 with $e=1$; 213704 with $e=3$;  213714 with $e=7$; and  213492 with $e=21$.  All of these proportions agree nicely with the predictions above, and give a ratio of 
$$
\frac{1291676}{1291676 + 854432} \approx 60.2\%,
$$
which is very good corroborating evidence for Corollary \ref{P7cor}.
\end{rmk}

\section{The probabilities \texorpdfstring{$P_3$}{P3} and \texorpdfstring{$P_4$}{P4}} \label{34div}

In this section, we compute the values of $P_3$ and $P_4$ using similar methods as in the previous section, but without appealing to the general result (in particular, there are non-fine moduli spaces).  In the case $m=3$ we can evaluate $P_3$ without computing explicit growth constants thanks to a symmetry argument, while for $m=4$ we express $P_4$ as a ratio of growth constants given explicitly by an integral. 

\subsection{Universal models}

Here we parametrize curves  that locally have a subgroup of order $m$ for $m \in \lbrace 3,4 \rbrace$, working in a bit more generality.  Let $F$ be a global field with $\opchar F \neq 2,3$ and let $E \colon y^2 = f(x) = x^3 + Ax + B$ be an elliptic curve over $F$.  For $d \in F^\times$, let $E_d \colon dy^2=f(x)$ denote the quadratic twist by $d$.  

\begin{lem} \label{E3weierstrass}
Suppose that $E$ locally has a subgroup of order $3$, i.e., $3 \mid \#E(\F_\frakp)$ for a set of primes $\frakp$ of $F$ of density $1$.  Then the following statements hold.
\begin{enumalph}
\item Either $E(F)[3] \neq \{\infty\}$ or $E_{-3}(F)[3] \neq \{\infty\}$.
\item There exist $a,b \in F$ and $u \in \{1,-3\}$ such that $E$ is defined by the equation
\[ y^2 = x^3 + u^2(6ab+27a^4)x + u^3(b^2-27a^6). \]
\end{enumalph}
\end{lem}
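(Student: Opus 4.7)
The plan is to reduce part~(a) to Lemma~\ref{serre_exercise} applied to the mod-$3$ representation $\overline{\rho}_{E,3}\colon \Gal_F \to \GL_2(\F_3)$. The density-$1$ hypothesis, together with the Chebotarev density theorem, yields $\det(1-\overline{\rho}_{E,3}(\sigma)) \equiv 0 \pmod{3}$ for every $\sigma \in \Gal_F$, placing the image (in a suitable basis) in one of the two Borel subgroups singled out in the lemma. In the first case (top-left entry $\equiv 1$), the stable line is pointwise fixed by $\Gal_F$, so $E(F)[3] \neq \{\infty\}$. In the second case (bottom-right entry $\equiv 1$), the stable line carries a character $\chi$, and since the character on the quotient is trivial, $\det \overline{\rho}_{E,3} = \chi$, forcing $\chi = \chi_{\mathrm{cyc}}$, the mod-$3$ cyclotomic character. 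Because $F(\zeta_3) = F(\sqrt{-3})$, this $\chi_{\mathrm{cyc}}$ coincides with $\chi_{-3}$, so twisting $E$ by $-3$ multiplies the stable-line character by $\chi_{-3}$ to obtain $\chi_{\mathrm{cyc}}^2 = 1$ in $\F_3^\times$; hence $E_{-3}(F)[3] \neq \{\infty\}$.

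For part~(b), the approach is to first parametrize curves with a nontrivial rational $3$-torsion point $(x_0,y_0)$ in the $u=1$ form, then recover the $u=-3$ case by twisting. This reduction suffices by part~(a), since twisting by $-3$ converts a $u=1$ model into a $u=-3$ model via the substitution $(A,B) \mapsto ((-3)^2 A, (-3)^3 B)$. The key identity for the parametrization comes from the duplication formula applied to the relation $2(x_0,y_0) = -(x_0,y_0)$, namely
\[
(3x_0^2 + A)^2 \;=\; 12 x_0 y_0^2.
\]
Since a $3$-torsion point has $y_0 \neq 0$, this forces $12 x_0 \in F^{\times 2}$, equivalently $x_0 \in 3F^{\times 2}$. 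Accordingly, I would set
\[
a \;\colonequals\; \frac{3x_0^2 + A}{6y_0}, \qquad b \;\colonequals\; y_0 - 9a^3,
\]
so that $a^2 = x_0/3$; a routine substitution using $6ay_0 = 3x_0^2 + A$ and $a^6 = x_0^3/27$ then confirms $A = 6ab + 27a^4$ and $B = b^2 - 27a^6$.

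The main obstacle is the square-class identity forcing $x_0 \in 3F^{\times 2}$---without it, the two-parameter family with $u$ fixed would be too rigid to cover every elliptic curve admitting a rational $3$-torsion point. Once that identity is extracted from the duplication formula, the coordinates $a$ and $b$ essentially present themselves, and the passage from $u=1$ to $u=-3$ via quadratic twist is purely formal.
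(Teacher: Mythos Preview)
Your proof is correct and, for part~(a), follows essentially the same strategy as the paper: both apply Lemma~\ref{serre_exercise} to $\overline{\rho}_{E,3}$ and then identify the nontrivial diagonal character with the mod-$3$ cyclotomic character $\epsilon_3 = \chi_{-3}$, concluding that a twist by $-3$ produces a rational $3$-torsion point. The paper phrases this through the $x$-coordinate $a$ of the kernel of the $3$-isogeny and the quadratic extension $F(\sqrt{a^3+Aa+B})$, whereas you argue purely in terms of characters on the stable line; the content is the same.

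For part~(b), you are considerably more explicit than the paper, which simply cites Garc\'ia-Selfa--Tornero for the parametrization. Your derivation via the duplication relation $(3x_0^2+A)^2 = 12x_0 y_0^2$ is correct: it forces $x_0/3 \in F^{2}$ (including the degenerate case $x_0=0$, $A=0$), and your choice $a = (3x_0^2+A)/(6y_0)$, $b = y_0 - 9a^3$ does recover $A = 6ab+27a^4$ and $B = b^2 - 27a^6$, as one checks from $x_0 = 3a^2$ and $y_0^2 = x_0^3 + Ax_0 + B$. The reduction of the $u=-3$ case to the $u=1$ case via twisting is likewise correct, since $(E_{-3})_{-3} \simeq E$. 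So your argument supplies the routine computation the paper omits.
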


\begin{proof}
Let $E \in \scrE_{3?}$ be given by $y^2 = x^3+Ax+B$. By Lemma \ref{serre_exercise}, either $E(F)[3] \neq \{\infty\}$ or $E$ admits a $3$-isogeny over $F$ to a curve $E'$ with $E'(F)[3] \neq \{\infty\}$.  In either case, $E$ has a rational $3$-isogeny and the $x$-coordinate of a generator of the kernel must be defined over $\Q$.  Hence the $3$-division polynomial 
of $E$ has a root $a\in F$.  

By Theorem \ref{upgrade}, the semisimplification of the mod $3$ Galois representation attached to $E$ has $\overline{\rho}_{E,3}^{\textup{ss}} \simeq \mathbf{1} \oplus \epsilon_3$, where $\epsilon_3$ is the mod $3$ cyclotomic character.  
If $E$ has a 3-torsion point then 
$$
a^3 + Aa +B \in F^{\times 2}
$$
so we interpret $F(\mathbf{1}) = F(\sqrt{a^3+Aa+B})$.  Since $F(\epsilon_3) = F(\sqrt{-3})$, it follows that 
$$
F(\epsilon_3) = F(\sqrt{-3(a^3+Aa+B)}).
$$
Thus, either $E$ has a rational point of order 3 or its quadratic twist by $-3$ does, proving (a).  

Part (b) is by a routine, universal computation (see e.g. Garc\'ia-Selfa--Tornero \cite[\S2]{gst} for a derivation).
\end{proof}

We now turn to $m=4$.  To set things up, suppose that $E$ has a nontrivial $2$-torsion point $T \in E(F)$.  Writing $T = (-b,0)$, we have a model
\begin{equation} \label{eqn:EyaB}
E \colon y^2 = x^3 + Ax + b^3 + Ab.
\end{equation}

\begin{lem} \label{discsquarelem}
Let $R$ be a $2$-division point of $T$ on $E$, i.e., $2R=T$.  Then the following are equivalent:
\begin{enumroman}
\item $x(R) \in F$;
\item $3b^2 + A \in F^{\times 2}$; and
\item $E$ admits an $F$-rational cyclic $4$-isogeny whose kernel contains $T$.
\end{enumroman}
\end{lem}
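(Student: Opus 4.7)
The plan is to establish the equivalence $\text{(i)} \Leftrightarrow \text{(ii)}$ by computing the $2$-division polynomial of $T$ explicitly, and then $\text{(i)} \Leftrightarrow \text{(iii)}$ by a Galois-theoretic argument about cyclic subgroups of order $4$ containing $T$.

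First I would use the duplication formula on $E$: if $R=(x_0,y_0)$ satisfies $2R=T=(-b,0)$, then $(3x_0^2+A)^2=4y_0^2(2x_0-b)$. Substituting $y_0^2=x_0^3+Ax_0+b^3+Ab$ and expanding yields a quartic in $x_0$. The key computational step is to recognize that this quartic factors as a perfect square,
\[
\bigl(x_0^2+2bx_0-(A+2b^2)\bigr)^2 = 0,
\]
which reflects the fact that the four $2$-division points of $T$ come in two pairs $\{R,-R\}$ sharing an $x$-coordinate. Setting $q(x)\colonequals x^2+2bx-(A+2b^2)$, the two distinct $x$-coordinates of $2$-division points of $T$ are the roots of $q(x)\in F[x]$, and $\disc(q)=4(3b^2+A)$. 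Thus some (equivalently, every) $2$-division point of $T$ has $x$-coordinate in $F$ if and only if $3b^2+A$ is a square in $F$, giving $\text{(i)} \Leftrightarrow \text{(ii)}$.

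For $\text{(i)} \Leftrightarrow \text{(iii)}$, I would observe that a cyclic $F$-rational $4$-isogeny whose kernel contains $T$ corresponds to a $\Gal_F$-stable cyclic subgroup of order $4$ of the form $\langle R\rangle$ with $2R=T$. Two such subgroups $\langle R\rangle$ and $\langle R'\rangle$ coincide if and only if $R'\in\{R,-R\}$ (equivalently, $R'-R\in\{\infty,T\}$), so there are exactly two such subgroups, parametrized by the two $x$-coordinates above. For any $\sigma\in\Gal_F$, one has $2\sigma(R)=\sigma(T)=T$, so $\sigma(R)-R\in E[2]$; the subgroup $\langle R\rangle$ is $\Gal_F$-stable precisely when $\sigma(R)\in\{R,-R\}$ for all $\sigma$, which holds if and only if $x(\sigma(R))=x(R)$ for all $\sigma$, i.e.\ $x(R)\in F$. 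This gives the equivalence with (i).

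The main content is the computation that produces the quadratic $q(x)$ with discriminant $4(3b^2+A)$; once this factorization is in hand, both equivalences follow formally. I do not foresee a substantive obstacle—the only care required is to verify the perfect-square identity and keep track of signs when expanding the duplication formula using $y_0^2=x_0^3+Ax_0+b^3+Ab$.
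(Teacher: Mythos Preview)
Your proposal is correct and follows essentially the same approach as the paper: both arguments compute the quadratic $x^2+2bx-(A+2b^2)$ whose roots are the $x$-coordinates of the $2$-division points of $T$, observe its discriminant is $4(3b^2+A)$ to get (i) $\Leftrightarrow$ (ii), and then use the Galois observation $\sigma(R)=\pm R \Leftrightarrow x(R)\in F$ for (i) $\Leftrightarrow$ (iii). Your derivation of the quadratic via the duplication formula and the perfect-square factorization is just a more explicit version of the paper's one-line ``computing with the group law on a universal curve.''
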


\begin{proof}
The 2-division points of $T$ form a torsor under $E[2]$ and there are two $x$-coordinates.  Computing with the group law on a universal curve, the minimal polynomial of the $x$-coordinates is exactly
$$
x(R)^2 + 2bx(R) - (A+2b^2).
$$
Thus, $x(R) \in F$ if and only if the discriminant $12b^2+4A$ is a non-zero square in $F$, showing (i) $\Leftrightarrow$ (ii).  

For (i) $\Rightarrow$ (iii), if there exists $R$ with $x(R) \in F$, then the subgroup $\langle R \rangle = \lbrace 0, R, T, 3R \rbrace$ is stable under $\Gal(\overline{F}/F)$ since 
$$
3R = -R = (x(R), -y(R)).
$$  
For (iii) $\Rightarrow$ (i), if $\langle R\rangle$ is Galois stable, then for all $\sigma \in \Gal(\overline{F}/F)$ we have $\sigma(R) = \pm R$ so $\sigma(x(R)) = x(R)$, whence  $x(R) \in F$.
\end{proof}

\begin{prop} \label{4weierstrass}
The elliptic curve $E$ locally has a subgroup of order $4$ if and only if at least one of the following statements hold:
\begin{enumroman}
\item $E(F)[2] \simeq (\Z/2)^2$, or
\item $E$ has a cyclic $4$-isogeny defined over $F$.
\end{enumroman}

Moreover, in case \textup{(ii)}, there exist $a,b \in F$ such that $E$ is defined by
\[ y^2 =x^3 + (a^2 - 3b^2)x  + a^2b - 2b^3 \]
and the following statements hold:
\begin{itemize}
\item $E(F)[2] \simeq (\Z/2)^2$ if and only if $9b^2-4a^2 \in F^{\times 2}$, and
\item $E(F)[4] \not\subseteq E(F)[2]$ if and only if $2a-3b \in F^{\times 2}$ or $-2a-3b \in F^{\times 2}$. 
\end{itemize}
\end{prop}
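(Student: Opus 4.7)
The plan is to separate three tasks: establish the iff statement (a generalization of Proposition~\ref{prop:eweir} over an arbitrary $F$ of characteristic coprime to $6$), exhibit the universal Weierstrass model assuming (ii), and read off the two torsion conditions by direct computation.

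\emph{Step 1 (the iff).} The reverse direction is immediate. If $E(F)[2]\simeq(\Z/2)^2$ then reduction at any good prime $\frakp$ embeds $(\Z/2)^2\hookrightarrow E(\Fp)$, so $4\mid\#E(\Fp)$; if instead $E$ admits a cyclic $F$-rational $4$-isogeny, then $\rho_{E,2}(\Gal_F)$ lies in a Borel mod~$4$, and one checks by hand that $\det(1-g)\equiv 0\pmod 4$ for every upper-triangular $g\in\GL_2(\Z/4)$ with unit diagonal.  For the forward direction I would apply Corollary~\ref{cor:sumitup} with $m=4$ to obtain a cyclic $F$-isogeny $\varphi\colon E\to E'$ of degree $d\mid 4$ with $4\mid\#E'(F)\sbtors$, then handle $d\in\{1,2,4\}$ in turn (for $d=2$, chasing the dual isogeny $\varphi\spcheck$ applied to a rational $4$- or $2$-torsion point of $E'$ to recover either a rational cyclic $\Z/4$ on $E$ or a second rational $2$-torsion point disjoint from $\ker\varphi$).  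A cleaner alternative I may prefer is a direct classification of the images $\bar G\colonequals\rho_{E,2}(\Gal_F)\bmod 4$ satisfying $\det\bar G=(\Z/4)^\times$ and $\det(1-g)\equiv 0\pmod 4$ for all $g\in\bar G$: any order-$3$ element mod~$2$ forces $\det(1-g)$ odd, ruling out $\Z/3$ and $S_3$ as the mod-$2$ image; the transvection case is handled by a short closure argument (multiplying an arbitrary element of $\bar G$ by one with bottom-left $\equiv 2\pmod 4$ forces the upper-right entry to be even, contradicting a nontrivial mod-$2$ transvection unless every bottom-left already vanishes mod~$4$), placing $\bar G$ in a Borel mod~$4$ and yielding (ii); the mod-$2$-trivial case is (i).

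\emph{Step 2 (the Weierstrass model).} Under (ii), let $T$ be the rational $2$-torsion point contained in the kernel of the cyclic $4$-isogeny, and translate so $T=(-b,0)$, putting $E$ in the form \eqref{eqn:EyaB}. Lemma~\ref{discsquarelem} applied to $T$ yields $3b^2+A\in F^{\times 2}$; writing $3b^2+A=a^2$ and substituting so $A=a^2-3b^2$ and $B=b^3+Ab=a^2b-2b^3$ produces the displayed universal equation.

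\emph{Step 3 (torsion conditions).} A direct factorization gives
\[ f(x)=(x+b)\bigl(x^2-bx+a^2-2b^2\bigr), \]
so $E(F)[2]\simeq(\Z/2)^2$ iff the discriminant $9b^2-4a^2$ of the quadratic factor is a square in $F$. For a rational point of order~$4$ above $T$, Lemma~\ref{discsquarelem} identifies the $x$-coordinates of the two half-points of $T$ as the roots of $x^2+2bx-(a^2-b^2)$, namely $x=-b\pm a$; evaluating $f$ at these values produces $f(-b+a)=a^2(2a-3b)$ and $f(-b-a)=a^2(-2a-3b)$, so the corresponding $y$-coordinate is rational exactly when $2a-3b$ or $-2a-3b$ is a square in $F$. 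The main obstacle I foresee is organizing Step~1 uniformly; the Galois-image classification sketched above confines the argument to an elementary closure calculation and avoids any delicate descent.
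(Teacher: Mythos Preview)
Your proposal is correct and follows essentially the same route as the paper. Your Step~2 and Step~3 match the paper's argument almost verbatim (the paper also invokes Lemma~\ref{discsquarelem} to write $A=a^2-3b^2$, then exhibits the points $(\pm a-b,\,a\sqrt{\pm 2a-3b})$ and the remaining $2$-torsion at $x=b/2\pm\tfrac{1}{2}\sqrt{9b^2-4a^2}$, packaging the three square classes into a biquadratic governing field). For Step~1, the paper does not go through Corollary~\ref{cor:sumitup} but uses exactly your second alternative: assuming $E$ is not in case~(i), it writes a general $g\in\im\rho_{E,2}$ lifting either the identity or the transvection mod~$2$, checks directly that $\det(1-g)\equiv 0\pmod 4$ forces $4$ to divide the lower-left entry in the transvection case, and then uses closure (multiplying an identity-type element by a transvection-type one) to propagate this to all of $\bar G$---your parenthetical sketch is this same closure argument with the roles slightly transposed, so you may want to straighten out which element carries the odd upper-right entry when you write it up.
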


\begin{proof}
An elliptic curve $E/F$ admits a rational cyclic 4-isogeny if and only if it has a Galois-stable cyclic subgroup of order 4; by stability, $E$ has a rational point of order 2 contained in the cyclic group.  Then the 2-adic representation of $E$ lies in the group
$$
\begin{pmatrix}
1 + 2\Z_2 & \Z_2 \\ 4 \Z_2 & 1+ 2\Z_2
\end{pmatrix}  \equiv \begin{pmatrix} * & * \\ 0 & * \end{pmatrix} \pmod{4}.
$$
Clearly, $\det(1-g) \equiv 0 \pmod{4}$ for all elements of this group and so $E$ has a local subgroup of order 4.  Conversely, if $\det(g-1) \equiv 0 \pmod{4}$ for all $g \in \im \rho_{E,2}$, but $E$ does not have full rational 2-torsion, then it will have one point of order 2 defined over $F$.  Then any non-trivial $g \in \im \rho_2$ reduces modulo 2 to $\left( \begin{smallmatrix} 1 & 1  \\ 0 & 1 \end{smallmatrix} \right)$ or $\left( \begin{smallmatrix} 1 & 0 \\0 & 1 \end{smallmatrix} \right)$ and so can be written in the form
$$
\begin{pmatrix}
1+2\alpha & \beta \\ 2 \gamma & 1+2\delta
\end{pmatrix}, \text{ or } 
\begin{pmatrix}
1+2\alpha' & 2\beta' \\ 2\gamma' & 1+2\delta'
\end{pmatrix},
$$
respectively.  

In the first case, $\det(1-g) \equiv 0\pmod{4}$ implies $2 \mid \beta \gamma$. But then $2 \mid \gamma$ (or else $E(F)[2]  = \Z/2 \times \Z/2$) and so the mod 4 representation on these elements has the shape $\left(\begin{smallmatrix} * & * \\ 0 & * \end{smallmatrix} \right)$.  If $g$ is of the second form, then multiply by a matrix $h$ of the first form and compute
$$
\det(1 - gh) \equiv 2(\gamma + \gamma')\beta \pmod{4}.
$$
Since $2\mid \gamma$, we must have $2 \mid \beta \gamma'$.  And similar to the first case we conclude that $2 \mid \gamma'$.  Thus the lower-triangular entry of any element of $\im \rho_{E,2}$ is divisible by 4 and so $E$ admits a rational cyclic 4-isogeny.

Now suppose we are in Case \textup{(ii)}.  In light of Lemma \ref{discsquarelem}, let $a \in F$ be such that $a^2 = 3b^2 + A$, so that 
$$
E:~y^2 =x^3 + (a^2 - 3b^2)x  + a^2b - 2b^3.
$$
Fix square-roots $\sqrt{\pm2a - 3b} \in \Fbar$.  Then $E$ visibly has two $F$-rational cyclic 4-isogenies with kernels
$$
\langle(a-b, a\sqrt{2a-3b})\rangle,~\text{and}~\langle(-a-b, a\sqrt{-2a-3b})\rangle,
$$
respectively.  Doubling either generator results in the marked 2-torsion point $T$; the other 2-torsion points are then 
$$
(b/2 \pm \sqrt{9b^2-4a^2}/2, 0).
$$
The splitting field of the preimages of $T$ under duplication is then a biquadratic extension of $F$ with intermediate extensions
$$ 
F(\sqrt{2a-3b}),\ F(\sqrt{-2a-3b}),\ F(\sqrt{9b^2-4a^2}).
$$
These quadratic extensions are nontrivial exactly under the conditions stated in Proposition \ref{4weierstrass}.  
\end{proof}

\subsection{The Probability \texorpdfstring{$P_3$}{P3}}  

By Lemma \ref{E3weierstrass}, all curves in $\scrE_{3?}$ either have global point of order 3 or are a quadratic twist by $-3$ of one that does.  These are modeled by the Weierstrass equations
\begin{align} \label{3weier}
y^2 = x^3 + u^2(6ab+27a^4)x + u^3(b^2-27a^6), 
\end{align}
where $u =1$ means the curve has a 3-torsion point and $u=-3$ is its quadratic twist.  

Denote by $R_{u}(H)$ the region (\ref{region}) attached to the elliptic curve (\ref{3weier}).  Applying the Principle of Lipschitz we see that
\begin{align} \label{lip_prin}
\Area R_u(H) = \Area R_u(1) H^{1/3} + O(H^{1/4}).
\end{align}
In particular, observe that 
\begin{align} \label{9factor}
\Area R_1(1) = 9 \Area R_{-3}(1).
\end{align}

\begin{rmk}
As long as $\#G \geq 5$, we have shown that Lipschitz asymptotics give a growth term of $2/d(G)$ and error of $1/d(G)$.  The degrees of the polynomials $A(a,b)$ and $B(a,b)$ are not large enough to ensure these asymptotics when $\#G =3$ or 4, so we estimate the order of growth of the error term ``by hand'' (using the results previously obtained by Harron-Snowden).
\end{rmk}

We are now ready to compute $P_3$.

\begin{prop} \label{prop:onehalfp3}
We have $P_3 = 1/2$.
\end{prop}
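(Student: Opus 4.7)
My plan is to show that the two sub-families constituting $\scrE_{3?}$, related by the quadratic twist $E \leftrightarrow E_{-3}$, have matching asymptotic counts, with the match driven by a delicate local calculation at the prime $3$.

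By Lemma \ref{E3weierstrass}, $\scrE_{3?} = \mathscr{F}_+ \sqcup \mathscr{F}_-$, where $\mathscr{F}_+ \colonequals \{E \in \scrE : 3 \mid \#E(\Q)\sbtors\}$ and $\mathscr{F}_- \colonequals \scrE_{3?} \setminus \mathscr{F}_+$; the twist by $-3$ gives a bijection $\mathscr{F}_+ \leftrightarrow \mathscr{F}_-$ outside a negligible exceptional set of curves with diagonal mod-$3$ Galois image (parametrized by $Y(3)$ and of growth only $O(H^{1/6})$, established by the argument $\chi_{-3}=\chi_3$ forcing the off-diagonal entry to vanish). The family $\mathscr{F}_+$ is parametrized by $(a,b) \in \Z^2$ via the Weierstrass coefficients $(A_1, B_1) = (6ab + 27a^4, b^2 - 27a^6)$, which are weighted-homogeneous of weights $(1,3)$: a direct $p$-adic valuation check at each prime shows that $(A_1, B_1)$ gives a minimal short Weierstrass model in $\scrE$ exactly when $(a,b)$ is \emph{weighted-primitive} (no prime $p$ has $p \mid a$ and $p^3 \mid b$), modulo the involution $(a,b) \mapsto (-a,-b)$. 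Applying the Principle of Lipschitz to $R_1(H)$ (whose area scales as $H^{1/3}$ by weighted homogeneity) together with a M\"obius sieve for weighted-primitivity yields
\[ N_+(H) \colonequals \#(\mathscr{F}_+ \cap \scrE_{\leq H}) \sim C H^{1/3} \]
for an explicit constant $C$.

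The twist sends $(A_1,B_1) \mapsto (9A_1,-27B_1)$. A $3$-adic analysis shows this pair is minimal precisely when $9 \nmid b$, giving $H(E_{-3}) = 729\,H(E)$; when $9 \mid b$ it reduces to the minimal model $(A_1/9,-B_1/27)$, giving $H(E_{-3}) = H(E)/729$. Among weighted-primitive $(a,b)$, the $3$-local conditional density of $\{9 \mid b\}$ is
\[ \frac{2/27 + 2/81}{1 - 1/81} = \frac{8/81}{80/81} = \frac{1}{10}, \]
the numerator counting the primitive sub-cases $v_3(b)=2$ (any $a$) and $v_3(b)\geq 3$ (forcing $v_3(a)=0$), and the denominator being the $3$-primitive density $1-1/81$. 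Summing over the two regimes via the bijection,
\[ N_-(H) \sim \tfrac{1}{10}\, N_+(729H) + \tfrac{9}{10}\, N_+(H/729) = \Bigl(\tfrac{1}{10}\cdot 9 + \tfrac{9}{10}\cdot \tfrac{1}{9}\Bigr) C H^{1/3} = C H^{1/3}, \]
so $N_-(H) \sim N_+(H)$ and $P_3 = N_+(H)/(N_+(H) + N_-(H)) \to 1/2$.

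The main obstacle is the delicate local bookkeeping at $3$: the multiplicative factor $729^{\pm 1}$ in the twisted height and the conditional density $\tfrac{1}{10}$ (coming from the primitivity correction that excludes $v_3(a)\geq 1$ with $v_3(b)\geq 3$) must combine to exactly $1$ in the weighted average $\tfrac{1}{10}\cdot 9 + \tfrac{9}{10}\cdot\tfrac{1}{9} = 1$. Without the primitivity correction (using the naive density $\tfrac{1}{9}$ in place of $\tfrac{1}{10}$), one would incorrectly obtain $P_3 = \tfrac{81}{170}$ rather than $\tfrac{1}{2}$, demonstrating how sensitively the answer depends on the precise local data at $3$.
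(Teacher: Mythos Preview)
Your argument is correct and reaches $P_3=\tfrac12$ by a genuinely different route from the paper's.

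The paper parametrizes the two families separately using the model $y^2=x^3+u^2(6ab+27a^4)x+u^3(b^2-27a^6)$ with $u\in\{1,-3\}$, observes $\Area R_1(1)=9\Area R_{-3}(1)$, and then notes that integrality of the twisted coefficients forces $a,b\in\tfrac13\Z$, which rescales the lattice count by exactly the compensating factor $9$; the minimality sieve removes $1/81$ from each side, leaving the ratio unchanged. By contrast, you use the twist bijection $E\leftrightarrow E_{-3}$ directly and track how the height transforms: you show that among weighted-primitive $(a,b)$ the $3$-local conditional density of $9\mid b$ is $\tfrac{1}{10}$, and that the height scales by $729^{\pm 1}$ in the two regimes, so that $\tfrac{1}{10}\cdot 9+\tfrac{9}{10}\cdot\tfrac19=1$ yields $N_-(H)\sim N_+(H)$. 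Your approach makes the mechanism transparent as a weighted average of height rescalings; the paper's approach avoids the height-tracking by exploiting a visible symmetry of the two regions. I verified your key local claims: weighted-primitivity (no $p$ with $p\mid a$ and $p^3\mid b$) is equivalent to minimality of $(A_1,B_1)$ at every prime; and for weighted-primitive $(a,b)$ the twisted pair $(9A_1,-27B_1)$ is minimal iff $9\nmid b$, reducing otherwise exactly once to $(A_1/9,-B_1/27)$.

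One small inaccuracy: the exceptional set where both $E$ and $E_{-3}$ lie in $\mathscr F_+$ consists of curves with \emph{split Cartan} image mod $3$ (i.e.\ $\overline\rho_{E,3}$ diagonalizable over $\F_3$), parametrized by $X_{\mathrm{sp}}(3)$ rather than $Y(3)$. Your negligibility claim is still correct, since $\Gamma_{\mathrm{sp}}(3)$ has index $12$ in $\SL_2(\Z)$ (the same as $\Gamma(3)$), giving growth $O(H^{1/6})$; alternatively, one can simply note that this is a thin subset of $\mathscr F_+$ cut out by a nontrivial algebraic condition on $(a,b)$, hence $o(H^{1/3})$.
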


\begin{proof}
Appealing to the notation of (\ref{prob_definition}), we write $c=c(G_3(1;1,0))$ and $c'=c(G_3(1;0,0))$ and find
\[ c H^{1/3} + O(H^{1/4}) \text{ and } c' H^{1/3} + O(H^{1/4}) \]
for the number of minimal elliptic curves of height at most $H$ with a global 3-torsion subgroup and the number of quadratic twists, respectively; note the exponents come from the Lipschitz estimate of (\ref{lip_prin}).  It remains to compute $c$ and $c'$ exactly. 

For every prime $q \ne 3$, we have 
$$
q^4 \mid (6ab+27a^4) \text{ and } q^6 \mid (b^2-27a^6)
$$
if and only if
$$
q^4 \mid 9(6ab+27a^4) \text{ and } q^6 \mid -27(b^2-27a^6).
$$
Therefore, sieving out non-minimal equations away from $q=3$ has no effect on the ratio of the growth constants.

The pairs $(a,b)$ such that 
$$
3^4 \mid (6ab+27a^4) \text{ and } 3^6 \mid (b^2-27a^6)
$$
have $a \equiv 0\pmod{3}$ and $b \equiv 0 \pmod{27}$, which accounts for a proportion of $1/81$ of the pairs.  

For the twists, observe that 
$$
9(6ab+27a^4) \text{ and } -27(b^2-27a^6)
$$
are integral if and only if $a,b \in (1/3)\Z$.  Among those pairs, similar reasoning shows that 1/81 yield non-minimal equations.

Taking $a,b \in (1/3)\Z$ scales the area of $R_{-3}(H)$ by $9$, whence, by \eqref{9factor} the number of integral equations parameterizing 3-torsion and local 3-torsion is the same.  Sieving out 1/81 of the pairs from each count does not affect the ratio and so the proportions are equal.
\end{proof}

\begin{remark} \label{rmk:P3approx}
We confirm Proposition \ref{prop:onehalfp3} experimentally: in a naive way, we compute
\[ \frac{\#\{E \in \scrE_{\leq 10^{12}} : 3 \mid \#E(\Q)\sbtors\}}{\#(\scrE_{3?} \cap \scrE_{\leq 10^{12}})} = \frac{3808}{7578} \approx 0.503.  \]
\end{remark}

\subsection{The Probability \texorpdfstring{$P_4$}{P4}}  The strategy here is similar, but we will need to do more computation to get the growth constants exactly.  (The difference between this case and $P_3$ is that the Weierstrass models of the curves in $\scrE_{4}$ are not simply quadratic twists of each other and, moreover,  to argue how the shapes of the regions are transformed by cyclic isogenies is at least as difficult as computing the areas by calculus.)

First, we reduce our work by observing from Harron--Snowden \cite[Theorem 1.1]{hs} that the number of curves up to height $H$ with a rational $\Z/4$-torsion subgroup is $\asymp H^{1/4}$ and curves with full 2-torsion are $\asymp H^{1/3}$.  This shows that as $H \to \infty$, curves with full 2-torsion dominate curves with a 4-torsion point in $\scrE_{4?}$ and so the latter will not contribute to the probability $P_4$. For completeness, however, we record the quantities $d(\Z/4)$ and $e(\Z/6)$ in the following Proposition and fill in the entry for $\Z/4$ in Table \ref{tab:yup}. Because we do not need the growth constant, we are content to sketch a proof.

\begin{prop} \label{Z4constants}
The number $N_{\Z/4}(H)$ of elliptic curves over $\Q$ of height $\leq H$ with a rational point of order 4 is given by
$$
N_{\Z/4}(H) = cH^{1/4} + O(H^{1/6}),
$$
for an explicitly computable constant $c$.
\end{prop}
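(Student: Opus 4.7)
The plan is to adapt the approach of Harron--Snowden \cite[\S 5]{hs}, applying the Principle of Lipschitz to the $(s,b)$-plane; a direct appeal to Theorem \ref{thm:hsupgrade} is unavailable because $\Gamma_1(4)$ has an irregular cusp.

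First, I would parametrize curves with a rational point of order $4$ using Proposition \ref{4weierstrass}: such a curve has a model $y^2=x^3+(a^2-3b^2)x+a^2b-2b^3$ with $(a,b) \in \Q^2$ satisfying $2a-3b \in \Q^{\times 2}$ or $-2a-3b \in \Q^{\times 2}$. Writing $2a-3b=s^2$ and substituting $a=(s^2+3b)/2$, then clearing denominators, yields the integer Weierstrass family
\begin{equation*}
E_{s,b} \colon y^2 = x^3 + A(s,b)\,x + B(s,b),
\end{equation*}
with $A(s,b) = 4s^4+24s^2b-12b^2$ and $B(s,b) = 16b(s^4+6s^2b+b^2)$, which are weighted homogeneous of weighted degrees $4$ and $6$ in the weighting $(\operatorname{wt} s, \operatorname{wt} b) = (1,2)$. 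The symmetric subfamily $-2a-3b=s^2$ is handled in parallel and contributes the same asymptotic; the overlap (both $\pm(2a-3b)$ squares) is a codimension-one condition absorbed into the error term.

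Second, I would count integer pairs $(s,b)$ in the region $R(H) \ddef \{(s,b) \in \R^2 : \max(|4A|^3, |27B|^2) \leq H\}$ via the Principle of Lipschitz. Under the scaling $(s,b) \mapsto (\lambda s, \lambda^2 b)$ we have $A \mapsto \lambda^4 A$ and $B \mapsto \lambda^6 B$, so the height scales by $\lambda^{12}$; hence $R(H)$ is the image of $R(1)$ under this scaling with $\lambda = H^{1/12}$, and its area is $\lambda^{1+2}\area(R(1)) = H^{1/4}\area(R(1))$. Sieving the \emph{primitivity condition}---no prime $p$ satisfies $p \mid s$ and $p^2 \mid b$, which ensures unique weighted orbits---contributes a M\"obius factor $1/\zeta(3)$, and stratifying by the minimality defect $e \ddef \max\{e \geq 1 : e^4 \mid A(s,b),\ e^6 \mid B(s,b)\}$, as in Step 3 of the proof of Theorem \ref{thm:hsupgrade0}, yields a finite sum indexed by the divisors of an explicit resultant, giving an effectively computable constant $c$.

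The error term $O(H^{1/6})$ arises from the boundary of $R(H)$: its arclength scales like $\lambda^{\max(\operatorname{wt})} = \lambda^2 = H^{1/6}$, and the stated bound follows by applying Huxley's refinement \cite{huxley} of the Principle of Lipschitz. The main obstacle is the careful bookkeeping: handling the $\pm$-symmetry in $s$, subtracting for the overlap between the two subfamilies $\pm(2a-3b) \in \Q^{\times 2}$, and verifying that curves with $j(E) \in \{0, 1728\}$ are absorbed into the error term.
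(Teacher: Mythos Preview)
Your approach is essentially the paper's: parametrize $Y_1(4)$ by a weighted-homogeneous family with weights $(1,2)$ and apply Lipschitz with the scaling $(s,b)\mapsto(\lambda s,\lambda^2 b)$. The paper uses the Tate normal form directly (homogenizing $t=a/b^2$ to get $A(a,b)=27(16a^2+16ab^2+b^4)$, $B(a,b)=54(64a^3-120a^2b^2-24ab^4-b^6)$), whereas you derive an equivalent parametrization from Proposition~\ref{4weierstrass}; both routes give the correct exponents $1/4$ and $1/6$ by the same homogeneity argument.

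However, your two-subfamily inclusion--exclusion is misconceived. The model $y^2=x^3+(a^2-3b^2)x+a^2b-2b^3$ depends only on $(\abs{a},b)$, and the symmetry $a\leftrightarrow -a$ \emph{exchanges} the conditions $2a-3b\in\Q^{\times 2}$ and $-2a-3b\in\Q^{\times 2}$. So the two subfamilies parametrize exactly the same set of elliptic curves: the first alone already covers every $E$ with a rational $4$-point (given such a point $P$, choose the sign of $a$ so that $P$ lies in the kernel $\langle(a-b,a\sqrt{2a-3b})\rangle$). The ``overlap'' is not codimension one in the space of curves---it is everything---and adding both subfamilies overcounts by a factor of $2$. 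The fix is simple: use only one subfamily; the remaining overcount is then exactly $2$ (from $s\leftrightarrow -s$, equivalently $P\leftrightarrow -P$), which matches $r(G)=2$ for the relevant $G\leq\GL_2(\Z/4)$.
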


\begin{proof}
By \cite[$\Sigma_4$, p.~93]{gst} elliptic curves over $\Q$ with a rational point of order 4 are parameterized by
$$
y^2 = x^3  -27(16t^2 + 16t + 1)x -54(64t^3 - 120t^2 - 24t - 1).
$$
Homogenizing and clearing denominators across the Weierstrass equation, shows that 
 the number of integral equations is roughly given by the number of integral points in the compact region of $\R^2$ defined by
\begin{equation}
R(H) \colonequals \lbrace (a,b) : 4 \abs{A(a,b)}^3 \leq H \ \text{and} \ 27\abs{B(a,b)}^2 \leq H \rbrace
\end{equation}
where
\begin{equation}
\begin{aligned}
A(a,b) &\colonequals 27(16a^2 + 16ab^2 + b^4) \\
B(a,b) &\colonequals 54(64a^3-120a^2b^2 - 24ab^4 - b^6).
\end{aligned}
\end{equation}
(Here, ``roughly'' means that $A(a,b)$ and $B(a,b)$ are integral if and only if $(a,b) \in \left(\frac{1}{6}\Z\right) \times \Z$; this can be deduced from congruences.  We will not pursue a finer estimate than this because we do not seek an explicit growth constant.)

The compactness of $R(H)$ allows for a Lipschitz analysis.  A homogeneity argument with the Weierstrass coefficients (scale $a$ by $H^{1/6}a$ and $b$ by $H^{1/12}b$) shows immediately that $\Area(R(H)) = \Area(R(1))H^{1/4}$ and $O(\len (\bd (R(H)))) = O(H^{1/6})$.  The boundary of $R(H)$ is rectifiable (given by polynomials) and so the area of $R(1)$ is calculable.  The constant $c$ is $\area(R(1))$ scaled by $1/r(\Z/4)$ and a sieve factor, both of which are finite calculations. 
\end{proof}

For $G = \Z/2 \times \Z/2$, our next goal is to show that the number of isomorphism classes $N_G(H)$ of elliptic curves with global torsion subgroup $G$ of height $\leq H$ is given by 
\begin{align} \label{growth_rate}
N_G(H) = c(G)H^{1/d(G)} + O(H^{1/e(G)}).
\end{align}
Thus, $P_4$ will be given as a weighted ratio of the constant $c(\Z/2 \times \Z/2)$ and the corresponding constant for curves admitting a cyclic 4-isogeny.  We first work out the details for the group $\Z/2\times \Z/2$ in the following Proposition, which contributes to the data in Table \ref{tab:yup}.  After this, we count curves admitting a cyclic 4-isogeny in Proposition \ref{cyclic_isogeny_constants}.  From there, it is then a simple matter to fit the pieces together to obtain an exact expression for $P_4$; this is Corollary \ref{4prob}.

\begin{prop} \label{full_2tors_constants}
For $G=\Z/2 \times \Z/2$, we have
\begin{align*}
c(G) &=  \frac{121\pi \sqrt{3}\sqrt[3]{2}}{360}\\
r(G) &= 6\\
1/d(G) &= 1/3 \\
1/e(G) &= 1/6.
\end{align*} 
\end{prop}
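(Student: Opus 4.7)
The plan is to follow the framework of Theorem~\ref{thm:hsupgrade0}, adapted to the case $\Gamma(2)$, which contains $-I$ and so falls outside that theorem's hypotheses.  First I would parametrize: every $E \in \scrE$ with $E(\Q)[2] \cong (\Z/2)^2$ has the form $y^2 = (x-a)(x-b)(x+a+b)$ with $(a,b) \in \Z^2$, since the short Weierstrass form has vanishing $x^2$ coefficient and the rational roots of a monic integer polynomial are integers.  This yields $A(a,b) = -(a^2+ab+b^2)$ and $B(a,b) = ab(a+b)$, with each curve corresponding to exactly six ordered pairs under the $S_3$-action permuting the three 2-torsion points; this matches $r(G) = [N_{\GL_2(\F_2)}(\{I\}):\{I\}] = 6$.

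Next, since $A$ and $B$ are homogeneous of degrees $2$ and $3$, the region $R(H)$ scales as $H^{1/6} R(1)$, yielding $1/d(G) = 1/3$ via the Principle of Lipschitz and $1/e(G) = 1/6$ from the rectifiable boundary estimate.  The central analytic step is to show that $R(1)$ is precisely the ellipse $\{(a,b) : a^2+ab+b^2 \leq 4^{-1/3}\}$ cut out by the first constraint alone.  Under the substitution $u = a+b$, $v = a-b$ (diagonalizing the form to $(3u^2+v^2)/4$) and a polar parametrization of the ellipse, the second constraint $27(ab(a+b))^2 \leq 1$ reduces to $\cos^2\theta(1-4\sin^2\theta)^2 \leq 1$; this rearranges to $\sin^2\theta(4\sin^2\theta-3)^2 \geq 0$, which holds with equality exactly at the six degenerate points where two of the $e_i$ coincide.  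The standard formula for a quadratic-form ellipse with matrix determinant $3/4$ then gives $\Area(R(1)) = \pi\sqrt{3}\sqrt[3]{2}/3$.

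Finally, I would sieve for minimality: a prime-by-prime check shows that $(A(a,b), B(a,b))$ defines a minimal Weierstrass equation if and only if $\gcd(a,b)$ is squarefree (the nontrivial direction being that $\ell \parallel \gcd(a,b)$ still leaves $\ell^4 \nmid A$).  Möbius inversion together with Lipschitz applied to $R(H/k^{12})$ then yields
\begin{equation*}
\#\{(a,b) \in R(H) \cap \Z^2 : \gcd(a,b) \text{ squarefree}\} = \frac{\Area(R(1))}{\zeta(4)} H^{1/3} + O(H^{1/6}),
\end{equation*}
and dividing by $r(G) = 6$ determines $c(G)$ explicitly, with the discriminant-zero locus (two equal $e_i$) negligible as a lower-dimensional subset.

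The main obstacle is the careful verification that the cubic constraint is dominated by the quadratic one everywhere on the ellipse, especially at the six tangency points where the two constraints just touch; the minimality sieve is standard but requires attention to the squarefree-gcd condition, which gives the factor $1/\zeta(4)$ in place of the more familiar $1/\zeta(2)$ that arises when sieving for coprimality.
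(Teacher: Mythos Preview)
Your argument is correct and takes a genuinely different route from the paper.  You parametrize directly by the integer $2$-torsion abscissae $(a,b,-a-b)$, so that $A=-(a^2+ab+b^2)$ and $B=ab(a+b)$ are automatically integral; the paper instead uses the model $A=-(a^2-ab+b^2)/3$, $B=-(a+b)(2a-b)(a-2b)/27$, which carries denominators and therefore needs an extra integrality condition $3\mid a+b$ (the factor $1/3$ inside the $1/18$) together with a special sieve at $p=3$.  Your containment ``$B$-constraint is implied by the $A$-constraint'' is the same phenomenon the paper verifies after a rotation, and both are equivalent to the one-line identity
\[
4(a^2+ab+b^2)^3-27\bigl(ab(a+b)\bigr)^2=(a-b)^2(a+2b)^2(2a+b)^2\geq 0,
\]
which is just nonnegativity of the discriminant.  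Your single squarefree-$\gcd$ sieve giving $1/\zeta(4)$ replaces the paper's two-stage (coprimality then minimality-defect) sieve and bypasses the prime $3$ entirely.

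One point worth flagging: your method yields the leading coefficient $\dfrac{\pi\sqrt{3}\sqrt[3]{2}}{18\,\zeta(4)}$ for $N_G(H)$, while the paper's proof produces $\dfrac{121\,\pi\sqrt{3}\sqrt[3]{2}}{2160\,\zeta(4)}$, a ratio of $121/120$.  The discrepancy comes from the paper's local factor at $3$: the three bad residue classes modulo $27$ give density $1/3^5$ among \emph{all} integer pairs, but density $1/3^4$ among those already satisfying $3\mid a+b$, and it is the latter that should be used once the $1/3$ integrality factor has been extracted.  With that correction the two computations agree, and your value is the right one.
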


\begin{proof}
We start with a two-variable model parameterizing elliptic curves with full 2-torsion:
\begin{align} \label{full2torsion}
y^2 = x^3  -\frac{(a^2-ab+b^2)}{3}x -\frac{(a+b)(2a-b)(a-2b)}{27},
\end{align}
identifying the polynomials $A(a,b)$ and $B(a,b)$ as
\begin{align*}
A(a,b) &= -(a^2-ab+b^2)/3\\
B(a,b) &= -(a+b)(2a-b)(a-2b)/27.
\end{align*}
It is routine to check that for all $H > 0$ we have the containment
$$
\lbrace (a,b)  : \abs{4 A(a,b)}^3 \leq H \rbrace \subseteq \lbrace (a,b)  : \abs{27B(a,b)}^2 \leq H \rbrace.
$$
(Briefly, rotate by $\pi/4$ so that it amounts to checking 
\begin{align} \label{rotated_ellipse}
4\left| \frac{a^2}{2} + \frac{b^2}{6} \right|^3 \leq H \Longrightarrow 27 \left( \frac{ba^2}{3\sqrt{2}} - \frac{b^3}{27\sqrt{2}} \right)^2 \leq H.
\end{align}
By symmetry and scaling, it suffices to show (\ref{rotated_ellipse}) holds for $a,b \geq 0$ and $H=1$, which is easily verified.)

We therefore put 
\begin{align} \label{R4defn}
R_4(H) = \lbrace (a,b) \in \R \times \R:  4|A(a,b)|^3 \leq H \rbrace.
\end{align}
The constants $c(\Z/2\times \Z/2)$, $d(\Z/2\times \Z/2)$, $e(\Z/2\times \Z/2)$ of the Proposition will follow from asymptotic analysis of the elliptical region defined by (\ref{R4defn}).

By the homogeneity of $A(a,b)$ of degree 2, it follows from direct calculation that
$$
\area (R_4(H))  = \area (R_4(1))H^{1/3}.
$$
By the Principle of Lipschitz applied to the homogeneously expanding compact region $R_4(H)$, 
we get that the number of integral points in $R_4(H)$ is asymptotically
$$
\area (R_4(H))  +O(\len(\bd(R_4(H)))).
$$
Therefore, $1/d(\Z/2 \times \Z/2) = 1/3$.  The fact that $R_4(H)$ defines an ellipse centered at the origin with boundary equation
$$
x^2-xy+y^2 = 3\left(\frac{H}{4} \right)^{1/3},
$$
immediately shows that 
$$
\len(\bd(R_4(H))) = O(H^{1/6}).
$$
It remains to remove singular and sieve out non-minimal equations. The conclusion from the steps will be that $1/e(\Z/2 \times \Z/2) = 1/6$ and an explicit expression for $c(\Z/2 \times \Z/2)$. 

The singular equations of the form (\ref{full2torsion}) have discriminant 0:
$$
4A(a,b)^3+27B(a,b)^2 = -a^2b^2(a-b)^2 =0,
$$
and by algebraic substitution we see that the number of singular equations up to height $H$ is $O(H^{1/6})$.  Therefore, the singular equations can be absorbed into the error term and we can now conclude that $1/e(\Z/2 \times \Z/2) = 1/6$.  

The points of $R_4(H)$ give a 6-fold overcount of models of the form (\ref{full2torsion}) because the points 
$$
\{(a,b),(b,a), (-a,b-a), (b-a,-a),(a-b,-b),(-b,a-b) \}
$$
each give rise to the identical Weierstrass equation with height $\leq H$; this shows $r(G) = 6$, as claimed. We also note that if both $A(a,b)$ and $B(a,b)$ are integers, then both $a$ and $b$ are integers, which is routinely verified by congruences, occurs for 1/3 of all integral pairs $(a,b) \in \Z\times \Z$. Therefore, 
\begin{align} \label{18}
\frac{\area R_4(1)}{18}
\end{align}
is the growth constant for non-singular, integral equations of the form (\ref{full2torsion}) of height $\leq H$.  It remains to sieve non-minimal equations.  We omit the routine computation, which is similar to the ones detailed in Section \ref{sect57} above, and simply observe that 
\begin{enumalph}
\item If $p \ne 3$, then 
$$
p^4 \mid A(a,b) \text{  and  } p^6 \mid B(a,b)
$$ 
if and only if $a \equiv b \equiv 0 \pmod{p^2}$. 
\item If $p=3$, then 
$$
3^4 \mid A(a,b) \text{ and } 3^6 \mid B(a,b)
$$
if and only if $(a,b) \equiv (0,0)$ or $(9,18)$ or $(18,9) \pmod{27}$.
\end{enumalph}

Thus, if $p\ne 3$ then $1/p^4$ of the equations are non-minimal at $p$.  If $p=3$, then $1/3^5$ equations are non-minimal.  Putting together (\ref{18}), this sieve,  and the area of the ellipse $R_4(1)$, we see that
$$
\frac{c(\Z/2 \times \Z/2)}{r(\Z/2 \times \Z/2)} = \frac{1}{18} \cdot \left(\frac{1-\frac{1}{3^5}}{1-\frac{1}{3^4}} \right) \frac{ \pi \sqrt{3}\sqrt[3]{2}}{\zeta(4)}= \frac{121\pi \sqrt{3}\sqrt[3]{2}}{2160\zeta(4)} \approx 0.355,
$$
which completes the proof.
\end{proof}

We now perform the analogous computation for curves admitting a cyclic 4-isogeny.  

\begin{prop} \label{cyclic_isogeny_constants}
The number $N(H)$ of elliptic curves over $\Q$ of height $\leq H$ admitting a cyclic $4$-isogeny is given by
$$
N(H) = cH^{1/d} + O(H^{1/e}),
$$
where 
\begin{align*}
c &= \frac{\area R_4'(1)}{2\zeta(4)} \approx 0.9574 \\
1/d &= 1/3 \\
1/e &= 1/6,
\end{align*} 
with the exact value of $c$ given in Lemma \textup{\ref{isogenyarealem}}.
\end{prop}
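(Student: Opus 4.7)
By Proposition \ref{4weierstrass}(ii), every elliptic curve over $\Q$ admitting a rational cyclic $4$-isogeny arises from some $(a,b) \in \Q^2$ via
\[
E(a,b) \colon y^2 = x^3 + A(a,b)\,x + B(a,b), \quad A(a,b) \colonequals a^2 - 3b^2, \quad B(a,b) \colonequals a^2 b - 2b^3.
\]
Following the template of Propositions \ref{Z4constants} and \ref{full_2tors_constants}, the count of such curves of height at most $H$ reduces (after handling singular equations, overcounting, and a minimality sieve) to counting integer pairs in
\[
R_4'(H) \colonequals \{(a,b) \in \R^2 : 4\abs{A(a,b)}^3 \leq H \text{ and } 27\abs{B(a,b)}^2 \leq H\}.
\]
Since $A$ and $B$ are homogeneous of degrees $2$ and $3$, both $A^3$ and $B^2$ are homogeneous of degree $6$, and the rescaling $(a,b)\mapsto(\lambda a,\lambda b)$ shows $R_4'(H) = H^{1/6} R_4'(1)$. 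In contrast to Proposition \ref{full_2tors_constants}, here both defining inequalities are genuinely active---test rays such as $b=0$ and $a=2b$ show that neither implies the other---so $R_4'$ is a compact intersection of two polynomial sublevel sets rather than a single ellipse. It still has rectifiable semi-algebraic boundary, so $\area R_4'(H) = \area R_4'(1)\,H^{1/3}$ and $\len\bd R_4'(H) = O(H^{1/6})$. By the Principle of Lipschitz, the integer-point count of $R_4'(H)$ is $\area R_4'(1)\,H^{1/3} + O(H^{1/6})$, giving $1/d=1/3$ and $1/e=1/6$.

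Next I would handle the bookkeeping. Singular pairs satisfy the nontrivial polynomial relation $4A^3 + 27B^2 = 0$ and thus contribute only $O(H^{1/6})$, which can be absorbed into the error. The involution $(a,b)\mapsto(-a,b)$ preserves both $A$ and $B$ (each depends on $a$ only through $a^2$), and a short direct check shows that (among pairs with $j(E) \neq 0, 1728$) it is the only such identification; hence $r(G)=2$. For the minimality sieve, for each prime $p$ I compute the density of pairs $(a,b)$ modulo a suitable power of $p$ satisfying both $p^4 \mid A(a,b)$ and $p^6 \mid B(a,b)$. For all but possibly $p = 2, 3$ this density equals $1/p^4$, contributing the global factor $\prod_p (1 - 1/p^4) = 1/\zeta(4)$; the stated closed form $c = \area R_4'(1)/(2\zeta(4))$ reflects that explicit enumeration at $p=2$ and $p=3$ produces the same density $1/p^4$ and therefore no additional correction. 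Assembling the pieces yields the main term $c H^{1/3}$ with $c=\area R_4'(1)/(2\zeta(4))$ and error $O(H^{1/6})$, as claimed.

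The main obstacle, deferred to Lemma \ref{isogenyarealem}, is the exact evaluation of $\area R_4'(1)$. Because the region is cut out by two inequalities of different shapes (rather than a single quadric as in Proposition \ref{full_2tors_constants}), one must split $R_4'(1)$ along the transition curve $4\abs{A}^3 = 27\abs{B}^2$ into pieces each governed by a single dominant inequality, and then evaluate the resulting integrals in closed form; this closed-form expression numerically evaluates to $c \approx 0.9574$.
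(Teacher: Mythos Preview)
Your proposal is correct and follows essentially the same approach as the paper: define $R_4'(H)$ from the parameterization in Proposition~\ref{4weierstrass}, use homogeneity and the Principle of Lipschitz to get the main term $H^{1/3}$ and error $O(H^{1/6})$, absorb singular equations into the error, divide by $2$ for the involution $(a,b)\mapsto(-a,b)$, and sieve for minimality via the equivalence $p^4\mid A,\ p^6\mid B \Leftrightarrow p^2\mid a,\ p^2\mid b$ to obtain the factor $1/\zeta(4)$. The paper additionally records the explicit factorization $4A^3+27B^2=a^4(4a^2-9b^2)$ of the discriminant, which makes the $O(H^{1/6})$ bound on singular pairs immediate, but otherwise your outline matches the paper's argument.
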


\begin{proof}
Appealing to Proposition \ref{4weierstrass} we define the region 
\begin{align*}
R_4'(H) = \lbrace (a,b) \in \R \times \R : 4\abs{a^2 - 3b^2}^3 \leq H \text{ and }  27\abs{a^2b - 2b^3}^2 \leq H\rbrace
\end{align*}
parameterizing curves of height $\leq H$ that admit a cyclic 4-isogeny.   We follow the same approach as in Proposition \ref{full_2tors_constants} to compute $1/d$, and $1/e$.  We separate the calculation of $c$ into a separate lemma following this Proposition. 

It follows from homogeneity of $A(a,b)$ and $B(a,b)$ that $\area(R_4'(H)) = \area(R_4'(1))H^{1/3}$ and by applying the Principle of Lipschitz we get $1/d = 1/3$.  By inspection on the degrees of $A(a,b)$ and $B(a,b)$, and using the fact that $A$ and $B$ are polynomials (so rectifiable) we see that $\len \bd (R_4'(H)) = O(H^{1/6})$.  

Next, we calculate the discriminant 
$$
4A(a,b)^3+27B(a,b)^2 = a^4(4a^2 - 9b^2)
$$
and see that the number of singular equations is $O(H^{1/6})$.  These singular equations can be absorbed into the Lipschitz error and we conclude that $1/e = 1/6$.

It remains to obtain $c$.  The region $R_4'(1)$ has polynomial boundary and its area can be computed by calculus (see the statement of Lemma \ref{isogenyarealem} immediately following this proof for an exact value of this area and numerical approximation).  We compute that $r(G)=2$.  

It is straightforward to verify that for every prime $p$, we have $p^4 \mid (a^2-3b^2)$ and $p^6 \mid (a^2b-2b^3)$ if and only if $a \equiv b \equiv 0 \pmod{p^2}$.  Sieving, we scale by $\zeta(4)^{-1}$.  
Altogether, we arrive at the growth constant
$$
c = \frac{1}{2} \cdot \frac{1}{\zeta(4)} \area (R_4'(1)) \approx 0.9574
$$
as claimed.
\end{proof}

\begin{lem} \label{isogenyarealem}
Let $u = 4^{-1/3}, v = 27^{-1/2}$, and define the polynomials $F_{\pm} \in \R[x]$ by
$$
F_{\pm}(x) = x^3 \pm ux-v.
$$
Let $\alpha_{\pm}$ denote the unique positive root of $F_{\pm}$ and set $\beta_{\pm}  = \sqrt{3\alpha_{\pm}^2 \pm u}$. Where it is defined, let $I(p,q)$ denote the integral
$$
I(p,q) = \int_p^q \sqrt{\frac{2y^3+v}{y}}\,{\rm d}y.
$$
Then we have
\begin{align*}
\area(R_4'(1)) &= 4I(\alpha_+,\alpha_-) +2(\alpha_+\beta_+ - \alpha_-\beta_-) \\
&\qquad\qquad + \frac{2u}{\sqrt{3}} \log \left( \frac{(\sqrt{3} \alpha_+ + \beta_+)(\sqrt{3}\alpha_- + \beta_-)}{u} \right) \\
&\approx 2.072.
\end{align*}
\end{lem}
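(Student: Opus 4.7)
The plan is to exploit the fourfold symmetry of $R_4'(1)$ and integrate in slices parallel to the $a$-axis. The region is invariant under $(a,b)\mapsto(\pm a,\pm b)$, because $|a^2-3b^2|$ and $|a^2b-2b^3|=|b|\cdot|a^2-2b^2|$ depend only on $a^2$ and on $|b|$; so $\area(R_4'(1))=4\mathcal{A}$, where $\mathcal{A}$ is the area in the first quadrant. For fixed $b\geq 0$, the defining inequalities constrain $a^2$ to lie in $[3b^2-u,\,3b^2+u]\cap[2b^2-v/b,\,2b^2+v/b]$ (with negative lower endpoints replaced by $0$, and with the second interval absent at $b=0$). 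The upper envelope switches from $3b^2+u$ to $2b^2+v/b$ exactly at $b=\alpha_+$, since $3b^2+u\leq 2b^2+v/b\iff F_+(b)\leq 0$; similarly the lower envelope switches from $3b^2-u$ to $2b^2-v/b$ exactly at $b=\alpha_-$, and for $b>\alpha_-$ we have $F_-(b)>0$, which forces $3b^2-u>2b^2+v/b$, so the slice is empty.

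The key arithmetic coincidence is that $u=2^{-2/3}$ and $v=3^{-3/2}$ make $\sqrt{u/3}=(v/2)^{1/3}$; denote this common value $c_0$. Hence the two positive lower envelopes $3b^2-u$ and $2b^2-v/b$ both turn on simultaneously at $b=c_0$, and since $\alpha_+<c_0<\alpha_-$ one obtains the decomposition
\begin{align*}
\mathcal{A} = \int_0^{\alpha_+}\sqrt{3b^2+u}\,db &+ \int_{\alpha_+}^{c_0}\sqrt{2b^2+v/b}\,db \\
&+ \int_{c_0}^{\alpha_-}\left(\sqrt{2b^2+v/b}-\sqrt{3b^2-u}\right)db.
\end{align*}
Since $\sqrt{(2y^3+v)/y}=\sqrt{2y^2+v/y}$, the two occurrences of $\sqrt{2b^2+v/b}$ combine to $I(\alpha_+,\alpha_-)$, contributing the transcendental piece $4I(\alpha_+,\alpha_-)$ to $4\mathcal{A}$.

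The remaining two pieces are elementary. With the substitution $w=\sqrt{3}\,b$ and the standard antiderivative $\int\sqrt{w^2\pm u}\,dw=\tfrac{w}{2}\sqrt{w^2\pm u}\pm\tfrac{u}{2}\log|w+\sqrt{w^2\pm u}|$, one gets $\int\sqrt{3b^2\pm u}\,db=\tfrac{b}{2}\sqrt{3b^2\pm u}\pm\tfrac{u}{2\sqrt{3}}\log|\sqrt{3}\,b+\sqrt{3b^2\pm u}|$. Evaluating at the relevant endpoints, using $\sqrt{3c_0^2-u}=0$ and $\sqrt{3\alpha_\pm^2\pm u}=\beta_\pm$, the polynomial parts of the boundary values contribute $2(\alpha_+\beta_+-\alpha_-\beta_-)$, while the logarithmic parts combine into $\tfrac{2u}{\sqrt{3}}\log\bigl((\sqrt{3}\alpha_++\beta_+)(\sqrt{3}\alpha_-+\beta_-)/u\bigr)$; here the two $\tfrac12\log u$ contributions from $b=0$ and from $b=c_0=\sqrt{u/3}$ enter with opposite signs in $4\mathcal{A}$ and merge into the single $\log u$ in the denominator. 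The numerical value $\approx 2.072$ follows from numerical integration of $I(\alpha_+,\alpha_-)$ and substitution of $\alpha_\pm,\beta_\pm$.

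The main obstacle is the geometric dissection: recognizing that the three natural thresholds $\alpha_+$, $\sqrt{u/3}$, and $(v/2)^{1/3}$ collapse to only two distinct points thanks to the identity $\sqrt{u/3}=(v/2)^{1/3}$, and that the region terminates exactly at $b=\alpha_-$. Once this picture is in place, the integration is a routine application of a standard table entry, and the cancellation of $\log u$ from the two lower endpoints is precisely what makes the final expression take the compact form in the statement.
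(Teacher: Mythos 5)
Your proposal is correct, and it is the natural way to carry out the computation; the paper itself only says ``straightforward calculation'' and points to Pomerance--Schaefer, so you are filling in exactly the details the paper outsources. Your decomposition of the first-quadrant area into three $b$-slices, the recognition that the coincidence $\sqrt{u/3}=(v/2)^{1/3}$ collapses two thresholds to a single $c_0$, the gluing of the two $\sqrt{2b^2+v/b}$ pieces into $I(\alpha_+,\alpha_-)$, and the elementary antiderivative of $\sqrt{3b^2\pm u}$ with the cancellation of the two $\tfrac{1}{2}\log u$ boundary terms into a single $\log u$ in the denominator all check out, and reproduce the stated expression. One sentence is imprecise: the lower envelope does \emph{not} switch from $3b^2-u$ to $2b^2-v/b$ at $b=\alpha_-$. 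Rather, for $c_0<b\leq\alpha_-$ the binding lower bound stays $3b^2-u$ (because $g(b)=b^2-u+v/b$ is convex with a double zero at $b=c_0$, hence $g>0$ for $b>c_0$), and at $b=\alpha_-$ the lower envelope $3b^2-u$ meets the upper envelope $2b^2+v/b$, so the slice closes; your stated reason (``$F_-(b)>0$ forces $3b^2-u>2b^2+v/b$'') is the correct one, and the integral formula you then write down is correct, so this is only a slip in the prose. You also use $\alpha_+<c_0<\alpha_-$ without justification; it follows from $F_+(c_0)>0$ and $F_-(c_0)<0$, both of which are one-line checks from $c_0^3=v/2$ and $uc_0=u^{3/2}/\sqrt{3}$, and it would be worth recording them.
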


\begin{proof}
Straightforward calculation: for a bit more detail, see Pomerance--Schaefer \cite[\S 2]{ed_carl_arxiv}, where our area is $2i_4 \approx 2(1.036) \approx 2.072$.
\end{proof}

\begin{cor} \label{4prob}
We have 
\[ P_4 = \frac{121 \Area(R_4(1))}{121 \Area(R_4(1)) + 1080 \Area(R_4'(1))} \approx 0.270. \]
\end{cor}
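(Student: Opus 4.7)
The plan is to express $P_4$ as the ratio of the two growth constants already computed in Propositions \ref{full_2tors_constants} and \ref{cyclic_isogeny_constants}, and then simplify algebraically.

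First, by Proposition \ref{prop:eweir} we decompose $\scrE_{4?} = \scrE^{(\mathrm{i})} \cup \scrE^{(\mathrm{ii})}$, where $\scrE^{(\mathrm{i})}$ is the set of curves with $E(\Q)[2] \simeq (\Z/2)^2$ and $\scrE^{(\mathrm{ii})}$ is the set of curves admitting a $\Q$-rational cyclic $4$-isogeny. By Proposition \ref{full_2tors_constants} (together with the remark following Corollary \ref{thm:hsupgrade_cor} that curves whose torsion merely \emph{contains} $\Z/2\times\Z/2$ have the same asymptotic as those with $E(\Q)\sbtors \simeq \Z/2\times\Z/2$) and by Proposition \ref{cyclic_isogeny_constants},
\[
\#(\scrE^{(\mathrm{i})} \cap \scrE_{\leq H}) = c_{\mathrm{i}} H^{1/3} + O(H^{1/6}), \quad \#(\scrE^{(\mathrm{ii})} \cap \scrE_{\leq H}) = c_{\mathrm{ii}} H^{1/3} + O(H^{1/6}),
\]
with $c_{\mathrm{i}} = 121\,\Area(R_4(1))/(2160\,\zeta(4))$ and $c_{\mathrm{ii}} = \Area(R_4'(1))/(2\,\zeta(4))$.

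The main obstacle is showing that two correction terms are negligible. I would first bound $\#(\scrE^{(\mathrm{i})} \cap \scrE^{(\mathrm{ii})} \cap \scrE_{\leq H}) = O(H^{1/6})$: in the model of Proposition \ref{4weierstrass} parametrizing case (ii) by $(a,b)$, the additional requirement of full $2$-torsion is the square condition $9b^2 - 4a^2 \in \Q^{\times 2}$, a conic condition cutting the $2$-parameter family of case (ii) down to a $1$-parameter subfamily; such curves have torsion containing $\Z/2\times\Z/4$ (possibly after a twist), whose count is $O(H^{1/6})$ by Table \ref{tab:yup}. Likewise, the numerator equals $\#(\scrE^{(\mathrm{i})} \cap \scrE_{\leq H})$ plus the count of curves with $E(\Q)\sbtors \supseteq \Z/4$ but $E(\Q)[2] \not\simeq (\Z/2)^2$; by Proposition \ref{Z4constants} the latter is $O(H^{1/4})$, also negligible against $H^{1/3}$.

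By inclusion--exclusion, $\#(\scrE_{4?} \cap \scrE_{\leq H}) = (c_{\mathrm{i}} + c_{\mathrm{ii}}) H^{1/3} + O(H^{1/6})$, so combining with the numerator count gives
\[
P_4 = \lim_{H\to\infty} \frac{c_{\mathrm{i}} H^{1/3} + O(H^{1/4})}{(c_{\mathrm{i}} + c_{\mathrm{ii}}) H^{1/3} + O(H^{1/6})} = \frac{c_{\mathrm{i}}}{c_{\mathrm{i}} + c_{\mathrm{ii}}}.
\]
Substituting the explicit values, the common factor $\zeta(4)^{-1}$ cancels; multiplying numerator and denominator by $2160$ produces
\[
P_4 = \frac{121\,\Area(R_4(1))}{121\,\Area(R_4(1)) + 1080\,\Area(R_4'(1))},
\]
as claimed. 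The numerical value $P_4 \approx 0.270$ then follows from $\Area(R_4(1)) = \pi\sqrt{3}\sqrt[3]{2}$ (the area of the ellipse $a^2 - ab + b^2 \leq 3\cdot 4^{-1/3}$) and $\Area(R_4'(1)) \approx 2.072$ from Lemma \ref{isogenyarealem}; the remainder is bookkeeping once the negligibility claims in the second paragraph are rigorously established.
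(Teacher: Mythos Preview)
Your proposal follows the same route as the paper's own proof, which is very terse: the paper simply writes $P_4 = c(\Z/2\times\Z/2)/(c(\Z/2\times\Z/2)+c)$ and cites Propositions \ref{full_2tors_constants} and \ref{cyclic_isogeny_constants}, relying on the preceding discussion in the section (notably the paragraph before Proposition \ref{Z4constants}) for why curves with a rational $4$-torsion point are negligible. Your version fills in the inclusion--exclusion bookkeeping that the paper leaves implicit, and the algebraic simplification to the stated formula is correct.

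One imprecision worth flagging: your justification for $\#(\scrE^{(\mathrm{i})}\cap\scrE^{(\mathrm{ii})}\cap\scrE_{\leq H})=O(H^{1/6})$ via ``such curves have torsion containing $\Z/2\times\Z/4$ (possibly after a twist)'' is not right as stated. A curve with $E(\Q)[2]\simeq(\Z/2)^2$ and a rational cyclic $4$-isogeny need not have a rational point of order $4$, nor need any quadratic twist; in the model of Proposition \ref{4weierstrass} the extra condition is $9b^2-4a^2\in\Q^{\times 2}$, which is independent of $\pm 2a-3b\in\Q^{\times 2}$. The correct argument is the one you gesture at first: the square condition $9b^2-4a^2=c^2$ cuts the $2$-parameter family down to a rationally parametrized $1$-parameter family, equivalently the overlap is parametrized by the modular curve for $\Gamma_0(4)\cap\Gamma(2)$, which has index $12$ in $\PSL_2(\Z)$ and hence $d=6$, giving count $O(H^{1/6})$. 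Since you already acknowledge this step needs rigorous establishment, this is a minor point; your overall structure and final computation are sound and match the paper.
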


\begin{proof}
Because both growth rates are $O(H^{1/3})$, we can express $P_4$ as the following ratio
$$
P_4 = \frac{c(\Z/2 \times \Z/2)}{c(\Z/2 \times \Z/2) + c}.
$$
The exact value and its approximations follow immediately from Propositions \ref{full_2tors_constants} and \ref{cyclic_isogeny_constants}.
\end{proof}

\begin{remark} \label{rmk:ed_carl}
Pomerance--Schaefer \cite{ed_carl_arxiv} count elliptic curves with Galois-stable cyclic subgroups of order 4 and obtain a finer estimate than our Proposition \ref{cyclic_isogeny_constants}, in the case where they count the number of curves with at least one pair of cyclic subgroups of order 4.  In that case they show 
$$
N(H)=c_{1}H^{1/3} + c_2H^{1/6}+O(H^{0.105}),
$$
where their $c_1$ is exactly our $c$ in Proposition \ref{cyclic_isogeny_constants}; they also compute the area of the same region that we do in Lemma \ref{isogenyarealem}.
\end{remark}

\begin{exm}
Returning to Example \ref{exm:isoggraph}, we have shown that $100\%$ of elliptic curves $E \in \scrE_{4?}$ are isogenous to an elliptic curve with full $2$-torsion (and no further torsion structure); the isogeny class of such curves have isogeny graph which is a tree with three leaves attached to a central root, for example the isogeny class with LMFDB \cite{lmfdb} label \href{http://www.lmfdb.org/EllipticCurve/Q/350/b/}{\textsf{350.b}}.
\end{exm}

\begin{remark} \label{rmk:P4approx}
We now give some experimental confirmation of Corollary \ref{4prob}.  Enumerating curves in a naive way, among the curves $E \in \scrE_{4?} \cap \scrE_{\leq 10^{13}}$ we count:
\begin{center}
\begin{tabular}{c|c} 
$E(\Q)\sbtors[2^\infty]$ & count \\
\hline \hline
$\Z/2$\rule{0pt}{2.5ex} & 20612 \\
$\Z/2 \times \Z/2$ & 8126 \\
$\Z/2 \times \Z/4$ & 8 \\
$\Z/4$ & 1382 \\
$\Z/8$ & 2
\end{tabular}
\end{center}
(It appears that the elliptic curve of smallest height with $\#E(\Q)\sbtors[2^\infty] \simeq \Z/2 \times \Z/8$ is the elliptic curve \href{http://www.lmfdb.org/EllipticCurve/Q/210/e/6}{\textsf{210.e6}} with height $\approx 10^{19.03}$.)  The curves with a rational $4$-torsion point are, according to the above, a lower-order term---but this is not so totally apparent in the range of our data!  So we estimate the probability by
\begin{equation}
\begin{aligned}
&\frac{\#\{E \in \scrE_{\leq 10^{13}} : \#E(\Q)\sbtors[2^\infty] \simeq \Z/2 \times \Z/2\}}{\#\{E \in \scrE_{4?} \cap \scrE_{\leq 10^{13}} : \#E(\Q)\sbtors[2^\infty] \leq \Z/2 \times \Z/2\}} \\
&\qquad\qquad = \frac{8126}{20612+8126} = \frac{8126}{28738} \approx 0.283
\end{aligned}
\end{equation}
which matches Corollary \ref{4prob} reasonably well.
\end{remark}

\begin{rmk} \label{rmk:naiveheight}
Alternatively, one can order the elliptic curves by naive height
\[ \height'(E) \colonequals \max(\abs{A^3},\abs{B^2}) \]
(without the scaling factors $4,27$) and ask how the explicit probabilities are affected.  This does not affect $P_3$, since the ratio of the areas of the regions $R_1(1)$ and $R_{-3}(1)$ is preserved.  However, in the case of $P_4$, the area of the elliptical region is $2\sqrt{3}\pi$ and the area of the region $R_4'(1)$ is given explicitly by 
\begin{align*} 
&4 \cdot \left(\frac{(\alpha_+\beta_+ - \alpha_-\beta_-)}{2} + \frac{\log\left((\beta_+ + \sqrt{3}\alpha_+)(\beta_- + \sqrt{3}\alpha_-)\right)}{2\sqrt{3}} \right. \\
&\qquad\qquad \left. + I(\alpha_+,\alpha_-) \right) \approx 4.019,
\end{align*}
where $\alpha_\pm$ is the real root of $z^3 \pm z-1$, $\beta_\pm = \sqrt{3\alpha_\pm^2 \pm1}$, and 
$$
I(p,q) = \int_p^q \sqrt{\frac{2z^3 + 1}{z}} {\rm d} z.
$$
No other adjustments to the growth constants are required.  Thus, the effect of ordering by $\height'$ versus $\height$ gives $P_4 \approx 0.233$.
\end{rmk}

\bigskip

\end{document}